\newtheorem{thm}{Theorem}[section]
\newtheorem{prop}[thm]{Proposition}
\newtheorem{lem}[thm]{Lemma}
\newtheorem{cor}[thm]{Corollary}
\newtheorem{fact}[thm]{Fact}
\newtheorem{quest}[thm]{Question}
\theoremstyle{definition}
\newtheorem{defn}[thm]{Definition}
\theoremstyle{remark}
\newcommand{\Lc}{\mathcal{L}}
\newcommand{\Pc}{\mathcal{P}}
\newcommand{\Tc}{\mathcal{T}}
\newcommand{\Toot}{\Leftrightarrow}
\newcommand{\toot}{\leftrightarrow}
\newcommand{\To}{\Rightarrow}
\newcommand{\sqin}{%
  \mathrel{\vphantom{\sqsubset}\text{%
      \mathsurround=0pt
      \ooalign{$\sqsubset$\cr$-$\cr}%
    }}%
}
\newcommand{\Rb}{\mathbb{R}}
\newcommand{\Nb}{\mathbb{N}}
\newcommand{\Qb}{\mathbb{Q}}
\newcommand{\Zb}{\mathbb{Z}}
\newcommand{\xbar}{\bar{x}}
\newcommand{\abar}{\bar{a}}
\newcommand{\bbar}{\bar{b}}
\newcommand{\ybar}{\bar{y}}
\newcommand{\zbar}{\bar{z}}
\newcommand{\fbar}{\bar{f}}
\newcommand{\cbar}{\bar{c}}
\newcommand{\gbar}{\bar{g}}
\newcommand{\e}{\varepsilon}
\providecommand{\dotplus}{%
  \mathbin{%
    \vphantom{+}%
    \text{%
      \mathsurround=0pt %
      \ooalign{%
        \noalign{\kern-.35ex}%
        \hidewidth$\smash{\cdot}$\hidewidth\cr %
        \noalign{\kern.35ex}%
        $+$\cr %
      }%
    }%
  }%
}
\newcommand{\MSE}{\mathsf{MSE}}%
\newcommand{\GPK}{\mathsf{GPK}}
\newcommand{\ZF}{\mathsf{ZF}}
\newcommand{\PA}{\mathsf{PA}}
\newcommand{\NF}{\mathsf{NF}}
\newcommand{\NFU}{\mathsf{NFU}}
\newcommand{\TSTI}{\mathsf{TSTI}}
\newcommand{\CL}{\mathrm{C\L}}
\newcommand{\res}{{\upharpoonright}}
\makeatletter \DeclareRobustCommand{\cset}{\@ifstar\star@cset\normal@cset}
\newcommand{\star@cset}[1]{\left\llbracket#1\right\rrbracket}
\newcommand{\normal@cset}[2][]{\mathopen{#1\llbracket}#2\mathclose{#1\rrbracket}}
\DeclareMathOperator{\Card}{Card}
\DeclareMathOperator{\otp}{otp}
\DeclareMathOperator{\otok}{otok}
\DeclareMathOperator{\ctok}{ctok}
\DeclareMathOperator{\Ord}{Ord}
\DeclareMathOperator{\ord}{ord}
\newcommand{\HH}{\mathrm{H}}
\newcommand{\Inf}{\mathsf{Inf}}
\DeclareMathOperator{\st}{st}
\newcommand{\naive}{na\"\i ve}
\DeclareMathOperator{\tc}{tc}
\newcommand{\ol}{\overline}
\DeclareMathOperator{\dist}{dist}
\newcommand{\Lsq}{\Lc_{\sqin}}
\newcommand*\superimpose[2]{%
  \ooalign{$\m@th#1\@firstoftwo#2$\cr
    \hidewidth$\m@th#1\@secondoftwo#2$\hidewidth}%
}
\newcommand{\dashedeq}[1]{\mathrel{\vphantom{\geq}\mathpalette\dashed@eq{#1}}}
\newcommand{\dashed@eq}[2]{%
  \vcenter{%
    \offinterlineskip
    \roundcap
    \linethickness{1.2\dimexpr\variable@rule{#1}\relax}%
    \sbox\z@{$\m@th#1#2$}%
    \setlength{\unitlength}{\dimexpr(\wd\z@-2\dimexpr\dashed@eq@kern{#1})/8}%
    \sbox\tw@{\begin{picture}(1,0)\Line(0,0)(1,0)\end{picture}}%
    \ialign{%
      ##\cr
      \copy\z@\cr
      \noalign{\vskip 0.3\ht\z@}
      \kern\dashed@eq@kern{#1}%
      \copy\tw@\hfil\copy\tw@\hfil\copy\tw@\hfil\copy\tw@
      \kern\dashed@eq@kern{#1}%
      \cr
    }%
  }%
}
\newcommand{\dashed@eq@kern}[1]{%
  \ifx#1\displaystyle 0.12\wd\z@\else
    \ifx#1\textstyle 0.12\wd\z@\else
      \ifx#1\scriptstyle 0.14\wd\z@\else
        0.16\wd\z@
  \fi\fi\fi
}
\newcommand{\variable@rule}[1]{%
  \fontdimen8  
  \ifx#1\displaystyle\textfont3\else
    \ifx#1\textstyle\textfont3\else
      \ifx#1\scriptstyle\scriptfont3\else
        \scriptscriptfont3\relax
  \fi\fi\fi
}
\newcommand{\exc}[4]{\left[#1 : #2 < #3\sim #4\right]}%
\DeclareMathOperator{\dis}{dis}
\DeclareMathOperator{\succc}{succ}
\DeclareMathOperator{\ind}{ind}
\providecommand{\dotdiv}{%
  \mathbin{%
    \vphantom{+}%
    \text{%
      \mathsurround=0pt %
      \ooalign{%
        \noalign{\kern-.35ex}%
        \hidewidth$\smash{\cdot}$\hidewidth\cr %
        \noalign{\kern.35ex}%
        $-$\cr %
      }%
    }%
  }%
}
\DeclareMathOperator*{\qqq}{qnt}
\newcommand{\opand}{\mathbin{\&}}
\DeclareMathOperator{\chn}{chn}
\DeclareMathOperator{\tp}{tp}
\newcommand{\rin}{\mathrel{\hat{\epsilon}}}
\newcommand{\LL}{\text{\upshape\L}}
\begin{document}

\title{A metric set theory with a universal set}
\address{Department of Mathematics\\
  University of Maryland\\
  College Park, MD 20742, USA}
\author{James Hanson}
\email{jhanson9@umd.edu}
\date{\today}

\keywords{metric set theory}
\subjclass[2020]{03E70, 03B50, 03C66}

\begin{abstract}
  Motivated by ideas from the model theory of metric structures, we introduce a metric set theory, $\MSE$, which takes bounded quantification as primitive and consists of a natural metric extensionality axiom (the distance between two sets is the Hausdorff distance between their extensions) and an approximate, non-deterministic form of full comprehension (for any real-valued formula $\varphi(x,y)$, tuple of parameters $a$, and $r < s$, there is a set containing the class $\{x: \varphi(x,a) \leq r\}$ and contained in the class $\{x:\varphi(x,a) < s\}$).  We show that $\MSE$ is sufficient to develop classical mathematics after the addition of an appropriate axiom of infinity. We then construct canonical representatives of well-order types and prove that ultrametric models of $\MSE$ always contain externally ill-founded ordinals, conjecturing that this is true of all models. To establish several independence results and, in particular, consistency, we construct a variety of models, including pseudo-finite models and models containing arbitrarily large standard ordinals. Finally, we discuss how to formalize $\MSE$ in either continuous logic or \L ukasiewicz logic.
\end{abstract}

\maketitle

\section*{Introduction}
\label{sec:intro}

Ever since the discovery of the inconsistency of full comprehension principles at the turn of the last century, there have been various efforts to rescue the idea and formulate systems in which the entire domain of discourse is meaningfully represented as an element of the domain of discourse itself.

Neo-\naive\ set theories commonly take one of two approaches to repairing full comprehension. One is to weaken the comprehension principle while maintaining full classical logic, and the other is to weaken the underlying logic while maintaining the full comprehension principle. Extensionality is often weakened or abandoned entirely. While there have been many investigations into such theories, there are approximately three in particular we will be occasionally comparing to ours: Quine's New Foundations, $\NF$, and Jensen's ``slight\ (?)\ modification'' thereof, $\NFU$; the positive topological set theory $\GPK^+$, studied most prominently by Esser; and Cantor-\L ukasiewicz set theory, originally isolated by Skolem but named by H\'ajek, which consists simply of the full comprehension scheme interpreted in the $[0,1]$-valued \L ukasiewicz predicate logic. $\NF(\mathsf{U})$ and $\GPK^+$ fall under the first approach mentioned above, and Cantor-\L ukasiewicz set theory, abbreviated $\CL_0$ by H\'ajek, falls under the second. $\NF$ and $\GPK^+$ have full extensionality, but $\CL_0$ is entirely inconsistent with it and $\NFU$ weakens it by allowing urelements. To keep this introduction short, we will point the reader to \cite{sep-settheory-alternative} for an overview of $\NF(\mathsf{U})$ and $\GPK^+$ and to \cite[Ch.\ 4.5]{Hajek-2015} for an overview of Cantor-\L ukasiewicz set theory. In particular, H\'ajek's result \cite[Th.\ 4.17]{Hajek-2015} that $\CL_0$ has no $\omega$-models will be relevant, in that we will find a similar real-valued failure of induction in our theory (\cref{thm:ord-ill}), although for us the failure may occur at arbitrarily large ordinals (\cref{thm:MSE-consistent}), rather than necessarily at $\omega$.

In this paper, we introduce a new set theory, $\MSE$, which takes a combined approach to repairing comprehension: We weaken the comprehension scheme \emph{and} weaken (or, more charitably, generalize) the underlying logic by working in a real-valued logic. For the sake of presentation, we will work in a slightly bespoke formalism, heavily based on first-order continuous logic, that capitalizes on the fact that our theory is a theory of sets that are actually sets of elements, rather than arbitrary $[0,1]$-valued predicates on some domain. In particular, our formalism will take bounded quantification as primitive, which is not possible to do in continuous logic, introduced in its modern form in \cite{MTFMS}.

Models of $\MSE$ are triples $(M,d,\sqin)$, where $(M,d)$ is a complete metric space and ${\sqin} \subseteq M^2$ is a closed binary relation. Such a structure is a model of $\MSE$ if it satisfies a strong metric form of extensionality and a weak approximation of comprehension. The strong form of extensionality, which we refer to as \emph{$\HH$-extensionality}, requires that for any $a,b \in M$, $d(a,b)=d_{\HH}(\{x : x \sqin a\}, \{x : x \sqin b\})$, where $d_{\HH}$ is the Hausdorff metric on sets (\cref{defn:H-ext}). We refer to $\HH$-extensional structures $(M,d,\sqin)$ as \emph{metric set structures}.

  The weak form of comprehension is the following principle: For any real-valued formula $\varphi(x,\ybar)$, any real numbers $r < s$, and any tuple of parameters $\abar \in M$, there is a set $b \in M$ such that for any $c$, if $\varphi(c,\abar) \leq r$, then $c \sqin b$ and if $c \sqin b$, then $\varphi(c,\abar) < s$ (\cref{defn:exc}). Crucially, we make no guarantees about membership of those $c$'s for which $\varphi(c,\abar)$ falls in the gap between $r$ and $s$ and so in this sense the principle is non-determinisitc.

  The word that we find most accurately captures this principle is \emph{excision}, the idea being that we are only able to cut out a desired set somewhat crudely. From this we get the initialism $\MSE$, for \emph{Metric Sets with Excision}. Of course the nature of this principle depends entirely on what is meant by `real-valued formula,' which is formalized in \cref{sec:formulas}, but the crucial fact is that these formulas are automatically uniformly continuous with regards to the metric. In particular, we have no direct access to the relation $\sqin$ as a $\{0,1\}$-valued predicate and instead can only use it in instances of bounded quantification, such as $\inf_{y \sqin z}\varphi(x,y)$.

After defining $\MSE$ and developing some techniques for constructing particular sets, we will establish that $\MSE$ is sufficiently strong and expressive by showing that it (with an axiom of infinity) interprets classical $\TSTI$\footnote{See \cite[Sec.~1.1.2]{EForster1992-EFOSTW} for the definition of $\TSTI$.} (or, equivalently, full $\omega$th-order arithmetic or the theory of a Boolean topos with a natural numbers object), which is well known to be more than sufficient for everyday mathematics. In particular, we do this by considering \emph{uniformly discrete sets}, which are better behaved than arbitrary sets in models of $\MSE$. We then build canonical representatives of internal well-order types in models of $\MSE$ (which we call ordinals). %
We show that $\MSE$ has no ultrametric $\beta$-models (i.e., models that are correct about well-foundedness) by showing that the class of ordinals of any such model $M$ admits an external map $s$ to $(0,1]$ that is non-increasing and has dense image (implying that the preimage of $(0,1)$ under $s$ has no least element). When we eventually construct models of $\MSE$ by using a non-standard modification of the standard construction of models of $\GPK^+$, we show that they can have arbitrarily large standard ordinals (\cref{thm:MSE-consistent}). This is of course similar to the situation with $\NFU$, which has no true $\beta$-models yet can have arbitrarily large well-founded parts, but the mode of failure is more conceptually similar to the mechanism that prevents $\CL_0$ from having $\omega$-models in that it involves the difficulty of robustly formalizing induction for real-valued predicates. We also construct a pseudo-finite model of our theory (without infinity). This establishes that $\MSE$ without infinity has an incredibly low consistency strength, lower than Robinson arithmetic, in contrast to $\NFU$ and $\GPK^+$ without infinity.\footnote{$\NFU$'s consistency strength is strictly between Robinson arithmetic and $\PA$. $\GPK^+$ is equiconsistent with full second-order arithmetic.} We also show that any complete metric space of diameter at most $1$ can be embedded as an internal set of Quine atoms in a model of $\MSE$, which in particular shows that not all models of $\MSE$ are ultrametric. Nevertheless, all models we are able to construct admit the map $s$ as before, so we conjecture that this is in fact always the case.

Finally, we show how to formalize our theory in either continuous logic or \L ukasiewicz predicate logic. In these contexts, we consider structures (without a given metric) of the form $(M,e)$, where $e$ is a binary $[0,1]$-valued predicate on $M$. The intended interpretation of $e(x,y)$ is the quantity $\inf_{z \sqin y}d(x,z)$. Our strong form of extensionality ensures that the metric $d(x,y)$ can be recovered from $e(x,y)$ by the formula $d_e(x,y)\coloneqq \inf_{z}|e(z,x)-e(z,y)|$. The $\HH$-extensionality axiom now takes the form
  \[
    \sup_{xy}|e(x,y)-\inf_z\min(d_e(x,z)+2e(z,y),1)| = 0,
  \]
  and the axiom scheme of excision consists of
  \[
    \sup_{\ybar}\inf_z \sup_x \max(\min(e(x,z),-\varphi(x,\ybar)),\min(\e_\varphi-e(x,z),\varphi(x,\ybar) - 1)) \leq 0
  \]
  for each restricted $\Lc_e$-formula $\varphi(x,\ybar)$, where $\e_\varphi$ is a certain rational number directly computable from $\varphi$. We show that models of the above theory are precisely pre-models of $\MSE$ in the sense that the completion with regards to $d_e$ yields a model of $\MSE$ (by taking $x\sqin y$ to be the relation $e(x,y) = 0$). Furthermore, all models of $\MSE$ arise in this way.

  In \L ukasiewicz logic, we use the predicate\footnote{The hat in $\rin$ is merely to help visually distinguish it from the four other epsilon-like symbols in this paper, $\in$, $\sqin$, $e$, and $\e$. We never use the symbol $\epsilon$, and $\rin$ is only used in the last section of the paper. $\in$ always refers to standard set-theoretic membership and $\e$ is always a real number.} $x \rin y$ instead of $e(x,y)$, with the intended meaning being that $(x \rin y) = 1-e(x,y)$. The $\HH$-extensionality axiom is directly translated as
  \[
    \forall x \forall y (x \rin y \toot \exists z(x=_ez \opand z \rin y \opand z \rin y)),
  \]
  where $x=_ey$ is the formula $\forall z(z \rin x \toot z \rin y)$ (which is the same as $1-d_e(x,y)$), and the axiom scheme of excision is shown to be equivalent to the scheme
  \[
    \forall \ybar \exists z \forall x (x \rin z \vee (\neg\varphi\opand\neg \varphi \opand \neg \varphi))\wedge ((\underbrace{\neg x\rin z \opand \cdots \opand \neg x \rin z}_{6\cdot\#\varphi~\text{times}}) \vee(\varphi \opand \varphi \opand \varphi))
  \]
  for each \L ukasiewicz formula $\varphi(x,\ybar)$, where $\# \varphi$ is the number of instances of $\rin$ in $\varphi$. %

\section{Specification of $\MSE$}
\label{sec:MSE-spec}

\subsection{$\HH$-extensionality and metric set structures}

The structures we will be considering will be of the form $(M,d,\sqin)$, where $(M,d)$ is a complete metric space, and ${\sqin} \subseteq M^2$ is a closed binary relation. As is suggested by the notation, $\sqin$ is meant to be interpreted as a set membership relation, and, as such, we would like for it to be extensional. Obviously we could just require extensionality of $\sqin$ as a binary relation in the standard sense, but for a few different reasons, we will opt to place a stronger condition on $\sqin$. To define this condition, recall one of many equivalent definitions of the \emph{Hausdorff distance} between subsets of a metric space:
\begin{defn}\label{defn:Haus-dist}
  The \emph{Hausdorff distance between $A$ and $B$}, written $d_{\HH}(A,B)$, is the unique smallest element of $[0,\infty]$ such that for any $r > d_{\HH}(A,B)$,
  \begin{itemize}
  \item for every $a \in A$, there is a $b \in B$ such that $d(a,b) < r$, and
  \item for every $b \in B$, there is an $a \in A$ such that $d(a,b) < r$.
  \end{itemize}
\end{defn}
On the full power set of $M$, $d_{\HH}$ is an extended pseudo-metric, but on the collection of close subsets of $M$, it is an extended metric. We will be concerned exclusively with $[0,1]$-valued metrics. In this context, it makes sense to modify the above definition to take $d_{\HH}$ to be $[0,1]$-valued as well. This only changes the distance between the empty set and non-empty sets. In particular, $d_{\HH}(\varnothing, A) = 1$ for any non-empty $A$. This is the definition of the Hausdorff distance we will actually use. 

The form of \cref{defn:Haus-dist} above makes it clear that $d_{\HH}$ is a direct metric generalization of extensional equality of sets. $A=B$ if and only if for every $a \in A$, there is a $b \in B$ such that $a=b$ and for every $b \in B$, there is an $a \in A$ such that $a=b$. In this way, we take as our extensionality axiom a direct translation of the statement `$A = B$ if and only if $A$ and $B$ are coextensive.'

\begin{defn}\label{defn:H-ext}
  Given a metric space $(M,d)$ and a binary relation ${\sqin} \subseteq M^2$, we say that $\sqin$ is \emph{$\HH$-extensional} if for any $a,b \in M$, $d(a,b) = d_{\HH}(\{x:x\sqin a\},\{x:x\sqin b\})$.

  We say that $(M,d,\sqin)$ is a \emph{metric set structure} if $(M,d)$ is a complete metric space, $d$ is $[0,1]$-valued, and ${\sqin}\subseteq M^2$ is closed and $\HH$-extensional.
\end{defn}

Metric set structures are a direct generalization of extensional digraphs (i.e., discrete models of the extensionality axiom). If $\delta$ is a $\{0,1\}$-valued metric on $V$, then $(V,\delta,E)$ is a metric set structure if and only if $(V,E)$ is an extensional digraph.

We should note that the definition of $\HH$-extensionality contains a somewhat arbitrary choice. After all, we could have just as easily required that $d(a,b) = (d_{\HH}(\{x:x\sqin a\},\allowbreak\{x:x\sqin b\}))^{\nicefrac{1}{2}}$. That said, the choice we have made is reasonable and seems to work well, so we have not investigated other possibilities in this paper. Moreover, this notion of extensionality appears in a previous paper of the author \cite[Def.~6.4]{Hanson2021AnalogR}. %

A commonly cited benefit of extensionality is that it allows one to take $\in$ as the only primitive notion, with $x=y$ being defined as $\forall z (z \in x \toot z \in y)$. It seems unlikely that we will be able to do something similar with $\sqin$, but we can do something similar with the natural $[0,1]$-valued version of $x\sqin Y$, which is the distance from $x$ to the elements of $Y$, commonly written $\dist(x,Y)$ or $d(x,Y)$. In the context of a set theory, $d(x,Y)$ is entirely unacceptable, being tantamount to writing $x = Y$ to mean $x \in Y$. $\dist(x,Y)$ is too long to use frequently, so we will introduce the following notation.

\begin{defn}
  We write $e(x,y)$ for $\inf\{d(x,z) : z \sqin y\}$.
\end{defn}

Another useful characterization of the Hausdorff metric is this: $d_{\HH}(A,B) = \sup_z|\dist(z,A)-\dist(z,B)|$. This means that if $(M,d,\sqin)$ is a metric set structure, we have that $d(x,y) = \sup_z|e(z,x)-e(z,y)|$. Since $x\sqin y$ if and only if $e(x,y) = 0$, it should be possible to take $e(x,y)$ as our only primitive notion. This is the approach we will take in the `official' continuous logic formulation of $\MSE$, which we will discuss in \cref{sec:formalizing-in-CL}.

\subsection{Formulas}
\label{sec:formulas}

Our formalism will be a small modification of first-order continuous logic, introduced in its modern form in \cite{MTFMS}. Our only predicate symbol will be the metric, $d(x,y)$, but we will take bound quantifiers of the form $\sup_{x \sqin y}$ as a primitive notion. Note though that we cannot access $x \sqin y$ as a formula directly. %

\begin{defn}\label{defn:Lsq-formulas}
  Our set of \emph{formulas}, written $\Lsq$, is the smallest non-empty set of expressions satisfying the following: For any $\varphi,\psi \in \Lsq$, variables $x$ and $y$, and $r \in \Rb$, $\Lsq$ contains the expressions
\begin{multicols}{2}
  \begin{itemize}
  \item $1$,
  \item $d(x,y)$,
  \item $\varphi + \psi$,
  \item $\max(\varphi,\psi)$,
  \item $\min(\varphi,\psi)$, 
  \item $r \cdot \varphi$, 
  \item $\sup_x\varphi$,
  \item $\inf_x\varphi$,
  \item $\sup_{x\sqin y}\varphi$, 
  \item $\inf_{x\sqin y}\varphi$. 
  \end{itemize}
\end{multicols}
When we need to be more specific, we will refer to elements of $\Lsq$ as \emph{$\Lsq$-formulas}.

We refer to quantifiers of the form $\sup_{x\sqin y}$ or $\inf_{x\sqin y}$ as \emph{bounded}. The \emph{free variables} of a formula $\varphi$ are defined in the obvious way. We write $\varphi(\xbar)$ to indicate that the free variables of $\varphi$ are included in $\xbar$. %

For bookkeeping purposes, we will need the inductively defined quantity given by
\begin{itemize}
\item $v(1) = v(d(x,y)) = 1$,
\item $v(\varphi + \psi)=v(\varphi)+v(\psi)$,
\item $v(\max(\varphi,\psi))=v(\min(\varphi,\psi))=\max(v(\varphi),v(\psi))$,
\item $v(r \cdot \varphi) = |r|v(\varphi)$, 
\item $v(\sup_x\varphi)=v(\inf_x\varphi)=v(\sup_{x\sqin y}\varphi)=v(\inf_{x\sqin y}\varphi)=v(\varphi)$.
\end{itemize}
\end{defn}

We are allowing ourself arbitrary real numbers in \cref{defn:Lsq-formulas} because it will be convenient in several places. This convenience comes at a cost later in \cref{sec:formalizing-in-CL}, however.

The intended interpretation of a given formula is clear, although we do need to specify the behavior of bounded quantifiers over empty sets. This is the first of two reasons why we defined the quantity $v(\varphi)$.

\begin{defn}\label{defn:formula-evaluate}
  Given a metric set structure $(M,d,\sqin)$ we define real-valued functions $\varphi^M$ for $\varphi \in \Lsq$ inductively:
  \begin{itemize}
  \item $1^M = 1$.
  \item $(d(a,b))^M = d(a,b)$ for all $a,b \in M$.
  \item $(\varphi+\psi)^M=\varphi^M+\psi^M$. We define $(\max(\varphi,\psi))^M$, $(\min(\varphi,\psi))^M$, and $(r\cdot \varphi)^M$ similarly.
  \item $(\sup_x\varphi(x,\abar))^M = \sup\{\varphi^M(b,\abar) : b \in M\}$. We define $(\inf_x\varphi(x,\abar))^M$ similarly.
  \item $(\sup_{x\sqin a}\varphi(x,a,\bbar))^M = \sup\{\varphi^M(c,a,\bbar) : c \sqin a\}$ if $c \sqin a$ for some $c \in M$. $(\inf_{x\sqin a}\varphi(x,a,\bbar))^M$ is defined similarly if $c \sqin a$ for some $c \in M$.
  \item $(\sup_{x\sqin}\varphi(x,a,\bbar))^M = -v(\varphi)$ if $c \not\sqin a$ for all $c \in M$.
  \item $(\inf_{x\sqin a}\varphi(x,a,\bbar))^M = v(\varphi)$ if $c \not\sqin a$ for all $c \in M$.
  \end{itemize}
We write expressions such as $M \models \varphi(\abar) \leq \psi(\bbar)$ to mean that $\varphi^M(\abar) \leq \psi^M(\bbar)$. We may also write expressions like $M \models \varphi(\abar) = r$. %
\end{defn}

The conventions regarding suprema and infima of empty sets were chosen so that formulas would always be real-valued (rather than taking on values in $\Rb\cup\{\pm \infty\}$) and so that $\sup$ and $\inf$ are monotonic with regards to set inclusion, although we have to prove that this is actually the case.

We will often use commonsensical shorthand such as $\varphi+\psi+\theta$ for $\varphi +(\psi+\theta)$, $\varphi - \psi$ for $\varphi + (-1)\cdot \psi$, and $|\varphi|$ for $\max(\varphi,-\varphi)$. We will abbreviate consecutive quantifiers with expressions such as $\sup_{xy}$. By an abuse of notation, we will also write $e(x,y)$ as shorthand for the formula $\inf_{z\sqin y}d(x,z)$.

In the context of a metric set structure $M$, we may often refer to formulas with parameters, such as $\varphi(\xbar,\abar)$ for some $\abar \in M$, as formulas and write them with parameters suppressed. 

Two important properties of formulas in continuous logic are that they only take on values in some bounded interval and that they are always uniformly continuous. The relevant interval and modulus of uniform continuity can be determined by the formula alone. We will need similar facts here.

\begin{lem}\label{lem:formula-v}
  For any metric set structure $(M,d,\sqin)$ and formula $\varphi(\xbar)$,
  \begin{enumerate}
  \item $\varphi^M(\abar) \in [-v(\varphi),v(\varphi)]$ for all $\abar \in M$ and
  \item $\varphi^M : M^{|\xbar|} \to \Rb$ is $2v(\varphi)$-Lipschitz in the sense that for any $\abar,\bbar \in M$, $|\varphi^M(\abar) - \varphi^M(\bbar)| \leq 2v(\varphi)d(\abar,\bbar)$ (where $d(\abar,\bbar)$ is the max metric on tuples).
  \end{enumerate}
\end{lem}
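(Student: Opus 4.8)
The plan is to prove (1) and (2) simultaneously by induction on the structure of $\varphi \in \Lsq$, carrying both invariants through each clause of \cref{defn:Lsq-formulas}. For the atoms $1$ and $d(x,y)$ everything is immediate: $1^M \equiv 1 \in [-1,1]$, $(d(x,y))^M$ is $[0,1]$-valued since $d$ is, and $|d(a,b)-d(a',b')| \le d(a,a')+d(b,b') \le 2\max(d(a,a'),d(b,b'))$, which is the required $2v(d(x,y))$-Lipschitz bound. The connectives $+,\max,\min,r\cdot$ are routine: for (1) one uses $[-v(\varphi),v(\varphi)]+[-v(\psi),v(\psi)] \subseteq [-(v(\varphi)+v(\psi)),v(\varphi)+v(\psi)]$ and the obvious analogues for $\max,\min$ and scaling; for (2) one combines the inductive Lipschitz bounds with $|\max(s,t)-\max(s',t')| \le \max(|s-s'|,|t-t'|)$ (and the analogue for $\min$) and homogeneity of scaling. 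For the unbounded quantifiers $\sup_x,\inf_x$, (1) holds because $M \neq \varnothing$, so the sup/inf is over a non-empty subset of $[-v(\varphi),v(\varphi)]$, and (2) follows from $|\sup_b f(b)-\sup_b g(b)| \le \sup_b|f(b)-g(b)|$ applied to $f=\varphi^M(\cdot,\abar)$ and $g=\varphi^M(\cdot,\bbar)$, and dually for $\inf$.

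The only clause needing real work — and the only one that uses $\HH$-extensionality together with the empty-set conventions of \cref{defn:formula-evaluate} — is the bounded quantifier; I treat $\psi(y,\zbar) \coloneqq \sup_{x\sqin y}\varphi(x,y,\zbar)$, the case of $\inf_{x\sqin y}$ being dual. Write $A_a \coloneqq \{x : x \sqin a\}$. For (1): if $A_a \neq \varnothing$, then $\psi^M(a,\cbar)$ is a supremum of a non-empty set of values of $\varphi^M$, hence in $[-v(\varphi),v(\varphi)]$ by the inductive hypothesis; if $A_a = \varnothing$, then $\psi^M(a,\cbar) = -v(\varphi)$ by convention, still in range. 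For (2), fix $a,a',\cbar,\cbar'$ and put $D \coloneqq \max(d(a,a'),d(\cbar,\cbar'))$. If $A_a,A_{a'}$ are both non-empty I will show $\psi^M(a,\cbar) \le \psi^M(a',\cbar') + 2v(\varphi)D$ (the reverse inequality is symmetric): given $\e>0$, pick $c \sqin a$ with $\varphi^M(c,a,\cbar) > \psi^M(a,\cbar)-\e$; since $\HH$-extensionality gives $d_{\HH}(A_a,A_{a'}) = d(a,a') \le D$, for every $r > D$ there is $c' \sqin a'$ with $d(c,c') < r$, whence the inductive Lipschitz bound yields $\varphi^M(c',a',\cbar') \ge \varphi^M(c,a,\cbar) - 2v(\varphi)\max(d(c,c'),d(a,a'),d(\cbar,\cbar')) > \psi^M(a,\cbar) - \e - 2v(\varphi)r$; as $\psi^M(a',\cbar') \ge \varphi^M(c',a',\cbar')$, letting $r \downarrow D$ and $\e \downarrow 0$ gives the claim. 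If $A_a,A_{a'}$ are both empty, both values equal $-v(\varphi)$ and there is nothing to prove. If exactly one is empty, say $A_a = \varnothing \neq A_{a'}$, then the $[0,1]$-valued convention gives $d_{\HH}(A_a,A_{a'}) = 1$, so $D \ge d(a,a') = 1$, while $\psi^M(a,\cbar) = -v(\varphi)$ and $\psi^M(a',\cbar') \in [-v(\varphi),v(\varphi)]$ by (1), so $|\psi^M(a,\cbar)-\psi^M(a',\cbar')| \le 2v(\varphi) \le 2v(\varphi)D$.

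The main obstacle is exactly this last bit of bookkeeping: the conventions assigning $-v(\varphi)$ to $\sup_{x\sqin y}\varphi$ and $v(\varphi)$ to $\inf_{x\sqin y}\varphi$ over empty extensions are calibrated so that the value can jump by at most $2v(\varphi)$ precisely when an extension becomes empty, and this jump is absorbed because $\HH$-extensionality forces $d(a,a') = 1$ in that situation — this is the second of the ``two reasons'' for introducing $v(\varphi)$ mentioned before \cref{defn:formula-evaluate}. Everything else is a mechanical propagation of the two invariants through the inductive clauses, modulo the harmless observation that all the quantifier bounds appearing above are over non-empty sets of reals contained in $[-v(\varphi),v(\varphi)]$, which keeps the relevant suprema and infima finite.
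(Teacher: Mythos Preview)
Your proof is correct and follows the same inductive approach the paper gestures at; the paper's own proof is just the two-line sketch ``1 follows by an easy induction argument and the fact that $d$ is $[0,1]$-valued. 2 follows similarly from the fact that $(x,y) \mapsto d(x,y)$ is $2$-Lipschitz.'' You have correctly identified and carefully handled the only genuinely non-routine step---the bounded quantifier case---where $\HH$-extensionality and the empty-extension conventions of \cref{defn:formula-evaluate} do the work, which the paper leaves entirely implicit.
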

\begin{proof}
  1 follows by an easy induction argument and the fact that $d$ is $[0,1]$-valued. 2 follows similarly from the fact that $(x,y) \mapsto d(x,y)$ is $2$-Lipschitz.
\end{proof}

\subsection{Excision}
\label{sec:excision}

Our comprehension scheme is better defined in terms of its important consequence, rather than directly, as it takes a proof to establish that this property is even axiomatizable. The principle can be informally justified like this:

Suppose that we run a chalk factory and we are contractually obligated to produce pieces of chalk that are no longer than \SI{7.62}{\cm} in length. (The chalk is boxed by another company and needs to fit in their boxes.) Of course, our machine, being cheap, actually produces pieces that are anywhere between roughly \SI{7.4}{\cm} and \SI{7.8}{\cm}. To deal with this, we add a second machine that measures length and rejects pieces that are too long. To maximize our output, we might say that we want it to reject a piece if and only if its length is strictly longer than \SI{7.62}{\cm}, but the realities of physical measurement mean that this is impossible to actually accomplish. Since the penalties for violating the contract are quite harsh, we need to give ourselves some leeway, but we also want to make sure we aren't throwing away too many acceptable pieces of chalk. So we configure the machine to accept chalk if it measures it to be no longer than \SI{7.6}{\cm}. We know that the error of the machine is no more than \SI{0.01}{\cm}, so we can guarantee that we will accept any piece of length at most \SI{7.58}{\cm} and reject any piece of length \SI{7.62}{\cm} or more, but we do not make any promises about the behavior of the machine in the gap between these bounds.\footnote{Reality aside, a similar thing happens in the context of computable analysis: One can write a program that is able to discretely sort computable real numbers in the same manner as our chalk factory, but it is only able to do this if it is allowed to have non-deterministic behavior in some gap. Regardless, this is sufficient for certain purposes.}

This is the manner in which we will approximate comprehension. Given a formula $\varphi(x,\ybar)$ (i.e., a `measurable quantity'), bounds $r < s$, and parameters $\abar$, we promise that we can deliver a set $b$ such that for any $c$, if $\varphi(c,\abar) \leq r$, then $c \sqin b$, and if $\varphi(c,\abar) \geq s$, then $c \not \sqin b$, but we make no commitment about those $c$'s for which $r < \varphi(c,\abar) < s$. %

\begin{defn}\label{defn:exc}
  $(M,d,\sqin)$ satisfies \emph{excision} if for any formula $\varphi(x,\ybar)$, reals $r < s$, and $\abar \in M$, there is a $b \in M$ such that for any $c \in M$, if $\varphi(c,\abar) \leq r$, then $c \sqin b$, and if $c \sqin b$, then $\varphi(c,\abar) < s$.  
\end{defn}

It is straightforward but worthwhile to see how this principle avoid Russell's paradox. We can consider a set $a_r$ satisfying that if $1-e(b,b) \leq r$, then $b \sqin a_r$ and if $b \sqin a_r$, then $1-e(b,b) < 1$. As we pick $r$ closer and closer to $1$, we get better and better approximations of the Russell class, but for each $r$, we consistently have that $r < e(a_r,a_r) < 1$. So we see that while our theory is strictly speaking a $[0,1]$-valued set theory like $\CL_0$, there is something of a qualitative difference in its avoidance of Russell's paradox. While $\CL_0$ is possibly\footnote{Various fragments of this theory were shown to be consistent by a few authors in the 1950s and 60s \cite{Chang1963TheAO,Fenstad-comp,Skolem1957-SKOBZK-3}. A full consistency proof was claimed by White in 1979 \cite{White1979}, but a seemingly fatal gap was discovered by Terui in 2010 \cite{Terui-Error} and consistency remains an open problem.} able to avoid Russell's paradox by Brouwer's fixed point theorem, our theory avoids it by virtue of the required gap between $r$ and $s$ (although these are not unrelated phenomena). %

We are now finally able to define the class of models of our theory directly before defining the theory itself. 

\begin{defn} \label{defn:MSE-models} 
  We say that $(M,d,\sqin)$ is a model of $\MSE$, written $(M,d,\sqin)\models \MSE$ or $M \models \MSE$, if it is a metric set structure that satisfies excision.

  $\MSE$ stands for \emph{Metric Sets with Excision}.
\end{defn}

The following notation will be useful.

\begin{defn}
  Given a metric set structure $M$, a formula $\varphi(x,\ybar)$, tuple $\abar \in M$, and reals $r < s$, we write
  \[
    b = \exc{x}{\varphi(x,\abar)}{r}{s}%
  \]
  to mean that for any $c$, if $\varphi^M(c,\abar) \leq r$, then $c\sqin b$ and if $c \sqin b$, then $\varphi^M(c,\abar) < s$.
\end{defn}
Note of course that $\exc{x}{\varphi(x)}{r}{s}$ is not a uniquely specified object, but if $M$ satisfies excision, it always exists.

It is immediate to show that models of $\MSE$ contain some of the familiar sets one expects to see in a set theory with a universal set.

\begin{prop}
  For any $M \models \MSE$, there are $a,b \in M$ such that for all $c \in M$, $c \not\sqin a$ and $c \sqin b$.
\end{prop}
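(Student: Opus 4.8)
The plan is to obtain both sets as direct instances of excision applied to constant formulas, with no auxiliary constructions. For the empty set, I would apply excision to the formula $\varphi(x) \coloneqq 1$, which lies in $\Lsq$ with $v(\varphi) = 1$ and $\varphi^M \equiv 1$ in any metric set structure, using bounds $r = 0 < s = 1$ and the empty tuple of parameters. This produces an element $a = \exc{x}{1}{0}{1}$ of $M$. Then for every $c \in M$ the inclusion clause of excision, ``$\varphi^M(c) \le 0$ implies $c \sqin a$,'' is vacuous because $\varphi^M(c) = 1 \not\le 0$, while the exclusion clause, ``$c \sqin a$ implies $1 = \varphi^M(c) < 1$,'' is impossible to satisfy; hence $c \not\sqin a$.

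For the universal set, I would apply excision to $\psi(x) \coloneqq 0 \cdot 1$, which is an $\Lsq$-formula by the $r \cdot \varphi$ clause of \cref{defn:Lsq-formulas}, with $v(\psi) = 0$ and $\psi^M \equiv 0$, again with bounds $r = 0 < s = 1$. This produces an element $b = \exc{x}{0 \cdot 1}{0}{1}$ of $M$, and for every $c \in M$ we have $\psi^M(c) = 0 \le 0 = r$, so the inclusion clause of excision forces $c \sqin b$.

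There is essentially no obstacle: the only things to verify are that $1$ and $0 \cdot 1$ are genuinely $\Lsq$-formulas (a generator and an instance of scalar multiplication, respectively), that the bounds obey $r < s$, and that applying excision with an empty parameter tuple is legitimate --- which is in fact what guarantees $M \ne \varnothing$, so that ``$a, b \in M$'' is not a vacuous conclusion. If one prefers a nonempty parameter tuple, once $a$ has been produced it may be reused as an unused dummy parameter. This proposition is really just the first confirmation that excision behaves as the chalk-factory analogy suggests.
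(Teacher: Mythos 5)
Your proof is correct and is essentially the paper's own argument: both sets are obtained as single instances of excision applied to constant formulas (the paper takes $a = \exc{x}{1}{0}{\tfrac{1}{2}}$ and $b = \exc{x}{0}{\tfrac{1}{2}}{1}$, and your choices of bounds differ only immaterially). Your added remarks --- that $0\cdot 1$ is the formally correct way to write the constant $0$ in $\Lsq$, and that excision with an empty parameter tuple already forces $M \neq \varnothing$ --- are accurate refinements of details the paper leaves implicit.
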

\begin{proof}
  Let $a = \exc{x}{1}{0}{\tfrac{1}{2}}$ and $b = \exc{x}{0}{\tfrac{1}{2}}{1}$.
\end{proof}

Since these sets are unique by $\HH$-extensionality, we will write $\varnothing^M$ for $\exc{x}{1}{0}{\tfrac{1}{2}}$ and $V^M$ for $\exc{x}{0}{\tfrac{1}{2}}{1}$. We may drop the superscript $M$ if no confusion will arise.

\section{Derived forms of comprehension}

In this section we will show that models of $\MSE$ automatically satisfy certain instances of exact comprehension.

\subsection{Relative excision}
\label{sec:approx-sep}

An important common construction in set theory is \emph{separation}, i.e., comprehension relative to a given set. Ordinarily, separation is an easy consequence of comprehension---$\{x \in A: \varphi(x)\}$ is the same as the set $\{x : x \in A \wedge \varphi(x)\}$---but seeing that excision is merely an approximate form of comprehension, one might worry that we will only be able to find sets that are approximately subsets of other given sets. In other words, if $B$ is a rough approximation of $\{x : x \in A \wedge \varphi(x)\}$, then it would only be the case that $x \sqin B \To e(x,A) < r$ for some small but positive $r$. Fortunately, we are able to build exact subsets of a given set and thereby perform \emph{relative excision}.

\begin{defn}
  For any $a,b \in M$, a metric set structure, we write $a \sqsubseteq b$ to mean that for all $c \in M$, if $c \sqin a$, then $c \sqin b$.
\end{defn}

The following is a special case of relative excision, but we state it first because it is the only form of relative excision we will actually use and it is much easier to prove.

\begin{prop}[Discrete separation]\label{prop:disc-sep}
  Fix $M \models \MSE$ and $\abar$ and $b$ in $M$. For any formula $\varphi(x,\abar)$ and $r<s$, if for all $c \sqin b$, $\varphi^M(c,\abar)\leq r$ or $\varphi^M(c,\abar) \geq s$, then there is an $f \sqsubseteq b$ such that $c \sqin f \Toot \varphi^M(c,\abar) \leq r$.
\end{prop}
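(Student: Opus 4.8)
The plan is to apply excision not to $\varphi$ directly but to a modified formula that simultaneously records both "$c$ is a member of $b$" and "$\varphi(c,\abar)$ is small," so that the resulting set is forced to be a subset of $b$ and to contain exactly the right elements. Concretely, recall that $e(x,b) = \inf_{z \sqin b} d(x,z)$ is an $\Lsq$-formula (with parameter $b$), and that $e(c,b) = 0$ precisely when $c \sqin b$, while $e(c,b) > 0$ (in fact $e(c,b) \geq$ some positive amount controlled by the geometry) when $c \not\sqin b$. I would first fix a small rational $\delta > 0$ and consider a formula of the shape
\[
  \psi(x,\abar,b) \coloneqq \max\!\bigl(\, \varphi(x,\abar) - r,\; \lambda \cdot e(x,b)\,\bigr)
\]
for a suitable scaling constant $\lambda$, and then take $f = \exc{x}{\psi(x,\abar,b)}{0}{\delta}$ for $\delta$ chosen small enough (depending on $s - r$ and on the hypothesis). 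The hypothesis that every $c \sqin b$ has $\varphi^M(c,\abar) \leq r$ or $\varphi^M(c,\abar) \geq s$ means that, restricted to members of $b$, the quantity $\varphi(x,\abar) - r$ is either $\leq 0$ or $\geq s - r$, so there is a genuine gap to exploit; this is exactly the dichotomy that lets an approximate principle behave exactly.

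The key steps, in order, are: (i) verify that $e(x,b)$ is an $\Lsq$-formula so that $\psi$ is a legitimate instance of excision; (ii) check that for $c \sqin b$ we have $e(c,b) = 0$, hence $\psi^M(c,\abar,b) = \max(\varphi^M(c,\abar) - r, 0)$, which is $0$ if $\varphi^M(c,\abar) \leq r$ and $\geq s - r$ if $\varphi^M(c,\abar) \geq s$; so choosing $\delta < s - r$ guarantees that among members of $b$, exactly those with $\varphi^M(c,\abar) \leq r$ land in $f$; (iii) check that any $c$ with $c \not\sqin b$ has $e(c,b)$ bounded below by a positive quantity — and here one uses $\HH$-extensionality: if $c \not\sqin b$ then $d_{\HH}$-considerations force $e(c,b)$ away from $0$, but more carefully one should note $c \not\sqin b$ does not by itself give a uniform lower bound on $e(c,b)$, so instead I would feed this through by picking $\lambda$ large and arguing that if $\lambda \cdot e(c,b) < \delta$ then $e(c,b)$ is tiny, and then the closedness of $\sqin$ plus a limiting argument forces $c \sqin b$ after all — contradiction; hence the upper part of the excision guarantee ($c \sqin f \To \psi^M(c,\abar,b) < \delta$) yields $f \sqsubseteq b$; (iv) combine (ii) and (iii): $c \sqin f$ implies $c \sqin b$ and $\varphi^M(c,\abar) < r + \delta$; since on members of $b$ the value is $\leq r$ or $\geq s > r + \delta$, we get $\varphi^M(c,\abar) \leq r$; conversely $\varphi^M(c,\abar) \leq r$ together with $c \sqin b$ gives $\psi^M(c,\abar,b) = 0 \leq 0$, so $c \sqin f$. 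This establishes the biconditional $c \sqin f \Toot \varphi^M(c,\abar) \leq r$ (the reverse direction implicitly using that the hypothesis is only invoked for $c \sqin b$, which is fine since $f \sqsubseteq b$).

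The main obstacle I anticipate is step (iii): getting from "$c \not\sqin b$" to a usable lower bound on $e(c,b)$. Since $\sqin$ is merely closed, not discrete, there can be $c \not\sqin b$ with $e(c,b)$ arbitrarily small, so one cannot pick a single $\delta$ uniformly in $c$ in a naive way. The resolution is that we do not need a uniform bound: for a \emph{fixed} $c \sqin f$, the excision guarantee gives $\lambda \cdot e(c,b) < \delta$, i.e. $e(c,b) < \delta/\lambda$, which can be made as small as we like by taking $\lambda$ large — but $\lambda$ must be fixed before $\delta$, so instead the cleanest route is: take $f = \exc{x}{\max(\varphi(x,\abar) - r,\, e(x,b))}{0}{\delta}$ with no scaling, get $c \sqin f \To e(c,b) < \delta$, and then separately shrink $\delta \to 0$ is not available either since $f$ is one fixed set. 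The actual fix, which I would use, is to note that we want $f \sqsubseteq b$ meaning $c \sqin f \To c \sqin b$, and for this it suffices to instead use the formula $\max(\varphi(x,\abar) - r,\, N \cdot e(x,b))$ and show that the set $b$ already "catches" everything: more precisely, one should first replace $b$ in the role of the ambient set by observing (via $\HH$-extensionality and closedness of $\sqin$) that the map $c \mapsto e(c,b)$ is continuous and vanishes exactly on the closed set $\{c : c \sqin b\}$, then use that $f$, being the excision set, satisfies $c \sqin f \To e(c,b) < \delta$ for \emph{that particular} $\delta$, and finally intersect ideas — I expect the author's proof is shorter and sidesteps this by a clever choice of $\varphi'$ making the "$\sqsubseteq b$" part automatic, perhaps by using $\max(\varphi(x,\abar), 1 - \tfrac{1}{2}e(x,b)\cdot(\text{large}))$-type tricks; in any case, the conceptual content is that the hypothesized gap in $\varphi$-values on $b$ upgrades approximate excision to exact separation, and I would present it with whatever scaling makes the bookkeeping cleanest.
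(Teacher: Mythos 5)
You have the right formula---$\max(\varphi(x,\abar)-r,\,e(x,b))$ is exactly what the paper uses---but your proposal has a genuine gap at precisely the point you flag yourself: step (iii) is never resolved, and no single application of excision can resolve it. For a fixed $f = \exc{x}{\max(\varphi(x,\abar)-r,e(x,b))}{0}{\delta}$, the guarantee is only $c \sqin f \To e(c,b) < \delta$, and since $\{c : c \sqin b\}$ is merely closed, there can be $c \not\sqin b$ with $0 < e(c,b) < \delta$; rescaling by $\lambda$ or $N$ changes nothing, because $\lambda\, e(c,b) < \delta$ still only bounds $e(c,b)$ by a fixed positive quantity, and the appeal to ``closedness of $\sqin$ plus a limiting argument'' does not apply to a single fixed set. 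As you yourself observe, shrinking $\delta$ is unavailable once $f$ is chosen. So as written the proposal does not establish $f \sqsubseteq b$, which is half of the conclusion.

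The missing idea is to apply excision infinitely often and pass to a limit. The paper sets $f_n = \exc{x}{\max(\varphi(x,\abar)-r,e(x,b))}{0}{2^{-n}}$ and shows $(f_n)_{n\in\Nb}$ is Cauchy. Writing $D = \{c : c \sqin b,\ \varphi^M(c,\abar)\le r\}$, which is closed because $\sqin$ is closed and $\varphi^M$ is continuous, every $f_n$ contains $D$; and once $n$ is large enough that $2^{-n}(1+2v(\varphi)) < s-r$, any $c \sqin f_n$ has a witness $c' \sqin b$ with $d(c,c') < 2^{-n}$, whence by the Lipschitz bound of \cref{lem:formula-v} we get $\varphi^M(c',\abar) < r + 2^{-n}(1+2v(\varphi)) < s$, so the gap hypothesis forces $\varphi^M(c',\abar) \le r$ and $c' \in D$. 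Thus $d_{\HH}(\{c : c \sqin f_n\},D) \le 2^{-n}$, so by $\HH$-extensionality the sequence is Cauchy, and its limit $f$ has extension exactly $D$. That limit is the required set; a single excision only ever produces approximations to it.
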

\begin{proof}
If $f_n = \exc{x}{\max\left(\varphi(x,\abar)-r,e(x,a) \right)}{0}{2^{-n}}$ for each $n \in \Nb$, then $(f_n)_{n \in \Nb}$ is a Cauchy sequence that limits to the required set. 
\end{proof}

\begin{lem}\label{lem:approx-subset-projection}
  Fix $M \models \MSE$. For any $a,b \in M$ and $\e > 0$, there is a $c \sqsubseteq a$ such that
  \[
    d(b,c) \leq \sup_{f\sqin b}\inf_{g \sqin a}d(f,g) + \e.
  \]
\end{lem}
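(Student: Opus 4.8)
Given $M \models \MSE$, sets $a, b \in M$, and $\varepsilon > 0$, I want a set $c \sqsubseteq a$ with $d(b,c) \le \sup_{f \sqin b} \inf_{g \sqin a} d(f,g) + \varepsilon$.

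Let me denote $\rho = \sup_{f \sqin b} \inf_{g \sqin a} d(f,g) = (\sup_{x \sqin b} e(x,a))^M$. This is the one-sided Hausdorff distance from $b$'s extension to $a$'s extension. The idea: I want to "project $b$ into $a$" — take all elements of $a$ that are within roughly $\rho$ of some element of $b$. Formally, the target class is $\{g : g \sqin a \wedge e(g,b) \le \rho\}$ (here $e(g,b) = \inf_{y \sqin b} d(g,y)$), or something a bit larger. Such a set $c$ would satisfy $c \sqsubseteq a$, and for each $f \sqin b$ there's a $g \sqin a$ within $\rho + (\text{small})$ of $f$, so that $g$ should be in $c$ — giving $d_\HH(\{x : x \sqin c\}, \{x : x \sqin b\}) \lesssim \rho$.

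**Construction.** Consider the formula $\varphi(x) = \max(e(x,a), e(x,b))$, which is an $\Lsq$-formula (abbreviating $e$ as in the excerpt). Here is the plan. Pick $r = \rho + \varepsilon/3$ and $s = \rho + 2\varepsilon/3$ and let $b_0 = \exc{x}{\varphi(x)}{r}{s}$. Then every element of $b_0$ has $e(x,a) < s$, so $b_0$ is an approximate subset of $a$ but not an exact one. To fix this, apply relative excision / \cref{lem:approx-subset-projection} idea recursively — but that's circular. Instead I'll mimic the proof of \cref{prop:disc-sep}: intersect down. Actually the cleaner route: first build $b_0$ as above, then note $b_0$ is within Hausdorff distance $< s$ of being inside $a$ in the one-sided sense, and then take a Cauchy sequence of excisions $c_n = \exc{x}{\max(\varphi(x), e(x,a) \cdot 2^n)}{0}{2^{-n}}$ — wait, scaling $e(x,a)$ by $2^n$ is a legitimate $\Lsq$-operation ($r \cdot \varphi$ with $r = 2^n$), and $v$ stays finite for each fixed $n$. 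For the set $c_n$: if $e(x,a) = 0$ and $e(x,b) \le \rho$ then $\max(\varphi(x), 2^n e(x,a)) = \max(\rho, 0) $... hmm, that's $\rho \ne 0$, so this doesn't land in the $\le 0$ zone. I need to subtract: use $\varphi(x) = \max(e(x,a), e(x,b) - r)$ with $r = \rho + \varepsilon/3$, so that $g \sqin a$ with $e(g,b) \le r$ forces $\varphi(g) \le 0$. Then set $c_n = \exc{x}{\max(2^n e(x,a),\, e(x,b) - r)}{0}{2^{-n}}$. For $g \sqin a$ with $e(g,b) \le r$: $\max(0, e(g,b)-r) = 0 \le 0$, so $g \sqin c_n$. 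For $x \sqin c_n$: $2^n e(x,a) < 2^{-n}$, so $e(x,a) < 4^{-n} \to 0$, and since $\sqin$ is closed this will let the limit lie in $a$; also $e(x,b) < r + 2^{-n}$.

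**Finishing.** The sequence $(c_n)$ is Cauchy (same Hausdorff-distance estimate as in \cref{prop:disc-sep}: consecutive terms agree on the "definitely in" and "definitely out" parts, differ only in shrinking gaps, so $d(c_n, c_{n+1}) \le 2^{-n}$-ish), hence converges to some $c$; by closedness of $\sqin$ and the bound $e(x,a) < 4^{-n}$, every $x \sqin c$ satisfies $e(x,a) = 0$, i.e. $x \sqin a$, giving $c \sqsubseteq a$. For the distance bound I compute $d(b,c) = d_\HH(\{x : x \sqin b\}, \{x : x \sqin c\})$. One direction: every $f \sqin b$ has $\inf_{g \sqin a} d(f,g) \le \rho$, so pick $g \sqin a$ with $d(f,g) < \rho + \varepsilon/3 = r$; then $e(g,b) \le d(g,f) < r$, so $g \sqin c_n$ for all $n$, hence $g \sqin c$ (closedness), and $d(f,g) < \rho + \varepsilon$. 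Other direction: every $x \sqin c$ is a limit of $x_n \sqin c_n$ with $e(x_n, b) < r + 2^{-n}$, so $e(x,b) \le r = \rho + \varepsilon/3$, i.e. there's $y \sqin b$ with $d(x,y) \le \rho + \varepsilon/2 < \rho + \varepsilon$. Hence $d_\HH \le \rho + \varepsilon$, as required.

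**Main obstacle.** The delicate point is getting $c \sqsubseteq a$ exactly (not approximately) while simultaneously controlling the Hausdorff distance — this is exactly the tension the lemma is designed to resolve, and the resolution is the same trick as \cref{prop:disc-sep}: scale the "error term" $e(x,a)$ by $2^n$ inside the excision formula so that the gap in which excision is non-deterministic forces $e(x,a) \to 0$, while keeping $v(\varphi_n) < \infty$ for each $n$ so the formulas are legitimate; then use completeness of $M$ and closedness of $\sqin$ to pass to the limit. I should double-check that the Cauchy estimate $d(c_n, c_{n+1}) \le C \cdot 2^{-n}$ genuinely holds — this needs the monotonicity of bounded quantifiers and the observation that the "sure in" set $\{x : \max(2^n e(x,a), e(x,b)-r) \le 0\}$ is nested decreasing in $n$ while the "sure out" set grows, squeezing the ambiguous region; I expect this to go through exactly as in the cited proposition but it is the one spot warranting care.
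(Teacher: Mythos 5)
Your construction has the right shape, but the Cauchy claim is exactly where it breaks, and it breaks genuinely rather than merely needing more care. For your sequence $c_n = \exc{x}{\max(2^ne(x,a),\,e(x,b)-r)}{0}{2^{-n}}$, the set of points that excision is \emph{required} to include is the same for every $n$, namely $S=\{g : e(g,a)=0,\ e(g,b)\le r\}$; every other point admitted into $c_n$ lies in the ambiguous zone, where you only know $e(x,a)<4^{-n}$ and $e(x,b)<r+2^{-n}$, and such a point need not be anywhere near $S$. The phrase ``differ only in shrinking gaps'' conflates smallness of the \emph{formula-value} window with metric smallness of the corresponding region of $M$: a point $g\sqin a$ with $e(g,b)=r+2^{-n-2}$ is in the ambiguous zone of both $c_n$ and $c_{n+1}$, so the excision producing $c_n$ may admit it while the excision producing $c_{n+1}$ rejects it together with its entire metric neighborhood; if the nearest point of $S$ is at distance $\tfrac12$ from $g$, then $d(c_n,c_{n+1})\ge e(g,c_{n+1})$ can be on the order of $\tfrac12$. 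Nothing in the axioms forbids this at every $n$, so $(c_n)$ need not converge, and without a limit the rest of your argument has nothing to apply to. (This is precisely where the analogy with \cref{prop:disc-sep} fails: there the hypothesis that $\varphi$ takes no values strictly between $r$ and $s$ on $b$ guarantees that every member of the $n$th excised set is within $2^{-n}$ of the \emph{fixed} sure-in set, which is what makes a fixed-threshold iteration Cauchy.)

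The paper's proof repairs exactly this defect by making the iteration adaptive: it starts from $c_0=b$, measures $t_n=\sup_{f\sqin c_n}\inf_{g\sqin a}d(f,g)$, and cuts $c_{n+1}$ out of the class of points that are both close to $a$ and within roughly $t_n$ of the \emph{previously obtained} set $c_n$, using the window $[2^{-n-4}\e,2^{-n-3}\e]$. Then every $g\sqin c_n$ --- including whatever the ambiguous zone contributed at the previous stage --- has, by the very definition of $t_n$, a point of $a$ within $t_n+2^{-n-4}\e$ of it, and that point lands in the sure-in set of $c_{n+1}$; simultaneously every member of $c_{n+1}$ is within $2^{-n-4}\e$ of $a$, forcing $t_{n+1}\le 2^{-n-3}\e$. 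Together these give $d(c_n,c_{n+1})\le t_n+2^{-n-3}\e$ with $t_n$ itself geometrically small from $n=1$ onward, which is the Cauchy estimate your version lacks. To salvage your write-up, replace the fixed threshold $r$ and the fixed reference set $b$ inside the excised formula by the measured quantity $t_n$ and the set $c_n$ from the previous stage.
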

\begin{proof}
  Fix $\e > 0$. Let $c_0 = b$. For any $n$, let $t_n \coloneqq \sup_{f \sqin c_n}\inf_{g \sqin a}d(f,g)$. At stage $n$, given $c_n$, let
  \[
    c_{n+1} = \exc{x}{\max(e(x,a),e(x,b)-t_n)}{2^{-n-4}\e}{2^{-n-3}\e}.
  \]
 Note that that for any $f \in M$, if $f \sqin a$ and $e(f,c_n) < t_n + 2^{-n-4}\e$, then $f \sqin c_{n+1}$, and if $f \sqin c_{n+1}$, then $e(f,a) < 2^{-n-4}\e$ and $e(f,c_n) < t_n + 2^{-n-3}\e$. In particular, $t_{n+1} \leq 2^{-n-3}\e$. Note also that by the definition of $t_n$, we have that for any $g \sqin c_n$, there is an $f \sqin a$ such that $d(f,g) < t_n + 2^{-n-4}\e$. Such a $g$ must be an element of $c_{n+1}$. Since we can do this for any $f \sqin c_n$, we have that $d(c_n,c_{n+1}) \leq \max(t_n+2^{-n-4}\e,t_n + 2^{-n-3}\e) = t_n + 2^{-n-3}\e$. Hence, for any $n > 0$, we have that $d(c_n,c_{n+1}) \leq 2^{-n-2}\e + 2^{-n-3}\e < 2^{-n-1}\e$. Therefore $(c_n)_{n\in \Nb}$ is a Cauchy sequence. Let $c = \lim_{n \to \infty}c_n$.

  Since $t_n \to 0$ as $n \to \infty$, we have that $c \sqsubseteq a$. Now we just need to verify that $d(b,c) \leq t_0 + \e$. Our estimates give that
  \[
d(b,c) \leq d(c_0,c_1) + d(c_1,c) \leq t_0 + 2^{-4}\e + \sum_{n=1}^\infty 2^{-n-1}\e < t_0 + \e,
\]
as required.
\end{proof}

\begin{prop}[Relative excision]\label{prop:sep}
  If $M\models \MSE$, then for any real-valued formula $\varphi(x,\ybar)$, $\abar,b \in M$, and reals $r < s$, there is a $c \in M$ with $c \sqsubseteq b$ such that for any $f \sqin b$, if $\varphi(f,\abar) \leq r$, then $f \sqin c$, and if $f \sqin c$, then $\varphi(c,\abar) < s$.
\end{prop}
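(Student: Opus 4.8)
The plan is to produce the desired $c$ as (in effect) the intersection of $b$ with a crude excision-approximation of the class $\{x : x\sqin b\ \&\ \varphi(x,\abar)\le r\}$, controlling one of the two required clauses exactly and the other via the Lipschitz estimate of \cref{lem:formula-v}. Two harmless reductions come first. If $v(\varphi)=0$ then $\varphi^M\equiv 0$ and the statement is trivial (take $c=b$ or $c=\varnothing^M$ according to the signs of $r,s$), so assume $v(\varphi)\ge 1$. If the class $I:=\{f : f\sqin b,\ \varphi^M(f,\abar)\le r\}$ is empty, take $c=\varnothing^M$; so assume $I\ne\varnothing$, whence every set produced below is nonempty.

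Fix $\eta>0$ small enough that $(r+\eta)+4v(\varphi)\eta<s$, and put $\e:=\eta$. By excision, fix
\[
  b_1 \;=\; \exc{x}{\max\!\big(\varphi(x,\abar)-r,\ e(x,b)\big)}{0}{\eta},
\]
where $e(x,b)$ abbreviates the formula $\inf_{z\sqin b}d(x,z)$; recall $e(x,b)^M=0$ iff $x\sqin b$, since $\sqin$ is closed. Then: (i) if $f\sqin b$ and $\varphi^M(f,\abar)\le r$ then $\max(\varphi^M(f,\abar)-r,\,e(f,b)^M)=0$, so $f\sqin b_1$; and (ii) if $f\sqin b_1$ then $\varphi^M(f,\abar)<r+\eta$ and $e(f,b)^M<\eta$. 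Now apply \cref{lem:approx-subset-projection} with ambient set $b$ and projected set $b_1$, producing $c\sqsubseteq b$ with $d(b_1,c)\le\sup_{f\sqin b_1}\inf_{g\sqin b}d(f,g)+\e\le\eta+\e$, using (ii). Beyond this bound, the construction in the proof of \cref{lem:approx-subset-projection} has the feature that each iterate $c_{n+1}$ (starting from $c_0=b_1$) contains every element of $\{x:x\sqin b\}\cap\{x:x\sqin c_n\}$; by induction, closedness of $\sqin$, and $c_n\to c$, this gives $\{x:x\sqin b\}\cap\{x:x\sqin b_1\}\subseteq\{x:x\sqin c\}$. (One who prefers not to reach into that proof can simply re-run the same Cauchy iteration here.)

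It remains to check the two clauses. First, $c\sqsubseteq b$ is immediate. Next, if $f\sqin b$ and $\varphi^M(f,\abar)\le r$, then $f\sqin b_1$ by (i), so $f\in\{x:x\sqin b\}\cap\{x:x\sqin b_1\}\subseteq\{x:x\sqin c\}$, i.e.\ $f\sqin c$. Finally, let $f\sqin c$. By $\HH$-extensionality, $d_{\HH}(\{x:x\sqin c\},\{x:x\sqin b_1\})=d(c,b_1)\le\eta+\e$, so for every $\delta>0$ there is $g\sqin b_1$ with $d(f,g)<\eta+\e+\delta$; by \cref{lem:formula-v}(2) and (ii),
\[
  \varphi^M(f,\abar)\;\le\;\varphi^M(g,\abar)+2v(\varphi)\,d(f,g)\;<\;(r+\eta)+2v(\varphi)(\eta+\e+\delta),
\]
and letting $\delta\to 0^+$ and invoking the choice of $\eta$ (with $\e=\eta$) gives $\varphi^M(f,\abar)<s$, as desired.

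The only real obstacle is the strictness of the clause $f\sqin c\Rightarrow\varphi(f,\abar)<s$: projecting $b_1$ into $b$ perturbs elements by up to roughly $\eta$, which can push $\varphi$ up by up to $2v(\varphi)\eta$, so the gap $\eta$ must be chosen small relative to $(s-r)/v(\varphi)$, using \cref{lem:formula-v}(2). The secondary point is that the inclusion direction relies on knowing that the projection in \cref{lem:approx-subset-projection} retains those elements of $b_1$ that already lie in $b$ — information that lives in the proof of that lemma rather than in its stated conclusion.
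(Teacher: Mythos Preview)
Your argument is correct and is in fact more direct than the paper's own proof. Both proofs begin the same way: excise a set (your $b_1$, the paper's $c$) approximating $\{x\sqin b:\varphi(x,\abar)\le r\}$, then apply \cref{lem:approx-subset-projection} to project into $b$. The divergence is in how the exact inclusion clause is obtained. The paper uses only the distance bound in the \emph{statement} of \cref{lem:approx-subset-projection}, which yields merely $e(g,f)<\delta$ for $g\sqin b$ with $\varphi(g,\abar)\le 0$; this is the relative analogue of the raw axiom scheme (\cref{defn:ax-sch-exc}), and the paper then re-runs the machinery of \cref{lem:fat-comp} and \cref{prop:MSE-CL-char} relative to $b$ to upgrade this to full relative excision. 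You instead observe that the iteration in the \emph{proof} of \cref{lem:approx-subset-projection} preserves elements already lying in the target set $b$: each $c_{n+1}$ contains $\{x:x\sqin b\}\cap\{x:x\sqin c_n\}$, so in the limit $\{x:x\sqin b\}\cap\{x:x\sqin b_1\}\subseteq\{x:x\sqin c\}$. This gives the inclusion clause immediately, and the $\varphi<s$ clause then follows from the distance bound and the Lipschitz estimate of \cref{lem:formula-v}.

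What your approach buys is brevity and the avoidance of the forward reference to \cref{sec:formalizing-in-CL}. What it costs is reaching inside a lemma's proof rather than citing its statement; as you note, this could be repaired by either re-running the Cauchy iteration in place or, more cleanly, by strengthening the statement of \cref{lem:approx-subset-projection} to record that the output $c$ contains $\{x:x\sqin a\}\cap\{x:x\sqin b\}$.
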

\begin{proof}
  Fix $\varphi(x,\ybar) \in \Lsq$, $\abar$ and $b$ in $M$, and $\delta > 0$ with $\delta < \frac{1}{4v(\varphi)}$.

  Let $c = \exc{x}{\max(\varphi(x,\abar),e(x,b))}{0}{\delta}$. Note that $\sup_{x\sqin c}\inf_{y\sqin b}d(x,y) \leq \frac{1}{2}\delta$. Apply \cref{lem:approx-subset-projection} to $c$ to get an $f \sqsubseteq b$ such that $d(c,f) < \delta$. We now have that for any $g$, if $g \sqin b$ and $\varphi(g,\abar) \leq 0$, then $e(g,f) < \delta$ and if $e(g,f)\leq \frac{1}{6v(\varphi)}$, then $e(g,c) \leq \frac{1}{4v(\varphi)} + \delta < \frac{1}{2v(\varphi)}$ and so $\varphi(g,\abar) < 2v(\varphi)\frac{1}{2v(\varphi)}$.

  Since we can do this for any $\varphi(x,\ybar)$ and $\abar \in M$, we have that $M$ satisfies \cref{defn:ax-sch-exc} relative to the set $b$. The proposition then follows by repeating the proofs of \cref{lem:fat-comp} and \cref{prop:MSE-CL-char} relative to the set $b$.
\end{proof}

In light of Propositions~\ref{prop:disc-sep} and \ref{prop:sep}, we will write
\[
c = \exc{x \sqin b}{\varphi(x,\abar)}{r}{s}
\]
to mean that for any $f \in M$, if $f \sqin b$ and $\varphi^M(f,\abar) \leq r$, then $f \sqin c$ and if $f \sqin c$, then $f \sqin b$ and $\varphi^M(f,\abar) < s$.

\subsection{Comprehension for definable classes}
\label{sec:defbl-classes}

Given \cref{prop:sep}, one might be tempted to ask whether we can just outright show that models of $\MSE$ satisfy a more conventional form of comprehension. Suppose we have a formula $\varphi(x)$ and we wish to form the set $\{x \in M : \varphi^M(x) = 0\}$. Could we not just form the sequence $a_n = \exc{x}{|\varphi(x)|}{0}{2^{-n}}$ and take the limit? While we are perfectly able to form this sequence externally, the difficulty is that it will in general fail to be Cauchy. 

Regardless, there are times when such a sequence of approximations does actually converge in the Hausdorff metric, giving us an instance of exact comprehension. This happens precisely when $\{x \in M: \varphi^M(x) = 0\}$ is a \emph{definable set} in the sense of continuous logic, although in the context of a set theory it would be more appropriate to refer to these as \emph{definable classes}. For the sake of this paper, we will not need the full generality of definable sets. %

\begin{defn}\label{defn:defnbl-class}
  A closed subset $D \subseteq M^n$ is a \emph{definable class} if the function $\xbar \mapsto \inf_{\abar \in D}d(\xbar,\abar)$ is a uniformly convergent limit of functions of the form $\varphi^M(\xbar,\bbar)$ for $\varphi(\xbar,\ybar) \in \Lsq$ and $\bbar \in M$. $D$ is definable \emph{without parameters} if its definability is witnessed by formulas without parameters.
  
  $D$ is an \emph{explicitly definable class}\footnote{There is no standard term for explicit definability in continuous logic, as it's not a wholly natural concept.} if there is a $\varphi(\xbar,\ybar) \in \Lsq$ and a tuple $\bbar$ such that $\inf_{\abar \in D}d(\xbar,\abar) = \varphi^M(\xbar,\bbar)$.

  $\xbar \mapsto \inf_{\abar \in D}d(\xbar,\abar)$ is called the \emph{distance predicate of $D$}, which we may also write as $e(\xbar,D)$.
\end{defn}

This definition is perhaps most strongly motivated by the fact that definable classes are precisely those that admit relative quantification. %

\begin{lem}\label{lem:rel-quan}
  For any metric set structure $M$ and $\abar \in M$, if $\varphi(\xbar,\abar)$ is the distance predicate of a definable class $D \subseteq M^n$, then for any $\psi(\xbar,\ybar,\zbar)$ and $\cbar \in M$,
  \[
    \left(\inf_{\xbar}\min(\psi(\xbar,\cbar,\bbar) + 2v(\psi) \varphi(\xbar,\abar),v(\psi))\right)^M = \inf\{\psi^M(\fbar,\cbar,\bbar) : \fbar \in D\},
  \]
  where $\inf \varnothing$ is understood to be $v(\psi)$. 
\end{lem}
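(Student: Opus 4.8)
The plan is to show two inequalities, $\leq$ and $\geq$, between the left side $L\coloneqq\left(\inf_{\xbar}\min(\psi(\xbar,\cbar,\bbar)+2v(\psi)\varphi(\xbar,\abar),v(\psi))\right)^M$ and the right side $R\coloneqq\inf\{\psi^M(\fbar,\cbar,\bbar):\fbar\in D\}$ (with $\inf\varnothing=v(\psi)$). The key inputs are: (i) $\varphi(\xbar,\abar)$ is the distance predicate of $D$, so $\varphi^M(\fbar,\abar)=0$ for $\fbar\in D$ and $\varphi^M(\xbar,\abar)=\inf_{\fbar\in D}d(\xbar,\fbar)$ in general; (ii) by \cref{lem:formula-v}, $\psi^M$ takes values in $[-v(\psi),v(\psi)]$ and is $2v(\psi)$-Lipschitz in $\xbar$; (iii) a definable class is closed, hence if $D\neq\varnothing$ the infimum defining $\varphi^M(\xbar,\abar)$ is attained (actually I only need near-attainment: for every $\xbar$ and $\e>0$ there is $\fbar\in D$ with $d(\xbar,\fbar)<\varphi^M(\xbar,\abar)+\e$).

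For the inequality $L\geq R$: fix any $\xbar$. I want to bound the quantified term from below by $R$. If $D=\varnothing$, then $\varphi^M(\xbar,\abar)=1$ (distance to the empty set), so the term is $\min(\psi^M(\xbar,\cbar,\bbar)+2v(\psi),v(\psi))$; since $\psi^M(\xbar,\cbar,\bbar)\geq-v(\psi)$, this equals $v(\psi)=R$, and taking the $\inf$ over $\xbar$ gives $L\geq R$ (in fact $L=v(\psi)$). If $D\neq\varnothing$, pick $\fbar\in D$ with $d(\xbar,\fbar)\leq\varphi^M(\xbar,\abar)+\e$; by the Lipschitz bound, $\psi^M(\fbar,\cbar,\bbar)\leq\psi^M(\xbar,\cbar,\bbar)+2v(\psi)d(\xbar,\fbar)\leq\psi^M(\xbar,\cbar,\bbar)+2v(\psi)\varphi^M(\xbar,\abar)+2v(\psi)\e$. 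Hence $R\leq\psi^M(\xbar,\cbar,\bbar)+2v(\psi)\varphi^M(\xbar,\abar)+2v(\psi)\e$, and also trivially $R\leq v(\psi)$, so $R\leq\min(\psi^M(\xbar,\cbar,\bbar)+2v(\psi)\varphi^M(\xbar,\abar),v(\psi))+2v(\psi)\e$; letting $\e\to0$ and then taking $\inf$ over $\xbar$ gives $R\leq L$.

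For the inequality $L\leq R$: if $D=\varnothing$ then $R=v(\psi)$ and $L\leq v(\psi)$ always (the $\min$ caps each term at $v(\psi)$), so we are done. If $D\neq\varnothing$, then for each $\fbar\in D$ the term at $\xbar=\fbar$ is $\min(\psi^M(\fbar,\cbar,\bbar)+2v(\psi)\cdot 0,\,v(\psi))=\min(\psi^M(\fbar,\cbar,\bbar),v(\psi))=\psi^M(\fbar,\cbar,\bbar)$ (the last equality since $\psi^M\leq v(\psi)$). Thus $L=\inf_{\xbar}(\cdots)\leq\inf_{\fbar\in D}\psi^M(\fbar,\cbar,\bbar)=R$. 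Combining the two inequalities yields $L=R$.

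I do not expect any serious obstacle here; it is essentially a bookkeeping exercise. The one point requiring mild care is the empty-class case, where the convention $\inf\varnothing=v(\psi)$ on the right has to be matched against the behavior of $\varphi^M(\xbar,\abar)$ being identically $1$ (the Hausdorff/distance-to-empty convention from the paragraph after \cref{defn:Haus-dist}) together with the cap $v(\psi)$ in the $\min$; one should check $2v(\psi)\cdot 1\geq 2v(\psi)\geq\psi^M(\xbar,\cbar,\bbar)-(-v(\psi))$ so that the first argument of the $\min$ dominates $v(\psi)$. The other small point is that I only ever use near-attainment of the distance predicate, so I never actually need $D$ to be closed beyond what is built into \cref{defn:defnbl-class}; this keeps the argument uniform.
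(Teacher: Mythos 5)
Your proof is correct and follows essentially the same route as the paper's: split into the two inequalities, handle $D=\varnothing$ via the distance-to-empty convention, get $L\leq R$ by evaluating at points of $D$ (where $\varphi$ vanishes and the $\min$ is inactive since $\psi^M\leq v(\psi)$), and get $L\geq R$ from near-attainment of the distance predicate plus the $2v(\psi)$-Lipschitz bound of \cref{lem:formula-v}. Your version is in fact slightly more careful than the paper's, which elides the $\e$-bookkeeping in the $L\geq R$ direction.
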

\begin{proof}
  Let $r = \inf_{\xbar}\min(\psi(\xbar,\cbar,\bbar) + 2v(\psi) \varphi(\xbar,\abar),v(\psi))$ and $s = \inf\{\psi^M(\fbar,\cbar,\bbar) : \fbar \in D\}$.

  If $D$ is empty, then $\varphi(\xbar,\abar) = 1$ and the result holds.

  If $D$ is not empty, then we clearly have that $r \leq s$
  since $\psi(\xbar,\cbar,\bbar) \in [-v(\psi),v(\psi)]$ by \cref{lem:formula-v}.
  For the other direction, fix $\gbar \in M$. For any $\e > 0$, there is an $\fbar \in D$ such that $d(\fbar,\gbar) < e(\gbar,D)+ \e$. By \cref{lem:formula-v}, $\xbar\mapsto \psi(\xbar,\cbar,\bbar)$ is $2v(\psi)$-Lipschitz, so $\psi(\gbar,\cbar,\bbar)+2v(\psi)\varphi(\gbar,\abar) \geq \psi(\fbar,\cbar,\bbar)$ and therefore $\min(\psi(\gbar,\cbar,\bbar)+2v(\psi)\varphi(\gbar,\abar),v(\psi)) \geq \psi(\fbar,\cbar,\bbar)$. Since we can do this for any $\gbar \in M$, we have that $r \geq s$ and we are done.
\end{proof}

In continuous logic generally, definable classes can be characterized as those sets that admit relative quantification in the same sense as \cref{lem:rel-quan}. In models of $\MSE$ moreover, definable classes of $1$-tuples correspond precisely to sets. In particular, every definable class is explicitly definable by $e(x,a)$ for some $a$.

\begin{prop}\label{prop:def-class-char}
  Let $M \models \MSE$. A closed set $D \subseteq M$ is a definable class if and only if there is an $a \in M$ such that $D = \{b \in M : b \sqin a\}$.
\end{prop}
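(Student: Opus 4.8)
The plan is to prove both directions. The easy direction is $(\Leftarrow)$: if $D = \{b : b \sqin a\}$ for some $a \in M$, then I claim $e(x,a) = \inf_{z \sqin a} d(x,z)$ is exactly the distance predicate $\inf_{b \in D} d(x,b)$ of $D$, so $D$ is explicitly definable (hence definable) by the $\Lsq$-formula $e(x,a)$ with parameter $a$. The only subtlety is the empty case: if no $c$ satisfies $c \sqin a$, then $D = \varnothing$ and by $\HH$-extensionality $a = \varnothing^M$, and one checks $e(x,a)$ evaluates to $v(d(x,z)) = 1 = \inf_{b \in \varnothing} d(x,b)$ under the empty-set conventions of \cref{defn:formula-evaluate}, matching the $[0,1]$-valued Hausdorff convention $d_{\HH}(\varnothing, A) = 1$. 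Since $\sqin$ is closed, $D$ is closed, so this is a genuine definable class.

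The substantive direction is $(\Rightarrow)$. Suppose $D \subseteq M$ is a definable class with distance predicate $e(x,D)$, a uniformly convergent limit of formulas $\varphi_n^M(x,\bbar_n)$. I want to produce $a \in M$ with $\{b : b \sqin a\} = D$. The idea is to build $a$ as a Hausdorff-limit of excised approximations: set $a_n = \exc{x}{e(x,D)}{2^{-n-1}}{2^{-n}}$ — more precisely, since $e(x,D)$ may not itself be a single $\Lsq$-formula, use $a_n = \exc{x}{\varphi_{m(n)}(x,\bbar_{m(n)})}{2^{-n-1}}{2^{-n}}$ for $m(n)$ chosen large enough that $\varphi_{m(n)}^M(\cdot,\bbar_{m(n)})$ approximates $e(\cdot,D)$ to within, say, $2^{-n-3}$. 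Then every $c$ with $e(c,D) \le 2^{-n-2}$ lies in $a_n$, and every $c \sqin a_n$ has $e(c,D) < 2^{-n} + 2^{-n-3}$. The key estimate is that $d_{\HH}(\{x : x \sqin a_n\}, \{x : x \sqin a_{n+1}\})$ is bounded by a geometrically decaying quantity: any element of $a_n$ is within $O(2^{-n})$ of $D$ and $D$ is within $O(2^{-n-1})$ of $a_{n+1}$ (using that points of $D$ get excised into $a_{n+1}$), and symmetrically. By $\HH$-extensionality this gives $d(a_n, a_{n+1}) = O(2^{-n})$, so $(a_n)$ is Cauchy; let $a = \lim_n a_n$. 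Then $\{x : x \sqin a\}$ is the Hausdorff-limit of $\{x : x \sqin a_n\}$ (again by $\HH$-extensionality, since $\sqin$ is closed), and since each $\{x : x\sqin a_n\}$ converges to $D$ in Hausdorff distance — the inclusions above force $d_{\HH}(\{x : x \sqin a_n\}, D) \to 0$ — we get $\{x : x \sqin a\} = D$ as closed sets.

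The main obstacle is the Cauchy/convergence bookkeeping: one has to be careful that $e(x,D)$, being only a \emph{uniform limit} of $\Lsq$-formulas rather than an $\Lsq$-formula, is approximated closely enough at each stage, and that the excision gaps $[2^{-n-1}, 2^{-n}]$ are chosen so that the resulting $\{x : x \sqin a_n\}$ both sandwich $D$ tightly enough to be Cauchy and converge to $D$ rather than to some fattening of it. I expect this to be essentially the same mechanism as in the proof of \cref{prop:disc-sep}, applied with the moving target $e(x,D)$ in place of a fixed formula, together with the observation that once $\{x : x \sqin a_n\} \to D$ in Hausdorff distance and the limit $a$ exists, $\HH$-extensionality pins down $\{x : x \sqin a\}$ exactly. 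One should also double-check the trivial case $D = \varnothing$ separately, where $a = \varnothing^M$ works directly.
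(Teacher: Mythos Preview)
Your proposal is correct and follows essentially the same approach as the paper: the $\Leftarrow$ direction via $e(x,a)$, and the $\Rightarrow$ direction by excising a sequence of approximations $a_n$ whose extensions converge to $D$ in Hausdorff distance, so that by $\HH$-extensionality $(a_n)$ is Cauchy and its limit has extension exactly $D$. The paper is terser (it writes $b_n = \exc{x}{\varphi_n(x,\abar_n)}{2^{-n}}{2^{-n+1}}$ and bounds $d_{\HH}(\{x:x\sqin b_n\},D)$ directly, inferring Cauchyness through $D$ rather than comparing consecutive $a_n$), but your more careful bookkeeping with the approximating formulas and the separate treatment of the empty case are sound and amount to the same argument.
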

\begin{proof}
  The $\Leftarrow$ direction is obvious. To show the $\To$ direction, find a formula $\varphi_n(x,\abar_n)$  for each $n \in \Nb$ such that $\sup_x|\varphi_n(x,\abar_n)-e(x,D)| < 2^{-n}$. Let $b_n = \exc{x}{\varphi_n(x,\abar_n)}{2^{-n}}{2^{-n+1}}$. Note that if $c \in D$, then $c \sqin b_n$ and if $c \sqin b_n$, then $e(c,D) < 2^{-n+1}+2^{-n} < 2^{-n+1}$. This implies that $d_{\HH}(\{x \in M : x \sqin b_n\},D) \leq 2^{-n+1}$, and so the sequence $(b_n)_{n \in \Nb}$ is a Cauchy sequence and $b = \lim_{n \to \infty}b_n$ has the property that $c \sqin b$ if and only if $c \in D$.
\end{proof}

\cref{prop:def-class-char} allows us to answer some very basic questions that we haven't resolved yet.

\begin{cor}\label{cor:basic-stuff}
  Fix $M \models \MSE$.
  \begin{enumerate}
  \item \emph{(Singletons)} For any $a \in M$, there is a $b \in M$ such that $c \sqin b$ if and only if $c = a$.
  \item \emph{(Finite unions)} For any $a,b \in M$, there is a $c \in M$ such that $f \sqin c$ if and only if $f \sqin a$ or $f \sqin b$.
  \item \emph{(Finite sets)} For any $a_0,\dots,a_{n-1} \in M$, there is a $b \in M$ such that $c \sqin b$ if and only if $c=a_i$ for some $i<n$.
  \item \emph{(Closure-of-unions)} For any $a \in M$, there is a $b \in M$ such that $c \sqin b$ if and only if $b$ is in the metric closure of $\{f \in M : (\exists g \sqin a)f \sqin g\}$.
  \end{enumerate}
\end{cor}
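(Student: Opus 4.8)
The plan is to derive each item of \cref{cor:basic-stuff} from \cref{prop:def-class-char} by exhibiting, in each case, a closed subset $D \subseteq M$ and checking that it is a definable class, i.e.\ that the distance predicate $x \mapsto e(x,D)$ is a uniform limit of functions of the form $\varphi^M(x,\bbar)$ for $\varphi \in \Lsq$. Once $D$ is shown to be a definable class, \cref{prop:def-class-char} immediately yields an $a \in M$ with $D = \{b : b \sqin a\}$, which is exactly the assertion in each case. So the whole argument reduces to computing distance predicates and recognizing them as (limits of) formulas.

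\medskip

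\textbf{Singletons.} Take $D = \{a\}$. Then $e(x,D) = d(x,a)$, which is literally an $\Lsq$-formula with parameter $a$, so $D$ is (explicitly) definable and we are done. \textbf{Finite unions.} Take $D = \{f : f \sqin a\} \cup \{f : f \sqin b\}$; this is closed since $\sqin$ is a closed relation. Its distance predicate is $e(x,D) = \min(e(x,a), e(x,b))$, and $e(x,a) = \inf_{z \sqin a} d(x,z)$ is (abbreviating as in the text) an $\Lsq$-formula, so $\min(e(x,a),e(x,b))$ is too. Hence $D$ is explicitly definable. \textbf{Finite sets.} This is a finite union of singletons: iterate item~(1) and item~(2), or directly observe $D = \{a_0,\dots,a_{n-1}\}$ has distance predicate $\min_{i<n} d(x,a_i)$, an $\Lsq$-formula with parameters $a_0,\dots,a_{n-1}$.

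\medskip

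\textbf{Closure-of-unions.} Let $U = \{f \in M : (\exists g \sqin a)\, f \sqin g\}$ and let $D = \overline{U}$ be its metric closure; $D$ is closed by construction, and we must show it is a definable class. The natural candidate for the distance predicate is $e(x,D) = e(x,U) = \inf\{d(x,f) : \exists g \sqin a,\ f \sqin g\} = \inf_{g \sqin a} e(x,g)$. I would argue that the $\Lsq$-formula $\psi(x,a) \coloneqq \inf_{g \sqin a} e(x,g) = \inf_{g \sqin a}\inf_{z \sqin g} d(x,z)$ computes exactly this, being careful about the empty-set conventions in \cref{defn:formula-evaluate}: if $a$ has no elements then $D = \varnothing$ and $\psi^M(x,a) = v(\psi) = 1 = e(x,\varnothing)$, so the conventions match; if some $g \sqin a$ but $g$ is empty then that $g$ contributes the value $v(e) = 1$ to the outer infimum, which is harmless since the infimum is $[0,1]$-valued anyway; and when $g \sqin a$ with $g$ nonempty the inner quantifier genuinely ranges over the elements of $g$. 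So $\psi^M(x,a) = \inf\{d(x,z) : \exists g \sqin a,\ z \sqin g\} = e(x,U) = e(x,\overline U) = e(x,D)$, showing $D$ is explicitly definable, and \cref{prop:def-class-char} finishes it. (The statement in the text has a typo: the condition should read ``$x$ is in the metric closure,'' i.e.\ $c \sqin b \Toot c \in \overline U$.)

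\medskip

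I expect the only real subtlety to be in item~(4): matching the empty-set conventions of \cref{defn:formula-evaluate} against the intended set-theoretic meaning of the nested bounded infimum, and confirming that the metric closure of $U$ does not change its distance predicate (which is automatic, since $e(x,\cdot)$ depends only on the closure of a set). Everything else is a one-line distance-predicate computation plus an invocation of \cref{prop:def-class-char}.
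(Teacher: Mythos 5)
Your proposal is correct and takes essentially the same route as the paper: each item is proved by exhibiting the distance predicate of the relevant closed class ($d(x,a)$, $\min(e(x,a),e(x,b))$, induction for finite sets, and $\inf_{y\sqin a}e(x,y)$ for the closure-of-unions) and invoking \cref{prop:def-class-char}. Your extra checks of the empty-set conventions in item~(4), and your note that the statement should read ``$c$ is in the metric closure,'' are both accurate but do not change the argument.
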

\begin{proof}
  \begin{enumerate}
  \item This is witnessed by the formula $d(x,a)$.
  \item This is witnessed by the formula $\min(e(x,a),e(x,b))$.
  \item This follows from 1 and 2 by induction.
  \item This is witnessed by the formula $\inf_{y \sqin a}e(x,y)$. \qedhere
  \end{enumerate}
\end{proof}

Unfortunately, however, it is generally not the case that the distance to the intersection of two sets $X$ and $Y$ can be computed from the distances to $X$ and $Y$. As such, \emph{we cannot establish the existence of intersections in general.} We do, however, get a kind of approximate intersection in the form of $\exc{x\sqin a}{e(x,b)}{0}{\e}$.

 A minor corollary of \cref{cor:basic-stuff} is that models of $\MSE$ satisfy $d(x,y) = \sup_{z}|e(x,z)-e(y,z)|$, as witnessed by the singleton $\{x\}$. In $[0,1]$-valued set theories, the quantity $1-\sup_z|e(x,z)-e(y,z)|$ is often referred to a \emph{Leibniz equality}, as it represents the degree to which $x$ and $y$ cannot be discerned from each other.

In light of \cref{cor:basic-stuff}, we will write $\{a_0,a_1,\dots,a_{n-1}\}$ for the finite set containing $a_0,a_1,\dots,a_{n-1}$, $a \sqcup b$ for the union of $a$ and $b$, and $\ol{\bigsqcup a}$ for the closure of the union of the elements of $a$.

  Now that we have the ability to form finite sets by \cref{cor:basic-stuff}, we are free to code ordered pairs. While we certainly could use the standard Kuratowski ordered pair, Wiener's earlier definition is actually preferable to us for technical reasons.\footnote{If $\langle a,b \rangle \coloneqq \{\{a\},\{a,b\}\}$, then a straightforward but tedious calculation shows that $d(\langle a,b \rangle,\langle c,f \rangle) = \max(d(a,c),\min(d(b,f),\max(d(a,f),d(b,c))))$ and so $d(\langle a,b \rangle,\langle c,f \rangle) \leq d(ab,cf) \leq 3d(\langle a,b \rangle,\langle c,f \rangle)$. Setting $\langle a,b \rangle=\langle 2,0 \rangle$ and $\langle c,f \rangle = \langle 1,3 \rangle$ in $\Rb$ shows that this is sharp. If $d$ is an ultrametric however, we do get $d(\langle a,b \rangle,\langle c,f \rangle) = d(ab,cf)$.} As such, we will write $\langle a,b \rangle$ for $\{\{\{a\},\varnothing\},\{\{b\}\}\}$. Recall that $d(ab,cf)\coloneqq \max(d(a,c),d(b,f))$.
\begin{lem}\label{lem:ord-pair-max-metric}
  Let $M\models \MSE$. For any $a,b,c,f \in M$,
  \[
d(\langle a,b \rangle,\langle c,d \rangle) = d(ab,cf).
\]
\end{lem}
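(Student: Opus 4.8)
The plan is to reduce everything to repeated applications of $\HH$-extensionality (\cref{defn:H-ext}) together with direct computations of Hausdorff distances between explicitly given finite sets of points. By \cref{cor:basic-stuff}, all the finite sets occurring in $\langle a,b\rangle=\{\{\{a\},\varnothing\},\{\{b\}\}\}$ genuinely exist in $M$ and have exactly the expected extensions, so $\HH$-extensionality lets us pass freely between the metric distance of two such sets and the Hausdorff distance of their point-sets, where $d_{\HH}(A,B)=\max(\sup_{x\in A}\inf_{y\in B}d(x,y),\ \sup_{y\in B}\inf_{x\in A}d(x,y))$.

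First I would record the basic building blocks. Applying $\HH$-extensionality to singletons gives $d(\{x\},\{y\})=d_{\HH}(\{x\},\{y\})=d(x,y)$, and iterating, $d(\{\{b\}\},\{\{f\}\})=d(b,f)$. Next, since $\HH$-extensionality forces any set with empty extension to equal $\varnothing$ while $d_{\HH}(\varnothing,A)=1$ for nonempty $A$ under our $[0,1]$-valued convention, we get $d(z,\varnothing)=1$ whenever $z\neq\varnothing$; in particular $d(\{x\},\varnothing)=1$ always. From these, two further computations follow by writing out the min–max formula: (i) $d(\{\{a\},\varnothing\},\{\{b\}\})=1$ for all $a,b$, because $\varnothing$ must be matched against the nonempty set $\{b\}$, forcing one of the two suprema up to $1$ (and likewise with $a,b$ replaced by $a,f$ or by $c,b$); and (ii) $d(\{\{a\},\varnothing\},\{\{c\},\varnothing\})=d(a,c)$, because the two copies of $\varnothing$ match each other at distance $0$, so only the comparison $d(\{a\},\{c\})=d(a,c)$ survives.

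Finally I would apply $\HH$-extensionality to $\langle a,b\rangle$ and $\langle c,f\rangle$ themselves. Writing $p=\{\{a\},\varnothing\}$, $q=\{\{b\}\}$, $p'=\{\{c\},\varnothing\}$, $q'=\{\{f\}\}$, the extension of $\langle a,b\rangle$ is $\{p,q\}$ and that of $\langle c,f\rangle$ is $\{p',q'\}$, so $d(\langle a,b\rangle,\langle c,f\rangle)=d_{\HH}(\{p,q\},\{p',q'\})$. Plugging the distances from the previous paragraph — $d(p,p')=d(a,c)$, $d(q,q')=d(b,f)$, and $d(p,q')=d(p',q)=1$ — into the min–max formula, the nearest neighbour of $p$ in $\{p',q'\}$ is $p'$ (as $d(a,c)\leq 1$) and the nearest neighbour of $q$ is $q'$, and symmetrically, so both suprema equal $\max(d(a,c),d(b,f))$. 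Hence $d(\langle a,b\rangle,\langle c,f\rangle)=\max(d(a,c),d(b,f))=d(ab,cf)$.

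I do not expect a serious obstacle; the content is bookkeeping. The one step that genuinely matters — and the reason Wiener's asymmetric pairing is used rather than Kuratowski's — is (i): the ``tags'' $\{\{a\},\varnothing\}$ and $\{\{b\}\}$ must sit at Hausdorff distance exactly $1$ regardless of $a,b$, so that the cross terms in the final Hausdorff computation never interfere with the diagonal matching. For the Kuratowski pair the analogous cross terms are not pinned at $1$, which is precisely why one only gets the two-sided estimate quoted in the footnote rather than equality. The only care required is tracking the $[0,1]$-valued convention $d_{\HH}(\varnothing,A)=1$ and noting that $p\neq q$ — immediate from (i) — so that the extension of $\langle a,b\rangle$ really has two elements.
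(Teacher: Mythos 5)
Your proof is correct and follows essentially the same route as the paper's: compute the pairwise distances between the four second-level sets, observe that the cross terms $d(\{\{a\},\varnothing\},\{\{b\}\})$ are pinned at $1$ because $d(\{x\},\varnothing)=1$, and then read off the Hausdorff distance as $\max(d(a,c),d(b,f))$. The only cosmetic difference is that you phrase the final step via nearest neighbours while the paper expands the four $e(\cdot,\cdot)$ terms explicitly; the content is identical.
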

\begin{proof}
  Let $A = \{\{a\},\varnothing\}$, $B = \{\{b\}\}$, $C = \{\{c\},\varnothing\}$, and $F = \{\{f\}\}$. We have that
  \[
    d(\{A,B\},\{C,F\}) = \max(e(A,\{C,F\}), e(B,\{C,F\}),e(C,\{A,B\}),e(F,\{A,B\})).
  \]
  For any $x$, $d(\{x\},\varnothing) = 1$. This implies that $d(\{\{x\},\varnothing\},\{\{y\}\}) = 1$ for any $x$ and $y$ as well. This implies, for instance, that
  \[
    e(A,\{C,F\}) = \min(d(A,C),d(A,F)) = \min(d(A,C),1) =  d(A,C). 
  \]
  This together with similar facts for the other three terms implies that
  \begin{align*}
    d(\{A,B\},\{C,F\}) &= \max(d(A,C),d(B,F),d(C,A),d(F,B)) \\
    &= \max(d(A,C),d(B,F)).
\end{align*}
Finally, $d(\{\{x\}\},\{\{y\}\}) = d(x,y)$ and $d(\{\{x\},\varnothing\},\{\{y\},\varnothing\}) = d(x,y)$ for any $x$ and $y$, so we have that $d(\langle a,b \rangle,\langle c,f \rangle) = d(\{A,B\},\{C,F\}) = d(ab,cf)$, as required.
\end{proof}

\cref{lem:ord-pair-max-metric} means that a sequence of ordered pairs can only converge to an ordered pair and that convergence of sequences of ordered pairs behaves in the expected way. In particular, the class of ordered pairs is closed.

  With a little more work we can establish the existence of power sets.

  \begin{prop}[Power sets]
    For any $a \in M$, there is a $b \in M$ such that $c \sqin b$ if and only if $c \sqsubseteq a$. %
  \end{prop}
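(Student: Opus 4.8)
The plan is to show that the class $D \coloneqq \{c \in M : c \sqsubseteq a\}$ is an explicitly definable class and then quote \cref{prop:def-class-char} to obtain the required $b$. The candidate for its distance predicate is $\varphi(x,y) \coloneqq \max\!\big(\sup_{z \sqin x} e(z,y),\, 0\big)$, which is a genuine $\Lsq$-formula once $e(z,y)$ is unfolded to $\inf_{w \sqin y} d(z,w)$ and $0$ is read as $0 \cdot 1$; so the whole proof reduces to verifying $e(x,D) = \varphi^M(x,a)$ for every $x \in M$.

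First I would record the ``combinatorial'' reformulation of $\sqsubseteq$: since $\sqin$ is closed, $e(f,a) = 0$ iff $f \sqin a$, and hence $c \sqsubseteq a$ iff $\sup_{z \sqin c} e(z,a) = 0$ (the empty-extension case being handled by the conventions of \cref{defn:formula-evaluate}), so that $D = \{c : \varphi^M(c,a) \le 0\}$; in particular $D$ is closed by the Lipschitz bound of \cref{lem:formula-v}. For the inequality $\varphi^M(x,a) \le e(x,D)$, I would fix $c \in D$ and exploit that $d(x,c)$ is the Hausdorff distance between the extensions of $x$ and $c$: for every $r > d(x,c)$ and every $z \sqin x$ there is a $w \sqin c \sqsubseteq a$ with $d(z,w) < r$, so $e(z,a) \le d(z,w) < r$; taking the supremum over $z \sqin x$ and then the infimum over $c \in D$ (and noting $e(x,D) \ge 0$ trivially, so the outer $\max$ with $0$ costs nothing) gives $\varphi^M(x,a) \le e(x,D)$. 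For the reverse direction the key observation is that \cref{lem:approx-subset-projection} applied to $x$ itself produces, for each $\e > 0$, a $c \sqsubseteq a$ with $d(x,c) \le \sup_{f \sqin x}\inf_{g \sqin a} d(f,g) + \e = \sup_{f \sqin x} e(f,a) + \e$, so $e(x,D) \le \varphi^M(x,a) + \e$; letting $\e \to 0$ finishes it, after separately (and trivially) noting $\varnothing \in D$ so the claim holds at $x = \varnothing$.

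Once $e(\cdot,D) = \varphi^M(\cdot,a)$ is in hand, $D$ is an explicitly definable class, and \cref{prop:def-class-char} yields a $b \in M$ with $c \sqin b \iff c \in D \iff c \sqsubseteq a$, as desired. I do not expect a deep obstacle here; the only real subtlety is bookkeeping around empty extensions---where $\sup_{z \sqin x} e(z,a)$ evaluates to $-v(\cdot) = -1$ rather than something nonnegative, which is precisely what the outer $\max(\cdot,0)$ absorbs---together with recognizing that the bound supplied by \cref{lem:approx-subset-projection} is exactly $\sup_{z \sqin x} e(z,a)$ and must be invoked on $x$ rather than on any approximation of it.
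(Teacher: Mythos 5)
Your proposal is correct and follows essentially the same route as the paper: both identify $\sup_{z \sqin x} e(z,a)$ as the distance predicate of $\Pc(a)$, prove the upper bound via \cref{lem:approx-subset-projection} and the lower bound by monotonicity of $\inf$ under $c \sqsubseteq a$ together with $\HH$-extensionality, and then invoke \cref{prop:def-class-char}. Your extra $\max(\cdot,0)$ and the separate treatment of $x = \varnothing$ are in fact slightly more careful than the paper's own proof, which silently ignores the $\sup_{z\sqin\varnothing} = -v(\cdot)$ convention at that one point.
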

  \begin{proof}
    First note that by basic properties of the Hausdorff metric, the class $\Pc(a) \coloneqq \{c \in M : c \sqsubseteq a\}$ is necessarily closed. By \cref{lem:approx-subset-projection}, we know that for any $f \in M$,
    \[
      \inf_{g \in \Pc(a)}d(f,g)\leq \sup_{y \sqin f}\inf_{z \sqin a}d(y,z).
    \]
    On the other hand, for any $g \in \Pc(a)$, we must have that
    \[
      \sup_{y \sqin f}\inf_{z \sqin a}d(y,z)\leq \sup_{y \sqin f}\inf_{z \sqin g}d(y,z)\leq d(f,g),
    \]
    by monotonicity of $\inf$. Therefore,
    \[
      \sup_{y \sqin f}\inf_{z \sqin a}d(y,z) \leq \inf_{g \in \Pc(a)}d(f,g),
    \]
    and so the two quantities are actually equal. Hence $\Pc(a)$ is a definable class and is coextensive with some element $b\in M$ by \cref{prop:def-class-char}.
  \end{proof}

  We will write $\Pc(a)$ for the power set of $a$.

\subsection{Definable functions and replacement}
\label{sec:definable-functions}

Now that we are confident that ordered pairs exist, the next natural thing to consider is Cartesian products. In order to show that the class $a \times b \coloneqq \{\langle c,f \rangle: c \sqin a,~f\sqin b\}$ is definable and therefore a set, what we would like to be able to do is write a real-valued formula like this:
\[
\varphi(x) = \inf_{y \sqin a}\inf_{z \sqin b}d(x,\{\{\{a\},\varnothing\},\{\{b\}\}\}).
\]
If we did have this formula, $\varphi(x)$ would of course be the point-set distance from $x$ to the class $a \times b$. The issue is that the functions $x \mapsto \{x\}$ and $(x,y) \mapsto \{x,y\}$ and the constant $\varnothing$ are not formally part of our logic.

Practically, however,  it is commonly understood in the context of discrete logic that it is safe to pretend that certain functions---namely the \emph{definable} functions---are formally part of the language in the following sense: Given a discrete structure $M$, a function $f:M^n \to M$ is \emph{definable} if and only if for every formula $\varphi(\xbar,y,\zbar)$, there is a formula $\psi(\xbar,\zbar)$ such that for any $\abar,\bbar \in M$, $M \models \varphi(\abar,f(\abar),\bbar)$ if and only if $M \models \psi(\abar,\bbar)$. It is easy to show that this is equivalent to the graph of $f$ being a definable subset of $M^{n+1}$.

In continuous logic, a similar thing can be done:

\begin{defn}
  Given a set $X \subseteq M^n$, a function $f : X \to M$ is \emph{definable} if for every $\e > 0$, there is a $\varphi(\xbar,y,\zbar) \in \Lsq$ and a $\cbar \in M$ such that for any $\abar \in X$ and $b \in M$, $|d(f(\abar),b) - \varphi^M(\abar,b,\cbar)| < \e$.

  $f$ is \emph{explicitly definable} if there is a $\varphi(\xbar,y,\zbar) \in \Lsq$ and a tuple $\cbar$ such that $d(f(\abar),b) = \varphi^M(\abar,b,\cbar)$ for every $\abar \in X$ and $b \in M^{n+1}$.

  If $X = M^n$, we say that $f$ is an \emph{(explicitly) definable total function}. Otherwise it is an \emph{(explicitly) definable partial function}.\footnote{Beware that, unlike in discrete first-order logic, definable partial functions do not always extend to definable total functions in continuous logic \cite[C.1.2]{HansonThesis}.}
    
  Given a function $g : M^n \to M$, we say that $g$ is \emph{(explicitly) definable on $X$} if $g \res X$ is (explicitly) definable.
\end{defn}

When $X$ is itself definable, it is not too hard to show that $f$ is definable if and only if it is uniformly continuous and its graph is definable (in the sense of \cref{defn:defnbl-class} relative to the max metric on tuples). 

Again, while more general statements can be made (see \cite[Sec.~9]{MTFMS}), we really only need explicitly definable functions.\footnote{Note though that with definable sets, in the special context of models of $\MSE$, all definable sets are ultimately explicitly definable. It seems unlikely that this will be true for non-Lipschitz definable functions.}

\begin{lem}\label{prop:defbl-fun-sub}
  For any metric set structure $M$, formula $\varphi(\xbar,y,\zbar)$, and explicitly definable function $f(\xbar)$ with domain $X \subseteq M^n$, there is a formula $\psi(\xbar,\zbar)$ (possibly with parameters) such that for all $\abar \in X$ and $\bbar \in M$, $\psi^M(\abar,\bbar) = \varphi^M(\abar,f(\abar),\bbar)$.
\end{lem}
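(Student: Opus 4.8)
The plan is to proceed by induction on the structure of the formula $\varphi(\xbar, y, \zbar)$, replacing each occurrence of the variable $y$ with the explicitly definable function $f(\xbar)$ and tracking how the defining formula changes. The one genuinely non-trivial case is the atomic one involving $y$, namely $d(y, w)$ where $w$ is another variable (the cases $1$, $d(u,v)$ with $u,v \neq y$, and $d(y,y)$ are trivial or constant), and this is handled exactly by the definition of explicit definability: since $d(f(\abar), b) = \theta^M(\abar, b, \cbar)$ for a fixed $\theta \in \Lsq$ and parameters $\cbar$, we set $\psi_0(\xbar, w) \coloneqq \theta(\xbar, w, \cbar)$, absorbing $\cbar$ as parameters. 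Here I use that $d$ is the only predicate, so $y$ can only appear inside terms of the form $d(y, \cdot)$ or inside bounded quantifiers $\sup_{u \sqin y}$ / $\inf_{u \sqin y}$.

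The main obstacle, and the reason the statement is not completely routine, is the case of a bounded quantifier ranging over $y$ itself: an expression like $\sup_{u \sqin y}\chi(u, \xbar, y, \zbar)$, where after substitution we would want $\sup_{u \sqin f(\xbar)}(\cdots)$, but $\sup_{u \sqin f(\xbar)}$ is not a legal quantifier since $f(\xbar)$ is not a variable. To handle this I would use \cref{lem:rel-quan} together with the fact (noted after \cref{prop:def-class-char}) that in a metric set structure the set $\{u : u \sqin a\}$ is explicitly definable by the distance predicate $e(u, a)$: quantifying over $\{u : u \sqin f(\xbar)\}$ is relative quantification over the explicitly definable class with distance predicate $e(u, f(\xbar))$. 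By the inductive hypothesis applied to $e(u, y) = \inf_{z \sqin y} d(u, z)$ with $y$ replaced by $f(\xbar)$, there is a formula $\eta(u, \xbar)$ computing $e(u, f(\xbar))$; then \cref{lem:rel-quan} (used in its $\inf$ form, and its evident $\sup$-dual, obtained by negation) converts $\inf_{u \sqin f(\xbar)}\chi'$ into a genuine $\Lsq$-formula $\inf_u \min(\chi' + 2v(\chi')\eta(u,\xbar), v(\chi'))$, where $\chi'$ is the formula produced for $\chi$ by the inductive hypothesis. I also need the bound case $\sup_{u \sqin w}$ for a variable $w \neq y$: here $y$ (hence $f(\xbar)$) may still occur inside $\chi$, so one simply applies the inductive hypothesis to $\chi$ and re-applies the same quantifier.

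The remaining inductive steps — $\varphi + \psi$, $\max$, $\min$, $r \cdot \varphi$, and the ordinary quantifiers $\sup_u$, $\inf_u$ over variables $u \neq y$ — are immediate: one applies the inductive hypothesis to each subformula and reassembles with the same connective or quantifier, using that substitution commutes with these operations. One should also check that the emptiness conventions in \cref{defn:formula-evaluate} are respected, which is exactly why \cref{lem:rel-quan} is stated with $\inf \varnothing = v(\psi)$, matching the convention for $\inf_{u \sqin a}$ over an empty $a$; this makes the bounded-quantifier case go through uniformly whether or not $f(\xbar)$ has elements. Finally, since each step introduces only finitely many new parameters (the $\cbar$ from the atomic case, reused each time $y$ appears), the resulting $\psi(\xbar, \zbar)$ has parameters as claimed, and an easy induction gives $\psi^M(\abar, \bbar) = \varphi^M(\abar, f(\abar), \bbar)$ for all $\abar \in X$ and $\bbar \in M$.
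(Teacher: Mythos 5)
Your induction is more elaborate than what is needed, and its key step is circular. In the case of a bounded quantifier over $y$ itself, say $\inf_{u\sqin y}\chi$, you need a formula $\eta(u,\xbar)$ computing $e(u,f(\xbar))$, and you propose to obtain it ``by the inductive hypothesis applied to $e(u,y)=\inf_{z\sqin y}d(u,z)$ with $y$ replaced by $f(\xbar)$.'' But $e(u,y)$ is not a subformula of the formula you are processing, and it is itself a formula whose outermost construct is a bounded quantifier over $y$ --- exactly the case you are in the middle of handling. Unwinding your recipe on $\inf_{z\sqin y}d(u,z)$ asks again for a formula computing $e(\cdot,f(\xbar))$, so the recursion never bottoms out. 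To break the circle you must produce $\eta$ by some other means, and the natural way to do so is to quantify over the singleton class $\{f(\xbar)\}$ (whose distance predicate is the defining formula $\chi(\xbar,y)$ of $f$) rather than over the extension of $f(\xbar)$: e.g.\ $\eta(u,\xbar)=\inf_y\bigl(e(u,y)+2\chi(\xbar,y)\bigr)$, valid because $y\mapsto e(u,y)$ is $2$-Lipschitz by \cref{lem:formula-v}.

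Once you see that move, the entire induction evaporates: the paper's proof applies it once, globally. Since $y\mapsto\varphi^M(\abar,y,\bbar)$ is $2v(\varphi)$-Lipschitz and $\chi(\xbar,y)$ is the distance predicate of the (nonempty) singleton $\{f(\xbar)\}$, the argument of \cref{lem:rel-quan} gives directly that
\[
\psi(\xbar,\zbar)\coloneqq\inf_y\bigl(\varphi(\xbar,y,\zbar)+2v(\varphi)\,\chi(\xbar,y)\bigr)
\]
satisfies $\psi^M(\abar,\bbar)=\varphi^M(\abar,f(\abar),\bbar)$ for all $\abar\in X$. No case analysis on the shape of $\varphi$, no substitution of $f(\xbar)$ into atomic formulas or quantifier bounds, and no emptiness bookkeeping is required (the class quantified over is a singleton, so the $\min(\cdot,v(\varphi))$ truncation of \cref{lem:rel-quan} can even be dropped). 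Your structural-substitution strategy could in principle be completed by patching the circular step with exactly this device, but at that point it reduces to the one-line argument.
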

\begin{proof}
  Let $f(\xbar)$ be defined on $X$ by $\chi(\xbar,y)$ (possibly with parameters). By the same argument as in the proof of \cref{lem:rel-quan}, the formula $\inf_y\varphi(\xbar,y,\zbar) + 2v(\varphi) \chi(\xbar,y)$ is the required $\psi(\xbar,\zbar)$.
\end{proof}

A corollary of \cref{prop:defbl-fun-sub} is that compositions of explicitly definable functions are explicitly definable. (This is also true of definable functions, but we will not need it.)

It is fairly immediate that the operations we established in \cref{sec:defbl-classes} are in fact definable:

\begin{prop}\label{prop:defbl-omnibus}
  Let $M \models \MSE$. The following functions are explicitly definable.
  \begin{enumerate}
  \item $()\mapsto \varnothing^M$.
  \item $()\mapsto V^M$.
  \item $(x_0,x_1,\dots,x_{n-1}) \mapsto \{x_0,x_1,\dots,x_{n-1}\}$.
  \item $(x,y) \mapsto x\sqcup y$.
  \item $x \mapsto \overline{\bigsqcup x}$.
  \item\label{ordered-pair} $(x,y) \mapsto \langle x,y \rangle$.
  \item $x \mapsto \Pc(x)$.
  \end{enumerate}
\end{prop}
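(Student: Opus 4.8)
The strategy is to exhibit, for each of the seven operations, an explicit $\Lsq$-formula $\varphi(\xbar,y,\zbar)$ (possibly with parameters) computing $d(f(\abar),b)$, invoking the machinery already developed: the distance-predicate characterizations from \cref{cor:basic-stuff}, \cref{lem:ord-pair-max-metric}, and the power set proposition, together with \cref{prop:def-class-char} and \cref{prop:defbl-fun-sub}. The key observation making all of this work uniformly is that in a model of $\MSE$, for any $a$ the singleton distance predicate $b \mapsto e(b,\{x : x \sqin a\}) = e(b,a)$ is literally the formula $\inf_{z \sqin a}d(b,z)$, and that \cref{cor:basic-stuff} already identifies the distance predicate of each derived set as an explicit $\Lsq$-formula in the defining parameters. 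So the content is just to read off those formulas and rephrase them as definitions of the corresponding functions.

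Concretely, I would handle the items as follows. For (1) and (2): $d(\varnothing^M,b)$ is the distance predicate of the empty set, which by the proof of \cref{cor:basic-stuff}(1)/(3) is $\sup_{w \sqin b}1$ (with the empty-quantifier convention giving $0$ when $b = \varnothing^M$), i.e.\ the formula $\sup_{w\sqin b}1$; and $d(V^M,b) = e(V^M,b) \vee \sup_{w}e(w,b)$ unwinds via $H$-extensionality to $\sup_w \inf_{z\sqin b}d(w,z)$ since every $w \sqin V^M$. Actually the cleanest route for the nullary cases is: $\varnothing^M$ and $V^M$ are particular parameters of $M$, and a constant map to a fixed parameter $a$ is explicitly definable by $\varphi(y) = \sup_{w}|e(w,a) - e(w,y)| = d(a,y)$ once we note $d(x,y) = \sup_w|e(w,x)-e(w,y)|$ (the minor corollary of \cref{cor:basic-stuff}); so $d(a,y)$ as an $\Lsq$-formula with parameter $a$ does it. For (3): by $H$-extensionality $d(\{x_0,\dots,x_{n-1}\},y)$ equals $\max\bigl(\sup_{w\sqin y}\min_{i<n}d(w,x_i),\ \max_{i<n} e(x_i,y)\bigr)$, which is manifestly an $\Lsq$-formula in $x_0,\dots,x_{n-1},y$. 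For (4): $d(x\sqcup y,b)$ is the distance predicate from \cref{cor:basic-stuff}(2), namely one reads off from $e(w,x\sqcup y) = \min(e(w,x),e(w,y))$ that $d(x\sqcup y,b) = \sup_w\bigl|\min(e(w,x),e(w,y)) - e(w,b)\bigr|$, an $\Lsq$-formula. For (5): similarly from \cref{cor:basic-stuff}(4), $e(w,\overline{\bigsqcup x}) = \inf_{u\sqin x}e(w,u) = \inf_{u\sqin x}\inf_{z\sqin u}d(w,z)$, so $d(\overline{\bigsqcup x},b) = \sup_w|\inf_{u\sqin x}\inf_{z\sqin u}d(w,z) - e(w,b)|$. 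For (6): compose. The pairing map is built by finitely many applications of the singleton map, $\varnothing^M$, and unordered-pair maps, each of which is explicitly definable by the above, and compositions of explicitly definable functions are explicitly definable (corollary of \cref{prop:defbl-fun-sub}); alternatively one can invoke \cref{lem:ord-pair-max-metric} directly to write $d(\langle x,y\rangle, b)$ in terms of the distance predicate of the four-element-style set and $e(w,b)$. For (7): from the power set proposition we have $e(f,\Pc(a)) = \sup_{y\sqin f}\inf_{z\sqin a}d(y,z) = \sup_{y\sqin f}e(y,a)$, so $d(\Pc(x),b) = \sup_w\bigl|\sup_{y\sqin w}e(y,x) - e(w,b)\bigr|$ — wait, that is not quite right since $\Pc(x)$ is a set of subsets; rather one uses $H$-extensionality on $\Pc(x)$ and $b$ directly: $d(\Pc(x),b) = \max\bigl(\sup_{w\sqin b}\,e(w,\Pc(x)),\ e(b,\Pc(x))\bigr)$ where $e(\cdot,\Pc(x))$ is the explicit predicate just named; substituting gives an $\Lsq$-formula in $x$ and $b$.

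The only genuinely delicate point — and the step I expect to be the main obstacle — is (7), the power set: one must be careful that the formula one writes for $d(\Pc(x),b)$ is correct on \emph{all} pairs $(x,b)$, not just when $b$ is itself a power set, and in particular that the closedness of $\Pc(x)$ (already noted in the power set proof) is what legitimizes reading off $e(\cdot,\Pc(x))$ as the explicit predicate $\sup_{y\sqin\cdot}e(y,x)$ and then plugging it into the Hausdorff formula $d(A,B) = \max(\sup_{w\sqin B}e(w,A), e(B,A))$. For pairing (6), there is a bookkeeping subtlety in that $\langle x,y\rangle = \{\{\{x\},\varnothing\},\{\{y\}\}\}$ is a nested construction, so one either grinds through the nesting using the unordered-pair and singleton formulas repeatedly (tracking that composition preserves explicit definability) or else quotes \cref{lem:ord-pair-max-metric} to shortcut. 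Everything else is routine: each item is a one-line identification of the distance predicate as a specific $\Lsq$-formula, justified by the corresponding part of \cref{cor:basic-stuff} (or the power set proposition) together with the identity $d(u,v) = \sup_w|e(w,u)-e(w,v)|$, and then \cref{prop:defbl-fun-sub} / the composition corollary packages the nullary and composite cases.
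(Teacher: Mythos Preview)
Your approach is essentially the paper's: use the identity $d(u,v)=\sup_w|e(w,u)-e(w,v)|$ together with the distance predicates already computed in \cref{cor:basic-stuff} and the power set proposition, and handle (6) by composition. Two small clean-ups: your first formula $\sup_{w\sqin b}1$ for $d(\varnothing^M,b)$ does \emph{not} give $0$ when $b$ is empty---by the empty-quantifier convention it gives $-v(1)=-1$---so either stick with the parameter route $d(\varnothing^M,y)$ or use the paper's $1-\inf_{z\sqin y}d(z,z)$; and for (7) your \emph{first} formula $\sup_w\bigl|\sup_{y\sqin w}e(y,x)-e(w,b)\bigr|$ is already correct (it is exactly the paper's), so the ``wait, that is not quite right'' detour is unnecessary, and indeed the alternative $\max(\sup_{w\sqin b}e(w,\Pc(x)),\,e(b,\Pc(x)))$ you write afterward has the wrong second term.
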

\begin{proof} The definability of these functions are witnessed by the following formulas.
  \begin{enumerate}
  \item $d(y,\varnothing) = 1-\inf_{z \sqin y}d(z,z)$.
  \item $d(y,V) = \sup_{z}e(z,y)$.
  \item $d(y,\{x_0,x_1,\dots,x_{n-1}\}) = \sup_{z}|e(z,y) - \min(d(z,x_0),\dots,d(z,x_{n-1}))|$.
  \item $d(z,x\sqcup y) = \sup_{w}|e(w,z) - \min(e(w,x),e(w,y))|$.
  \item $d\left( y, \overline{\bigsqcup x} \right) = \sup_z\left|e(z,y) - \inf_{w \sqin x} e(z,w)\right|$.
    \addtocounter{enumi}{1}
  \item $d(y,\Pc(x)) = \sup_z\left| e(z,y) - \sup_{u\sqin z}\inf_{v\sqin x}d(u,v)\right|$. %
  \end{enumerate}
  \ref{ordered-pair} follows from the fact that $(x,y) \mapsto \langle x,y \rangle$ is a composition of other explicitly definable functions.
\end{proof}

Now finally we can return to the question of forming a Cartesian product of two sets. The relevant fact is this:

\begin{prop}[Images of definable functions]\label{prop:def-fun-im}
  Let $M \models \MSE$. For any definable function $f: X \to M$ and any $a_0,\dots,a_{n-1} \in M$, if $\bbar \in X$ for any tuple $\bbar$ satisfying $b_i\sqin a_i$ for each $i<n$, then the metric closure of $\{f(b_0,\dots,b_{n-1}) : b_0 \sqin a_0,\dots,b_{n-1}\sqin a_{n-1}\}$ is a definable class.
\end{prop}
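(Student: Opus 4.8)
The plan is to show directly that the distance predicate $y\mapsto\inf_{\abar\in D}d(y,\abar)$ of $D\coloneqq\overline{\{f(b_0,\dots,b_{n-1}):b_i\sqin a_i\text{ for }i<n\}}$ is a uniform limit of formulas with parameters; $D$ is closed by construction, so this is all that is required. The mechanism is to pull the quantifier ``$\inf$ over $D$'' back through $f$ onto the bounded quantifiers $\inf_{x_i\sqin a_i}$, in the same spirit as \cref{lem:rel-quan} and \cref{prop:defbl-fun-sub}, but using the $\e$-approximations that come with a merely definable (rather than explicitly definable) function. Before the main argument I would dispose of the degenerate case: if some $a_i$ has no elements then the image, and hence $D$, is empty, and then $e(y,\varnothing^M)$ is a formula that by the empty-bounded-quantifier convention of \cref{defn:formula-evaluate} is identically $1$, which is the distance predicate of the empty class (alternatively, just apply \cref{prop:def-class-char}). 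So assume from now on that every $a_i$ is non-empty, hence $D$ is non-empty too, and note that since passing to a metric closure does not change the distance from a point to a set, $e(y,D)=\inf\{d(y,f(\bbar)):b_i\sqin a_i\text{ for all }i<n\}$ for every $y$.

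Now fix $\e>0$. Using definability of $f$, choose $\varphi_\e(x_0,\dots,x_{n-1},y,\zbar)\in\Lsq$ and $\cbar\in M$ with $|d(f(\bbar),b)-\varphi_\e^M(\bbar,b,\cbar)|<\e$ for all $\bbar\in X$ and $b\in M$, and set
\[
  \psi_\e(y)\coloneqq\inf_{x_0\sqin a_0}\cdots\inf_{x_{n-1}\sqin a_{n-1}}\varphi_\e(x_0,\dots,x_{n-1},y,\cbar),
\]
an $\Lsq$-formula with parameters $a_0,\dots,a_{n-1},\cbar$. Since each set $\{x:x\sqin a_i\}$ is non-empty, an induction on $n$ using \cref{defn:formula-evaluate} shows the iterated bounded infimum collapses to a single one: $\psi_\e^M(b)=\inf\{\varphi_\e^M(\bbar,b,\cbar):b_i\sqin a_i\text{ for all }i<n\}$. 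By the hypothesis of the proposition every such $\bbar$ lies in $X$, so the estimate for $f$ applies to each term of this infimum, and taking infima yields $|\psi_\e^M(b)-e(b,D)|\le\e$ for all $b\in M$. Letting $\e\to0$, $e(\cdot,D)$ is a uniform limit of formulas with parameters, so $D$ is a definable class.

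I expect the only genuine subtleties to be two. First, the behaviour of bounded quantifiers over empty sets forces the empty-$a_i$ case to be handled separately rather than absorbed into the main computation. Second --- and this is where the hypothesis on $X$ is used essentially --- for a definable (as opposed to explicitly definable) $f$ the witnessing inequality holds only on $X$, so one must know that every relevant tuple $(b_0,\dots,b_{n-1})$ with $b_i\sqin a_i$ already lies in $X$ before applying the inequality inside the infimum. The collapse of the iterated infimum and the closure-invariance of point-to-set distance are routine and I would not belabour them.
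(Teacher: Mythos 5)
Your proof is correct and takes essentially the same route as the paper's, which simply writes down the iterated bounded infimum $\inf_{x_0\sqin a_0}\dots\inf_{x_{n-1}\sqin a_{n-1}}d(x,f(x_0,\dots,x_{n-1}))$, observes it is the distance predicate of the class in question, and converts it to a formula via \cref{prop:defbl-fun-sub}. Your $\e$-approximation version is in fact the more careful one: the paper's citation of \cref{prop:defbl-fun-sub} literally covers only explicitly definable $f$ even though the proposition is stated for definable $f$, and you also dispose of the empty-$a_i$ case, which the paper's one-line argument glosses over (and where the empty-quantifier convention would otherwise return $v$ of the substituted formula rather than $1$).
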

\begin{proof}
  $\inf_{x_0 \sqin a_0}\dots \inf_{x_{n-1} \sqin a_{n-1}}d(x,f(x_0,\dots,x_{n-1}))$ is clearly the distance predicate of the class in question. By \cref{prop:defbl-fun-sub}, this is equivalent to a formula.
\end{proof}

\begin{cor}[Cartesian products]\label{cor:Cart-prod}
  Let $M \models \MSE$. For any $a,b \in M$, there is a $c \in M$ such that $f \sqin c$ if and only if $f = \langle g,h \rangle$ for some $g \sqin a$ and $h \sqin b$.
\end{cor}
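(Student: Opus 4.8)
The plan is to read $c$ off directly from \cref{prop:def-fun-im} applied to the pairing function. By \cref{prop:defbl-omnibus} (item 6), the map $(x,y)\mapsto \langle x,y\rangle$ is an explicitly definable \emph{total} function $M^2\to M$, so its domain contains every pair $(g,h)$ with $g\sqin a$ and $h\sqin b$ vacuously. \cref{prop:def-fun-im} then says that the metric closure $D$ of the class
\[
  P \coloneqq \{\langle g,h\rangle : g\sqin a,~h\sqin b\}
\]
is a definable class, and \cref{prop:def-class-char} yields an element of $M$ — call it $c$ — with $\{f\in M : f\sqin c\} = D$.

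It remains to verify that $D = P$, i.e.\ that $P$ is already closed, so that the elements of $c$ are exactly the ordered pairs $\langle g,h\rangle$ with $g\sqin a$ and $h\sqin b$ and nothing else. If $a = \varnothing^M$ or $b = \varnothing^M$, then $P = \varnothing$ is closed and there is nothing to prove. Otherwise, suppose $\langle g_n,h_n\rangle \to p$ in $M$ with each $g_n\sqin a$ and $h_n\sqin b$. By \cref{lem:ord-pair-max-metric}, $d(\langle g_m,h_m\rangle,\langle g_n,h_n\rangle) = \max(d(g_m,g_n),d(h_m,h_n))$, so $(g_n)_{n\in\Nb}$ and $(h_n)_{n\in\Nb}$ are Cauchy; let $g = \lim_n g_n$ and $h = \lim_n h_n$, which exist since $(M,d)$ is complete. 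Applying \cref{lem:ord-pair-max-metric} again gives $\langle g_n,h_n\rangle \to \langle g,h\rangle$, hence $p = \langle g,h\rangle$, and since $\sqin$ is closed we get $g\sqin a$ and $h\sqin b$, so $p\in P$. Thus $P$ is closed, $D = P$, and $c$ has the asserted property.

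The argument is essentially mechanical once \cref{prop:def-fun-im} is available; the only point requiring any care is the passage from ``the closure of $P$ is definable'' to ``$P$ itself is the extension of $c$,'' and \cref{lem:ord-pair-max-metric} disposes of this at once by exhibiting the pairing map as an isometry on $\{x:x\sqin a\}\times\{y:y\sqin b\}$, whose image of a complete (hence closed) set is again complete, hence closed.
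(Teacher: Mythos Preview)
Your proof is correct and follows essentially the same route as the paper: apply \cref{prop:def-fun-im} to the explicitly definable pairing map, then invoke \cref{prop:def-class-char} once you know the image class is already closed. The paper handles closedness by pointing back to the remark after \cref{lem:ord-pair-max-metric}; you spell this out in full, including the use of closedness of $\sqin$ to conclude $g\sqin a$ and $h\sqin b$, which is exactly the detail that remark suppresses.
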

\begin{proof}
By \cref{prop:def-fun-im}, the metric closure of $a \times b \coloneqq \{\langle x,y \rangle : x \sqin a,~y\sqin b\}$ is a definable class. By the discussion at the end of \cref{sec:defbl-classes}, $a \times b$ is already metrically closed, so we have that it is a definable class. By \cref{prop:def-class-char}, we have that the required $c$ exists. 
\end{proof}

We will write $a \times b$ for the set whose existence is established in \cref{cor:Cart-prod}.

One thing to note is that the proof of \cref{cor:Cart-prod} actually establishes that the function $(x,y) \mapsto x \times y$ is explicitly definable as witnessed by the formula
\(
 \inf_{w \sqin x}\inf_{u \sqin y}d(z,\langle w,u \rangle).
\)

It is occasionally useful to be able to project sets of ordered pairs onto their coordinates. Since the projection function $\pi_0(\langle x,y \rangle) = x$ is only partially defined, this is the first time we need the added generality of being able to talk about definable partial functions.

\begin{prop}\label{prop:proj-is-defbl}
  Let $M \models \MSE$. Let $\pi_0$ and $\pi_1$ be the functions on the class of ordered pairs defined by $\pi_0(\langle x,y \rangle) =x$ and $\pi_1(\langle x,y \rangle) = y$. $\pi_0$ and $\pi_1$ are explicitly definable.
\end{prop}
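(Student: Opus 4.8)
The plan is to realize $\pi_0$ and $\pi_1$ as compositions of a few explicitly definable partial functions, exploiting the concrete shape of the Wiener pair: $\langle a,b \rangle = \{A,B\}$ with $A = \{\{a\},\varnothing\}$ and $B = \{\{b\}\}$. The workhorse is the partial function $u$ that unpacks a singleton, $u(\{a\}) = a$; this is explicitly definable since for any singleton $s$ we have $d(u(s),x) = e(x,s)$, and $e(x,s) = \inf_{z\sqin s}d(x,z)$ is an $\Lsq$-formula. Throughout, I use that $\varnothing$ is an explicitly definable constant (\cref{prop:defbl-omnibus}), so that by \cref{prop:defbl-fun-sub} expressions such as $d(\varnothing,z)$ and $e(\varnothing,y)$ are realized by $\Lsq$-formulas; concretely $d(\varnothing,z)$ is $1-\inf_{w\sqin z}d(w,w)$, so that $d(\varnothing,z)=1$ for every nonempty $z$ and $d(\{x\},\varnothing)=1$ always.

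Next I would show that the two ``side selectors'' on the class of ordered pairs are explicitly definable. Since $\varnothing\sqin A$ but $d(\varnothing, z) = 1$ for the unique $z\sqin B$, we have $e(\varnothing,A)=0$ and $e(\varnothing,B)=1$. Set $g_0(p)$ to be the $y\sqin p$ with $\varnothing\sqin y$ (so $g_0(\langle a,b\rangle)=A$) and $g_1(p)$ the $y\sqin p$ with $\varnothing\not\sqin y$ (so $g_1(\langle a,b\rangle)=B$); these are witnessed on ordered pairs $p=\{A,B\}$ by
\[
  d(g_0(p),x) = \inf_{y\sqin p}\max\!\left(d(x,y),\, e(\varnothing,y)\right),\qquad d(g_1(p),x) = \inf_{y\sqin p}\max\!\left(d(x,y),\, 1 - e(\varnothing,y)\right),
\]
because on the unwanted branch the second entry of the $\max$ equals $1\ge d(x,\cdot)$ while on the wanted branch it is $0$, so the right-hand sides collapse to $d(x,A)$ and $d(x,B)$ respectively. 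Similarly, on sets of the form $A = \{\{a\},\varnothing\}$, the selector $h(A) \coloneqq$ the $z\sqin A$ with $z\neq\varnothing$ is explicitly definable via $d(h(A),x) = \inf_{z\sqin A}\max(d(x,z),\, 1 - d(z,\varnothing))$, which evaluates to $d(x,\{a\})$ for the same reason.

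Finally I would assemble these. The function $g_0$ carries an ordered pair $\langle a,b\rangle$ to $A=\{\{a\},\varnothing\}$, on which $h$ is defined and returns the singleton $\{a\}$, on which $u$ is defined and returns $a$; likewise $g_1$ carries $\langle a,b\rangle$ to the singleton $B=\{\{b\}\}$, and two applications of $u$ return $b$. Hence $\pi_0 = u\circ h\circ g_0$ and $\pi_1 = u\circ u\circ g_1$ on the class of ordered pairs, and since compositions of explicitly definable functions are explicitly definable (a corollary of \cref{prop:defbl-fun-sub}), both $\pi_0$ and $\pi_1$ are explicitly definable. I do not anticipate a genuine obstacle here: the only points needing care are checking that each selector formula really isolates the intended element — which rests entirely on $d(\varnothing,z)=1$ for nonempty $z$ and $d(\{x\},\varnothing)=1$, forcing the unwanted branch to contribute the maximal value — and confirming that each intermediate output lands in the domain of the next function in the composition, which is immediate from the explicit form of the Wiener pair.
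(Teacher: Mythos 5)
Your proof is correct, but it takes a genuinely different route from the paper's. The paper's proof is a one-liner: it witnesses $\pi_0$ and $\pi_1$ by the formulas $\inf_z d(\langle y,z \rangle,x)$ and $\inf_z d(\langle z,y \rangle,x)$, where $d(\langle y,z\rangle,x)$ is itself a formula because the pairing function is explicitly definable (\cref{prop:defbl-omnibus}); correctness is then immediate from \cref{lem:ord-pair-max-metric}, since for $x=\langle a,c\rangle$ one gets $\inf_z\max(d(y,a),d(z,c))=d(y,a)$. In other words, the paper ``re-pairs and matches'' from the outside, and the argument works for any pairing scheme satisfying the max-metric identity. You instead open the Wiener pair up from the inside, building selector functions $g_0$, $g_1$, $h$, $u$ that peel off one layer at a time, and invoke closure of explicit definability under composition. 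Your selector formulas are all verified correctly: the key computations ($e(\varnothing,A)=0$ versus $e(\varnothing,B)=1$, and $d(z,\varnothing)=1$ for nonempty $z$ forcing the unwanted branch of each $\max$ to the value $1$) are exactly right, and the intermediate outputs do land in the successive domains. What your approach buys is independence from \cref{lem:ord-pair-max-metric} (you only need the crude fact that $\varnothing$ is at distance $1$ from everything nonempty) and a reusable toolkit of ``tag-based'' selectors; what it costs is length and a dependence on the specific internal anatomy of the Wiener pair, whereas the paper's proof makes clear that the max-metric property is the only feature of the pairing that matters.
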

\begin{proof}
  This is witnessed by the formulas $\inf_zd(\langle y,z \rangle,x)$ and $\inf_zd(\langle z,y \rangle,x)$. 
\end{proof}

The following facts will also be useful.

\begin{lem}\label{lem:uniformly-definable-is-definable-function}
  Fix closed sets $X \subseteq M^n$ and $Y\subseteq M^{n+1}$. Suppose that there is a formula $\varphi(\xbar,y)$ such that for every $\abar \in X$ and $b \in M$, $\varphi^M(\abar,b) = e(b,\{y : (\abar,y) \in Y\})$. Then there is an explicitly definable function $f: X \to M$ such that for every $\abar \in X$, $f(\abar)$ is coextensive with $\{y : (\abar,y) \in Y\}$.
\end{lem}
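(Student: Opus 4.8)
The plan is to use the hypothesis together with the excision scheme to build, for each $\abar \in X$, a Cauchy sequence whose limit is coextensive with $D_{\abar} \coloneqq \{y : (\abar,y) \in Y\}$, and to do this in a way that is uniform enough in $\abar$ to witness explicit definability. First I would note that, since $\varphi(\xbar,y)$ computes $e(b, D_{\abar})$ exactly, each $D_{\abar}$ is (nonempty, since $e$ is $[0,1]$-valued and must be attained as a limit, or else $D_{\abar} = \varnothing$ and $\varphi^M(\abar,\cdot) \equiv 1$) a definable class uniformly in $\abar$; in particular, by \cref{lem:rel-quan} we may quantify over $y \in D_{\abar}$ inside formulas with parameter $\abar$. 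The idea is then to run the argument in the proof of \cref{prop:def-class-char} ``with a parameter.''

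Concretely, for each $n$ set $f_n(\abar) = \exc{y}{\varphi(y,\abar)}{2^{-n}}{2^{-n+1}}$, which exists by excision. As in \cref{prop:def-class-char}, if $c \in D_{\abar}$ then $\varphi^M(c,\abar) = 0 \leq 2^{-n}$ so $c \sqin f_n(\abar)$, and if $c \sqin f_n(\abar)$ then $e(c, D_{\abar}) = \varphi^M(c,\abar) < 2^{-n+1}$; hence $d_{\HH}(\{y : y \sqin f_n(\abar)\}, D_{\abar}) \leq 2^{-n+1}$, so $(f_n(\abar))_{n}$ is Cauchy and its limit $f(\abar)$ is coextensive with $D_{\abar}$ (using that $D_{\abar}$ is closed and the completeness of $M$). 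This defines the function $f : X \to M$ with the required property; what remains is to check that $f$ is explicitly definable. For this I would exhibit the distance predicate $b \mapsto d(f(\abar), b)$ directly: by $\HH$-extensionality together with \cref{cor:basic-stuff} (the singleton fact, giving $d(x,y) = \sup_z |e(x,z) - e(y,z)|$), we have
\[
  d(f(\abar), b) = \sup_z \left| e(z, f(\abar)) - e(z,b) \right|,
\]
and since $f(\abar)$ is coextensive with $D_{\abar}$, $e(z, f(\abar)) = e(z, D_{\abar}) = \varphi^M(\abar, z)$. Thus $d(f(\abar), b) = \left(\sup_z |\varphi(\abar,z) - e(z,y)|\right)^M$ evaluated at $y = b$, which is the value of an $\Lsq$-formula in $\abar$ and $b$; this is precisely explicit definability.

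The main obstacle I expect is the careful verification that the sequence $(f_n(\abar))_n$ is genuinely Cauchy uniformly in $\abar$ — i.e., that the bound $d(f_n(\abar), f_{n+1}(\abar)) \leq d_{\HH}(\{y : y \sqin f_n(\abar)\}, D_{\abar}) + d_{\HH}(\{y : y \sqin f_{n+1}(\abar)\}, D_{\abar}) \leq 2^{-n+1} + 2^{-n}$ holds — and that the limit function's graph really does match $Y$ rather than just its closure. But since the hypothesis hands us the exact distance predicate $e(z, D_{\abar})$, these points go through exactly as in \cref{prop:def-class-char}; the only genuinely new content is bookkeeping the parameter $\abar$ through, and that is routine because all the formulas involved ($\varphi$, excision instances, and the final $\sup_z$ expression) carry $\abar$ transparently. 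A secondary subtlety is handling the case $D_{\abar} = \varnothing$: then $\varphi^M(\abar, \cdot) \equiv 1$ is forced, so $f_n(\abar)$ contains no $c$ with $\varphi^M(c,\abar) \geq 1$, i.e. nothing, so $f_n(\abar) = \varnothing^M$ for all $n$ and $f(\abar) = \varnothing^M$ is coextensive with $D_{\abar}$; the distance-predicate computation above is unaffected since $e(z, \varnothing^M) = 1 = \varphi^M(\abar,z)$ in that case.
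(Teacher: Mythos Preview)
Your proof is correct and lands on the same witnessing formula the paper uses: $\psi(\xbar,z)=\sup_y|e(y,z)-\varphi(\xbar,y)|$ (the paper writes $\inf_y$, which appears to be a typo, since $d(f(\abar),z)=\sup_y|e(y,f(\abar))-e(y,z)|$ by $\HH$-extensionality). The paper's proof is just that one line; the existence of $f(\abar)$ is implicitly taken from \cref{prop:def-class-char}, which you instead re-derive by hand with the Cauchy-sequence argument.

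One remark: your identified ``main obstacle'' is a red herring. You do not need the sequence $(f_n(\abar))_n$ to be Cauchy \emph{uniformly} in $\abar$, nor do you need the excision witnesses $f_n(\abar)$ to depend definably on $\abar$. All you need is that for each fixed $\abar$ \emph{some} element $f(\abar)$ coextensive with $D_{\abar}$ exists; once it does, explicit definability is carried entirely by the formula $\sup_z|\varphi(\xbar,z)-e(z,y)|$, which already depends only on $\varphi$. So the construction of $f$ is purely pointwise, and the second half of your write-up (the formula computation) is the whole proof.
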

\begin{proof}
  The formula $\psi(\xbar,z) = \inf_{y}|e(y,z)-\varphi(\xbar,y)|$ witnesses that the required function is explicitly definable.
\end{proof}

\begin{lem}\label{lem:im-of-def-is-def}
  If $f: a \to M$ is an explicitly definable partial function on some set $a$, then the map $x \mapsto \overline{\{f(y) : y \sqin x\}}$ is an explicitly definable partial function on the set $\Pc(a)$.
\end{lem}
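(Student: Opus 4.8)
The plan is to reduce to \cref{lem:uniformly-definable-is-definable-function}. Since $f$ is explicitly definable, fix a formula $\chi(y,z)$ (parameters suppressed) with $\chi^M(y,z)=d(f(y),z)$ for all $y\sqin a$ and $z\in M$. For $x\in\Pc(a)$ we have $x\sqsubseteq a$, so every $y\sqin x$ lies in the domain of $f$ and the closed set $D(x)\coloneqq\overline{\{f(y):y\sqin x\}}$ is well defined; the content of the lemma is that $x\mapsto D(x)$ is explicitly definable, where of course we must also see that each $D(x)$ is the extension of some (necessarily unique) element of $M$.

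The crux is that $D$ has a uniform distance predicate. For $x\in\Pc(a)$ and $z\in M$, if $x$ has an element then $e(z,D(x))=\inf_{y\sqin x}d(z,f(y))=(\inf_{y\sqin x}\chi(y,z))^M$ (point-set distance is insensitive to closures), while $e(z,D(\varnothing))=e(z,\varnothing)=1$. Since $M$ contains $V^M$ and $\varnothing^M$ at distance $1$, the metric attains $1$, so $v(\chi)\ge 1$ (if instead the domain of $f$ is empty the statement is immediate, as then $\Pc(a)=\{\varnothing^M\}$ and the map is constant). Hence the empty-set convention for bounded quantifiers makes $\varphi(x,z)\coloneqq\min(\inf_{y\sqin x}\chi(y,z),1)$ satisfy $\varphi^M(x,z)=e(z,D(x))$ for all $x\in\Pc(a)$ and $z\in M$. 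Put $Y\coloneqq\{(x,z)\in\Pc(a)\times M:z\in D(x)\}$; as each $D(x)$ is closed, $z\in D(x)$ iff $e(z,D(x))=0$, so $Y=\{(x,z)\in\Pc(a)\times M:\varphi^M(x,z)=0\}$, which is closed by continuity of $\varphi^M$ (\cref{lem:formula-v}). Applying \cref{lem:uniformly-definable-is-definable-function} with $X=\Pc(a)$, $n=1$, this $Y$, and $\varphi$ gives an explicitly definable $g: \Pc(a)\to M$ with $g(x)$ coextensive with $\{z:(x,z)\in Y\}=D(x)$; in particular each $D(x)$ is realized, and by $\HH$-extensionality $g$ is precisely the map we wanted.

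I expect the only real obstacle to be the bookkeeping around empty-set conventions (both for bounded quantification over $\varnothing$ and for $e(\,\cdot\,,\varnothing)$), which is what forces the clip by $1$ and the aside about the domain of $f$; the rest is a routine application of the substitution machinery, together with \cref{prop:def-class-char} invoked implicitly through \cref{lem:uniformly-definable-is-definable-function}. If one prefers not to invoke that lemma, the same $\varphi$ works directly: $\psi(x,w)\coloneqq\sup_z\lvert\varphi(x,z)-e(z,w)\rvert$ then satisfies $\psi^M(x,w)=\sup_z\lvert e(z,D(x))-e(z,w)\rvert=d(D(x),w)$ for all $x\in\Pc(a)$ and $w\in M$ (reading $d(D(x),w)$ via the realizer of $D(x)$).
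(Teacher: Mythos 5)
Your proof is correct and follows essentially the same route as the paper: substitute the defining formula of $f$ under a bounded infimum over $x$ to obtain a uniform distance predicate for $\overline{\{f(y):y\sqin x\}}$, then invoke \cref{lem:uniformly-definable-is-definable-function}. Your extra bookkeeping for the case $x=\varnothing$ (the clip by $1$ and the check that $v(\chi)\geq 1$) is a refinement that the paper's one-line proof silently elides.
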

\begin{proof}
  Let $\varphi(y,z)$ be a formula (possibly with parameters) such that $\varphi^M(b,c) = d(b,f(c))$ for all $b\sqin a$ and $c \in M$. We now have that for any $g\sqsubseteq a$, $\inf_{z \sqin g}\varphi(y,z)$ is the distance predicate of $\ol{\{f(y):y \sqin g\}}$. Therefore the required function is definable by \cref{lem:uniformly-definable-is-definable-function}.
\end{proof}

\subsection{Quotients by discrete equivalence relations}
\label{sec:disc-eq-reln}

A common technique is passing from an equivalence relation to its set of equivalence classes. We are able to do this for discrete equivalence relations.

\begin{defn}\label{defn:disc-eq-reln}
Fix a metric set structure $M$. Given a set $a \in M$, a formula $\varphi(x,y)$ is a \emph{discrete equivalence relation on $a$} if for all $b,c \sqin a$, $\varphi^M(b,c)$ is either $0$ or $1$ and $\varphi^M(x,y) = 1$ is an equivalence relation on $\{b \in M : b \sqin a\}$.
\end{defn}

First we need a small observation.

\begin{lem}\label{lem:eq-class-dist-pred}
  Let $M \models \MSE$. If $\varphi(x,y)$ is a discrete equivalence relation on $a \in M$, then for any $b \sqin a$,
  \[
\psi(x,b) \coloneqq \inf_{y \sqin a}\max(\varphi(y,b),d(x,y))
\]
is the distance predicate of the $\varphi$-equivalence class of $b$.\hfill $\qed$ 
\end{lem}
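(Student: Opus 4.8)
The goal is to show that $\psi(x,b) = \inf_{y \sqin a}\max(\varphi(y,b), d(x,y))$ computes the point-set distance from $x$ to the $\varphi$-equivalence class of $b$, i.e.\ to the closed set $C_b \coloneqq \{z \in M : z \sqin a,\ \varphi^M(z,b) = 1\}$ (which is closed since $\sqin$ is closed and $\varphi^M$ is continuous by \cref{lem:formula-v}, with the discreteness hypothesis making $\{z \sqin a : \varphi^M(z,b) = 1\}$ relatively clopen in $\{z : z \sqin a\}$). First I would note that $C_b$ is nonempty, since $b \sqin a$ and $\varphi^M(b,b) = 1$ by reflexivity of the equivalence relation, so the bounded quantifier $\inf_{y \sqin a}$ is not over the empty set and we may ignore the empty-set convention in \cref{defn:formula-evaluate}. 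Fix an arbitrary $c \in M$; I want $\psi^M(c,b) = e(c, C_b) = \inf\{d(c,z) : z \in C_b\}$.

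For the inequality $\psi^M(c,b) \leq e(c,C_b)$: for any $z \in C_b$ we have $z \sqin a$ and $\varphi^M(z,b) = 1 \neq 0$, so discreteness forces... wait—I actually want $\varphi^M(z,b)$ to contribute $0$ here, not $1$. So I should be careful about the convention: the hypothesis is that $\varphi^M(x,y) = 1$ \emph{is} the equivalence relation, meaning $z \mathrel{\sim} b \Leftrightarrow \varphi^M(z,b) = 1$, and since $\varphi^M(z,b) \in \{0,1\}$ for $z \sqin a$, we get $z \in C_b \Leftrightarrow \varphi^M(z,b) = 1$ and $z \sqin a \setminus C_b \Leftrightarrow \varphi^M(z,b) = 0$. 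Hmm, then $\max(\varphi(y,b),d(x,y))$ is at least $1$ whenever $y \in C_b$, which is the wrong direction. So the correct reading must be that $\varphi$ is written so that $\varphi^M(z,b)=0$ means $z \sim b$; I would state at the outset that under \cref{defn:disc-eq-reln} the relation $\varphi^M(x,y)=1$ being an equivalence relation, together with the Leibniz-style conventions of the paper, is used with $\varphi$ normalized so that $\varphi^M(b,b)=0$, i.e.\ replacing $\varphi$ by $1-\varphi$ if necessary; equivalently I take $C_b = \{z \sqin a : \varphi^M(z,b) = 0\}$. Granting that, the argument is: for $z \in C_b$, $\max(\varphi^M(z,b), d(c,z)) = \max(0, d(c,z)) = d(c,z)$, so $\psi^M(c,b) = \inf_{y \sqin a}\max(\varphi^M(y,b),d(c,y)) \leq d(c,z)$; taking the infimum over $z \in C_b$ gives $\psi^M(c,b) \leq e(c,C_b)$.

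For the reverse inequality $\psi^M(c,b) \geq e(c,C_b)$: take any $y \sqin a$. If $y \in C_b$, then $\max(\varphi^M(y,b), d(c,y)) = d(c,y) \geq e(c,C_b)$. If $y \notin C_b$, then $y \sqin a$ forces $\varphi^M(y,b) = 1$ by discreteness (the only other allowed value), so $\max(\varphi^M(y,b), d(c,y)) \geq 1 \geq e(c,C_b)$ since the metric is $[0,1]$-valued. In either case the term is $\geq e(c,C_b)$, so the infimum over $y \sqin a$ is $\geq e(c,C_b)$. Combining the two inequalities gives $\psi^M(c,b) = e(c,C_b)$ for all $c$, which is exactly the assertion that $\psi(x,b)$ is the distance predicate of $C_b$. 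The only real subtlety — and the thing I would flag explicitly in the writeup — is the normalization convention for $\varphi$ versus $1-\varphi$ in \cref{defn:disc-eq-reln}; once that is pinned down the proof is a two-line case split, which is presumably why the original statement is given with proof omitted ($\qed$ inline).
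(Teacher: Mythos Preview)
The paper gives no proof for this lemma (it is marked with an inline $\qed$), so there is nothing to compare against; your two-inequality case split is exactly the intended argument and is correct. You are also right to flag the normalization issue: as written, \cref{defn:disc-eq-reln} says $\varphi^M(x,y)=1$ is the equivalence relation, but the formula $\inf_{y\sqin a}\max(\varphi(y,b),d(x,y))$ only yields the distance predicate of the class $\{z\sqin a:\varphi^M(z,b)=0\}$, so either the definition or the lemma tacitly uses $1-\varphi$ in place of $\varphi$. This appears to be a minor inconsistency in the paper rather than a gap in your reasoning.
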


\begin{prop}\label{prop:disc-eq-reln-quot}
  Let $M \models \MSE$. If $\varphi(x,y)$ is a discrete equivalence relation on $a \in M$, then there is a $b \in M$ containing precisely the $\varphi$-equivalence classes of $a$.

  Moreover, if $c$ is a set such that $\varphi(x,y)$ is a discrete equivalence relation on every $a \sqin c$, then the map taking $a$ to the set of $\varphi$-equivalence classes of $a$ is an explicitly definable partial function on $c$.
\end{prop}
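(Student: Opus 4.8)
The plan is to build the set of equivalence classes as the metric closure of the image of $a$ under the explicitly definable partial map sending $b \sqin a$ to (a set coextensive with) its $\varphi$-equivalence class, and then to use the uniformly definable function machinery to get definability over the family $c$. First I would invoke \cref{lem:eq-class-dist-pred}: for $b \sqin a$, the formula $\psi(x,b) = \inf_{y \sqin a}\max(\varphi(y,b),d(x,y))$ is the distance predicate of the $\varphi$-class of $b$. By \cref{lem:uniformly-definable-is-definable-function} (applied with $X$ the set of $b$ with $b \sqin a$, packaged as a closed subset of $M$, and $Y$ the graph of the class-assignment), there is an explicitly definable partial function $g$ on $a$ with $g(b)$ coextensive with the $\varphi$-class of $b$ for each $b \sqin a$. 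Then \cref{lem:im-of-def-is-def} applied to $g$ shows that $x \mapsto \overline{\{g(y) : y \sqin x\}}$ is an explicitly definable partial function on $\Pc(a)$; evaluating at $x = a$ (note $a \sqsubseteq a$, so $a \sqin \Pc(a)$) gives a set $b$. Since the map $g$ lands exactly in the $\varphi$-classes of $a$ and the class of $b$ runs over all of them, and since there are only finitely many classes up to the discrete metric being $\{0,1\}$-valued — actually one should just observe that distinct $\varphi$-classes are at distance $\ge$ some fixed amount only if they are genuinely far apart, but in general two non-equivalent elements can still be metrically close, so the set $\{g(y):y\sqin a\}$ need not be metrically closed — here the closure in \cref{lem:im-of-def-is-def} is harmless: any limit of $\varphi$-classes is itself a closed set, and by an $\HH$-extensionality argument it will again be a $\varphi$-class of $a$ (a Cauchy sequence of classes $g(b_n)$ forces $d(b_n,b_m)$ small only after passing to the closure of $a$; since $a$ is arbitrary here and $g(b_n) \to g(b_n)$'s limit is coextensive with some element, one checks it is the class of a limit point $b_\infty$ of the $b_n$, using continuity of $\psi$). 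Thus $b$ contains precisely the $\varphi$-equivalence classes of $a$.

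For the "moreover" clause, I would redo the same construction with $c$ in place of a single $a$. The key point is that the formula $\psi(x,y)$ from \cref{lem:eq-class-dist-pred} does not depend on $a$ — it only uses the bounded quantifier $\inf_{y \sqin a}$ — so for each fixed $a \sqin c$ the same $\psi$ works. Hence one gets a single formula $\chi(a,b,x)$ (namely $\inf_{y \sqin a}\max(\varphi(y,b),d(x,y))$, viewing $a$ now as a variable) which, for $a \sqin c$ and $b \sqin a$, is the distance predicate of $g_a(b)$. By \cref{lem:uniformly-definable-is-definable-function} again (now with an extra parameter block ranging over $c$), the map $(a,b) \mapsto g_a(b)$, defined on $\{(a,b) : a \sqin c,\ b \sqin a\}$, is explicitly definable. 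This family is closed and of the form required by \cref{prop:def-fun-im}/\cref{lem:im-of-def-is-def}, so composing with the "image of a definable partial function on elements" operation yields that $a \mapsto \overline{\{g_a(b) : b \sqin a\}}$ is an explicitly definable partial function on $c$, which is exactly the assignment sending $a$ to its set of $\varphi$-equivalence classes.

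The main obstacle I expect is verifying that the metric closure introduced by \cref{lem:im-of-def-is-def} does not add spurious elements — i.e., that a Hausdorff-limit of $\varphi$-equivalence classes of $a$ is again a $\varphi$-equivalence class of $a$. The cleanest way around this is to avoid the issue entirely: rather than taking an image-closure, observe directly that the class $E(a) \coloneqq \{y \in M : y$ is coextensive with some $\varphi$-class of $a\}$ is already metrically closed, by the following argument. If $y_n \to y$ with each $y_n$ the $\varphi$-class of some $b_n \sqin a$, then by $\HH$-extensionality $d(y_n,y_m) = d_{\HH}$ of the corresponding classes; since $\varphi$ is $\{0,1\}$-valued on $a$, two distinct classes have Hausdorff distance $1$ unless they contain metrically close representatives, and a short compactness/continuity argument using $2v(\varphi)$-Lipschitzness of $\varphi$ forces the sequence $b_n$ to (sub)converge, with the limit's class being coextensive with $y$. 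Granting closedness of $E(a)$, one shows $e(x,E(a))$ equals the formula $\inf_{w \sqin a}\psi(x,w)$ (using $\psi$ from \cref{lem:eq-class-dist-pred} with $x$ and the class-representative both quantified appropriately), which is manifestly an $\Lsq$-formula, so $E(a)$ is an explicitly definable class and \cref{prop:def-class-char} produces $b$; the uniform version then follows by carrying $a$ as a free variable throughout, exactly as above.
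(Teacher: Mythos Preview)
Your overall approach matches the paper's: use \cref{lem:eq-class-dist-pred} to get an explicitly definable map $b \mapsto g(b)$ sending each $b \sqin a$ to its $\varphi$-class, apply \cref{prop:def-fun-im} to get the closure of the image as a set, then argue that the image is already closed; and for the ``moreover'' clause, carry $a$ as a free variable and invoke \cref{lem:uniformly-definable-is-definable-function}. That is exactly what the paper does (in fewer words).

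The one genuine gap is your closedness argument. You correctly identify $2v(\varphi)$-Lipschitzness of $\varphi$ as the key ingredient, but you then reach for compactness and (sub)convergence of the representatives $b_n$, neither of which is available: $\{x : x \sqin a\}$ need not be compact, and the $b_n$ need not converge to anything. The clean argument is much simpler and bypasses all of this. Since $\varphi$ is $2v(\varphi)$-Lipschitz and $\{0,1\}$-valued on $a$, any two $b,c \sqin a$ with $d(b,c) < 1/(2v(\varphi))$ must be $\varphi$-equivalent (as $|\varphi(b,b)-\varphi(b,c)| < 1$ forces $\varphi(b,c)$ to take the same value as $\varphi(b,b)$). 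Hence any two \emph{distinct} $\varphi$-classes of $a$ are at Hausdorff distance $\geq 1/(2v(\varphi))$ from one another: the set $\{g(b) : b \sqin a\}$ is uniformly discrete in $M$. A Cauchy sequence in a uniformly discrete set is eventually constant, so this set is closed, and no spurious elements are added by the closure in \cref{prop:def-fun-im}. Your claim that ``two distinct classes have Hausdorff distance $1$ unless they contain metrically close representatives'' is not correct as stated and is not what is needed; the uniform lower bound $1/(2v(\varphi))$ is the whole point. The paper's own proof is admittedly cryptic here (it just cites \cref{lem:eq-class-dist-pred} for closedness), so you were right to flag this as the step requiring care.
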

\begin{proof}
  The formula in \cref{lem:eq-class-dist-pred} defines a partial function on $a$ that maps elements to their $\varphi$-equivalence classes. Therefore, by \cref{prop:def-fun-im}, the closure of the class of $\varphi$-equivalence classes of $a$ is a set in $M$. By \cref{lem:eq-class-dist-pred}, the class of $\varphi$-equivalence classes is closed, so we are done.

  The `Moreover' statement follows from \cref{lem:uniformly-definable-is-definable-function}.
\end{proof}

\section{Uniformly discrete sets and ordinary mathematics}
\label{sec:disc-set}

A common feature of models of set theories that implement some kind of nearly unrestricted comprehension is that they have a class of tame sets in which unrestricted separation is consistent. In $\NFU$, the strongly cantorian sets are well behaved in this way, and in $\GPK^+_\infty$, the closed sets of isolated points are likewise well behaved. In the context of $\MSE$, the analogously well-behaved class seems to be that of the \emph{uniformly discrete} sets.

\begin{defn}
  In a metric set structure $M$, an element $a \in M$ is \emph{$\e$-discrete} if for any $b,c \sqin a$, either $b=c$ or $d(b,c) \geq \e$. $a$ is \emph{uniformly discrete} if it is $\e$-discrete for some $\e > 0$.
\end{defn}

\begin{defn}
  For any $a$ and $b$, the \emph{disjoint union of $a$ and $b$}, written $a \boxplus b$, is $(a\times \{\varnothing\})\sqcup(b\times \{\{\varnothing\}\})$.
\end{defn}

Note that it is immediate that $(x,y) \mapsto x\boxplus y$ is an explicitly definable function.

\begin{lem}\label{lem:unif-disc-prod-power}
  Let $M \models \MSE$. If $a,b \in M$ are $\e$-discrete, then $a \boxplus b$, $a \times b$, and $\Pc(a)$ are $\e$-discrete.
\end{lem}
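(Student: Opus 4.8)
The plan is to treat the three sets separately. Throughout we may assume $\e \leq 1$, since $d$ is $[0,1]$-valued (if $\e > 1$, then $a$ and $b$ each have at most one element, and in any case every $\e$-discrete set is $1$-discrete). The key external inputs are \cref{lem:ord-pair-max-metric} for $a \boxplus b$ and $a \times b$, and $\HH$-extensionality together with the identity $d(x,y) = \sup_z |e(z,x) - e(z,y)|$ for $\Pc(a)$.

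For $a \times b = \{\langle c,f\rangle : c \sqin a,\ f \sqin b\}$, if $\langle c,f\rangle$ and $\langle c',f'\rangle$ are distinct elements then $c \neq c'$ or $f \neq f'$, and by \cref{lem:ord-pair-max-metric} their distance is $\max(d(c,c'),d(f,f'))$, which is at least $\e$ because $a$ (resp.\ $b$) is $\e$-discrete. For $a \boxplus b = (a \times \{\varnothing\}) \sqcup (b \times \{\{\varnothing\}\})$, its elements are exactly the pairs $\langle c,\varnothing\rangle$ with $c \sqin a$ and the pairs $\langle c,\{\varnothing\}\rangle$ with $c \sqin b$. Two distinct pairs of the first kind are at distance $d(c,c') \geq \e$, and likewise for two of the second kind; a pair of one of each kind is at distance $\max(d(c,c'), d(\varnothing,\{\varnothing\})) = 1 \geq \e$, since $\varnothing$ is empty while $\{\varnothing\}$ is not, so the Hausdorff distance between their extensions is $1$ (this also shows the two ``halves'' of $a \boxplus b$ are genuinely disjoint).

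The main case, and the only one requiring a little care, is $\Pc(a) = \{c : c \sqsubseteq a\}$. Fix distinct elements $c, c' \sqsubseteq a$. Since $\HH$-extensionality implies ordinary extensionality of $\sqin$, the extensions of $c$ and $c'$ differ, so without loss of generality there is a $g$ with $g \sqin c$ and $g \not\sqin c'$; as $c \sqsubseteq a$, we have $g \sqin a$. Every $h \sqin c'$ satisfies $h \sqin a$ (since $c' \sqsubseteq a$) and $h \neq g$, so $d(g,h) \geq \e$ by $\e$-discreteness of $a$; hence $e(g,c') = \inf_{h \sqin c'} d(g,h) \geq \e$, and this remains true (equal to $1$) if $c'$ is empty, by the convention on empty infima. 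Since $g \sqin c$ gives $e(g,c) = 0$, taking $z = g$ in $d(c,c') = \sup_z|e(z,c) - e(z,c')|$ yields $d(c,c') \geq e(g,c') \geq \e$. Thus $\Pc(a)$ is $\e$-discrete, and no step presents a genuine obstacle beyond tracking the empty-set edge cases just noted.
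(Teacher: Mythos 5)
Your proof is correct and follows essentially the same route as the paper's: \cref{lem:ord-pair-max-metric} for the product and disjoint union, and for the power set the observation that a witness $g \sqin c \setminus c'$ lies in $a$ and is therefore at distance at least $\e$ from every element of $c'$, forcing $d(c,c') \geq \e$. Your handling of the empty-set edge cases is slightly more explicit than the paper's (which simply dispenses with empty $a$ or $b$ at the outset), but the argument is the same.
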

\begin{proof}
  If $a$ is empty, then the statements in the lemma are trivial, and if $b$ is empty, then the statements for $a \boxplus b$ and $a \times b$ are trivial, so assume that $a$ and $b$ are both non-empty. Fix $c,f \sqin a$ and $g,f \sqin b$.

  For the disjoint union, we have that $d(\langle c,\varnothing \rangle,\langle g,\{\varnothing\} \rangle) = d(\varnothing,\{\varnothing\}) = 1$. Furthermore, $d(\langle c,\varnothing \rangle,\langle f,\varnothing \rangle) = d(c,f)$ and $d(\langle g,\{\varnothing\} \rangle,\langle f,\{\varnothing\} \rangle) = d(g,f)$, so $a \boxplus b$ is $\e$-discrete.
  
  For the Cartesian product, if $\langle c,g \rangle \neq \langle f,h \rangle$, then either $c\neq f$ or $g \neq h$. In either case we have that $d(\langle c,g \rangle,\langle f,h \rangle) \geq \e$.

  For the power set, if $c \neq f$, then we may assume without loss of generality that there is a $g$ such that $g \sqin c$ but $g \not \sqin f$. Since $g \neq h$ for all $h \sqin f$, we have that $e(g,h) \geq \e$ (since $a$ is $\e$-discrete). Therefore $d(c,f) \geq \e$.
\end{proof}

Note that \cref{lem:unif-disc-prod-power} relies on our use of Wiener pairs over Kuratowski pairs. While seemingly a cosmetic issue, this will eventually matter in \cref{lem:unif-disc-prod-power}.

Within uniformly discrete sets, we are generally able to reason in a familiar discrete manner. To distinguish discrete formulas from real-valued formulas, we will usually write discrete formulas with capital Greek letters.

\begin{defn}
Given a tuple $\abar$ of formal type variables, we write $\abar^\ast$ for the smallest collection of expressions containing $\{a_0,a_1,\dots\}$ and containing (the formal expressions) $b\times c$ and $\Pc(b)$ for any $b,c \in A^\ast$.  
  
  We write $\Lc_{\mathrm{dis}}(\abar)$ for the smallest collection of formulas satisfying the following (where $b$ and $c$ are elements of $\abar^\ast$):
  \begin{itemize}
  \item For any $x{:}b$ and $y{:}c$, $x=y$ is in $\Lc_{\mathrm{dis}}(\abar)$.
  \item For any $x{:}b$, $y{:}c$, and $z{:}b\times c$, $\langle x,y \rangle = z$ is in $\Lc_{\mathrm{dis}}(\abar)$.
  \item For any $x{:}b$ and $y{:}\Pc(b)$, $x \sqin y$ is in $\Lc_{\mathrm{dis}}(\abar)$.
  \item For $\Phi,\Psi \in \Lc_{\mathrm{dis}}(\abar)$, $\Phi\wedge \Psi$, $\Phi\vee\Psi$, $\Phi \to \Psi$, and $\neg \Phi$ are in $\Lc_{\mathrm{dis}}(\abar)$.
  \item For $\Phi \in \Lc_{\mathrm{dis}}(\abar)$ and $x{:}b$, $(\exists x{:}b)\Phi$ and $(\forall x{:}b)\Phi$ are in $\Lc_{\mathrm{dis}}(\abar)$.
  \end{itemize}
If we wish to specify the variables and formal type variables of a formula $\Phi \in \Lc_{\mathrm{dis}}(\abar)$, we will write $\Phi(\xbar;\abar)$ (where the free variables of $\Phi$ are among $\xbar$).
\end{defn}

The formal type variables in a formula of $\Lc_{\mathrm{dis}}(\abar)$ are intended to be interpreted as uniformly discrete sets in a model of $\MSE$, in which case a variable of type $a$ is allowed to take on values in $a$. The interpretation of a formula is then clear:

\begin{defn}
  Fix $M \models \MSE$. Given a formula $\Phi(\xbar;\abar) \in \Lc_{\mathrm{dis}}(\abar)$, a tuple $\bbar \in M$ of the same length as $\abar$, and a tuple $\cbar$ of the same length as $\xbar$, we say that $\Phi(\cbar;\bbar)$ is \emph{well-typed} if for each $c_i \in \cbar$, if $x_i$ is a variable of type $t(\abar)$ (where $t(\abar)$ is a formal type expression), then $c_i \sqin t(\bbar)$ (where $t$ is now interpreted as a literal expression involving the functions $\times$ and $\Pc$ in $M$).

  If $\Phi(\cbar;\bbar)$ is well-typed, we write $M \models \Phi(\cbar;\bbar)$ to mean that $(M,\sqin)$ satisfies $\Phi(\cbar;\bbar)$ as a discrete structure (where quantifiers such as $\exists x {:}t(\bbar)$ are interpreted as $\exists x \sqin t(\bbar)$).
\end{defn}

Now, we will see that as long as the sets in $\bbar$ are $\e$-discrete, we can express these kinds of discrete formulas as real-valued formulas in a mostly uniform way.

\begin{defn}
  Given a tuple $\abar$ of formal type variables, a formula $\Phi(\xbar) \in \Lc_{\mathrm{dis}}(\abar)$, and $\e > 0$, we write $\cset{\Phi}_{\e}(\xbar;\abar)$ for the real-valued formula defined by the following inductive procedure:
  \begin{itemize}
  \item $\cset{x=y}_\e=\max\left( 1-\frac{1}{\e} d(x,y),0 \right)$.
  \item $\cset{\langle x,y \rangle = z}_\e = \max\left( 1-\frac{1}{\e}d(\langle x,y \rangle,z),0 \right)$.
  \item $\cset{x \sqin y}_\e = \max\left( 1-\frac{1}{\e}e(x,y),0 \right)$.
  \item $\cset{\Phi\wedge\Psi}_\e = \min\left(\cset{\Phi}_\e,\cset{\Psi}_\e\right)$.
  \item $\cset{\neg \Phi}_{\e} = 1-\cset{\Phi}_{\e}$.
  \item $\cset{(\exists x{:}a) \Phi}_{\e} = \sup_{x \sqin a}\cset{\Phi}_{\e}$.
  \end{itemize}
  The other Boolean connectives and the universal quantifier are defined from the above in the typical way.
\end{defn}

\begin{prop}\label{prop:disc-lang-unif}
  Fix $M\models \MSE$. For any formal tuple $\abar$ of type variable and any formula $\Phi(\xbar;\abar) \in \Lc_{\mathrm{dis}}(\abar)$, we have for any $\e$-discrete sets $\bbar \in M$ and any $\cbar \in M$ such that $\Phi(\bbar;\cbar)$ is well-typed, $M \models \Phi(\bbar;\cbar)$ if and only if $(\cset{\Phi}_{\e}(\bbar;\cbar))^M = 1$ and $M \models \neg \Phi(\bbar;\cbar)$ if and only if $(\cset{\Phi}_{\e}(\bbar;\cbar))^M = 0$.
\end{prop}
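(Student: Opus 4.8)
The plan is to induct on the structure of $\Phi$, establishing the slightly stronger statement that, under the hypotheses, for every subformula $\Psi$ and every well-typed substitution, $(\cset{\Psi}_\e)^M$ takes only the values $0$ and $1$, and takes the value $1$ precisely when the corresponding discrete assertion holds in $(M,\sqin)$. Since $(M,\sqin)$ is a classical structure, this is exactly what the two displayed biconditionals say together.

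The content lies in the three atomic cases, where $\cset{\Psi}_\e$ has the form $\max(1-\tfrac{1}{\e}\delta,0)$ with $\delta$ one of $d(b,c)$, $d(\langle b,c\rangle,f)$, or $e(b,c)$. In each case I claim $\delta\in\{0\}\cup[\e,\infty)$, with $\delta=0$ exactly when the atomic statement is true; this immediately gives $(\cset{\Psi}_\e)^M\in\{0,1\}$ with the correct value. For $x=y$, well-typedness places both substituted values in $\e$-discrete sets (in the intended applications, the same set; in general one should assume the relevant type interpretations are jointly $\e$-separated), so $d(b,c)$ is $0$ or at least $\e$. For $\langle x,y\rangle=z$, the value substituted for $z$ and the pair $\langle b,c\rangle$ both lie in the literal Cartesian product of the two $\e$-discrete type interpretations, which is $\e$-discrete by \cref{lem:unif-disc-prod-power}; this is precisely where the choice of Wiener pairs is used. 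For $x\sqin y$, if the value of $y$ is empty then $e(b,c)=1\geq\e$ by the evaluation convention of \cref{defn:formula-evaluate}, and otherwise every element of it lies in an $\e$-discrete set together with the value of $x$, so $e(b,c)$ is $0$ when $b\sqin c$ and at least $\e$ otherwise. One also checks that the discrete interpretation of $\langle x,y\rangle=z$ and $x\sqin y$ matches the metric operations $\langle\cdot,\cdot\rangle$, $\times$, and $\Pc$ defined earlier, which is immediate from their definitions.

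The inductive step is then purely formal. The functions $\min$ and $t\mapsto 1-t$ preserve $\{0,1\}$ and implement $\wedge$ and $\neg$ on truth values, and the remaining connectives are defined from these. For the existential quantifier, $\cset{(\exists x{:}a)\Phi}_\e=\sup_{x\sqin a}\cset{\Phi}_\e$; by the induction hypothesis the function being supremized is $\{0,1\}$-valued over the well-typed range $x\sqin a^M$ and equals $1$ exactly at witnesses of $\Phi$, so the supremum is $1$ if some witness exists and $0$ otherwise, provided $a^M\neq\varnothing$. The empty case is vacuous in the intended applications and should be excluded by the standing hypotheses; note that otherwise the convention $(\sup_{x\sqin a}\varphi)^M=-v(\varphi)$ from \cref{defn:formula-evaluate} would make the translation negative rather than $0$. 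The universal quantifier is obtained as $\neg\exists\neg$.

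The main obstacle is the atomic dichotomy $\delta\in\{0\}\cup[\e,\infty)$: one has to make sure well-typedness really does confine the arguments of the metric to $\e$-discrete sets, which is where \cref{lem:unif-disc-prod-power} — and hence the use of Wiener rather than Kuratowski pairs — is essential, and one has to reconcile the discrete semantics of the atomic symbols with the metric operations fixed earlier in the paper. Everything past the atomic cases is bookkeeping.
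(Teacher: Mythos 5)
Your proof is correct and takes the same route as the paper, which simply asserts the result "by induction on the construction of formulas" and leaves exactly the details you supply: the atomic dichotomy $\delta\in\{0\}\cup[\e,\infty)$ via $\e$-discreteness and \cref{lem:unif-disc-prod-power}, followed by formal bookkeeping for the connectives and quantifiers. The two caveats you flag --- cross-type equality between two different $\e$-discrete sets that need not be jointly $\e$-separated, and the $-v(\varphi)$ convention for bounded quantification over an empty type interpretation --- are genuine edge cases that the paper's statement silently glosses over, and your handling of them is appropriate.
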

\begin{proof}
 This follows immediately by induction on the construction of formulas in $\Lc_{\mathrm{dis}}(\abar)$.
\end{proof}

Given \cref{prop:disc-lang-unif}, we can now confidently talk about familiar discrete concepts in the context of uniformly discrete sets. In particular, we can develop the notion of cardinalities.

\begin{defn}
  Given a uniformly discrete sets $a,b \in M \models \MSE$, we write $M \models a \approx b$ to mean that there is a $c \sqsubseteq a \times b$ that is the graph of a bijection between $a$ and $b$.
\end{defn}

It is clear that there is an $\Lc_{\dis}(a,b)$-formula $\eta(x,y)$ with the property that if $a$ and $b$ are $\e$-discrete, $c \sqsubseteq a$, and $f \sqsubseteq b$, then $M \models \cset{\eta(c,f)}_\e$ if and only if $M \models c \approx f$. We will write $x \approx_{a,b}y$ for this formula. We will write $\approx_a$ for $\approx_{a,a}$. It is immediate that $\cset{x \approx_{a} y}_\e$ is an equivalence relation on $\Pc(a)$ whenever $a$ is $\e$-discrete.

\begin{defn}
  Given an $\e$-discrete set $a$, a \emph{cardinal of $a$} is a $\cset{x\approx_ay}_\e$-equivalence class. We write $\Card_a$ for the collection of cardinals of $a$.

  Given $b \sqsubseteq a$, we write $|b|_a$ for the $\cset{x\approx_a y}_\e$-equivalence class of $b$.
\end{defn}

It follows immediately from the above discussion and \cref{prop:disc-eq-reln-quot} that $\Card_a$ is a set for any uniformly discrete $a\in M \models \MSE$. Furthermore, $x \mapsto |x|_a$ is a definable function on $\Pc(a)$.

\begin{defn}
  Given a uniformly discrete set $a$ and a set $b \sqsubseteq \Pc(a)$, we write $\succc_a(b)$ for the collection $\{c \sqin \Pc(a) : (\exists f \sqin b)[f\sqsubseteq c \wedge \exists ! g(g \sqin f \wedge c \not \sqin f)]\}$.
\end{defn}

It follows from \cref{prop:disc-sep} that $\succc_a(b)$ is a set for any uniformly discrete $a$ and $b \sqsubseteq a$. Note that $\succc_a$ is an explicitly definable function by \cref{lem:uniformly-definable-is-definable-function}. Furthermore, if $b \in \Card_a$, then either $\succc_a(b) \in \Card_a$ or $\succc_a(b) = \varnothing$.

 \begin{defn}
   We write $0$ for the set $\{\varnothing\}$. (Note that $0$ is always an element of $\Card_a$.)
   
   We write $\ind_a(x)$ for the $\Lc_{\dis}(a)$-formula $0 \sqin x \wedge (\forall y \sqin x)\succc_a(y) \in x\vee \succc_a(y) = \varnothing$. If $\ind_a(x)$ holds, we say that $x$ is an \emph{inductive} set.

   We write $\Nb_a$ for the set $\{x \in \Pc^2(a) : (\forall y \in \Pc^3(a))\ind_a(y)\to x \sqin y\}$.
 \end{defn}

It is clear that $\ind_a(\Card_a)$ always holds and so $\Nb_a \sqsubseteq \Card_a$ for any uniformly discrete $a$. 

Now we can finally state the local version of the axiom of infinity.

\begin{defn}
  We write $\Inf(a)$ for the $\Lc_{\dis}(a)$-sentence $|a|_a \not\sqin \Nb_a$.
\end{defn}

It is easy to show that $\Inf(a)$ holds if and only if $\Nb_a \neq \Card_a$.

Now provided that one can find a uniformly discrete $a \in M \models \MSE$ such that $M \models \Inf(a)$, we have that $(\Nb_a,\Pc(\Nb_a),\Pc^2(\Nb_a),\dots)$ is a model of full $\omega$th-order arithmetic, which is more than sufficient to develop ordinary mathematics.

Given the local nature of our development here, one might worry that there could be uniformly discrete $a,b \in M \models \MSE$ for which $\Inf(a)$ and $\Inf(b)$ both hold but $\Nb_a$ and $\Nb_b$ are not internally isomorphic. Fortunately, since our theory is fully impredicative, this cannot happen.

\begin{prop}\label{prop:global-cardinality}
  Fix $M \models \MSE$ and uniformly discrete $a\sqsubseteq b \in M$.
  \begin{enumerate}
  \item The equivalence relation $\approx_a$ is the restriction of the equivalence relation $\approx_b$ to $\Pc(a)\times \Pc(a)$. Write $\iota$ for the induced map from $\Card_a$ to $\Card_b$.
  \item If $M \models \Inf(a)$, then $M \models \Inf(b)$ and $\Nb_b$ is the image of $\Nb_a$ under $\iota$.
  \item For any uniformly discrete $c$, if $M \models \Inf(a) \wedge \Inf(c)$, then $\Nb_a \approx \Nb_c$.
  \end{enumerate}
\end{prop}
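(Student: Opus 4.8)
The plan is to prove the three parts in order, since part (3) follows immediately from parts (1) and (2). For part (1), the key observation is that whether a subset $c \sqsubseteq a \times b$ is the graph of a bijection between two subsets of $a$ is a property expressible by an $\Lc_{\dis}$-formula that only quantifies over elements of $a$ (for the domain and codomain of the bijection) and over elements of $a \times a$ (for the graph). Concretely, given $a \sqsubseteq b$ (both $\e$-discrete, say), a subset of $a$ is literally a subset of $b$, and a candidate bijection-graph contained in $\Pc(a) \times \Pc(a) = \Pc(a \times a)$ is also contained in $\Pc(b \times b)$. Moreover, by \cref{lem:unif-disc-prod-power}, $a \times a$ and $\Pc(a \times a)$ are $\e$-discrete, so by \cref{prop:disc-lang-unif} the real-valued translations $\cset{c \approx_{a,a} f}_\e$ and $\cset{c \approx_{b,b} f}_\e$ agree with the discrete satisfaction relation. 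The point is then that for $c, f \sqsubseteq a$, the statement ``$c \approx f$ as witnessed by a graph inside $a \times a$'' and ``$c \approx f$ as witnessed by a graph inside $b \times b$'' are equivalent: any bijection between subsets of $a$ has its graph automatically living inside $a \times a \sqsubseteq b \times b$, and conversely a bijection-graph inside $b \times b$ between two subsets of $a$ is in particular contained in $a \times a$ since both coordinates land in $a$. Hence $\approx_a$ is the restriction of $\approx_b$, and the quotient map $\iota : \Card_a \to \Card_b$ sending $|c|_a \mapsto |c|_b$ is well-defined; it is explicitly definable by \cref{lem:uniformly-definable-is-definable-function} and injective because two subsets of $a$ that are equinumerous in $b$ are equinumerous via a graph inside $a \times a$.

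For part (2), I would argue that $\iota$ commutes with the successor operation and sends $0$ to $0$, so that it carries the inductive structure of $\Card_a$ into $\Card_b$. First, $0 = \{\varnothing\}$ is the same element regardless of the ambient set, and $\iota(0_a) = 0_b$. Next, $\succc_a$ and $\succc_b$ are compatible with $\iota$: adding one fresh element to a subset of $a$ (which is a subset of $b$) is the same operation whether we view it inside $\Pc(a)$ or $\Pc(b)$, so $\iota(\succc_a(|c|_a)) = \succc_b(|c|_b)$ whenever $\succc_a(|c|_a) \neq \varnothing$, i.e. whenever the cardinal is not the top cardinal of $a$. Since $\Nb_a$ is the intersection of all inductive subsets of $\Pc^2(a)$, its image $\iota[\Nb_a]$ is an inductive subset of (the relevant power set layer inside) $\Card_b$, hence $\Nb_b \sqsubseteq \iota[\Nb_a]$. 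For the reverse inclusion — and this is where I expect the main technical friction — one uses impredicativity: $\iota^{-1}[\Nb_b] \cap \Nb_a$ (or more carefully, the preimage under $\iota$ of $\Nb_b$, intersected with $\Pc^2(a)$) is an inductive subset of $\Pc^2(a)$, because $\iota$ is injective and commutes with $0$ and successor, so it contains $\Nb_a$; this gives $\iota[\Nb_a] \sqsubseteq \Nb_b$. The existence of these preimage sets as genuine elements of $M$ is guaranteed by \cref{prop:disc-sep} applied to the definable relation ``$\iota(x) \sqin \Nb_b$''. Finally, $M \models \Inf(a)$ says $|a|_a \notin \Nb_a$; since $\iota$ is injective and $\iota[\Nb_a] = \Nb_b$, and $\iota(|a|_a) = |a|_b$, this forces $|a|_b \notin \Nb_b$. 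But wait — $|a|_b$ need not equal $|b|_b$, so to conclude $\Inf(b)$, i.e. $|b|_b \notin \Nb_b$, I would observe that $\Nb_b$ is an initial segment of $\Card_b$ under the natural ordering (a subset whose cardinal is at most that of a finite subset is finite), and $|a|_b \le |b|_b$ since $a \sqsubseteq b$; so if $|b|_b \in \Nb_b$ then $|a|_b \in \Nb_b$, a contradiction. This initial-segment property is itself a routine consequence of the discrete development and \cref{prop:disc-sep}.

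For part (3), given uniformly discrete $a$ and $c$ with $M \models \Inf(a) \wedge \Inf(c)$, I would apply parts (1) and (2) to the pairs $a \sqsubseteq a \boxplus c$ and $c \sqsubseteq a \boxplus c$ (using that $a \boxplus c$ is uniformly discrete by \cref{lem:unif-disc-prod-power}, after identifying $a$ and $c$ with their canonical copies $a \times \{\varnothing\}$ and $c \times \{\{\varnothing\}\}$ inside $a \boxplus c$, which is harmless since $x \mapsto \langle x, y\rangle$ is isometric in the first coordinate). Both $\Nb_a$ and $\Nb_c$ then map via the respective $\iota$'s onto $\Nb_{a \boxplus c}$, and these maps are bijections onto their images; composing one with the inverse of the other yields an internal bijection $\Nb_a \approx \Nb_{a\boxplus c} \approx \Nb_c$. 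The graph of this composite bijection is a definable subset of $\Nb_a \times \Nb_c$ and hence a set contained in $\Nb_a \times \Nb_c$ by \cref{prop:def-class-char}, which is exactly what $M \models \Nb_a \approx \Nb_c$ requires.

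The main obstacle is the reverse inclusion $\iota[\Nb_a] \sqsubseteq \Nb_b$ in part (2): one must exhibit the preimage $\{x \in \Pc^2(a) : \iota(x) \sqin \Nb_b\}$ as an honest inductive set in $M$, which requires checking that the relevant relation is a discrete formula to which \cref{prop:disc-sep} applies, and then verifying the two inductivity clauses ($0$ and successor) using the compatibility of $\iota$ with $0$ and $\succc$ — and, separately, the little lemma that $\Nb_b$ is an initial segment of $\Card_b$, needed to pass from $|a|_b \notin \Nb_b$ to $|b|_b \notin \Nb_b$. None of these steps is deep, but they are the ones that genuinely use impredicativity and the discrete-separation machinery rather than pure bookkeeping.
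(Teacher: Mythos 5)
Your proposal is correct and follows essentially the same route as the paper, whose entire proof is the observation that any bijection between subsets of $a$ existing as a subset of $b\times b$ already lies in $a\times a$ (giving part 1), with parts 2 and 3 declared to follow from part 1. Your elaboration of part 2 (compatibility of $\iota$ with $0$ and $\succc$, inductivity of the image and preimage, and the passage from $|a|_b\not\sqin\Nb_b$ to $\Inf(b)$) and of part 3 (via the canonical copies inside $a\boxplus c$) is a faithful and accurate filling-in of the details the paper leaves implicit.
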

\begin{proof}
  1 follows from the fact that any bijection between subsets of $a$ that exists as a subset of $b \times b$ is already a subset of $a \times a$. 2 and 3 follow from 1.
\end{proof}

\section{Global structure of models of $\MSE$}%
\label{sec:global-structure}

While \cref{sec:disc-set} gives a satisfactory picture of the local structure of a model of $\MSE$ around some collection of uniformly discrete sets, the axiom of infinity is a global statement in that it says that there is some set, somewhere, that is infinite.

It is clear that we can take a more global view of cardinality for uniformly discrete sets in a model of $\MSE$. $\approx$ is a perfectly well-defined equivalence relation externally and it would make sense to call its equivalence classes \emph{cardinals}, but we cannot write it as a formula. While $0 = \{\varnothing\}$ and the class of all singletons, $1$, are both sets, the class of doubletons in a model of $\MSE$ is never closed, which precludes it from being a set.\footnote{This phenomenon is also seen in models of $\GPK^+$.}

Suppose furthermore that there is a $\frac{1}{2}$-discrete set $a$ such that $M \models \Inf(a)$. Does this necessarily imply that there is a $1$-discrete set $b$ such that $M \models \Inf(a)$? Can we somehow scale a set up in this way? We will see in \cref{thm:ord-ill} that the answer is no, and so in order to state the axiom of infinity in a global way, we will need to specify the scale at which infinity is to first appear. 

\subsection{Collecting $\e$-discrete sets and `the' axiom of infinity}
\label{sec:collecting-e-discrete}

But before we can formalize that, we need to deal with another subtlety we have been ignoring up until now. How do we even know that we can find \emph{any} uniformly discrete sets? Clearly $\varnothing$ is uniformly discrete, and likewise all hereditarily finite sets are, but it is not clear that the class of hereditarily finite sets is even a set.

What we would like to be able to do is make a formula $\varphi(x)$ that returns $\dis(x)\coloneqq\sup \{r > 0 : x~\text{\rm is}~r\text{\rm -discrete}\}$, but $\dis$ is not a continuous function and so cannot possibly be a formula. If $(a_n)_{n \in \Nb}$ is a Cauchy sequence limiting to $a$ with $a_n \neq a$ for all $n \in \Nb$, then $(\{a_n,a\})_{n\in \Nb}$ will be $d(a_n,a)$-discrete but no better for every $n$, yet the limit, $\{a\}$, will be $1$-discrete. (This is just the fact that the class of doubletons is not closed again.)

This makes it seem unlikely that we will be able to even approximately collect the $r$-discrete sets into a class. Nevertheless, we are able to do something nearly as good.

Fix $r > 0$ and consider the formula
\[
\varphi_r(x) = \sup_{z,y\sqin x}\min(d(y,z),r-d(y,z)).
\]
Note that $\varphi_r^M(a) \leq \e$ if and only if for every $b,c \sqin a$, either $d(y,z) \leq \e$ or $d(y,z) \geq r -\e$. If moreover $\e < \frac{1}{3}r$, this implies that the formula
\[
E_r(x) = \max(\min(\tfrac{1}{r}(2r-3d(x,y)),1),0)
\]
defines a discrete equivalence relation on the elements of $a$. If $a$ is already $r$-discrete, then this equivalence relation is equality. For any $\e > 0$ with $\e < \frac{1}{3}r$, let
\[
  X_{r} = \left\{x \in M : \varphi_r^M(x)< \tfrac{1}{3}r\right\}
\]
and let $f_{r} : X_{r} \to M$ be the function that takes $a$ to the set of $E_r$-equivalence classes of $a$. By \cref{lem:uniformly-definable-is-definable-function} and \cref{prop:disc-eq-reln-quot}, $f_{r}$ is an explicitly definable partial function. Let $\psi_{r}(x,y)$ be a formula defining it (i.e., for any $a \in X_{r}$ and $b \in M$, $\psi_{r}^M(a,b) = d(f_{r}(a),b)$). (Note that $\psi_{r}(x,y)$ does not need any parameters.) Note that for any $a \in X_{r}$, $f_{r}(a)$ is $\frac{1}{3}r$-discrete.

\begin{defn}\label{defn:ax-of-infty}
  For any $r,\e > 0$ with $\e < \frac{1}{3}r$, we let $\Inf_{r,\e}$ denote the condition
  \[
    \sup_x \min\left(1+\e-\varphi_e(x),\cset{\Inf(f_{r}(x))}_{\frac{1}{3}r}\right)\geq 1,
\]
where $\cset{\Inf(f_{r}(x))}_{\frac{1}{3}r}$ means $\inf_z\cset{\Inf(z)}_{\frac{1}{3}r} + 2v(\cset{\Inf(-)}_{\frac{1}{3}r})\psi_{r}(x,z)$.

We let $\Inf$ denote the collection of conditions $\{\Inf_{1,\e} : 0<\e < \frac{1}{3}\}$.
\end{defn}

\begin{prop}\label{prop:ax-infty-char}
  Fix $M \models \MSE$.
  \begin{enumerate}
  \item For any $r,\e > 0$ with $\e < \frac{1}{3}r$, $M \models \Inf_{r,\e}$ if and only if for every $s \in (0,r-\e)$, there is an $s$-discrete set $a \in M$ such that $M \models \Inf(a)$.
  \item $M \models \Inf$ if and only if for every $r \in (0,1)$, there is an $r$-discrete set $a \in M$ such that $M \models \Inf(a)$.
  \end{enumerate}
\end{prop}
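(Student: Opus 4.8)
The plan is to prove both parts by unwinding the definitions of $\Inf_{r,\e}$ and $\varphi_r$, treating part (2) as essentially the special case $r = 1$ of part (1) (since $M \models \Inf$ means $M \models \Inf_{1,\e}$ for all $\e \in (0,\tfrac13)$, and as $\e$ ranges over $(0,\tfrac13)$ the intervals $(0,1-\e)$ exhaust $(0,1)$). So the real content is part (1).

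\medskip\noindent\textbf{The $\Leftarrow$ direction.} Suppose that for every $s \in (0,r-\e)$ there is an $s$-discrete $a \in M$ with $M \models \Inf(a)$. I need to show $\sup_x \min(1+\e-\varphi_r(x), \cset{\Inf(f_r(x))}_{\frac13 r}) \geq 1$, i.e., that there exist $x$ making both arguments arbitrarily close to $1$ (or at least witnessing the supremum). Fix any $\delta > 0$ small and pick $s$ with $r - \e < $ (something) — more carefully: I want $\varphi_r(x) \leq \e$ (then $1 + \e - \varphi_r(x) \geq 1$) and $\cset{\Inf(f_r(x))}_{\frac13 r} = 1$. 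Take $a$ that is $s$-discrete with $M \models \Inf(a)$ where $s$ is chosen in $(0, r-\e)$ but close enough to $r$ that... actually the cleanest choice is to take $a$ itself to be the witness $x$: if $a$ is $s$-discrete with $s \geq r$ then $\varphi_r(a) = 0$ and $f_r(a)$ is coextensive with $a$ (the equivalence relation $E_r$ restricted to $a$ is equality when $a$ is $r$-discrete), so $\cset{\Inf(f_r(a))}_{\frac13 r}$ computes the truth value of $\Inf$ on something coextensive with $a$. But we are only given $s$-discrete sets for $s < r - \e$. The key observation is that if $a$ is $s$-discrete for $s$ close to $r - \e$, then $\varphi_r^M(a) \leq s' $ for an appropriate bound: for $b,c \sqin a$ with $b \neq c$ we have $d(b,c) \geq s$, so $\min(d(b,c), r - d(b,c)) \leq \max(\text{small}, r - s)$. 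Hmm — this is $\leq r - s$ only when $d(b,c)$ is large; when $d(b,c)$ is close to $s$ (small end, since $d \leq 1$ and $s$ could be $< 1$... no wait $s$-discreteness forces $d(b,c) \geq s$, but $d(b,c)$ could still be small if $s$ is small). So I actually need $a$ to be $s$-discrete AND I need control on the small end. This is where I should be careful: reexamine whether $\varphi_r(a) \le \e$ really needs full $r$-discreteness. If $a$ is $s$-discrete with $s \in (r - \e, r]$... we don't have those. So instead: given $s \in (0, r-\e)$ with $a$ $s$-discrete and $M \models \Inf(a)$, I claim I can first \emph{rescale} — but that's precisely what \cref{thm:ord-ill} says we generally cannot do. So the honest reading must be that $\Inf_{r,\e}$ only asks for a witness $x$ with $\varphi_r(x)$ bounded by $\e$, and the function $f_r$ then \emph{produces} an $\frac13 r$-discrete set out of such an $x$; and the statement (1) is calibrated exactly so the bound $r - \e$ on $s$ works. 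So in the $\Leftarrow$ direction: given $s < r - \e$ and $a$ that is $s$-discrete, consider the doubleton-free issue... Actually the right move is to take $a$ that is $s$-discrete for $s$ just below $r - \e$, note that then every pair $b, c \sqin a$ with $b \ne c$ has $d(b,c) \in [s, 1]$, hence $\min(d(b,c), r - d(b,c)) \le \max(r - s, s - \text{?})$... I see that the small-$d$ branch is the obstruction. I'll need to handle it by observing that we may freely replace $a$ with $f_r(a)$-type quotients or by noting that we can assume $a$ is also bounded below appropriately; concretely, given ANY uniformly discrete infinite $a$, the quotient construction or a substructure argument lets us assume it is $s$-discrete with $s$ as close to the relevant threshold as we like — \emph{this identification of exactly what "$s$-discrete" buys us is the main obstacle} and I would resolve it by carefully tracking which pairs $b \ne c$ in $a$ actually occur and using that $\Inf(a)$ is preserved under passing to a coextensive or bijective copy.

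\medskip\noindent\textbf{The $\Rightarrow$ direction.} Suppose $M \models \Inf_{r,\e}$, i.e. $\sup_x \min(1 + \e - \varphi_r(x), \cset{\Inf(f_r(x))}_{\frac13 r}) \ge 1$. Fix $s \in (0, r - \e)$; I must produce an $s$-discrete $a$ with $M \models \Inf(a)$. By the supremum condition, for each $n$ there is $x_n$ with $1 + \e - \varphi_r^M(x_n) > 1 - 2^{-n}$ and $\cset{\Inf(f_r(x_n))}_{\frac13 r}^M > 1 - 2^{-n}$; the first gives $\varphi_r^M(x_n) < \e + 2^{-n} < \tfrac13 r$ for large $n$, so $x_n \in X_r$ and $f_r(x_n)$ is defined and $\tfrac13 r$-discrete. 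Then $\cset{\Inf(f_r(x_n))}_{\frac13 r}^M > 1 - 2^{-n}$ combined with \cref{prop:disc-lang-unif} (applied to the $\frac13 r$-discrete set $f_r(x_n)$ and the formula $\Inf(-) \in \Lc_{\dis}$, noting $\cset{\Inf}_{\frac13 r}$ takes values in $\{0,1\}$ on $\frac13 r$-discrete arguments) forces $\cset{\Inf(f_r(x_n))}_{\frac13 r}^M = 1$ for large $n$, hence $M \models \Inf(f_r(x_n))$. But $f_r(x_n)$ is only $\tfrac13 r$-discrete, and I need $s$-discrete with $s$ possibly up to just below $r - \e$; if $s \le \tfrac13 r$ we're already done, but for larger $s$ I need a sharper analysis of $f_r(x_n)$: since $\varphi_r^M(x_n) \le \e$ (taking the limit / large $n$), the $E_r$-equivalence classes of $x_n$ are pairwise at distance $\ge r - (\text{something involving } \e)$, so $f_r(x_n)$ is in fact $(r - c\e)$-discrete for an explicit constant — and choosing $n$ large and $\e$ small enough this beats $s$. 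Tracking this constant and confirming $r - c\e \ge s$ is exactly what makes the threshold $r - \e$ in the statement correct; I expect the constant to work out to give $(r-\e)$-discreteness or close to it, since $E_r$ uses the slope $\tfrac{1}{r}(2r - 3d)$, cutting off at $d = \tfrac23 r$, so distinct classes are separated by roughly $r - O(\e)$.

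\medskip\noindent\textbf{Main obstacle.} The crux is the quantitative bookkeeping relating $\varphi_r^M(x) \le \e$, the separation of $E_r$-equivalence classes, and the discreteness modulus of $f_r(x)$, so that the threshold $r - \e$ comes out exactly right in both directions; once that is pinned down, the logical part is a routine application of \cref{prop:disc-lang-unif}, \cref{lem:uniformly-definable-is-definable-function}, and the definition of $\cset{-}_\e$. Part (2) then follows immediately by specializing to $r = 1$ and letting $\e \to 0$.
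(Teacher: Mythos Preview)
Your overall approach matches the paper's, and your $\Rightarrow$ direction is essentially correct: the paper does exactly what you sketch, and the ``sharper analysis'' you anticipate is that if $\varphi_r^M(x) < \e + \delta$ then any two distinct $E_r$-classes of $x$ have all cross-distances greater than $r - \e - \delta$, so their Hausdorff distance exceeds $r - \e - \delta$, making $f_r(x)$ genuinely $(r-\e-\delta)$-discrete rather than merely $\tfrac{1}{3}r$-discrete. That is the constant you were hoping would come out right.

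The gap is in your $\Leftarrow$ direction. You correctly realize you should use the hypothesis to pick $s$ close to $r-\e$, but then get stuck worrying about the ``small-$d$ branch'' of $\min(d(b,c), r - d(b,c))$ and propose rescaling or substructure arguments as a fix. Those detours are unnecessary (and, as you yourself note via \cref{thm:ord-ill}, rescaling up is not generally available). The point you are missing is purely arithmetic: since $\e < \tfrac{1}{3}r$, we have $r - \e > \tfrac{2}{3}r > \tfrac{r}{2}$. So if you take $s \in (\tfrac{2}{3}r, r-\e)$ and let $a$ be $s$-discrete with $M \models \Inf(a)$, then for $b \neq c$ in $a$ you have $d(b,c) \geq s > \tfrac{r}{2}$, hence $\min(d(b,c), r - d(b,c)) = r - d(b,c) \leq r - s$. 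Thus $\varphi_r^M(a) \leq r - s < \tfrac{1}{3}r$, so $a \in X_r$, $f_r(a) = \{\{x\} : x \sqin a\}$, and $M \models \Inf(f_r(a))$ via the obvious definable bijection. Letting $s \to (r-\e)^-$ drives $1 + \e - \varphi_r(a) \geq 1 + \e - (r-s) \to 1$, witnessing the supremum. This is exactly what the paper does; once you see that $s > r/2$ is automatic, the ``main obstacle'' you identified dissolves.
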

\begin{proof}
  2 follows immediately from 1. For the $\To$ direction of $1$, assume that $M \models \Inf_{r,\e}$. This implies that for any $\delta > 0$, there is an $b \in M$ such that $M \models 1 + \e - \varphi_r(b) > 1 - \delta$ and $M \models \inf_z\cset{\Inf(z)}_{\frac{1}{3}r} + 2v(\cset{\Inf(-)}_{\frac{1}{3}r})\psi_{r}(b,z) > 1 - \delta$. The first condition implies that $\varphi_r^M(b) < \e+\delta$. For $\delta < \frac{1}{3}r - \e$, this implies that $b$ is in $X_{r}$ and so $f_r(b)$ is $(r-\e-\delta)$-discrete. Since $b \in X_r$, the second condition is equivalent to $M \models \cset{\Inf(f_{r}(b))}_{\frac{1}{3}r} \geq 1$, which is equivalent to $M \models \Inf(f_r(b))$ and we can take $f_r(b)$ to be the required $a$. Since we can do this for any $\delta > 0$ (and since an $s$-discrete set is $t$-discrete for any $t < s$), we have the required statement.  

For the $\Leftarrow$ direction, fix $s \in (\frac{2}{3}r,r-\e)$ and let $a$ be an $s$-discrete set such that $M \models \Inf(a)$. Now we clearly have that $\varphi_r(a) \leq r-s < \frac{1}{3}r$. Therefore $f_r(a)$ is defined (and equal to $\{\{x\} : x \sqin a\}$). Since $M \models \Inf(a)$, $M \models \Inf(f_r(a))$ as well. (There is an obviously definable bijection between $a$ and $f_r(a)$. This exists as an element of $\Pc(a\times f_r(a))$.) Since we can do this for any sufficiently large $s < r-\e$, we're done.
\end{proof}

By the discussion in \cref{sec:disc-set}, any of the axioms $\Inf_{r,\e}$ is a sufficient form of the axiom of infinity for the purposes of developing standard mathematics. Nevertheless, we propose the scheme $\Inf$ as a canonical choice for `the axiom of infinity' in the context of $\MSE$. One objection to this proposal might be that it is a scheme, rather than a single axiom, but as discussed in \cite[Sec.\ 6.1]{MetSpaUniv}, the concept of finite axiomatizability is murky in continuous logic.

For most of the models we construct in \cref{sec:constructing-models}, there is a $1$-discrete set $a$ for which $\Inf(a)$ holds. This is obviously a more comfortable condition than $M\models \Inf$, but it is unclear whether it is actually axiomatizable. We could achieve it by adding a constant for some such $a$, but this is unsatisfying. Thus we have the following question.

\begin{quest}
  Is the class $\{M \models \MSE : (\exists a \in M)a~\text{\rm is}~1\text{\rm -discrete,}~M\models \Inf(a)\}$ elementary in the sense of continuous logic?
\end{quest}

General pessimism leads us to believe that the answer to this is no, but we do not see an approach to resolving this question.

\subsection{Ordinals}
\label{sec:ord}

Rather than develop the global structure of cardinals in models of $\MSE$, we will focus on ordinals. We do this for a couple of reasons. Many of the technical details for cardinals and ordinals are similar but not quite similar enough to develop simultaneously in an expeditious way. Furthermore, more can be said about the structure of ordinals than of cardinals without assuming some form of the axiom of choice.

\begin{defn}
  Fix $M \models \MSE$. A \emph{chain} in $M$ is a set $a$ such that for any $b,c \in a$, either $b \sqsubseteq c$ or $c \sqsubseteq b$.

  Two uniformly discrete chains $a$ and $b$ are \emph{order-isomorphic} if there is a bijection $f \in \Pc(a\times b)$ such that for any $g,h \sqin a$, it holds that $g \sqsubseteq h$ if and only if $f(g) \sqsubseteq f(h)$. We write $a \cong b$ to signify that $a$ and $b$ are order-isomorphic. The \emph{order type of $a$} is the $\cong$-class of $a$, written $\otp(a)$.

  A uniformly discrete chain $a$ is \emph{well-ordered} if for any non-empty $b \sqsubseteq a$, there is a $\sqsubseteq$-least element of $b$.

  The order types of uniformly discrete well-ordered chains in $M$ are referred to as the \emph{ordinals} of $M$, and the collection of such is written $\Ord^M$.
\end{defn}

Note that we will typically use the term well-ordered to mean internally well-ordered. We will use the word `externally' if we wish to emphasize that something is externally well-ordered.

Given a more general sort of linear order, namely a pair $(a,b)$ with $a$ uniformly discrete and $b \sqsubseteq a\times a$ the graph of a linear order, we can find a uniformly discrete chain $c$ such that $(a,b)$ and $(c,{\sqsubseteq}\res c\times c)$ are internally order-isomorphic. We just need to map each element $f$ of $a$ to the set $\{x \sqin a : \langle x,f \rangle \sqin c\}$ (i.e., the $c$-initial segment with largest element $f$). In this way we can see that uniformly discrete chains are sufficient to represent all uniformly discrete linear order types in models of $\MSE$.

We denote order types of uniformly discrete well-ordered chains with lowercase Greek letters near the beginning of the alphabet, such as $\alpha$ and $\beta$. We write $\alpha \leq \beta$ to mean that for any $a$ with $\otp(a) = \alpha$ and any $b$ with $\otp(b) = \beta$, $a$ is order-isomorphic to some initial segment of $b$. We write $\alpha < \beta$ to mean that $\alpha \leq \beta$ and $\alpha \neq \beta$. By a completely standard argument, we have that for any ordinals $\alpha,\beta \in \Ord^M$, either $\alpha < \beta$, $\beta < \alpha$, or $\alpha = \beta$.

To what extent can we approximate the class of well-ordered uniformly discrete chains with a set? As is typically the case in set theories with a universal set, something fishy needs to happen with regards to the class of ordinals, on pain of the Burali-Forti paradox. In particular, it is immediate that there cannot be a uniformly discrete set containing representatives of all ordinals of $M$.

Using techniques similar to those in \cref{sec:collecting-e-discrete}, we are able to collect representatives of all well-order types occurring below a certain scale. Just as there, we can't easily form sets that consist solely of $r$-discrete chains, only things that are in some sense `approximate chains.' We can use a similar trick, however, to turn these into order-isomorphic chains.

\begin{defn}
  Let $\sigma(x,y) = \sup_{z\sqin x}e(z,y)$. Let $\chn(x) = \sup_{y,z\sqin x}\min(\sigma(y,z),\sigma(z,y))$.
\end{defn}

Note that $\sigma(a,b) = 0$ if and only if $a \sqsubseteq b$. Note also that $d(a,b) = \max(\sigma(a,b),\sigma(b,a))$. Furthermore, $\chn(a) = 0$ if and only if $a$ is a chain. Let $\varphi_r $, $E_r$, $X_r$, and $f_r$ be defined as they were in \cref{sec:collecting-e-discrete}.

\begin{lem}\label{lem:approx-chain}
  For any $r > 0$ and $a \in M \models \MSE$, if $\varphi_r(a)< \frac{1}{3}r$ and $\chn(a) < \frac{1}{3}r$, then for any $b,c\sqin a$, exclusively either $d(a,b) < \frac{1}{3}r$, $\sigma(a,b) > \frac{2}{3}r$, or $\sigma(b,a) > \frac{2}{3}$. 
\end{lem}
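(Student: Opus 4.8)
The plan is a short case analysis driven by the two hypotheses. (I read the displayed conclusion with the evident typo corrected: for any $b,c\sqin a$, \emph{exactly} one of $d(b,c)<\tfrac13 r$, $\sigma(b,c)>\tfrac23 r$, $\sigma(c,b)>\tfrac23 r$ holds — the $a$'s in the displayed alternatives standing for $b,c$.) The only facts I would use beyond the hypotheses are the elementary identities noted just before the lemma, namely $\sigma(x,y)=0\iff x\sqsubseteq y$, $d(x,y)=\max(\sigma(x,y),\sigma(y,x))$, and $\sigma(x,y)\le d(x,y)$ (the last two coming from the Hausdorff-distance characterization; the $\max$ identity degenerates only when both $x,y$ are empty, which I handle separately at the end).

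Fix $b,c\sqin a$. First I would extract the two ``gaps.'' Instantiating the supremum defining $\varphi_r$ at the pair $b,c$ and using $\varphi_r(a)<\tfrac13 r$ gives $\min(d(b,c),r-d(b,c))<\tfrac13 r$, hence
\[
  d(b,c)<\tfrac13 r \qquad\text{or}\qquad d(b,c)>\tfrac23 r .
\]
Similarly, since $\chn(a)\ge\min(\sigma(b,c),\sigma(c,b))$ and $\chn(a)<\tfrac13 r$, at least one of $\sigma(b,c),\sigma(c,b)$ is $<\tfrac13 r$; in particular, because $\tfrac13 r<\tfrac23 r$, we cannot have both $\sigma(b,c)>\tfrac23 r$ and $\sigma(c,b)>\tfrac23 r$. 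Combining this with $\sigma(x,y)\le d(x,y)$ — which makes $d(b,c)<\tfrac13 r$ incompatible with either $\sigma>\tfrac23 r$ — already gives pairwise exclusivity of the three alternatives.

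For existence I would split on whether both $\sigma(b,c)$ and $\sigma(c,b)$ are $<\tfrac13 r$. If so, then $d(b,c)=\max(\sigma(b,c),\sigma(c,b))<\tfrac13 r$ and the first alternative holds. If not, then since at least one of them is $<\tfrac13 r$, exactly one is; say $\sigma(b,c)<\tfrac13 r\le\sigma(c,b)$ (the other case is symmetric and yields the second alternative). Then $d(b,c)=\max(\sigma(b,c),\sigma(c,b))=\sigma(c,b)\ge\tfrac13 r$, so by the displayed dichotomy in fact $d(b,c)>\tfrac23 r$, whence $\sigma(c,b)=d(b,c)>\tfrac23 r$ and the third alternative holds. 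Finally, the degenerate case $b=c=\varnothing$: here $d(b,c)=0<\tfrac13 r$ while $\sigma(b,c)=\sigma(c,b)\le 0$, so the first alternative holds and the other two fail.

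I do not expect a genuine obstacle here; the only care required is in tracking strict versus non-strict inequalities so that the trichotomy comes out exclusive, and in remembering the convention for bounded quantifiers over the empty set so that the identity $d=\max(\sigma,\sigma)$ is invoked only where it actually holds.
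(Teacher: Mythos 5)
Your proof is correct and follows essentially the same route as the paper's: extract the dichotomy $d(b,c)<\tfrac13 r$ or $d(b,c)>\tfrac23 r$ from $\varphi_r(a)<\tfrac13 r$, extract $\min(\sigma(b,c),\sigma(c,b))<\tfrac13 r$ from $\chn(a)<\tfrac13 r$, and combine via $d(b,c)=\max(\sigma(b,c),\sigma(c,b))$. Your explicit handling of the degenerate case $b=c=\varnothing$ (where that max identity fails under the empty-quantifier conventions) is a small point of extra care the paper's proof glosses over.
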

\begin{proof}
  Since $\varphi_r(a) < \frac{1}{3}r$, we have that for any $b,c \sqin a$, either $d(b,c) < \frac{1}{3}r$ or $d(b,c) > \frac{2}{3}r$. Since $\chn(a) < \frac{1}{3}r$, we have that for any $b,c \sqin a$, either $\sigma(b,c) < \frac{1}{3}r$ or $\sigma(c,b) < \frac{1}{3}r$. If $d(b,c) \not < \frac{1}{3}r$, then we must have either $\sigma(b,c)\geq \frac{1}{3}r$ or $\sigma(c,b) \geq \frac{1}{3}r$, whence either $\sigma(b,c) > \frac{2}{3}r$ and $\sigma(c,b) < \frac{1}{3}r$ or $\sigma(c,b) > \frac{2}{3}r$ and $\sigma(b,c) < \frac{1}{3}r$. 
\end{proof}

Define the formula
\[
  o(x,y) = \sup_{z \sqin x}\sup_{w \sqin y}\sigma(z,w).
\]
\cref{lem:approx-chain} implies that if $\varphi_r(a) < \frac{1}{3}r$ and $\chn(a) < \frac{1}{3}$, then for any $E_r$-equivalence classes $b$ and $c$ of $a$, either $o(b,c)\leq \frac{1}{3}r$ or $o(b,c)\leq \frac{1}{3}r$. Furthermore, if both of these hold, then $b = c$.

Let $C_r$ be the class $\{x \in X_r : \chn(x) < \frac{1}{3}r\}$. By the above observations, we have that the function $g_r : C_r \to M$ defined by
\[
g_r(a) = \left\{ \{x \sqin f_r(a) : o(x,y) \leq \tfrac{1}{3}r \} :  y \in f_r(a) \right\}
\]
is explicitly definable (without parameters). (Note that we do not need to take closures as $f_r(a)$ is $\frac{1}{3}r$-discrete.) Furthermore, it is immediate that $g_r(a)$ is a $\frac{1}{3}r$-discrete chain for any $a \in C_r$ and if $a \in C_r$ is a chain, then $g_r(a) = \{\{\{x\} : x \sqin a,~x\sqsubseteq y\} : y \in a\}$ and moreover $a$ and $g_r(a)$ are order-isomorphic as chains.

With this machinery, we are finally in a position to examine the global structure of ordinals in models of $\MSE$. In particular, we will show that 

\begin{defn}
  For any ordinal $\alpha$ of $M \models \MSE$, we write $s(\alpha)$ for the quantity
  \[
    \sup\{r> 0 : (\exists~\text{well-ordered}~r\text{-discrete chain}~x\in M)\otp(x) = \alpha\}.
  \]
\end{defn}

It is easy to see that if $\alpha \leq \beta$, then $s(\alpha) \geq s(\beta)$, so for any $M \models \MSE$, $s$ is a non-increasing map from $\Ord^M$ to $(0,1]$. Furthermore, it is always the case that $s(0) = 1$. By using Hartogs numbers, its easy to show that for any ordinal $\alpha \in \Ord^M$, there is an ordinal $\beta \in \Ord^M$ of strictly larger cardinality, namely the Hartogs number of $\Pc(a)$, where $\otp(a) = \alpha$. By an abuse of notation, we'll write this as $\aleph(\Pc(\alpha))$. Note that by \cref{lem:unif-disc-prod-power} and the fact that the Hartogs number of $X$ always embeds into $\Pc^3(X)$, we have that $s(\alpha) = s(\aleph(\Pc(\alpha)))$. This means that if $s(\beta) < s(\alpha)$, then $\beta$ has much larger cardinality than $\alpha$.

We'll write $\omega^M$ for the first limit ordinal in $M$, if it exists. The value of $s(\omega^M)$ is directly related to the axiom of infinity.

\begin{prop}
  Fix $M\models \MSE$. For any $r \in (0,1]$ and $\e \in (0,\frac{1}{3}r)$, $M \models \Inf_{r,\e}$ if and only if $\omega^M$ exists and $r \leq s(\omega^M)$. In particular, $M \models \Inf$ if and only if $\omega^M$ exists and $s(\omega^M) = 1$.
\end{prop}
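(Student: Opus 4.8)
The plan is to reduce the statement to \cref{prop:ax-infty-char} together with one genuinely new ingredient, namely that $s(\omega^M)=1$ whenever $\omega^M$ exists. First I would record the easy passage between infinite uniformly discrete sets and the ordinal $\omega^M$. If $a$ is $s$-discrete with $M\models\Inf(a)$, then $\Nb_a$ carries an internal well-order whose type is the first limit ordinal $\omega^M$ (so $\omega^M$ exists); moreover $\Nb_a\subseteq\Pc^2(a)$ is $s$-discrete by \cref{lem:unif-disc-prod-power}, and replacing each $\nu\in\Nb_a$ by $\{\mu\in\Nb_a:\mu<\nu\}$ turns $(\Nb_a,{<})$ into an order-isomorphic $\sqsubseteq$-chain inside $\Pc(\Nb_a)$, which is again $s$-discrete. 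Hence for every $s$, an $s$-discrete set satisfying $\Inf$ yields an $s$-discrete well-ordered chain of type $\omega^M$; conversely such a chain is itself an $s$-discrete set whose cardinality is not in $\Nb_a$, so it witnesses $\Inf$. Combining this with \cref{prop:ax-infty-char}(1) gives: $M\models\Inf_{r,\e}$ iff $\omega^M$ exists and $s(\omega^M)\ge r-\e$.

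So the whole proposition follows from the claim that $\omega^M$ exists $\Rightarrow s(\omega^M)=1$. Granting it, the $\Leftarrow$ direction is immediate: if $\omega^M$ exists and $r\le s(\omega^M)$ then for every $r'<r$ there is an $r'$-discrete well-ordered chain of type $\omega^M$, so for every $s\in(0,r-\e)$ an $s$-discrete set satisfying $\Inf$, whence $M\models\Inf_{r,\e}$ by \cref{prop:ax-infty-char}(1). For $\To$, $\Inf_{r,\e}$ produces some infinite uniformly discrete $a$ by \cref{prop:ax-infty-char}(1), hence $\omega^M$ exists as above, and then $s(\omega^M)=1\ge r$. The ``In particular'' clause is the case $r=1$: since $s$ is $(0,1]$-valued, $1\le s(\omega^M)$ says exactly $s(\omega^M)=1$, and $\Inf=\{\Inf_{1,\e}:0<\e<\tfrac13\}$.

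It remains to prove $s(\omega^M)=1$ when $\omega^M$ exists. The idea is that, although the cardinal copy $\Nb_a$ of $\omega$ is only as discrete as $a$ itself, the \emph{von Neumann} copy is automatically $1$-discrete, because passing from a finite von Neumann ordinal $x$ to its successor $x\cup\{x\}$ moves one a metric distance exactly $1$. Concretely, from $\Nb_a$ and the replacement/explicit-definability apparatus of \cref{sec:definable-functions}---using the fact established in \cref{sec:disc-set} that $\MSE$ with $\Inf(a)$ is strong enough to carry out this recursion---I would form $\omega_{vN}\coloneqq\{n_{vN}:n\in\Nb_a\}\in M$, where $0_{vN}=\varnothing$ and $(n+1)_{vN}=n_{vN}\cup\{n_{vN}\}$; since $m_{vN}\sqsubseteq n_{vN}$ precisely when $m\le n$, $\omega_{vN}$ is a well-ordered $\sqsubseteq$-chain of type $\omega^M$. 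An induction along $\Nb_a$ then shows $d(m_{vN},n_{vN})=1$ for $m\ne n$: for $m<n$ we have $m_{vN}\sqin n_{vN}$ and $m_{vN}\sqsubseteq n_{vN}$, so $d(m_{vN},n_{vN})=\sigma(n_{vN},m_{vN})\ge e(m_{vN},m_{vN})$, and $e(m_{vN},m_{vN})$ is $1$ by the empty-quantifier convention if $m=0$ and, otherwise, by the inductive hypothesis since $e(m_{vN},m_{vN})=\inf_{k<m}d(m_{vN},k_{vN})$. Thus $\omega_{vN}$ is a $1$-discrete well-ordered chain of type $\omega^M$, so $s(\omega^M)=1$.

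I expect the only real obstacle to be this last construction---verifying that the von Neumann $\omega$ genuinely exists as an element of $M$, i.e.\ that the recursion along $\Nb_a$ and the attendant instance of replacement go through. This is precisely the sort of point where $\MSE$'s handling of recursion is delicate, but the recursion here is ``discrete'' rather than real-valued, so it should be available from the arithmetic interpreted in \cref{sec:disc-set}. Everything else---the $1$-discreteness computation and the bookkeeping with \cref{prop:ax-infty-char}---should be routine.
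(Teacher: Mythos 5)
Your first paragraph is, in essence, the paper's entire proof: the paper passes from an $r$-discrete set $a$ with $M \models \Inf(a)$ to the $r$-discrete well-ordered chain of initial segments of $\Nb_a$ (your $\{\mu : \mu < \nu\}$ construction), notes that conversely any uniformly discrete chain of type $\omega^M$ is itself an infinite uniformly discrete set, and then cites \cref{prop:ax-infty-char}. Up to that point your argument is correct and matches the paper, including your (accurate) observation that \cref{prop:ax-infty-char} literally delivers ``$s(\omega^M) \geq r - \e$'' rather than ``$s(\omega^M) \geq r$''; the paper simply elides this bookkeeping, and the ``In particular'' clause is unaffected by it either way.

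The genuine gap is the new ingredient you introduce to close that bookkeeping, namely the claim that $\omega^M$ exists implies $s(\omega^M) = 1$. Your proof of this claim does not go through, and the obstacle is exactly the one you flag and then dismiss. To realize $\omega_{vN} = \{n_{vN} : n \in \Nb_a\}$ as an element of $M$ you must exhibit $n \mapsto n_{vN}$ as a \emph{definable} function on $\Nb_a$, since \cref{prop:def-fun-im} and \cref{lem:im-of-def-is-def} are the only replacement-like principles $\MSE$ offers; but the arithmetic interpreted in \cref{sec:disc-set} only supports recursions whose values stay inside the uniformly discrete hierarchy $\Pc^k(a)$, $a \times a$, etc.\ (all of which are merely as discrete as $a$ itself), whereas your recursion takes values in the ambient universe and its graph has no reason to be a definable class --- unwinding ``there exists a recursion witness $g$'' produces a zero-set of a formula, not a distance predicate, which is precisely the failure mode discussed at the start of \cref{sec:defbl-classes}. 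Moreover $\Nb_a$ contains internally finite nonstandard elements, so the ``induction along $\Nb_a$'' you invoke is an instance of the internal induction principle whose failure is the content of \cref{thm:ord-ill} and its corollary: the latter exhibits models containing internally finite ordinals $\alpha$ with $s(\alpha) < 1$, i.e.\ ordinals whose $1$-discrete (von Neumann) representatives provably do not exist, so the recursion cannot be carried out internally in general. Finally, note that if your claim were true, every condition $\Inf_{r,\e}$ would be equivalent to ``$\omega^M$ exists,'' collapsing the whole parametrized family; this runs directly against the paper's stated motivation (the discussion preceding \cref{sec:collecting-e-discrete}) that an infinite $\frac{1}{2}$-discrete set need not yield an infinite $1$-discrete one, which is the entire reason the scale parameter $r$ appears in the axiom of infinity.
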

\begin{proof}
Let $\Nb_a^i$ be the set of initial segments of $\Nb_a$ for some uniformly discrete $a$. It is immediate that $\Nb_a^i$ is a well-ordered chain. If $a$ is $r$-discrete, then we have by \cref{lem:unif-disc-prod-power} that $\Nb_a^i$ is $r$-discrete as well. It is easy to show that $\otp(\Nb_a^i) = \omega^M$ if and only if $M \models \Inf(a)$. Conversely, if $\otp(b) = \omega^M$ for some well-ordered uniformly discrete chain $b$, then $M \models \Inf(b)$. The result now follows from \cref{prop:ax-infty-char}.
\end{proof}

\begin{lem}\label{lem:close-implies-iso}
  Let $a$ and $b$ be $r$-discrete chains in some $M\models \MSE$. If $d(a,b) < \frac{1}{2}r$, then $a \cong b$. Furthermore, if $(M,d)$ is an ultrametric space, it is enough to assume that $d(a,b) < r$.
\end{lem}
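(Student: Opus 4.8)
The plan is to construct the order-isomorphism explicitly as a ``nearest-point'' correspondence and then verify that it is coded by an internal subset of $a \times b$.

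\emph{Setup.} We may assume $r \le 1$. If $a = \varnothing$, then since $d_{\HH}$ from $\varnothing$ to any non-empty set is $1$ and $d(a,b) < \tfrac12 r \le \tfrac12$, we must have $b = \varnothing$, and the empty map witnesses $a \cong b$; so assume $a,b \ne \varnothing$. Fix a real $\rho$ with $d(a,b) < \rho < \tfrac12 r$ (in the ultrametric case, instead with $d(a,b) < \rho < r$). By the definition of $d_{\HH}$, for every $f \sqin a$ there is a $g \sqin b$ with $d(f,g) < \rho$, and this $g$ is unique: if $g' \sqin b$ also satisfies $d(f,g') < \rho$, then $d(g,g') \le d(f,g)+d(f,g') < 2\rho < r$ (ultrametric: $d(g,g') \le \max(d(f,g),d(f,g')) < \rho < r$), and $b$ is $r$-discrete. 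Write $\phi(f)$ for this unique $g$. The symmetric construction gives a map from the elements of $b$ to those of $a$, which is at once seen to be a two-sided inverse of $\phi$ (if $\phi(f)=g$, then $f$ is a point of $a$ within $\rho$ of $g$, hence \emph{the} such point). Thus $\phi$ is a bijection between the elements of $a$ and of $b$.

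\emph{Order-preservation.} I would show $f \sqsubseteq f'$ iff $\phi(f) \sqsubseteq \phi(f')$ for $f,f' \sqin a$; by bijectivity and the symmetry of the construction it suffices to prove the forward direction, after which the reverse follows by applying it to the pair $(f',f)$ together with $\HH$-extensionality. So suppose $f \sqsubseteq f'$, $f \ne f'$, but $\phi(f) \not\sqsubseteq \phi(f')$; since $b$ is a chain and $\phi$ is injective, this forces $\phi(f') \sqsubseteq \phi(f)$ with $\phi(f') \ne \phi(f)$. From $d(x,y) = \max(\sigma(x,y),\sigma(y,x))$ and $\sigma(f,f') = 0$ (as $f \sqsubseteq f'$), together with $r$-discreteness of $a$, we get $\sigma(f',f) = d(f',f) \ge r$. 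On the other hand, for any $z \sqin f'$, since $d(f',\phi(f')) < \rho$ there is a $z' \sqin \phi(f')$ with $d(z,z') < \rho$; as $\phi(f') \sqsubseteq \phi(f)$ we have $z' \sqin \phi(f)$, and since $d(\phi(f),f) < \rho$ there is a $z'' \sqin f$ with $d(z',z'') < \rho$; hence $e(z,f) \le d(z,z'') < 2\rho < r$ (ultrametric: $e(z,f) \le \max(d(z,z'),d(z',z'')) < \rho < r$). Taking the supremum over $z \sqin f'$ yields $\sigma(f',f) \le 2\rho < r$ (resp.\ $\le \rho < r$), a contradiction. This is exactly the directed triangle inequality for $\sigma$, routed through the two Hausdorff approximations; crucially it uses only that $a$ and $b$ are $r$-discrete, \emph{not} that their elements are.

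\emph{Internalizing the graph and the obstacle.} Let $G = \{\langle f,\phi(f)\rangle : f \sqin a\} \subseteq a \times b$, where $a \times b$ exists by \cref{cor:Cart-prod}. There is a genuine separation gap: for $f \sqin a$ and $g \sqin b$, if $g=\phi(f)$ then $d(f,g) < \rho$, while if $g \ne \phi(f)$ then $d(g,\phi(f)) \ge r$ ($b$ is $r$-discrete), so $d(f,g) \ge r-\rho > \rho$ (ultrametric: $d(g,\phi(f)) \le \max(d(g,f),d(f,\phi(f)))$ forces $d(f,g) \ge r > \rho$). By \cref{prop:proj-is-defbl} and \cref{prop:defbl-fun-sub}, substituting $\pi_0,\pi_1$ into $d(y_0,y_1)$ gives an $\Lsq$-formula $\theta(w)$ (with parameters) with $\theta^M(w) = d(\pi_0(w),\pi_1(w))$ for every ordered pair $w$, so for every $w \sqin a\times b$ we have $\theta^M(w) < \rho$ or $\theta^M(w) \ge r-\rho$ (resp.\ $\ge r$). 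Then \cref{prop:disc-sep}, applied to $a\times b$ with $\theta$ and bounds $\rho < r-\rho$ (resp.\ $\rho < r$), produces a $G \sqsubseteq a\times b$ with $w \sqin G$ iff $\theta^M(w) \le \rho$, i.e.\ iff $w = \langle f,\phi(f)\rangle$ for some $f \sqin a$; since $\phi$ is a bijection and an order-isomorphism, $G$ witnesses $a \cong b$. I expect the order-preservation step to be the main obstacle: a naive element-chase would want the elements of $b$ to be $r$-discrete, which is unavailable, so one must argue at the level of the asymmetric quantity $\sigma$, where the relevant directed triangle inequality holds with exactly the hypotheses at hand. Once that is set up, the ultrametric strengthening is automatic — each estimate of the form $d_1+d_2$ becomes $\max(d_1,d_2)$, which absorbs the weaker assumption $d(a,b) < r$.
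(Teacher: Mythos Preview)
Your proof is correct and follows essentially the same approach as the paper's: build the nearest-point bijection, verify it preserves the order via a triangle-inequality chase through the two Hausdorff approximations, and internalize the graph by discrete separation on $a\times b$. The only cosmetic difference is that the paper argues order-preservation directly (exhibiting an element $i\sqin g'$ with $i\not\sqin g$) whereas you argue by contradiction using $\sigma$; your treatment of the ultrametric case and of the internalization step is in fact more explicit than the paper's.
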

\begin{proof}
  Fix $s$ such that $d(a,b) < s < \frac{1}{2}r$.
  Since $a$ and $b$ are $r$-discrete, we have that the class $f = \{\langle x,y \rangle \sqin a \times b : d(x,y) \leq s\}$ is a set and is the graph of a bijection between $a$ and $b$. Now we need to show that $f$ is actually an order isomorphism between $a$ and $b$. Suppose that we have $c,c' \sqin a$ with $c \sqsubseteq c'$ and $g,g' \sqin b$ with $d(c,g) < s$ and $d(c',g') < s$. If $c = c'$, then $g = g'$, so assume that $c \sqsubset c'$. Since $d(c,c') \geq r$, we can find an $h \sqin c'$ such that $e(h,c) > 2s$ (as $2s < r$). Since $d(c',g') < s$, we can find an $i \sqin g'$ such that $d(h,i) < s$. The triangle inequality implies that $e(i,g) \geq e(h,c) - d(h,i) - d(c,g) > 2s - s - s = 0.$ Therefore $i \not \sqin g$ and it must be the case that $g \sqsubset g'$, as required.

The proof in the ultrametric case is essentially the same.
\end{proof}

\begin{thm}\label{thm:ord-ill}
  Fix $M \models \MSE$ and $r \in \{s(\gamma) : \gamma \in \Ord^M \}$. Let $t = r$ if $d$ is an ultrametric and let $t = \frac{1}{2}r$ otherwise.

  For any $s < t$, there is an ordinal $\alpha \in \Ord^M$ with $s \leq s(\alpha) < r$ such that for any $\beta \in \Ord^M$, if $s(\beta) \geq r$, then $\beta < \alpha$.

  In particular, if $d$ is an ultrametric, then $\{s(\alpha) : \alpha \in \Ord^M\}$ is dense in $(0,1]$ and $\{\alpha \in \Ord^M : s(\alpha) < 1\}$ has no least element.
\end{thm}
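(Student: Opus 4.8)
The plan is to realize $\alpha$ as the order type of a uniformly discrete well-ordered chain that enumerates every well-order type occurring $\rho$-discretely, where $\rho$ is chosen in $(s,r)$ in the ultrametric case and in $(2s,r)$ in general; the hypothesis $s<t$ (with $t=r$, resp.\ $t=\tfrac{1}{2}r$) is exactly what makes such a choice possible. First I would build a set $T$ of (representatives of) ordinals with the properties that (a) $T$ is downward closed under $<$, (b) $T$ contains $\otp(c)$ for \emph{every} $\rho$-discrete well-ordered chain $c\in M$, and (c) $T$ is realized by a uniformly discrete set, pairwise distances being $\geq\rho$ in the ultrametric case and $\geq\tfrac{1}{2}\rho$ in general. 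Since the class of $\rho$-discrete well-ordered chains is not closed (the usual ``doubletons'' obstruction), $T$ cannot be formed directly; instead I would imitate \cref{sec:collecting-e-discrete}: excise a set $W$ that contains every genuine $\rho$-discrete well-ordered chain and is contained in the class of approximately $\rho$-discrete approximate chains (replacing the well-ordering requirement by the real-valued surrogate supplied by \cref{prop:disc-lang-unif}); normalize via $g_\rho$; quotient by $\cong$ using \cref{prop:disc-eq-reln-quot} (legitimate because, by \cref{lem:close-implies-iso}, non-isomorphic $\rho$-discrete chains lie at distance $\geq\rho$, resp.\ $\geq\tfrac{1}{2}\rho$, so $\cong$ is a discrete equivalence relation on the normalized family and the quotient inherits that separation); pass to the downward closure under initial segments via \cref{prop:def-fun-im}; and recode the well-order $(T,\leq)$ as a chain by the ``initial segments'' construction described just after the definition of ordinals. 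Set $\alpha=\otp(T,\leq)\in\Ord^M$.

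Granting such a $T$, the three assertions are short. Since $T$ is downward closed, the position of any $\tau\in T$ within $(T,\leq)$ is $\tau$ itself, so $\tau<\alpha$; hence for every $\beta$ with $s(\beta)\geq r$, as $\rho<r\leq s(\beta)$ gives $\beta$ a $\rho$-discrete well-ordered representative, we get $\beta\in T$ and $\beta<\alpha$. If $s(\alpha)\geq r$ the same reasoning puts $\alpha\in T$, whence $\alpha<\alpha$, which is absurd; thus $s(\alpha)\leq\rho<r$. Finally $s(\alpha)\geq\rho$ because the chain realizing $\alpha$ is $\rho$-discrete (only $\tfrac{1}{2}\rho$-discrete in general, but that still gives $s(\alpha)\geq\tfrac{1}{2}\rho>s$ as $\rho>2s$). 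So $s\leq s(\alpha)<r$ and $\alpha$ dominates every $\beta$ with $s(\beta)\geq r$, as required. In fact the argument shows $s(\alpha)=\rho$ exactly, since a representative of $\alpha$ at any scale $>\rho$ would again force $\alpha\in T$.

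For the concluding assertions, apply the main statement with $r=s(0)=1$ (so $t=1$ in the ultrametric case). If $\{\alpha\in\Ord^M:s(\alpha)<1\}$ had a $\leq$-least element $\alpha_0$, choose $s$ with $s(\alpha_0)<s<1$; the ordinal $\alpha$ produced satisfies $s(\alpha)\geq s>s(\alpha_0)$, hence $\alpha<\alpha_0$ because $s(\cdot)$ is non-increasing, while $s(\alpha)<1$, contradicting minimality. Density of $\{s(\gamma):\gamma\in\Ord^M\}$ in $(0,1]$ then follows from the ``$s(\alpha)=\rho$'' refinement above (taken with $r=1$), which puts every $\rho\in(0,1)$ in the range of $s$.

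I expect the genuine difficulty to be the construction of $T$: carrying out the excision and the $g_\rho$-normalization so that $W$ simultaneously catches \emph{all} honestly $\rho$-discrete well-ordered chains and is tame enough that the $\cong$-quotient is a legitimate discrete quotient, and — the crucial point — tracking the discreteness constant through $g_\rho$, the quotient, the downward closure, and the final recoding so that the resulting chain is $\rho$-discrete, not merely $\lambda\rho$-discrete for some $\lambda<1$. This is precisely the calibration that gives $t=r$ in the ultrametric case, via the lossless constant of \cref{lem:close-implies-iso}, and only $t=\tfrac{1}{2}r$ in general.
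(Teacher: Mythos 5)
Your proposal follows essentially the same route as the paper's proof: excise an approximation of the class of $\rho$-discrete approximate well-ordered chains, normalize with the $g_\rho$ machinery of \cref{sec:collecting-e-discrete}, quotient by $\cong$ using \cref{lem:close-implies-iso} to control the discreteness constant (the source of the $t=r$ vs.\ $t=\tfrac12 r$ dichotomy), and run the Burali-Forti argument to get $s\leq s(\alpha)<r$. The only caveat is your side remark that $s(\alpha)=\rho$ \emph{exactly}: the normalization is slightly lossy (the paper only gets $r(1-\delta)$-discreteness for the normalized chains), so you should only claim $s(\alpha)\in[\rho(1-\delta),\rho]$ — which still suffices for the density conclusion.
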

\begin{proof}
  
  Fix positive $s < t$ and let $\delta = 1-\frac{s}{t}$ (implying that $s=t(1-\delta)$). Assume without loss of generality that $t\delta < \frac{1}{3}r$. Let
  \[
    a = \exc{x}{\max(\phi_r(x),\chn_r(x))}{0}{t\delta}.
  \]
  Note that every element of $a$ is an element of $C_r$. Let $b = \ol{\{g_r(x) : x\sqin a\}}$. Note that since the collection of chains in $M$ is $\{x \in M : \chn^M(x) = 0\}$, it is metrically closed. Hence every element of $b$ is a chain. It is also easy to see that every element of $b$ is $r(1-\delta)$-discrete (regardless of whether $d$ is an ultrametric).

  Let $c = \{ x \sqin b : x~\text{is well-ordered}\}$. Note that $c$ is a set in $M$. Also note that for any $\beta \in \Ord^M$, if $s(\beta) \geq r$, then some element of $c$ has order type $\beta$. The equivalence relation $\cong$ is discretely definable on $c$, so we can form the set $f = \{\{y \sqin c : x \cong y\} : x \sqin c\}$. By \cref{lem:close-implies-iso}, the set $f$ is $t(1-\delta)$-discrete (regardless of whether $d$ is an ultrametric) or, in other words, $s$-discrete. We can find a formula $\varphi(x,y)$ with the property that for any $x,y \sqin f$, $\varphi(x,y) \in \{0,1\}$ and $\varphi(x,y) = 0$ if and only $(\forall z \sqin x)(\forall w \sqin y)\otp(z)\leq\otp(x)$. This formula defines a linear order on $f$ which by a standard argument is a well-order. Let
  \[
    \alpha = \otp(\{\{y : y \sqin f,~\varphi(x,y) = 0\} : x\sqin f\}).
  \]
  Since $f$ contains representatives of all ordinals $\beta$ with $s(\beta) \geq r$, we must have that $\alpha$ is larger than any such $\beta$. Therefore it cannot be the case that $s(\alpha) \geq r$. On the other hand, since $f$ is $s$-discrete, it follows that $s(\alpha) \geq s$, as required.

 The last statements in the theorem obviously follow from the rest of it.
\end{proof}

\begin{cor}
  If $M\models \MSE$ has no infinite, uniformly discrete sets, then $M$ has non-standard naturals.
\end{cor}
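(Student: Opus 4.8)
The plan is to read the statement off \cref{thm:ord-ill} applied at the top scale $r=1$. First I would observe that $s(0)=1$, so $1\in\{s(\gamma):\gamma\in\Ord^M\}$ and \cref{thm:ord-ill} applies with this $r$; here $t$ is $1$ or $\tfrac12$, so in particular $t\geq\tfrac12$ and I may take $s=\tfrac14<t$. The theorem then yields an ordinal $\alpha\in\Ord^M$ with $\tfrac14\leq s(\alpha)<1$ such that $\beta<\alpha$ for every $\beta\in\Ord^M$ with $s(\beta)=1$ (i.e.\ with $s(\beta)\geq 1$).

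Next I would verify that every standard natural number $n$ satisfies $s(n)=1$, so that $n<\alpha$ for all standard $n$. The witnesses are the finite sets of von Neumann naturals $\{0,1,\dots,n-1\}$ (where $0=\varnothing$ and $m+1=m\sqcup\{m\}$), which exist by \cref{cor:basic-stuff}; each such set is an internally well-ordered chain of order type $n$, since $m\sqsubseteq k$ iff $m\leq k$, and a short induction using $\HH$-extensionality shows that any two distinct von Neumann naturals are at distance exactly $1$, so $\{0,\dots,n-1\}$ is $1$-discrete. Hence $s(n)=1$, and therefore $\alpha$ is strictly larger than every standard natural number.

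Then I would fix a uniformly discrete well-ordered chain $c$ with $\otp(c)=\alpha$ (one exists because $s(\alpha)>0$). Since $\alpha$ exceeds every standard natural, $c$ has externally infinitely many elements. On the other hand, the hypothesis says $M$ has no infinite uniformly discrete set, i.e.\ no uniformly discrete $a$ satisfies $\Inf(a)$; equivalently $\omega^M$ does not exist. Consequently $c$ cannot be internally infinite: otherwise $\Nb_c$ would have no largest element and its set of initial segments $\Nb_c^i$ would be a uniformly discrete well-ordered chain of order type $\omega^M$, contradicting the nonexistence of $\omega^M$. So $|c|_c\in\Nb_c$, yet because $c$ has more than $n$ elements for every standard $n$, the cardinal $|c|_c$ is not the internal cardinality $\succc_c^{\,n}(0)$ of any finite subset of $c$. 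Thus $|c|_c$ is a non-standard element of the internal natural number object $\Nb_c$, which is to say $M$ has non-standard naturals.

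The main obstacle I anticipate is bookkeeping rather than depth: pinning down $s(n)=1$ for all standard $n$ (the von Neumann distance computation, routine but the only content outside \cref{thm:ord-ill}), and cleanly separating the two roles of ``infinite'' — the hypothesis forbids \emph{internal} infinitude of $c$, while \cref{thm:ord-ill} forces \emph{external} infinitude of $c$ — so that their combination is exactly the assertion that $\Nb_c$ is a non-standard model of arithmetic.
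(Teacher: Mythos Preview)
Your proposal is correct and follows essentially the same approach as the paper: establish $s(n)=1$ for all standard $n$ via hereditarily finite sets, then invoke \cref{thm:ord-ill} at $r=1$ to obtain an ordinal $\alpha$ with $s(\alpha)<1$, which must be internally finite by hypothesis yet cannot be any standard $n$. The only cosmetic difference is that the paper stops at the ordinal $\alpha$ itself (noting it is internally finite but not standard), whereas you pass to a representing chain $c$ and locate the non-standard element as $|c|_c\in\Nb_c$; both readings of ``non-standard naturals'' are fine, and your extra detail on the von Neumann naturals being $1$-discrete is a welcome explicitation of what the paper leaves to the phrase ``hereditarily finite sets.''
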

\begin{proof}
  By \cref{cor:basic-stuff} and induction, any model of $\MSE$ contains hereditarily finite sets of every externally finite cardinality. Therefore for any standard natural $n$, $n \in \Ord^M$ and $s(n) = 1$. \cref{thm:ord-ill} implies that there are ordinals $\alpha$ in $M$ (which must be internally finite) such that $s(\alpha) < 1$. 
\end{proof}

The behavior of ultrametric models of $\MSE$ in \cref{thm:ord-ill} is reminiscent of the behavior of $\omega$ in models of Cantor-\L ukasiewicz set theory, as discovered by H\'ajek \cite[Th.\ 4.17]{Hajek-2015}. In particular, they both exhibit a manifestation of the sorites paradox: an inability to formalize an induction principle of the form
\begin{prooftree}
  \AxiomC{$\varphi(0) = 0$}
  \AxiomC{$\forall \alpha[(\forall \beta < \alpha)(\varphi(\beta) = 0) \to \varphi(\alpha) = 0]$}
  \BinaryInfC{$\forall \alpha(\varphi(\alpha) = 0)$}
\end{prooftree}
for a real-valued predicate $\varphi(x)$ on some class of ordinals. For $ \CL_0$, this induction principle cannot hold even for $\omega$, but, as we will see in \cref{thm:MSE-consistent}, models of $\MSE$ can have arbitrarily large standard ordinals. \cref{thm:ord-ill} is also of course similar to the non-existence of $\beta$-models of $\NFU$, although the mechanism by which models of $\NFU$ are ill-founded is different. One might idly wonder what could happen if we were to restrict excision to stratified formulas.

What is unclear at the moment is the status of non-ultrametric models of $\MSE$. \cref{thm:ord-ill} does not preclude the possibility of $\beta$-models of $\MSE$ (i.e., models in which $\Ord^M$ is externally well-founded), but it seems unlikely that they exist. Every model of $\MSE$ we know how to produce contains a set that is an ultrametric model of $\MSE$, whereby \cref{thm:ord-ill} applies. This leaves the following question.

\begin{quest}
  Does $\MSE$ have any $\beta$-models? Is it true that for any $M \models \MSE$, $\{s(\alpha) : \alpha \in \Ord^M\}$ is dense in $(0,1]$?
\end{quest}

Given the behavior of the models constructed in \cref{sec:constructing-models}, we conjecture that $\MSE$ has no $\beta$-models and $\{s(\alpha) : \alpha \in \Ord^M\}$ is always dense in $(0,1]$.

Finally, although this is more or less a cosmetic nicety, we would like to show that we can build canonical `tokens' representing well-order types, i.e., elements of $M$ that somehow canonically represent a well-order type $\alpha$. In $\NFU$, this is accomplished by taking $\{x : \otp(x) = \alpha\}$. In $\ZF$ and $\GPK^+_\infty$, this is accomplished by taking the von Neumann ordinal of that order type. Neither of these approaches will work for $\MSE$, so we will have to do something new. %

\begin{defn}\label{defn:ord-token}
  Given a uniformly discrete set $a$, a set $b \sqsubseteq a \times V^M$, and an element $c \sqin a$, we write $b[c]$ for the class $\{f : \langle c,f \rangle \sqin b\}$. For any chain $a \in M \models \MSE$ and any set $b \sqsubseteq a \times V^M$, the \emph{closed chain union of $b$} is 
  \[
    \chi(b) \coloneqq \overline{\left\{ \overline{\bigsqcup\{b[f]:f\sqin a,~f\sqsubseteq c\}} : c \sqin a \right\}}.
  \]
  
  Given a uniformly discrete chain $a \in M \models \MSE$, the \emph{order token of $a$} is the class
  \[
    \otok(a) \coloneqq \overline{\{\chi(b) : b \sqsubseteq a \times V^M\}}.
  \]
\end{defn}

\begin{thm}\label{thm:canonical-ordinals}
  Fix $M \models \MSE$
  \begin{enumerate}
  \item $\otok(a)$ is a set for any uniformly discrete chain $a$.
  \item For any $r > 0$, the map $x \mapsto \otok(a)$ is explicitly definable on the class of $r$-discrete chains.
  \item If $a$ and $b$ are well-ordered uniformly discrete chains, then $\otp(a) \leq \otp(b)$ if and only if $\otok(a) \sqsubseteq \otok(b)$.
  \end{enumerate}
\end{thm}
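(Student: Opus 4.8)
The plan is to get parts~1 and~2 in one stroke by writing $\otok$ as a composite of explicitly definable operations, and then to deduce part~3 from a structural analysis of how $\chi(-)$ transforms when one passes to an initial segment. \emph{For parts 1 and 2}, fix $r>0$. First I would check that the partial operation $(a,p,c)\mapsto p[c]$ is explicitly definable on the class of triples with $a$ an $r$-discrete chain, $p\sqsubseteq a\times V^M$, and $c\sqin a$: every $y\sqin p$ is an ordered pair, so the projections $\pi_0,\pi_1$ of \cref{prop:proj-is-defbl} apply, and substituting via \cref{prop:defbl-fun-sub} the formula $\min\bigl(\inf_{y\sqin p}\max(d(x,\pi_1 y),r^{-1}d(c,\pi_0 y)),1\bigr)$ computes $e(x,p[c])$ -- here $r$-discreteness of $a$ forces $d(c,\pi_0 y)\in\{0\}\cup[r,1]$, so the second entry of $\max$ is either $0$ or $\ge 1$. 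Next, \cref{prop:disc-sep} with \cref{lem:uniformly-definable-is-definable-function} makes $(a,c)\mapsto\{f\sqin a:f\sqsubseteq c\}$ explicitly definable on $r$-discrete chains, since on a chain $\sigma(\cdot,c)=\sup_{z\sqin(-)}e(z,c)$ is $\{0\}\cup[r,1]$-valued (as $d(f,c)=\max(\sigma(f,c),\sigma(c,f))$ there). Composing these with the closure-of-unions map $x\mapsto\overline{\bigsqcup x}$ from \cref{prop:defbl-omnibus} and using \cref{prop:def-fun-im} and \cref{lem:im-of-def-is-def} to form, in turn, the closed image over $\{f\sqin a:f\sqsubseteq c\}$, then over $c\sqin a$, and finally over $p\sqin\Pc(a\times V^M)$ (the operations $\times$, $V^M$, $\Pc$ all being explicitly definable) shows that $a\mapsto\otok(a)$ is explicitly definable on the class of $r$-discrete chains; that is part~2. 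Part~1 follows since an explicitly definable class of $1$-tuples is coextensive with an element of $M$ by \cref{prop:def-class-char}.

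\emph{For part 3}, I would first observe that $\otok$ is an invariant of the order type: an internal order-isomorphism $\phi:a\to b$ of uniformly discrete chains induces a bijection $p\mapsto\{\langle\phi f,g\rangle:\langle f,g\rangle\sqin p\}$ of $\Pc(a\times V^M)$ with $\Pc(b\times V^M)$ that intertwines the two instances of $\chi$, so $\otok(a)=\otok(b)$ whenever $a\cong b$. For the forward implication one may therefore assume $a$ is an initial segment of $b$, and the crux is to compare $\chi_b(p)$ with $\chi_a(p)$ for $p\sqsubseteq a\times V^M$: for $c\sqin a$ the corresponding terms coincide since $\{f\sqin b:f\sqsubseteq c\}=\{f\sqin a:f\sqsubseteq c\}$, and for $c\sqin b\setminus a$ the term collapses to the ``total'' union $\overline{\bigsqcup\{p[f]:f\sqin a\}}$; one then has to check that this does not push $\chi_b(p)$ outside $\otok(a)$ and, conversely, that every $\chi_b(q)$ with $q\sqsubseteq b\times V^M$ lies in $\otok(a)$, yielding $\otok(a)\sqsubseteq\otok(b)$. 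For the reverse implication I would argue the contrapositive: if $\otp(b)<\otp(a)$, then by the forward direction applied to $b,a$ and by $\HH$-extensionality (\cref{defn:H-ext}) it suffices to exhibit one element of $\otok(a)\setminus\otok(b)$. The natural witness is $\chi_a(p)$ for $p=\{\langle f,\langle f,\varnothing\rangle\rangle:f\sqin a\}$, for which $\chi_a(p)=\{\{\langle f,\varnothing\rangle:f\sqin a,\ f\sqsubseteq c\}:c\sqin a\}$ is an $r$-discrete well-ordered chain order-isomorphic to $a$, hence of order type $\otp(a)$ (with $r$ a discreteness scale for $a$, using \cref{lem:ord-pair-max-metric}). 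One would then bound the order types of elements of $\otok(b)$: each $\chi_b(q)$ carries a non-decreasing map of $b$ onto a dense subset of itself, so when $\chi_b(q)$ is uniformly discrete its order type is at most $\otp(b)<\otp(a)$, and a rigidity argument in the style of the proof of \cref{lem:close-implies-iso} (together with \cref{thm:ord-ill}-style control over which order types admit $r$-discrete representatives) should rule out $\chi_a(p)$ being a Hausdorff limit of such sets.

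The step I expect to be the genuine obstacle -- and on which the whole theorem rests -- is controlling the two nested metric closures in \cref{defn:ord-token}: Hausdorff limits of chains of bounded order type can have much larger order type, and an increasing net of closed sets need not Hausdorff-converge to the closure of its union, so neither half of part~3 reduces to a naive comparison of order types. One really has to pin down precisely which chains, and which ``completions'' of chains, occur in $\otok(a)$, presumably leaning on the facts that $d$ is $[0,1]$-valued and that all the relevant sets are built up along a uniformly discrete well-ordered chain; everything else is bookkeeping over \cref{sec:defbl-classes,sec:definable-functions}.
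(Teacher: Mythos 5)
Your treatment of parts 1 and 2 is correct and follows the same route the paper intends (writing $\otok$ as a composite of explicitly definable operations and invoking \cref{lem:im-of-def-is-def} and \cref{prop:def-class-char}); you supply considerably more detail than the paper's one-line citation, and your formula for $e(x,p[c])$ exploits $r$-discreteness in the right way. The problem is part 3, where what you have written is an outline whose two decisive steps are explicitly left open, and those steps are the entire content of the statement. For the forward direction you correctly reduce to the case where $a$ is an initial segment of $b$ and correctly isolate the discrepancy: for $c\sqin b$ above $a$ the corresponding term of $\chi$ collapses to the closed total union $T=\overline{\bigsqcup\{p[f]:f\sqin a\}}$, and $T$ need not lie in $\overline{\{\,\overline{\bigsqcup\{p[f]:f\sqsubseteq c\}}:c\sqin a\}}$ when $a$ has no maximum (take $p$ the diagonal $\{\langle x,x\rangle:x\sqin a\}$ on an $r$-discrete $a$ of limit order type: every proper initial segment of $a$ is then at Hausdorff distance at least $r$ from $\{x:x\sqin a\}$). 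You then write that ``one has to check that this does not push $\chi_b(p)$ outside $\otok(a)$'' --- note that the containment actually required is the other one, namely $\chi_a(p)\sqin\otok(b)$ --- and you do not check it. This is not bookkeeping; it is exactly where the theorem is nontrivial, and your own (correct) observation that an increasing family of closed sets need not Hausdorff-converge to the closure of its union shows that no soft argument will close it. (For what it is worth, the paper dispatches this same point with the bare assertion that $\chi(p)=\chi(p_f)$, so the worry you raise is aimed at the right target; but flagging a difficulty is not resolving it.)

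For the reverse direction you diverge from the paper, and the divergence costs you. The paper never bounds the order types of elements of $\otok(b)$: it uses only that some chain within distance $\tfrac{1}{2}r$ of $a$ lies in $\otok(a)\sqsubseteq\otok(b)$, so that there is $q\sqsubseteq b\times V^M$ with $d(a,\chi(q))<\tfrac{1}{2}r$, whence $a\cong\chi(q)$ by \cref{lem:close-implies-iso}, and then it sends each $x\sqin a$ to the $\sqsubseteq$-least index $z\sqin b$ whose partial union realizes the image of $x$, yielding an order-embedding $a\to b$ directly (using \cref{prop:disc-sep} and $r$-discreteness). Your contrapositive route instead requires controlling the order type of \emph{every} element of $\otok(b)$, including Hausdorff limits of the $\chi_b(q)$'s, which is precisely the phenomenon you admit you cannot control; the concluding ``rigidity argument \dots should rule out'' is a placeholder, not a proof. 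So: parts 1--2 stand; part 3 is not proved, and the unproved steps are the theorem.
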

\begin{proof}
  1 and 2 follow from \cref{prop:defbl-omnibus} and \cref{lem:im-of-def-is-def}.
  
  For 3,  assume that $\otp(a) \leq \otp(b)$. Let this be witnessed by an order isomorphism $f: a \to b$ to some initial segment of $b$. For any $c \sqsubseteq a \times V^M$, we can form the set $c = \{\langle f(x),y \rangle : \langle x, y \rangle \sqin c\}$ (because the map $\langle x,y \rangle \mapsto \langle f(x),y \rangle$ is definable) and we immediately have that $\chi(c) = \chi(c_f)$. Therefore $\otok(a) \sqsubseteq \otok(b)$.

  Conversely, assume that $\otok(a) \sqsubseteq \otok(b)$. Let $r$ be such that $a$ and $b$ are $r$-discrete. Find $c \sqsubseteq b \times V^M$ such that $d(a,\chi(c)) < \frac{1}{2}r$. By \cref{lem:close-implies-iso}, we have that $a$ and $\chi(c)$ are order-isomorphic as chains. Let this be witnessed by $f : a \to \chi(c)$. For any $x \sqin a$, let $g(x)$ be the smallest element of $b$ such that $f(x) \sqsubseteq \ol{\bigcup \{c[z]:z\sqin b,~z\sqsubseteq g(x)\}}$. (This is a set by \cref{prop:disc-sep} and the fact that $a$ and $b$ are $r$-discrete.) $g$ is an injective order-preserving map from $a$ to $b$, so by a standard argument, we have that $\otp(a) \leq \otp(b)$.
\end{proof}

Now of course, given $\otok(a)$, we can build a canonical well-ordered chain with the same order type as $a$, namely 
\[
 \ord(a)\coloneqq \{\otok(z) : z~\text{is a well-ordered uniformly discrete chain},~\otp(z)<\otp(a)\}. 
\]
One can show that $x \mapsto \ord(x)$ is explicitly definable on the class of $r$-discrete well-ordered chains for any $r > 0$.

Naturally, we could attempt to do something similar to \cref{defn:ord-token} with cardinalities, but without some form of the axiom of choice, we only seem to be able to build tokens representing equivalence classes of the $\approx^\ast$ relation (where $x \leq^\ast y$ if there is a surjection from some subset of $y$ onto $x$ and $x \approx^\ast y$ if $x\leq^\ast y$ and $y \leq^\ast x$). Specificially, if we define $\ctok^\ast(a)\coloneqq \ol{\{\ol{\{\pi_1(x):x \sqin y\}} : y \sqsubseteq a \times V^M\}}$, we then have that $a \approx^\ast b$ if and only if $\ctok^\ast(a) = \ctok^\ast(b)$ for any uniformly discrete $a$ and $b$. This raises an obvious question.

\begin{quest}
  Is there a function $\ctok(x)$ that is definable on the class of $r$-discrete sets for each $r > 0$ such that for any uniformly discrete $a$ and $b$, $a \approx b$ if and only if $\ctok(a) = \ctok(b)$?
\end{quest}

\section{Formalizing $\MSE$ in continuous logic}
\label{sec:formalizing-in-CL}

Given a metric set structure $(M,d,\sqin)$, we can build a \emph{general structure}\footnote{As defined in \cite{2005.11851}. Such structure could also be described as \emph{metric structures without a metric}.} $(M,e)$ by taking the function $e:M^2 \to [0,1]$ to be the sole predicate. After doing so, the original structure can be recovered by taking $d(x,y) = \sup_z|e(z,x)-e(z,y)|$ and ${\sqin} = \{(x,y) \in M^2: e(x,y) = 0\}$. Our goal in this section is to characterize the structures that arise in this way and show that they form an elementary class in the sense of continuous logic.

Let $\Lc_e$ be the language with a single $[0,1]$-valued predicate symbol $e$. Given any $\Lc_e$-structure $(M,e)$, we can define a pseudo-metric
\[
d_e(x,y) \coloneqq \sup_z|e(z,x)-e(z,y)|.
\]
Since this is a formula in the sense of continuous logic, $d_e$ is a definable predicate on any $\Lc_e$-structure. Note that by construction, for any $\Lc_e$-structure $(M,e)$ and $a \in M$, the function $y \mapsto e(a,y)$ is $1$-Lipschitz with regards to $d_e$.

The first thing we need to do is write out an axiom that guarantees that $e(x,b)$ is a distance predicate with regards to $d_e$ for any choice of $b$. This is implicitly done in \cite[Ch.~9]{MTFMS}, but the characterization of distance predicates there does not cover the possibility of an empty definable set. This is easy enough to add in by hand, but we will take the opportunity to make the paper more self-contained and give a cleaner\footnote{The proof of \cref{lem:ax-H-ext-char} implicitly contains a proof of the following more general fact: In any metric structure $M$ with a $[0,r]$-valued metric, a formula $\varphi(x)$ is the distance predicate of a (possibly empty) definable set if and only if $M\models \sup_{x}|\varphi(x) - \inf_z \min(d(x,z) + 2\varphi(z),r)|$. This is a slight modification of the condition $E_2$ in \cite[Ch.~9]{MTFMS} that obviates the need for $E_1$.} axiomatization that also covers both cases.

\begin{defn}\label{defn:H-ext-ax}
  The \emph{$\HH$-extensionality axiom} is the $\Lc_e$-condition
  \[
    \sup_{xy}|e(x,y) - \inf_z \min(d_e(x,z) + 2e(z,y),1)| = 0.
  \]
  We say that an $\Lc_e$-structure $M$ is \emph{$\HH$-extensional} if it satisfies the $\HH$-extensionality axiom.
\end{defn}
Note that the $\HH$-extensionality axiom could more conventionally be written
\[
\forall x \forall y (e(x,y) = \inf_z \min(d_e(x,z)+2e(z,y),1)).
\]

Given an $\Lc_e$-structure $(M,e)$, we write $M/e$ for the $d_e=0$ quotient of $M$ and we write $\ol{M/e}$ for the completion of this under $d_e$. Given $a \in M$, we write $[a]_e$ for the corresponding element of $M/e$, which we regard as a subset of $\ol{M/e}$.

\begin{lem}\label{lem:ax-H-ext-char}
Fix an $\Lc_e$-structure $(M,e)$. $M$ satisfies the $\HH$-extensionality axiom if and only if for any $b \in M$, the function $x \mapsto e(x,b)$ is $1$-Lipschitz with regards to $d_e$ and if $f(x)$ is the extension of $e(x,b)$ to $\ol{M/e}$, then for any $a \in \ol{M/e}$, $f(a) = \inf\{d_e(a,c) : c \in \ol{M/e},~f(c) = 0\}$, where $\inf \varnothing = 1$.
\end{lem}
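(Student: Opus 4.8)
The plan is to transfer everything to the completion $X \coloneqq \overline{M/e}$, a complete $[0,1]$-valued metric space whose metric I still denote $d_e$, and then to recognize the identity in the $\HH$-extensionality axiom as the standard characterization of the distance predicate of a (possibly empty) closed set. For $b \in M$, once the relevant function is known to exist I write $f_b \colon X \to [0,1]$ for the continuous extension of $[x]_e \mapsto e(x,b)$ and $Z_b \coloneqq \{c \in X : f_b(c) = 0\}$, which is closed.

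For the forward direction, assume $e(x,b) = \inf_{z \in M}\min(d_e(x,z) + 2e(z,b),1)$ for all $x,b \in M$. First I would note that $x \mapsto e(x,b)$ is automatically $1$-Lipschitz in $d_e$: for fixed $z$ the function $x \mapsto \min(d_e(x,z) + 2e(z,b),1)$ is $1$-Lipschitz, and an infimum of $1$-Lipschitz functions is $1$-Lipschitz. Hence $f_b$ exists, and since $M/e$ is dense in $X$ and $c \mapsto \min(d_e(a,c) + 2f_b(c),1)$ is continuous, the axiom promotes to $f_b(a) = \inf_{c \in X}\min(d_e(a,c) + 2f_b(c),1)$ for every $a \in X$. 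The crux is then to deduce $f_b(a) = \inf\{d_e(a,c) : c \in Z_b\}$, with $\inf\varnothing = 1$. One inequality is immediate from $1$-Lipschitzness: $f_b(a) \le f_b(c) + d_e(a,c) = d_e(a,c)$ for $c \in Z_b$, and $f_b(a) \le 1$ always. For the reverse I would run a geometric iteration: set $a_0 = a$ and $t_n \coloneqq f_b(a_n)$; assuming $t_0 < 1$ (the case $t_0 = 1$ is trivial) and fixing a small $\e > 0$, use the identity at $a_n$ while $t_n > 0$ to choose $a_{n+1}$ with $d_e(a_n, a_{n+1}) + 2f_b(a_{n+1}) < t_n + \e 2^{-n-1}$, keeping $\e$ small enough that every $t_n$ stays below $1$. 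Then $t_{n+1} < \tfrac12 t_n + \e 2^{-n-2}$, so $t_n \to 0$, while a telescoping estimate gives $\sum_{n<N} d_e(a_n,a_{n+1}) < t_0 - \sum_{0<n<N} t_n - 2t_N + \e < t_0 + \e$. Hence $(a_n)$ is Cauchy, its limit $c^\ast \in X$ lies in $Z_b$ (as $f_b(c^\ast) = \lim_n t_n = 0$), and $d_e(a,c^\ast) \le t_0 + \e$; letting $\e \to 0$ gives $\dist(a,Z_b) \le f_b(a)$, hence equality. In particular, if $Z_b = \varnothing$ then the iteration would produce an element of it from any $a$ with $f_b(a) < 1$, so $f_b \equiv 1 = \inf\varnothing$ and the displayed equation still holds.

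For the backward direction, assume each $e(\cdot,b)$ is $1$-Lipschitz — so that $f_b$ is defined — and that $f_b(a) = \inf\{d_e(a,c) : c \in Z_b\}$ for all $a$. I would first show $f_b(a) = \inf_{c \in X}\min(d_e(a,c) + 2f_b(c),1)$. The inequality $\ge$ holds because $d_e(a,c) \ge f_b(a) - f_b(c)$ yields $\min(d_e(a,c) + 2f_b(c),1) \ge \min(f_b(a) + f_b(c),1) \ge f_b(a)$; the inequality $\le$ is witnessed, when $Z_b \ne \varnothing$, by choosing $c \in Z_b$ with $d_e(a,c) < f_b(a) + \e$ (so that $f_b(c) = 0$), and is trivial when $Z_b = \varnothing$ since then $f_b \equiv 1$. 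Finally, restricting $a$ to $M/e$ and using density of $M/e$ in $X$ with continuity of $c \mapsto \min(d_e(a,c) + 2f_b(c),1)$, the infimum over $X$ equals the infimum over $M/e$; translating back along $f_b([x]_e) = e(x,b)$ gives $e(x,b) = \inf_{z\in M}\min(d_e(x,z) + 2e(z,b),1)$ for all $x,b \in M$, i.e., the $\HH$-extensionality axiom.

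The main obstacle is the reverse inequality in the forward direction: showing that $Z_b$ is rich enough that $\dist(\cdot,Z_b)$ genuinely equals $f_b$ rather than merely dominating it. This is where completeness of $X$ is used, to produce the limit $c^\ast$, and where the coefficient $2$ in the axiom is indispensable — without it the telescoping estimate degrades to something like $\dist(a,Z_b) \le 2f_b(a)$, which does not suffice. Everything else, namely the $1$-Lipschitz observations, the passage between $M$, $M/e$, and $X$ by density and continuity, and the soft inequalities, is routine.
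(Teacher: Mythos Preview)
Your proof is correct and follows essentially the same approach as the paper: both directions hinge on the same iteration that exploits the coefficient $2$ to produce a Cauchy sequence converging to a zero of $f_b$, together with the observation that the axiom expresses $e(\cdot,b)$ as an infimum of $1$-Lipschitz functions. Your organization is somewhat cleaner than the paper's---by first promoting the identity to the completion $X$ you avoid the repeated $\e/3$ approximations by elements of $M$, and your telescoping estimate $\sum_{n<N} d_e(a_n,a_{n+1}) < t_0 - \sum_{0<n<N} t_n - 2t_N + \e$ is tighter than the paper's more ad hoc summation---but the underlying argument is the same.
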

\begin{proof}
  For the $\To$ direction, suppose that $M$ satisfies the $\HH$-extensionality axiom and fix $b \in M$. We have by the $\HH$-extensionality axiom that $e(x,b) = \inf_z \min(d_e(x,z)+2e(z,b),1)$ for all $x$. The function $x \mapsto \min(d_e(x,z)+2e(z,b),1)$ is $1$-Lipschitz with regards to $d_e$ for any $z$, therefore $e(x,b)$ is as well (since it is the infimum of a family of $1$-Lipschitz functions). Let $f(x)$ be the extension of $e(x,b)$ to $\ol{M/e}$.

  Fix $a \in \ol{M/e}$ such that $f(a) < 1$. Also fix $\e > 0$. Since $M/e$ is dense in $\ol{M/e}$, we can find $c_0 \in M$ such that $d_e(a,[c_0]_e) < \frac{1}{3}\e$ and $f([c_0]_e) = e(c_0,b) < 1$. Note that $f(a) < f([c_0]_e) = e(c_0,b) + \frac{1}{3}\e$.  Fix $\delta > 0$ with $\delta < \frac{1}{4}\e$, $\delta < e(c_0,b)$, and $e(c_0,b) + \delta < 1$.

  Suppose we are given $c_n \in M$ with $e(c_n,b) \leq e(c_0,b) < 1$ for some $n\in \Nb$. If $e(c_n,b) = 0$, stop the construction and set $c_m = c_n$ for all $m > n$, otherwise we can find $c_{n+1} \in M$ such that
  \[
    d_e(c_n,c_{n+1}) + 2e(c_{n+1},b) < e(c_n,b) + \min(4^{-n}\delta,\tfrac{1}{2}e(c_0,b))
  \]
  by the $\HH$-extensionality axiom. In particular, this implies that 
  \begin{align*}
    e(c_{n+1},b) &< \frac{1}{2}e(c_n,b) + \frac{1}{2}e(c_0,b) \\
                 &\leq e(c_0,b) < 1.
  \end{align*}

  We have at each $n$ that $e(c_{n+1},b) < \frac{1}{2}e(c_n,b) + 4^{-n}\delta$. Recursively applying this bound gives
  \[
    e(c_{n},b) < 2^{-n}e(c_0,b) + \sum_{k=n-1}^{2n-2}2^{-k}\delta.
  \]
  for any $n > 0$ with $c_n$ defined. This implies that the infinite sums we are about to manipulate are all absolutely convergent and that $(c_n)_{n\in \Nb}$ is a Cauchy sequence.\footnote{Since $\sum_{k=n-1}^{2n-2}2^{-k} = 2^{-n+2}-2^{-2n+2} < 2^{-n+2}$, we have that $e(c_n,b) < 2^{-n}(e(c_0,b) + 4\delta)$. This in turn implies that for any $n$ at which $c_{n+1}$ is defined, $d_e(c_n,c_{n+1}) < 2^{-n}(e(c_0,b) + 4\delta) + 4^{-n}\delta$.} We have that
  \begin{align*}
    \sum_{n=0}^\infty \left( d_e(c_n,c_{n+1})+ 2e(c_{n+1},b)  \right)&< \sum_{n=0}^\infty \left( e(c_n,b) + 4^{-n}\delta \right), \\
    \sum_{n=0}^\infty d_e(c_n,c_{n+1})+ 2\sum_{n=0}^\infty e(c_{n+1},b) &< \sum_{n=0}^\infty e(c_n,b) + \sum_{n=0}^\infty4^{-n}\delta, \\
    \sum_{n=0}^\infty d_e(c_n,c_{n+1})+ \sum_{n=0}^\infty e(c_{n+1},b) + e(c_{\infty+1},b) &< e(c_0,b) + \frac{4}{3}\delta, \\
    \sum_{n=0}^\infty d_e(c_n,c_{n+1}) &< e(c_0,b) + \frac{4}{3}\delta. 
  \end{align*}
 Let $g = \lim_{n\to \infty}[c_n]_e$. Note that $e(g,[b]_e) = 0$. We now have that
  \begin{align*}
    d_e(a,g) &\leq d_e(a,[c_0]_e) + d_e([c_0]_e,g) \\
             & < \frac{1}{3}\e + \sum_{n=0}^\infty d_e(c_n,c_{n+1}) \\
             & < \frac{1}{3}\e + e(c_0,b) + \frac{4}{3}\frac{1}{4}\e \\
             & < \frac{1}{3}\e + f(a) + \frac{1}{3}\e + \frac{1}{3}\e \\
    & = f(a) + \e.
  \end{align*}
  Since we can do this for any $\e > 0$, we have that $\inf\{d_e(a,x):f(x) = 0\} \leq f(a)$ whenever $f(a) < 1$. If $f(a) = 1$, then this inequality holds anyway, so the inequality holds in all cases.

  For the other inequality, first assume that $\{x \in \ol{M/e}:f(x) = 0\}$ is empty. By the previous part, this implies that $f(a) = 1$ for all $a \in \ol{M/e}$. Therefore the required equality holds. Now assume that $\{x \in \ol{M/e}:f(x) = 0\}$ is non-empty. Fix $a \in \ol{M/e}$ and let $r = \inf\{d_e(a,x) : f(x) = 0\}$. Fix $\e > 0$ and find some $a' \in M$ such that $d_e(a,[a']_e) < \frac{1}{6}\e$. Since $f$ is $1$-Lipschitz, we must have that $f(a) < f([a']_e) + \frac{1}{6}\e = e(a',b) + \frac{1}{6}\e$.

  Find some $c \in \ol{M/e}$ such that $f(c) = 0$ and $d_e(a,c) < r + \frac{1}{3}\e$. Find $g \in M$ such that $d_e(c,[g]_e) < \frac{1}{3}\e$. Since $f(x)$ is $1$-Lipschitz, we have that $f([g]_e) = e(g,b) < \frac{1}{3}\e$. By the $\HH$-extensionality axiom, $e(a',b) \leq d_e(a',c') + 2e(c',b)$, so we have that
  \begin{align*}
    f(a) & < e(a',b) + \frac{1}{6}\e \\
         & \leq d_e(a',c') + 2e(c',b) + \frac{1}{6}\e \\
         &< d_e(a',c') + 2\frac{1}{6}\e + \frac{1}{6} \e  \\
         &\leq d_e(a,c) + \frac{2}{6}\e + 2\frac{1}{6}\e + \frac{1}{6} \e  \\
         &< r + \frac{1}{6}\e + \frac{2}{6}\e + 2\frac{1}{6}\e + \frac{1}{6} \e  \\
         &= r+ \e. 
  \end{align*}
  Since we can do this for any $\e > 0$, we have that $f(a) \leq r = \inf\{d_e(a,x):f(x) = 0\}$. Therefore both directions of the inequality hold and we have that $f(a) = \inf\{d_e(a,x):f(x) = 0\}$ for any $a \in \ol{M/e}$.

  For the $\Leftarrow$ direction, suppose that $e(x,b)$ is $1$-Lipschitz with regards to $d_e$ for any $b \in M$ and that for any $a \in \ol{M/e}$, $f(a) = \inf\{d_e(a,x):f(x)= 0\}$, where $f(x)$ is the unique continuous extension of $e(x,b)$ to $\ol{M/e}$. Fix $a \in M$ and let $r = e(a,b) = f([a]_e)$. Fix $\e > 0$. Find $c \in \ol{M/e}$ such that $f(c) = 0$ and $d([a]_e,c) < r + \frac{1}{4}\e$. Find $c' \in M$ such that $d(c,[c']_e) < \frac{1}{4}\e$. Since $f(x)$ is $1$-Lipschitz, we have that $f([c']_e)=e(c',b) < \frac{1}{4}\e$. Note also that $d_e(a,c') < r + \frac{2}{4}\e$. We now have that
  \begin{align*}
    \inf_z \min(d_e(a,z) + 2e(z,b),1) & \leq d_e(a,c') + 2e(c',b) \\
                                      &\leq r + \frac{2}{4}\e + 2e(c',b) \\
                                      &\leq r + \frac{2}{4}\e + 2\frac{1}{4}\e \\
                                      &\leq r + \e.
  \end{align*}
  Since we can do this for any $\e > 0$, we have that
  \[
    \inf_z \min(d_e(a,z)+2e(z,b), 1) \leq \inf\{d_e(a,x) : f(x) = 0\} = e(a,b).
  \]
  For the other direction of the inequality, let $s = \inf_z \min(d_e(a,z)+2e(z,b),1)$. If $s = 1$, then the above implies that $f(x) = 1$ for all $x \in \ol{M/e}$, so the $\HH$-extensionality axiom is satisfied. Otherwise assume that $s < 1$ and fix $\e > 0$ with $s + \e < 1$. Find $c \in M$ such that $\min(d_e(a,c)+2e(c,b),1) < s + \e$. We must have that $d_e(a,c) + 2e(c,b) < s+\e$. By assumption, $e(c,b) = f([c]_e) = \inf\{d_e([c]_e,x) : f(x) = 0\}$. Therefore
  \begin{align*}
  e(a,b) = \inf\{d_e([a]_e,x) : f(x) = 0\} & \leq d_e(a,c) + e(c,b) \\
                                    & < d_e(a,c) + 2e(c,b) \\
                                    &< s + \e.
  \end{align*}
Since we can do this for any $\e > 0$, we have that $e(a,b) \leq s = \inf_z \min(d_e(a,z) + 2e(z,b),1)$. Therefore $e(a,b) = \inf_z\min(d_e(a,z) + 2e(z,b)),1$ for any $a,b \in M$ and the $\HH$-extensionality axiom holds. 
\end{proof}

Note that since $y \mapsto e(a,y)$ is automatically $1$-Lipschitz with regards to $d_e$, the $\HH$-extensionality axiom implies that $(x,y) \mapsto e(x,y)$ is $2$-Lipschitz with regards to $d_e$. This means that $e(x,y)$ extends to a unique continuous function on $\ol{M/e}$. By an abuse of notation we will also denote this as $e$. Note that in this case, $(\ol{M/e},e)$ still satisfies the $\HH$-extensionality axiom (and is in fact elementarily equivalent to $(M,e)$ as an $\Lc_e$-structure). In particular, by \cref{lem:ax-H-ext-char} applied to the structure $(\ol{M/e},e)$, we have that $x\mapsto e(x,b)$ is a distance predicate for any $b \in \ol{M/e}$.

Given an $\Lc_e$-structure $M$ for which $e(x,y)$ extends to $\ol{M/e}$, write $\sqin_e$ for the relation $\{(x,y) \in (\ol{M/e})^2 : e(x,y) = 0\}$. Now we will see the manner in which the $\HH$-extensionality axiom characterizes metric set structures.

\begin{prop}
  Fix an $\Lc_e$-structure $(M,e)$. $(M,e)$ satisfies the $\HH$-extensionality axiom if and only if $(\ol{M/e},d_e,\sqin_e)$ exists and is a metric set structure.
\end{prop}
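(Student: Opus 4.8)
The plan is to move the whole question to the complete quotient structure $\ol{M/e}$ and recognize the $\HH$-extensionality axiom there as an instance of \cref{lem:ax-H-ext-char}, using the Hausdorff-metric identity $d_{\HH}(A,B)=\sup_z|\dist(z,A)-\dist(z,B)|$---valid even when one of the sets is empty, since $\dist(\cdot,\varnothing)\equiv 1$---as the bridge between \emph{$\sqin_e$ is $\HH$-extensional} and \emph{$x\mapsto e(x,b)$ is a distance predicate}. Recall that \cref{lem:ax-H-ext-char} already characterizes truth of the $\HH$-extensionality axiom in $(M,e)$ in terms of $\ol{M/e}$: it holds iff for each $b$ the function $x\mapsto e(x,b)$ is $1$-Lipschitz for $d_e$ and its continuous extension $f_b$ to $\ol{M/e}$ satisfies $f_b=\dist(\cdot,f_b^{-1}(0))$. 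Also, the axiom is a $\sup$-condition with $d_e$-continuous matrix, so $(M,e)$ satisfies it iff $(\ol{M/e},e)$ does; whenever $e$ has an extension we may therefore argue inside $\ol{M/e}$, where $d_e$ is an honest $[0,1]$-valued metric on a complete space.

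For the forward direction, assume $(M,e)$ satisfies the axiom. By the remark following \cref{lem:ax-H-ext-char}, $(x,y)\mapsto e(x,y)$ is $2$-Lipschitz in $d_e$, so it extends uniquely and continuously to $\ol{M/e}$; then $\sqin_e=\{(x,y):e(x,y)=0\}$, being the preimage of $\{0\}$ under a continuous map, is a closed relation on $(\ol{M/e})^2$, so $(\ol{M/e},d_e,\sqin_e)$ exists, with $(\ol{M/e},d_e)$ complete and $d_e$ $[0,1]$-valued. It remains to check $\HH$-extensionality of $\sqin_e$. By \cref{lem:ax-H-ext-char} applied to $(M,e)$ (plus a density argument carrying $b\in M$ to $b\in\ol{M/e}$, using $2$-Lipschitzness and that Hausdorff-convergent sets have convergent distance functions), for every $b$ the extended $e(\cdot,b)$ is the distance predicate of $A_b:=\{w:w\sqin_e b\}$, i.e.\ $e(x,b)=\dist(x,A_b)$. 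Substituting into the Hausdorff identity,
\[
  d_{\HH}(A_a,A_b)=\sup_z|\dist(z,A_a)-\dist(z,A_b)|=\sup_z|e(z,a)-e(z,b)|=d_e(a,b),
\]
the last equality being the definition of $d_e$. Hence $\sqin_e$ is $\HH$-extensional and $(\ol{M/e},d_e,\sqin_e)$ is a metric set structure.

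For the converse, suppose $(\ol{M/e},d_e,\sqin_e)$ exists---so $e$ extends continuously to $\ol{M/e}$---and is a metric set structure. Running the computation backwards, $\HH$-extensionality of $\sqin_e$ and the Hausdorff identity give $\sup_z|e'(z,a)-e'(z,b)|=d_e(a,b)=\sup_z|e(z,a)-e(z,b)|$ for all $a,b$, where $e'(x,b):=\dist(x,A_b)$ is tautologically the distance predicate of the closed set $A_b=\{w:w\sqin_e b\}=\{w:e(w,b)=0\}$. I expect the crux of the proof to be identifying the extended $e$ with $e'$ on $\ol{M/e}$: both are $1$-Lipschitz in the first variable (for $e'$ because it is a distance function; for $e$ because $y\mapsto e(x,y)$ is automatically $1$-Lipschitz and, $e$ having an extension, $e(x,b)$ factors through the $d_e$-class of $x$) and they share the zero set $A_b$, and one must use the displayed isometry to collapse them---equivalently, this is the coherence between $e$ and $\sqin_e$ implicit in $(\ol{M/e},d_e,\sqin_e)$ being \emph{the} metric set structure attached to $(M,e)$ in the sense of the opening of this section. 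Once $e=e'$, the function $x\mapsto e(x,b)$ is literally the distance predicate of $A_b$, so the $\Leftarrow$ direction of \cref{lem:ax-H-ext-char} applies to $(\ol{M/e},e)$ and yields the $\HH$-extensionality axiom there, which by the first paragraph transfers back to $(M,e)$.
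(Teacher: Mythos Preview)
Your forward direction is exactly the paper's argument: invoke \cref{lem:ax-H-ext-char} to see that each $x\mapsto e(x,b)$ is the distance predicate of $A_b=\{w:w\sqin_e b\}$, then plug into the Hausdorff identity $d_{\HH}(A_a,A_b)=\sup_z|\dist(z,A_a)-\dist(z,A_b)|$ to get $d_{\HH}(A_a,A_b)=d_e(a,b)$. The paper's one-line proof cites precisely these two ingredients.

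In the converse direction you correctly isolate the crux---showing that the given $e$ coincides with $e'(x,b)\coloneqq\dist(x,A_b)$---but you do not actually prove it; you only gesture at ``coherence implicit in'' the construction. This is a genuine gap, and in fact it cannot be filled from the hypothesis as literally stated. Take $M=\{p,q\}$ with $e(p,p)=e(q,q)=0$, $e(p,q)=1$, $e(q,p)=\tfrac12$. Then $d_e(p,q)=1$, $\ol{M/e}=M$, $\sqin_e=\{(p,p),(q,q)\}$, and $d_{\HH}(\{p\},\{q\})=1=d_e(p,q)$, so $(\ol{M/e},d_e,\sqin_e)$ is a metric set structure; yet $\inf_z\min(d_e(q,z)+2e(z,p),1)=1\neq\tfrac12=e(q,p)$, so the $\HH$-extensionality axiom fails. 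Thus knowing only that $(\ol{M/e},d_e,\sqin_e)$ is a metric set structure does not force $e=e'$, and hence does not force the axiom. The paper's terse proof does not explicitly address this either; the intended content of the converse is presumably that the \emph{canonical} $e$-predicate of $(\ol{M/e},d_e,\sqin_e)$ (namely $e'$) satisfies the axiom, which is immediate from the $\Leftarrow$ direction of \cref{lem:ax-H-ext-char}, or equivalently that the hypothesis tacitly includes $e=e'$. Your write-up would be improved by either stating the converse with that extra coherence hypothesis, or by noting the counterexample and restricting attention to the forward direction, which is the one actually used downstream.
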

\begin{proof}
  This follows from \cref{lem:ax-H-ext-char} and the fact that $(d_e)_{\HH}(\{x:x\sqin_e a\},\{x:x\sqin_e b\}) = \sup_z |e(z,a)-e(z,b)| = d_e(a,b)$ for all $a,b \in \ol{M/e}$.
\end{proof}

Axiomatizing excision will be more technical. For convenience, we'll take restricted $\Lc_e$-formulas to be defined as in \cite[Sec.~1.3]{HansonThesis}: the only atomic formulas are those of the form $e(x,y)$ and we take as connectives $\varphi+\psi$, $\max(\varphi,\psi)$, $\min(\varphi,\psi)$, the constant $1$, and $r\cdot \varphi$ for rational $r$. We should note though that the scheme described here would be sufficient with any definition of restricted formula, such as the one in \cite[Sec.~3]{MTFMS}. 

Given a restricted $\Lc_e$-formula $\varphi$, we can form a corresponding $\Lsq$-formula by replacing each instance of $e(x,y)$ with the $\Lsq$-formula $\inf_{z\sqin y}d(x,z)$ where $z$ is taken to be any variable distinct from $x$ and $y$. We write $\varphi_{\sqin}$ for the formula resulting from this translation. By an abuse of notation, we will write $v(\varphi)$ for $v(\varphi_{\sqin})$.

Later on, we will also need a way to translate $\Lsq$-formulas back to restricted $\Lc_e$-formulas. The difficulty here is that we allowed real coefficients in $\Lsq$-formulas but only rational coefficients in $\Lc_e$-formulas. With this issue in mind say that an $\Lsq$-formula is \emph{rational} if all coefficients occurring in it are rational numbers. We define the $\Lc_e$-formula $\varphi_e$ corresponding to a rational $\Lsq$-formula $\varphi$ inductively as follows:
\begin{itemize}
\item $(d(x,y))_e = d_e(x,y)$,
\item $(\inf_{x\sqin y}\varphi)_e = \inf_x\min(\varphi_e + 2v(\varphi)e(x,y),v(\varphi))$, and
\item $(\sup_{x\sqin y}\varphi)_e = -(\inf_{x\sqin y} - \varphi)_e$,
\end{itemize}
with the other elements of the translation defined in the obvious way. The following facts are either standard results in continuous logic or easily verified.

\begin{fact}\label{fact:translation}
  Fix an $\HH$-extensional $\Lc_e$-structure $(M,e)$ with $(M,d_e)$ complete.
  \begin{enumerate}
  \item For any $\Lc_e$-formula $\varphi(\xbar)$ and any $\abar \in M$, $\varphi^{(M,e)}(\abar) = \varphi_{\sqin}^{(M,d_e,\sqin_e)}(\abar)$.
  \item For any rational $\Lsq$-formula $\varphi(\xbar)$ and any $\abar \in M$, $\varphi^{(M,d_e,\sqin_e)}(\abar) = \varphi^{(M,e)}_e(\abar)$.
  \item For any $\Lsq$-formula $\varphi(\xbar)$ and $\e > 0$, there is a rational $\Lsq$-formula $\psi(\xbar)$ such that $|\varphi^{(M,d_e,\sqin_e)}(\abar) - \psi^{(M,d_e,\sqin_e)}(\abar)| < \e$ for all $\abar \in M$.
  \end{enumerate}
\end{fact}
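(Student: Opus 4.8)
The plan is to prove (1) and (2) by parallel structural inductions in which the connectives and the unbounded quantifiers $\sup_x,\inf_x$ commute trivially with the translations, so that the only real content is, for (1), the atomic formula $e(x,y)$ — handled by $\HH$-extensionality via \cref{lem:ax-H-ext-char} — and, for (2), the bounded quantifiers $\sup_{x\sqin y},\inf_{x\sqin y}$ — handled by relative quantification via \cref{lem:rel-quan}. Part (3) will be a separate induction that propagates coefficient-rounding errors through a formula, controlled by the uniform bounds of \cref{lem:formula-v}. Throughout, recall that the hypotheses make $(M,d_e,\sqin_e)$ a metric set structure and that for each $b\in M$ the $\Lsq$-formula $e(x,y)$, interpreted at $y:=b$, computes $\dist(x,\{c:c\sqin_e b\})$, i.e.\ the distance predicate of that closed, possibly empty, set.

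For (1), induct on the restricted $\Lc_e$-formula $\varphi$. The constant $1$, the connectives, and $\sup_x,\inf_x$ commute with $\varphi\mapsto\varphi_\sqin$ and are interpreted by the same clauses of \cref{defn:formula-evaluate}, so those inductive steps are immediate. In the atomic case $\varphi=e(x,y)$ we have $\varphi_\sqin=\inf_{z\sqin y}d(x,z)$ (with $z$ distinct from $x,y$), and for all $a,b\in M$ the value $(\inf_{z\sqin b}d(a,z))^{(M,d_e,\sqin_e)}$ equals $\dist(a,\{c:c\sqin_e b\})$ by the evaluation conventions (the empty case giving $1$ on both sides), which equals $e^{(M,e)}(a,b)$ by \cref{lem:ax-H-ext-char}.

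For (2), induct on the rational $\Lsq$-formula $\varphi$. The atomic case $\varphi=d(x,y)$ is definitional, since $(d(x,y))_e=d_e(x,y)$ evaluates in $(M,e)$ at $(a,b)$ to $\sup_c|e(c,a)-e(c,b)|=d_e(a,b)$, the metric of $(M,d_e,\sqin_e)$; the constant, the connectives, and $\sup_x,\inf_x$ again commute with $\varphi\mapsto\varphi_e$. The crucial case is $\varphi=\inf_{x\sqin y}\psi$: fixing $b\in M$ as the value of $y$, the formula $e(x,y)$ is the distance predicate of the explicitly definable class $\{c:c\sqin_e b\}$, and $v(e(x,y))=1$, so \cref{lem:rel-quan} gives
\[
\Bigl(\inf_x\min\bigl(\psi_e+2v(\psi)\,e(x,y),v(\psi)\bigr)\Bigr)^{(M,e)}\!(b)=\inf\bigl\{\psi^{(M,d_e,\sqin_e)}(f,b):f\sqin_e b\bigr\},
\]
with both sides equal to $v(\psi)$ when $\{c:c\sqin_e b\}$ is empty. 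By the inductive hypothesis applied to $\psi$, the left side is $\bigl((\inf_{x\sqin y}\psi)_e\bigr)^{(M,e)}\!(b)$ and the right side is $(\inf_{x\sqin b}\psi)^{(M,d_e,\sqin_e)}$, as required. The case $\varphi=\sup_{x\sqin y}\psi$ follows by applying the $\inf$ case to $-\psi$, using $v(-\psi)=v(\psi)$ and the fact that the translation clause $(\sup_{x\sqin y}\psi)_e=-(\inf_{x\sqin y}{-\psi})_e$ and the corresponding evaluation conventions are the negations of those for $\inf_{x\sqin y}$.

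For (3), I would instead prove the stronger claim that for every $\Lsq$-formula $\varphi(\xbar)$ and every $\delta>0$ there is a rational $\Lsq$-formula $\psi(\xbar)$, obtained from $\varphi$ by replacing each real coefficient with a rational one, such that $\sup_{\abar\in M}|\varphi^M(\abar)-\psi^M(\abar)|<\delta$ \emph{and} $|v(\varphi)-v(\psi)|<\delta$, by induction on $\varphi$. The cases $1$ and $d(x,y)$ are already rational; $\varphi_0+\varphi_1$, $\max$, and $\min$ combine rational approximations of the subformulas with tolerances $\delta/2$ (resp.\ $\delta$), using $|\max(a,b)-\max(a',b')|\le\max(|a-a'|,|b-b'|)$ and that $v$ is additive (resp.\ a maximum); for $r\cdot\varphi_0$ one first approximates $\varphi_0$ by $\psi_0$ to a small tolerance $\delta'$ and picks a rational $q$ near $r$, so that $|r\varphi_0^M-q\psi_0^M|\le|r|\delta'+|r-q|(v(\varphi_0)+\delta')$ and $\bigl||r|v(\varphi_0)-|q|v(\psi_0)\bigr|$ are each $<\delta$ once $\delta'$ and $|r-q|$ are small enough; for $\sup_x,\inf_x$ one uses $|\sup_x f-\sup_x g|\le\sup_x|f-g|$ (with $v$ unchanged); and for $\sup_{x\sqin y},\inf_{x\sqin y}$ the same bound handles the non-empty branch while on the empty branch the difference of the two values is exactly $|v(\varphi)-v(\psi)|<\delta$. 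The only genuine difficulty is this bookkeeping — the multiplicative compounding of coefficient error with subformula error in the $r\cdot\varphi_0$ step, and the need to carry the auxiliary estimate $|v(\varphi)-v(\psi)|<\delta$ through the induction in order to cover the empty-set conventions for bounded quantifiers — and it is routine given that \cref{lem:formula-v} bounds every formula uniformly by its $v$.
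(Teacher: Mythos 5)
Your proposal is correct, and in fact the paper offers no proof at all here --- it introduces these as ``either standard results in continuous logic or easily verified'' --- so your structural inductions supply exactly the verification the paper leaves to the reader: \cref{lem:ax-H-ext-char} (with $(M,d_e)$ complete) for the atomic case of (1), \cref{lem:rel-quan} applied to the distance predicate $e(x,b)$ of the definable class $\{c:c\sqin_e b\}$ for the bounded-quantifier case of (2), and coefficient rounding controlled by \cref{lem:formula-v} for (3). Your strengthening of (3) to carry the auxiliary estimate $|v(\varphi)-v(\psi)|<\delta$ through the induction is a genuinely necessary detail (the empty-set conventions for bounded quantifiers evaluate to $\pm v$, so the approximation would otherwise fail on $\sqin$-empty sets), and it is handled correctly.
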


It follows from \cref{lem:formula-v} and \cref{fact:translation} that if $(M,e)$ is $\HH$-extensional with $(M,d_e)$ complete, then for any restricted $\Lc_e$-formula $\varphi(\xbar)$, the function $\xbar \mapsto \varphi^M(\xbar)$ is $2v(\varphi)$-Lipschitz with regards to the max metric on tuples induced by $d_e$. By passing to the completion $(\ol{M/e},d_e)$, this implies the same for any $\HH$-extensional $(M,e)$. 

For any formula $\varphi \in \Lc_e$, we define
\[
  \e_\varphi \coloneqq \frac{1}{\max(6v(\varphi),3)}.
\]
 Note that for any $\HH$-extensional $(M,e)$, if $|\varphi^M(\abar) - \varphi^M(\bbar)| \geq \frac{1}{2}$, then $d_e(\abar,\bbar) > \e_\varphi$. %

\begin{defn}\label{defn:ax-sch-exc}
  The \emph{axiom scheme of excision} is the collection of $\Lc_e$-conditions of the form
  \[
    \sup_{\ybar}\inf_z \sup_x \max(\min(e(x,z),-\varphi(x,\ybar)),\min(\e_\varphi-e(x,z),\varphi(x,\ybar) - 1)) \leq 0
  \]
  for each restricted $\Lc_e$-formula $\varphi(x,\ybar)$ (not containing $z$ as a free variable).

  Given an $\HH$-extensional $\Lc_e$-structure $M$, we say that $M$ \emph{satisfies $\Lc_e$-excision} to mean that $M$ satisfies the axiom scheme of excision.
\end{defn}

The axiom scheme of excision can be more conventionally stated like this: For all $\ybar$,  $\delta > 0$, and $\varphi(x,\ybar) \in \Lc_e$, there is a $z$ such that for all $x$,
\begin{itemize}
\item if $\varphi(x,\ybar) \leq -\delta$, then $e(x,z) < \delta$ and
\item  if $e(x,z) \leq \e_\varphi - \delta$, then $\varphi(x,\ybar) < 1+\delta$. 
\end{itemize}
It is also sufficient to assume merely that this holds for sufficiently small $\delta > 0$. This is clearly an approximation of a certain case of the excision principle in $\MSE$, but we will now show that in $\HH$-extensional $M$ with $(M,d_e)$ complete, the axiom scheme of excision is enough to imply full excision.

\begin{lem}\label{lem:fat-comp}
  Fix an $\HH$-extensional $\Lc_e$-structure $M$ with $(M,d_e)$ complete. Suppose that $M$ satisfies $\Lc_e$-excision. For any $a \in M$ and $r,s \in [0,1]$ with $r<s$, there is a $b \in M$ such that for any $c \in M$, if $e(c,a)\leq r$, then $c \sqin_e b$, and if $c \sqin_e b$, then $e(c,a) < s$.
\end{lem}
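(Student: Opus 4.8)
The plan is to build, by iterating $\Lc_e$-excision, a $d_e$-Cauchy sequence $(b_n)_{n\in\Nb}$ in $M$ whose limit works. Fix a rational $p$ with $r<p<s$ and a rational $\lambda_0>0$ large enough that $p+1/\lambda_0<s$, and consider the (rational) restricted $\Lc_e$-formula $\varphi_0(x,y):=\lambda_0\cdot(e(x,y)-p)$, with associated constant $\e_0:=\e_{\varphi_0}$ (note $v(\varphi_0)=\lambda_0(1+p)$). Applying $\Lc_e$-excision to $\varphi_0(x,a)$ with a small slack $\delta_0$ yields $b_0\in M$ such that $e(c,a)\le r\To e(c,b_0)<\delta_0$ (because then $\varphi_0(c,a)\le-\delta_0$, provided $\delta_0\le\lambda_0(p-r)$) and $e(c,b_0)\le\e_0-\delta_0\To e(c,a)<p+(1+\delta_0)/\lambda_0<s$. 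In particular $b_0$ already satisfies the ``upper'' requirement on the nose ($c\sqin_e b_0\To e(c,a)<s$), and only its ``lower'' requirement carries the slack $\delta_0$. To shrink that slack, at stage $n+1$ I would apply $\Lc_e$-excision to $\psi_n(x,y):=\lambda_{n+1}\cdot(e(x,y)-p_{n+1})$ at the parameter $y=b_n$ with slack $\delta_{n+1}$, where $\lambda_{n+1},p_{n+1},\delta_{n+1}\in\Qb_{>0}$ are to be chosen, obtaining $b_{n+1}\in M$ with $e(x,b_n)\le p_{n+1}-\delta_{n+1}/\lambda_{n+1}\To e(x,b_{n+1})<\delta_{n+1}$ and $e(x,b_{n+1})\le\e_{n+1}-\delta_{n+1}\To e(x,b_n)<\kappa_{n+1}$, where $\e_{n+1}:=\e_{\psi_n}$ and $\kappa_{n+1}:=p_{n+1}+(1+\delta_{n+1})/\lambda_{n+1}$.

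Next I would pin down the parameters. The ``lower'' guarantees chain forward provided $p_{n+1}\ge\delta_n+\delta_{n+1}/\lambda_{n+1}$, so that ``$e(c,b_n)<\delta_n$'' feeds the hypothesis of the stage-$(n+1)$ membership clause; taking $p_{n+1}=2\delta_n$ suffices. The ``upper'' guarantees chain backward provided $\kappa_{n+1}\le\e_n-\delta_n$ for every $n\ge0$ (reading $\e_0=\e_{\varphi_0}$, $\kappa_0:=p+(1+\delta_0)/\lambda_0$, $b_{-1}:=a$); since $\e_{n+1}=\e_{\psi_n}$ is of order $1/\lambda_{n+1}$, this forces $\lambda_{n+1}$ to exceed $\lambda_n$ by at least a fixed factor $>1$, i.e. $\lambda_n$ must grow at least geometrically, and the $\delta_n$ are then taken far smaller still. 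Crucially, this geometric growth makes $\sum_n\kappa_{n+1}$ and $\sum_n\delta_n$ convergent, and, leaving a little room in the inequalities, one also arranges $\sum_{k\ge n}(\delta_{k+1}+\kappa_{k+1})<\e_n-\delta_n$ for all $n\ge1$. All of this is a routine, if fiddly, induction on $n$.

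To see $(b_n)$ is $d_e$-Cauchy I would pass to the completion $\ol{M/e}$, which is a metric set structure in which $e(x,c)$ is the distance predicate of $\{x:x\sqin_e c\}$ and $d_e(b_n,b_{n+1})=d_{\HH}(\{x:x\sqin_e b_n\},\{x:x\sqin_e b_{n+1}\})$. Applying the two excision clauses of stage $n+1$ to elements $x\sqin_e b_n$ and $x\sqin_e b_{n+1}$ respectively (using continuity of $e$ to extend from $M$ to $\ol{M/e}$), every element of $b_n$ lies within $\delta_{n+1}$ of an element of $b_{n+1}$ and every element of $b_{n+1}$ within $\kappa_{n+1}$ of an element of $b_n$, so $d_e(b_n,b_{n+1})\le\max(\delta_{n+1},\kappa_{n+1})$, which is summable. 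By completeness of $(M,d_e)$ set $b:=\lim_n b_n\in M$. If $e(c,a)\le r$ then $e(c,b_n)<\delta_n$ for all $n$ by the forward chaining, and since $y\mapsto e(c,y)$ is $1$-Lipschitz for $d_e$, $e(c,b)\le e(c,b_n)+d_e(b_n,b)\to0$, so $c\sqin_e b$. If $c\sqin_e b$, then for every $n\ge1$ we have $e(c,b_n)\le d_e(b_n,b)\le\sum_{k\ge n}(\delta_{k+1}+\kappa_{k+1})<\e_n-\delta_n$; fixing such an $n$ and running the backward chain $e(c,b_n)\le\e_n-\delta_n\To e(c,b_{n-1})<\kappa_n\le\e_{n-1}-\delta_{n-1}\To\cdots\To e(c,b_0)<\kappa_1\le\e_0-\delta_0\To e(c,a)<\kappa_0<s$ gives $e(c,a)<s$, as required.

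The main obstacle is precisely the balancing act in the second paragraph: the reject-radius $\e_{\psi_n}$ delivered by $\Lc_e$-excision is inversely proportional to the steepness $\lambda_{n+1}$ of the very formula used to hold $b_{n+1}$ close to $b_n$, so one cannot make the steps small without simultaneously making the reject-radii small; the construction only closes up because these two effects can be made compatible by sending $\lambda_n\to\infty$ geometrically fast while keeping $p_n,\delta_n$ negligible, which at once makes the step sizes a convergent geometric series and keeps every tail sum below the corresponding (shrinking) reject-radius, so that the backward chain can still be triggered in the limit. (The degenerate situation in which no $c$ satisfies $e(c,a)\le r$, e.g. $a=\varnothing^M$, needs no separate treatment: the construction then produces sets $b_n$ with no elements at all, and both conditions hold vacuously.)
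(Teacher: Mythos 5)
Your proof is correct and follows essentially the same route as the paper's: iterate $\Lc_e$-excision on affine rescalings $\lambda_n(e(x,y)-p_n)$ of the distance predicate, each applied to the previous approximant, with the acceptance thresholds $p_{n+1}\approx 2\delta_n$ and steepnesses $\lambda_n$ growing fast enough that the reject-radii $\e_n$ dominate the tail sums, then take the $d_e$-limit of the resulting Cauchy sequence. The only (cosmetic) divergence is in the endgame: where you run the backward chain of excision clauses down to $a$, the paper instead bounds $d_e(a,b)<s$ by one triangle inequality and invokes $\HH$-extensionality to conclude that every element of $b$ is within $s$ of an element of $a$; your parameter bookkeeping is left as a sketched induction but the constraints you isolate are the right ones and are simultaneously satisfiable exactly as you claim.
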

\begin{proof}
  For readability, we will write $d$ for $d_e$ and $\sqin$ for $\sqin_e$.
  
  Let $r_0 = \frac{2}{3}r+\frac{1}{3}s$, $s_0 = \frac{1}{3}r + \frac{2}{3}s$, and $b_0 = a$. For any $n$, let $\varphi_n(x,y) = \frac{e(x,y) - r_n}{s_n - r_n}$.

  At stage $n$, suppose we are given $b_n$ and rationals $r_n$ and $s_n$ with $0 < r_n < s_n$. Since $M$ satisfies $\Lc_e$-excision, we have that for any $\gamma > 0$, there is an $f \in M$ such that
  \[
\forall x (e(x,f) < \gamma \vee \varphi_n(x,b_n) > -\gamma) \wedge (\e_{\varphi_n} - e(x,b_n) < \gamma \vee \varphi_n(x,b_n) - 1 < \gamma).
\]
Let $b_{n+1}$ be such an $f$ with
\[
  \gamma = \delta_n \coloneqq \min\left(\frac{2^{-n-2}}{27}(s-r),\frac{1}{2}r_n,\e_{\varphi_n} \right).
\]

We have that for any $c \in M$, if $\frac{e(c,b_n)-r_n}{s_n-r_n} \leq -\delta_n$ (i.e., if $e(c,b_n) \leq r_n -\delta_n(s_n-r_n)$), then $e(c,b_{n+1}) < \delta_n$. A fortiori, this implies that if $e(c,b_n) \leq r_n - \delta_n$, then $e(c,b_{n+1}) < \delta_n$.

On the other hand, if $\e_{\varphi_n} - e(c,b_{n+1}) \geq \delta_n$ (i.e., if $e(c,b_{n+1}) \leq \e_{\varphi_n} - \delta_n$), then $\frac{e(c,b_n)-r_n}{s_n-r_n} - 1 < \delta_n$ and so $e(c,b_n) < r_n + (1+\delta_n)(s_n-r_n)$. Since $\delta_n \leq \e_{\varphi_n}$, this implies that if $c \sqin b_{n+1}$, then $e(c,b_n) < r_n + 2s_n$.

Finally, pick $r_{n+1}$ and $s_{n+1}$ so that $2\delta_n < r_{n+1} < s_{n+1} < 3\delta_n$, and move to the next stage of the construction.
\vspace{0.5em}

\noindent\emph{Claim.} $(b_n)_{n<\omega}$ is a Cauchy sequence.
\vspace{0.25em}

\noindent\emph{Proof of claim.} For any $n > 0$ and $c \in M$, we have that if $c \sqin b_n$, then $e(c,b_{n+1}) < \delta_n \leq \frac{2^{-n-2}}{27}(s-r)$ and also that if $c \sqin b_{n+1}$, then $e(c,b_n) < r_n + 2s_n < 9\delta_{n-1}\leq \frac{2^{-n-1}}{3}(s-r)$. Therefore, by $\HH$-extensionality, $d(b_n,b_{n+1}) \leq \frac{2^{-n-1}}{3}(s-r)$. Since we can do this for any positive $n$, the claim follows. \hfill $\square_{\text{claim}}$
\vspace{0.25em}

Let $b = \lim_{n\to \infty}b_n$. $b$ is an element of $M$ since $(M,d)$ is complete.
\vspace{0.5em}

\noindent\emph{Claim.} For any $c \in M$, if $e(c,a) \leq r$, then $c \sqin b$. 
\vspace{0.25em}

\noindent\emph{Proof of claim.} Since $e(c,a) \leq r$ and since $\delta_0 \leq \frac{2^{-2}}{27}(s-r) < \frac{1}{3}(s-r)$, we have that $e(c,b_0) = e(c,a) < r_0 - \delta_0$. Therefore, $r(c,b_1) < \delta_0$. For any $n$, suppose that we know that $e(c,b_{n+1}) < \delta_{n}$. We then have that
\begin{align*}
  e(c,b_{n+1}) &<\delta_{n} & \\
               &< \tfrac{1}{2}r_{n+1} &(\text{by our choice of }r_{n+1}) \\
               &\leq  r_{n+1} - \delta_{n+1} &(\text{since }\delta_{n+1}\leq \tfrac{1}{2}r_{n+1}).
\end{align*}
  Hence, $e(c,b_{n+2}) < \delta_{n+1} \leq \frac{2^{-n-1}}{27}(s-r)$.

Therefore $e(c,b) < \frac{2^{-n-1}}{27}(s-r) + d(b_{n+2},b)$ for every $n$ by induction. Since $b_n \to b$ we have that $e(c,b) = 0$, i.e., $c \sqin b$. \hfill $\square_{\text{claim}}$
\vspace{0.25em}

Finally we just need to verify that if $c \sqin b$, then $e(c,a)< s$. By the above estimate, we know that $d(b_1,b) \leq \sum_{n=1}^\infty\frac{2^{-n-1}}{3}(s-r) = \frac{1}{6}(s-r)$. We have
\begin{align*}
  d(a,b_1) = d(b_0,b_1) &\leq \max(\delta_0,r_0 + (1+\delta_0)(s_0-r_0)) \\
                        &\leq \max\left( \tfrac{1}{2}r_0,r_0 + \left(1+\tfrac{1}{108}\right)\tfrac{1}{3}(s-r) \right) \\
                        &< \max\left( \tfrac{1}{2}r_0,r_0 + \tfrac{5}{4}\cdot\tfrac{1}{3}(s-r) \right) = \tfrac{1}{4}r + \tfrac{3}{4}s.
\end{align*}
Therefore,
\[
  d(a,b) \leq d(a,b_1) + d(b_1,b) <  \tfrac{1}{4}r + \tfrac{3}{4}s + \tfrac{1}{6}(s-r) = \tfrac{1}{12}r + \tfrac{11}{12}s < s.
\]
So if $c \sqin b$, then there is an $f \sqin a$ such that $d(c,f) < s$, implying that $e(c,a) < s$, as required.
\end{proof}

\begin{prop}\label{prop:MSE-CL-char}
  Let $(M,e)$ be an $\HH$-extensional $\Lc_e$-structure with $(M,d_e)$ complete. $(M,d_e,\allowbreak{\sqin_e}) \models \MSE$ if and only if $(M,e)$ satisfies $\Lc_e$-excision.

  Furthermore, all models of $\MSE$ arise in this manner.
\end{prop}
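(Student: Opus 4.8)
\emph{Overall strategy.} The plan is to prove the two directions of the equivalence and then read off the ``all models of $\MSE$ arise'' clause from the forward direction together with the structural facts already established for metric set structures.

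\emph{From $\MSE$ to $\Lc_e$-excision.} Assume $(M,d_e,\sqin_e)\models\MSE$. Since it suffices to verify the conventional form of the axiom scheme of excision for sufficiently small $\delta>0$, I would fix a restricted $\Lc_e$-formula $\varphi(x,\ybar)$, parameters $\bbar\in M$, and small $\delta$, and apply excision (\cref{defn:exc}) in $(M,d_e,\sqin_e)$ to the $\Lsq$-formula $\varphi_{\sqin}$, which computes $\varphi$ by \cref{fact:translation}(1), with bounds $-\delta<-\tfrac12\delta$. This produces $z\in M$ such that $\varphi(c,\bbar)\le-\delta$ implies $c\sqin_e z$ and $c\sqin_e z$ implies $\varphi(c,\bbar)<-\tfrac12\delta$. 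The first clause of the scheme is then immediate, since $c\sqin_e z$ gives $e(c,z)=0<\delta$. For the second, if $e(c,z)\le\e_\varphi-\delta$ then (as $e(c,z)<1$ forces $z$ nonempty) there is $w\sqin_e z$ with $d_e(c,w)<\e_\varphi$, so $\varphi(w,\bbar)<-\tfrac12\delta$ and, by the Lipschitz remark preceding \cref{defn:ax-sch-exc}, $|\varphi(c,\bbar)-\varphi(w,\bbar)|<\tfrac12$, hence $\varphi(c,\bbar)<-\tfrac12\delta+\tfrac12<1+\delta$.

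\emph{From $\Lc_e$-excision to $\MSE$.} We already know $(M,d_e,\sqin_e)$ is a metric set structure, so only excision remains to be checked. By \cref{fact:translation}(3) it is enough to handle rational $\Lsq$-formulas (shrinking the gap $r<s$ a little), and by \cref{fact:translation}(2) each rational $\Lsq$-formula is computed by an $\Lc_e$-formula $\varphi_e$; so the task becomes: show that $(M,e)$ satisfies excision, in the sense of \cref{defn:exc} with $\sqin=\sqin_e$, for \emph{every} $\Lc_e$-formula, quantifiers included. I would do this by induction on the $\Lc_e$-formula, interleaved with re-deriving, at each level of complexity, the instances of the derived comprehension principles (\cref{prop:def-class-char}, \cref{prop:def-fun-im}, \cref{cor:basic-stuff}) that only invoke excision for strictly simpler formulas. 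For a restricted formula $\varphi$ one applies the axiom scheme of excision to a rational rescaling of $\varphi$ to obtain a $z$ with the approximate membership property, then turns $e(\,\cdot\,,z)\le\delta$ into genuine membership via \cref{lem:fat-comp} applied with $z$ as a parameter. A bounded quantifier $\sup_{x\sqin y}$ or $\inf_{x\sqin y}$ is absorbed using relative quantification (\cref{lem:rel-quan}) and the fact that $\{x:x\sqin_e a\}$ is the explicitly definable class with distance predicate $e(x,a)$, so it reduces to the unbounded case. For an unbounded quantifier, say $\inf_{x}\psi(x,z,\ybar)$ with main variable $z$, I would take the family of excision sets for $\{c:\psi(a,c,\bbar)\le r\}$ indexed by $a$, form the closure of its image over the universal set $V$ and then the closure of the union of the resulting set (via \cref{prop:def-fun-im} and \cref{cor:basic-stuff}, legitimate at this stage), and check this is an excision set for $\inf_{x}\psi$ within the slack allowed on $r<s$; the connectives are routine.

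\emph{All models of $\MSE$ arise this way, and the main obstacle.} Given $(N,d,\sqin)\models\MSE$, set $e(x,y)=\inf_{z\sqin y}d(x,z)$. As $(N,d,\sqin)$ is a metric set structure, $d(x,y)=\sup_z|e(z,x)-e(z,y)|=d_e(x,y)$, so $(N,d_e)$ is complete, and $\sqin_e=\sqin$ because $\sqin$ is closed; by the proposition stated just after \cref{lem:ax-H-ext-char}, $(N,e)$ is $\HH$-extensional (since $d_e=d$ is already a complete metric, $\overline{N/e}=N$ and its associated $(\overline{N/e},d_e,\sqin_e)$ is exactly $(N,d,\sqin)$), and by the forward direction it satisfies $\Lc_e$-excision, so $(N,d,\sqin)=(N,d_e,\sqin_e)$ is obtained from $(N,e)$. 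The genuinely hard part is the induction in the second paragraph: the unbounded-quantifier step needs its excision sets produced \emph{uniformly and definably} in the parameters, which forces one to revisit \cref{lem:fat-comp}, the restricted base case, and the derived comprehension principles, and check at each stage that the relevant sets can be chosen to depend definably on the parameters.
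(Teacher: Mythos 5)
Your forward direction is correct and is essentially the paper's argument (the paper uses the single excision instance $\exc{x}{\varphi_{\sqin}(x,\abar)}{0}{\tfrac12}$ to get the condition exactly at $\delta=0$, whereas you verify the $\delta$-approximate form, which the paper notes is sufficient), and your treatment of the ``furthermore'' clause is fine. The problem is the converse direction, where you have misread the scope of the axiom scheme of excision and, as a result, set up an induction that is both unnecessary and unworkable. Restricted $\Lc_e$-formulas in \cref{defn:ax-sch-exc} already contain arbitrary unbounded quantifiers $\sup_x$, $\inf_x$ (this is the standard meaning of ``restricted formula'' in continuous logic, and is confirmed by the paper's later use of prenex normal forms and by the inclusion of $\LL_{\rin}$, with $\exists$ and $\forall$, among the restricted $\Lc_e$-formulas). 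Moreover the translation $\varphi\mapsto\varphi_e$ converts bounded quantifiers into unbounded ones via $(\inf_{x\sqin y}\varphi)_e=\inf_x\min(\varphi_e+2v(\varphi)e(x,y),v(\varphi))$, so $\psi_e$ is a single restricted $\Lc_e$-formula to which the scheme applies directly, quantifiers and all. The paper's proof therefore has no induction on formula complexity: given $\varphi\in\Lsq$ and $r<s$, it rationally approximates the rescaled formula $\frac{\varphi-r}{s-r}$ by $\psi$ to within $\frac16$, applies the scheme once to $3\psi_e-1$ with small $\delta$, and then applies \cref{lem:fat-comp} once to upgrade the approximate membership condition $e(x,z)<\delta$ to genuine membership. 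That is the entire argument.

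The induction you propose in its place would not go through as described. Your step for an unbounded quantifier asks for ``the family of excision sets for $\{c:\psi(a,c,\bbar)\le r\}$ indexed by $a$,'' followed by an application of \cref{prop:def-fun-im} and the closure-of-union operation. But excision sets are non-deterministic: nothing in the theory provides a canonical or definable choice of $a\mapsto B_a$, and \cref{prop:def-fun-im} and \cref{cor:basic-stuff}(4) require an actual (definable) function or an actual set of sets to take the union of. You flag exactly this as ``the genuinely hard part,'' but it is not a detail to be checked later; it is an obstruction with no evident repair, and the interleaved ``re-derivation'' of \cref{prop:def-class-char}, \cref{prop:def-fun-im}, and \cref{cor:basic-stuff} at each level would itself need excision for formulas (e.g.\ $\inf_{y\sqin a}e(x,y)$) that are not obviously simpler than the one being handled. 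All of this is avoided by noticing that the scheme already quantifies over restricted $\Lc_e$-formulas with quantifiers, so the only content of the converse is the rescaling/approximation bookkeeping plus one application of \cref{lem:fat-comp}.
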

\begin{proof}
  Let $\chi(x,\ybar,z) = \max(\min(e(x,z),-\varphi(x,\ybar)),\min(\e_\varphi-e(x,z),\varphi(x,\ybar)))$.  Suppose that $(M,d_e,\sqin_e) \models \MSE$. Fix a restricted $\Lc_e$-formula $\varphi(x,\ybar)$. Fix a tuple of parameters $\abar$. Let $b = \exc{x}{\varphi_{\sqin}(x,\abar)}{0}{\frac{1}{2}}$. Now for any $c$, we have that if $\varphi(c,\abar) \leq 0$, then $e(c,b) =0$. So $\min(e(c,b),-\varphi(c,\abar)) \leq 0$. Moreover, if $\varphi(c,\abar) \geq 1$, then $d_e(c,f) > \e_\varphi$ for all $f \sqin_e b$ (since these all satisfy $\varphi^M(f,\abar) < \frac{1}{2}$). Therefore $e(c,b) \geq \e_\varphi$. So $\min(\e_\varphi-e(c,b),\varphi(c,\abar)) \leq 0$. Since we can do this for any $c \in M$, we have that $(M,e) \models \sup_x\chi(x,\abar,b) \leq 0$, whereby $(M,e) \models \inf_z\sup_x \chi(x,\abar,z) \leq 0$. Since we can do this for any $\abar \in M$, we have that $(M,e) \models \sup_{\ybar}\inf_z\sup_x\chi(x,\ybar,z)$. Finally since this holds for any restricted $\Lc_e$-formula, we have that $(M,e)$ satisfies $\Lc_e$-excision.

  Now assume that $(M,e)$ satisfies $\Lc_e$-excision. Fix an $\Lsq$-formula $\varphi(x,\ybar)$, $\abar \in M$, and $r < s$. By passing to $r'$ and $s'$ with $r < r' < s' < s$ if necessary, we may assume that $r$ and $s$ are rational. By \cref{fact:translation}, we can fix a rational $\Lsq$-formula $\psi(x,\ybar)$ such that $\left| \psi(x,\ybar) - \frac{\varphi(x,\ybar)-r}{s-r}\right| < \frac{1}{6}$ for all $x$ and $\ybar$. Fix $\delta > 0$ with $\delta < \frac{1}{2}\e_{3\psi-1}$. Note that $\delta < \frac{1}{2}$. Apply $\Lc_e$-excision to the restricted $\Lc_e$-formula $3\psi_e(x,\abar)-1$ to get $b \in M$ such that for all $c \in M$, if $\psi_e(c,\abar) \leq \frac{1}{3}-\frac{1}{3}\delta$, then $e(c,b) < \delta$ and if $e(c,b) < \e_{3\psi-1}-\delta$, then $\psi_e(c,\abar) < \frac{2}{3}+\frac{1}{3}\delta$. Apply \cref{lem:fat-comp} to $b$ to get a set $f$ such that if $e(c,b) \leq \delta$, then $c\sqin f$ and if $c \sqin f$, then $e(c,b) \leq \frac{1}{2}\e_{3\psi-1} < \e_{3\psi-1}-\delta$.

  For any $c \in M$, suppose that $\varphi(c,\abar) \leq r$. We then have that $\psi(c,\abar) = \psi_e(c,\abar) < \frac{1}{6} < \frac{1}{3}-\frac{1}{3}\delta.$ Therefore, $e(c,b) < \delta$ and so $c \sqin f$. On the other hand, suppose that $c \sqin f$. We then have that $e(c,b) < \e_{3\psi-1}-\delta$. Therefore $\psi_e(c,\abar) < \frac{2}{3}+\frac{1}{3}\delta$, implying that $\frac{\varphi(c,\abar)-r}{s-r} < \frac{2}{3}+\frac{1}{3}\delta + \frac{1}{6} < 1$ and so $\varphi(c,\abar) < s$.

  Since we can do this for any $\varphi(x,\abar)$ and $r < s$, we have that $(M,d_e,\sqin_e)\models \MSE$.

  The `Furthermore' statement follows from the fact that if $(M,d,\sqin)\models \MSE$, then $(M,e)$ (where $e$ is defined from $\sqin$ and $d$) is $\HH$-extensional and satisfies $\Lc_e$-excision.
\end{proof}

Given \cref{prop:MSE-CL-char}, we will also use $\MSE$ to denote the $\Lc_e$-theory consisting of the $\HH$-extensionality axiom and the axiom scheme of excision.

\section{Constructing models of $\MSE$}
\label{sec:constructing-models}

In order to construct models of $\MSE$, we need to borrow techniques from the construction of models of $\GPK$. The construction also has something of the flavor of the construction of models of $\NFU$ in that it involves non-standard models of another set theory. In order to show that arbitrary metric spaces can be a set of Quine atoms\footnote{Recall that a \emph{Quine atom}, sometimes called a \emph{self-singleton}, is a set $x$ satisfying $x = \{x\}$.} in a model of $\MSE$, we will use a construction that combines elements of the tree structures in \cite{weydert_thesis} and the construction presented at the end of \cite[Sec.\ 2]{Forti1996}. The construction we give here could be generalized to allow certain other metric set structures to be embedded in models of $\MSE$, in the same vein as \cite[Sec.\ 2]{Forti1996}, but we have not pursued this here. We work in the context of $\ZF$.  %

In the following definition, $Q$ is intended to be a set of Quine atoms in our resulting model, although the models we construct here always have precisely one additional Quine atom.
\begin{defn}\label{defn:Q-basic}
  Fix a set $Q$ and a $[0,1]$-valued metric $d$ on $Q$. Assume that $Q$ does not contain any ordinal-indexed sequences. For any ordinal $\alpha$, we let $\Tc_\alpha(Q)$ be the set of all $\alpha$-sequences $x$ satisfying that
  \begin{itemize}
  \item for every $\beta<\alpha$, $x(\beta) \subseteq Q \cup \Tc_\beta(Q)$ and
  \item  for every $\beta < \gamma < \alpha$, $x(\beta)\cap Q = x(\gamma)\cap Q$ and $x(\beta)\setminus Q = \{y\res \beta : y \in x(\gamma)\setminus Q\}$.
  \end{itemize}
  Let ${\sqin} \subseteq \Tc_\alpha(Q)^2$ be a binary relation such that
  \begin{itemize}
  \item for $x \in \Tc_\alpha(Q)$ and $y \in Q$, $x\sqin y$ holds if and only if $x=y$,
  \item for $x \in Q$ and $y \sqin \Tc_\alpha(Q)\setminus Q$, $x\sqin y$ holds if and only if $x\in y(\beta)$ for every $b<\alpha$, and
  \item for $x \in \Tc_\alpha(Q) \setminus Q$ and $y \in \Tc_\alpha(Q)\setminus Q$, $x \sqin y$ if and only if $x \res \beta \in y(\beta)$ for every $\beta < \alpha$. 
 \end{itemize}
 Note that $\Tc_\alpha(Q)$ is well-defined, as $\Tc_0(Q) = Q \cup \{\varnothing\}$. 
  For any $x \in \Tc_\alpha(Q)$, we write $\tc(x)$ for the smallest subset of $\Tc_\alpha(Q)$ such that $\{y : y \sqin x\} \subseteq \tc(x)$ and if $z \sqin y \in \tc(x)$, then $z \in \tc(x)$. %
  For any $x,y \in \Tc_\alpha(Q)$, we define
  \begin{align*}
    \rho_0^{Q,\alpha}(x,y) &\coloneqq d_{\HH}(\tc(x)\cap Q,\tc(y)\cap Q), \\ 
    e_{\beta}^{Q,\alpha}(z,y) & \coloneqq \inf_{w \sqin y}\rho_\beta^{Q,\alpha}(z,w), \\
    \rho_{\beta+1}^{Q,\alpha}(x,y) &\coloneqq \max\left( \sup_{z \sqin x}e_\beta^{Q,\alpha}(z,y)  ,\sup_{w \sqin y} e_\beta^{Q,\alpha}(w,x)\right), \\ %
    \rho_{\lambda}^{Q,\alpha}(x,y) &\coloneqq \sup_{\beta<\lambda}\rho_{\beta}^{Q,\alpha}(x,y), 
  \end{align*}
  for all $\beta$ and $\lambda$ a limit ordinal, where $\sup \varnothing = 0$ and $\inf \varnothing = 1$.
\end{defn}
We will often suppress the superscript $^{Q,\alpha}$. Since the supremum of a family of pseudo-metrics is always a pseudo-metric, an easy inductive argument shows that $\rho_\beta$ is a pseudo-metric for every $\beta \in \Ord\cup\{\infty\}$. It is also immediate that for any $x \in Q$, $\tc(x) = \{x\}$, and so for $x,y \in Q$, $\rho_\beta(x,y) = d(x,y)$ for every~$\beta$. Finally, it can be shown that if $x(\gamma) = y(\gamma)$ for all $\gamma \leq \beta$, then $\rho_\beta(x,y) = 0$.

Also, while we will not need it, we should note that $\Tc_\alpha(\varnothing)$ is precisely the tree structure of height $\alpha$ of \cite{weydert_thesis} and in this case, $\rho_\beta(x,y)$ is $0$ if and only if $x\res \beta = y \res \beta$ and is $1$ otherwise. $\rho_\beta$ is of course also closely related to the $\sim_\beta$ relation of \cite{malitz_1976}. 

\begin{lem}\label{lem:rho-increasing}
  For any $a,b \in \Tc_\alpha$, $\beta \mapsto \rho_\beta(a,b)$ and $\beta \mapsto e_\beta(a,b)$ are both non-decreasing functions of $\beta$.
\end{lem}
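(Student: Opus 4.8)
The plan is to prove, by transfinite induction on $\gamma$, that $\rho_{\beta}(a,b) \le \rho_{\gamma}(a,b)$ for all $\beta \le \gamma$ and all $a,b \in \Tc_{\alpha}$ (equivalently, that $\beta \mapsto \rho_{\beta}(a,b)$ is non-decreasing on $[0,\gamma]$, uniformly in $a,b$). Granting this, the statement for $e$ comes for free: since $e_{\beta}(a,b) = \inf_{w \sqin b} \rho_{\beta}(a,w)$ with the convention $\inf \varnothing = 1$, applying monotonicity in the index to the pairs $(a,w)$ and using that $\inf$ is monotone gives $e_{\beta_{1}}(a,b) \le e_{\beta_{2}}(a,b)$ whenever $\beta_{1} \le \beta_{2}$.

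The engine of the induction is a \emph{self-improvement} observation I would isolate first: if $\rho_{\mu}(z,w) \le \rho_{\nu}(z,w)$ for all $z,w$, then taking infima gives $e_{\mu}(z,y) \le e_{\nu}(z,y)$ for all $z,y$, and then the defining formula $\rho_{\xi+1}(x,y) = \max\!\big(\sup_{z \sqin x} e_{\xi}(z,y),\, \sup_{w \sqin y} e_{\xi}(w,x)\big)$ yields $\rho_{\mu+1}(x,y) \le \rho_{\nu+1}(x,y)$ for all $x,y$. With this in hand, the successor stages of the induction are automatic (from $\rho_{\delta} \le \rho_{\delta+1}$ pointwise we get $\rho_{\delta+1} \le \rho_{\delta+2}$ pointwise), and at a limit $\lambda$ the inequalities $\rho_{\beta} \le \rho_{\lambda}$ for $\beta < \lambda$ hold trivially because $\rho_{\lambda} = \sup_{\beta < \lambda} \rho_{\beta}$. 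To push \emph{past} a limit, i.e.\ to see $\rho_{\lambda}(a,b) \le \rho_{\lambda+1}(a,b)$, I would combine $\rho_{\beta}(a,b) \le \rho_{\beta+1}(a,b)$ (induction hypothesis, $\beta < \lambda$) with $e_{\beta} \le e_{\lambda}$ (self-improvement applied to the already-proven $\rho_{\beta} \le \rho_{\lambda}$) to get $\rho_{\beta}(a,b) \le \rho_{\lambda+1}(a,b)$ for every $\beta < \lambda$, and then take the supremum over $\beta$.

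The only part with genuine content is the base case $\rho_{0}(a,b) \le \rho_{1}(a,b)$. For this I would use the identity $\tc(a) \cap Q = \bigcup_{z \sqin a}(\tc(z) \cap Q)$, which follows from $\tc(q) = \{q\}$ for Quine atoms $q$ (recorded just after \cref{defn:Q-basic}). Writing $A \coloneqq \tc(a) \cap Q = \bigcup_{z \sqin a} A_{z}$ with $A_{z} \coloneqq \tc(z) \cap Q$, and likewise $B \coloneqq \bigcup_{w \sqin b} B_{w}$, the estimate is direct: for $p \in A$, say $p \in A_{z_{0}}$, and any $w \sqin b$ one has $\dist(p,B) \le \dist(p,B_{w}) \le \sup_{p' \in A_{z_{0}}} \dist(p',B_{w}) \le d_{\HH}(A_{z_{0}},B_{w}) = \rho_{0}(z_{0},w)$, hence $\dist(p,B) \le e_{0}(z_{0},b) \le \sup_{z \sqin a} e_{0}(z,b) \le \rho_{1}(a,b)$; the symmetric argument gives $\dist(q,A) \le \rho_{1}(a,b)$ for $q \in B$; together these give $\rho_{0}(a,b) = d_{\HH}(A,B) \le \rho_{1}(a,b)$. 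The conventions $\sup \varnothing = 0$, $\inf \varnothing = 1$, and $d_{\HH}(\varnothing, X) = 1$ for nonempty $X$ make this uniform, but I would verify separately the degenerate cases in which $a$, $b$, or some $A_{z}$ or $B_{w}$ is empty.

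The main obstacle is bookkeeping rather than ideas: keeping the transfinite recursion honest at limit stages — in particular deriving $\rho_{\lambda} \le \rho_{\lambda+1}$ without circularity, invoking only instances already established — and pinning down the empty-set corner cases in the base step, which is where a sign or convention slip is easiest to commit.
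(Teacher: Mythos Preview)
Your proposal is correct and follows essentially the same approach as the paper: transfinite induction using the observation that $\rho_\mu \leq \rho_\nu$ pointwise implies $e_\mu \leq e_\nu$ and hence $\rho_{\mu+1} \leq \rho_{\nu+1}$, with the base case $\rho_0 \leq \rho_1$ handled via the decomposition $\tc(a)\cap Q = \bigcup_{z \sqin a}(\tc(z)\cap Q)$. Your write-up is more explicit than the paper's (you isolate the self-improvement step and handle the limit-to-successor transition carefully, where the paper is terse), and your base case is presented as a direct estimate rather than contrapositively, but the content is the same.
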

\begin{proof}
  Proceed by induction on $\beta$. Limit stages are obvious, so assume that we know that $\gamma \mapsto \rho_\gamma(a,b)$ and $\gamma \mapsto e_\gamma(a,b)$ are increasing functions for any $a,b \in \Tc_\alpha$ on the interval $[0,\beta]$ and consider $\rho_{\beta+1}(x,y)$.
  
  If $\beta = 0$, then we just need to argue that $\rho_1(a,b) \geq \rho_0(a,b) = d_{\HH}(\tc(a) \cap Q, \tc(b) \cap Q)$. Suppose that $\rho_0(a,b) > r$. Without loss of generality, this implies that there is a $c \in \tc(a)\cap Q$ such that $\inf\{d(c,z) : z \in \tc(b) \cap Q\} > r$. Since $c \in \tc(a)$ and $c \sqin c$, there is an $f \sqin a$ such that $c \in \tc(f)$. Since $c \in \tc(f)\cap Q$ and since $\tc(g) \subseteq \tc(b)$ for any $g \sqin b$, we have that $\rho_0(f,g) > r$ for any $g \sqin b$. Therefore $\rho_1(a,b) \geq r$. Since we can do this for any $r$, we have that $\rho_1(a,b) \geq \rho_0(a,b)$.

  If $\beta > 0$, then for any $\gamma \leq \beta$, we have that $e_\gamma(u,v) \leq e_\beta(u,v)$ by the induction hypothesis, so
  \[
    \max\left( \sup_{z \sqin a}e_\gamma (z,b)  ,\sup_{w \sqin b} e_\gamma (w,a)\right) \leq \max\left( \sup_{z \sqin a}e_\beta(z,b)  ,\sup_{w \sqin b} e_\beta(w,a)\right)
  \]
  and therefore $\rho_\gamma(a,b) \leq \rho_{\gamma+1}(a,b) \leq \rho_{\beta+1}(a,b)$, as required. The fact that $e_{\beta+1}(a,b) \geq e_\beta(a,b)$ is immediate.
\end{proof}

\begin{lem}\label{lem:V-alpha-disc-embedding}
 For any $(Q,d)$ and ordinals $\alpha<\beta$, there is a unique $v_\alpha \in \Tc_\beta(Q)$ such that $(v_\alpha,\sqin)$ and $(V_\alpha,\in)$ are isomorphic and for any $\gamma \in (\alpha,\beta)$ and distinct $a,b \sqin v_\alpha$, $\rho_\gamma(a,b) = 1$.
\end{lem}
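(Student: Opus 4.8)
I would build a canonical embedding of the von Neumann hierarchy into $\Tc_\beta(Q)$ by $\in$-recursion. Define, for every set $x$, a $\beta$-sequence $\iota(x)$ by $\iota(x)(\delta) = \{\iota(z)\res\delta : z \in x\}$ for $\delta < \beta$. A routine check shows $\iota(x) \in \Tc_\beta(Q)$: the three closure clauses of \cref{defn:Q-basic} hold because every value $\iota(z)\res\delta$ lies in $\Tc_\delta(Q)$ and no $\iota$-value ever contains an element of $Q$, and the coherence conditions are immediate from the definition (in particular $\iota$ is coherent in $\beta$, so $\iota(x)\res\delta$ agrees with $\iota(x)$ computed with bound $\delta$). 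Since $\mathrm{rk}(V_\alpha) = \alpha < \beta$, the element $v_\alpha := \iota(V_\alpha) \in \Tc_\beta(Q)\setminus Q$ is well-defined, and this will be the witness.

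Two lemmas about $\iota$ drive the rest. \emph{Lemma A (injectivity up to truncation):} for sets $x,y$ and $\delta < \beta$ with $\mathrm{rk}(x),\mathrm{rk}(y) < \delta$, if $\iota(x)\res\delta = \iota(y)\res\delta$ then $x = y$; prove this by induction on $\max(\mathrm{rk} x,\mathrm{rk} y)$, reading off the agreeing level $\xi = \max(\mathrm{rk} x, \mathrm{rk} y) < \delta$ and applying the induction hypothesis to members. \emph{Lemma B (propagation):} for every set $y$, every ordinal $\theta$, every $\delta$ with $\mathrm{rk}(y) < \delta \le \theta$, and every $a \in \Tc_\theta(Q)$, if $a\res\delta = \iota(y)\res\delta$ then $a = \iota(y)\res\theta$; prove this by induction on $\mathrm{rk}(y)$, \emph{uniformly in $\theta$}. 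For $\xi$ with $\delta \le \xi < \theta$ one uses the coherence clause of \cref{defn:Q-basic} to restrict $a(\xi)$ down to level $\xi' := \mathrm{rk}(y) < \delta$, where it equals $\iota(y)(\xi') = \{\iota(z)\res\xi' : z \in y\}$; Lemma A identifies, for each node $u \in a(\xi)$, the unique $z\in y$ with $u\res\xi' = \iota(z)\res\xi'$, and the induction hypothesis of Lemma B (applied with ambient ordinal $\xi$) upgrades this to $u = \iota(z)\res\xi$; running this in both directions gives $a(\xi) = \iota(y)(\xi)$. From these: $\iota\res V_\alpha$ is injective (Lemma A with $\delta=\alpha$, legitimate as $\alpha<\beta$), preserves and reflects membership (clear one way; the converse by the same level-$\alpha$ argument plus Lemma A), and by Lemma B (again with $\delta=\alpha$) every $a \sqin v_\alpha$ equals $\iota(z_0)$ for some $z_0 \in V_\alpha$; hence $(\{x : x\sqin v_\alpha\},\sqin)$ is the isomorphic image of $(V_\alpha,\in)$ under $\iota$.

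For the discreteness clause I would prove separately, by induction on $\max(\mathrm{rk} x,\mathrm{rk} y)$, that for all sets $x \neq y$ and all $\gamma \ge \max(\mathrm{rk} x, \mathrm{rk} y)$ one has $\rho_\gamma^{Q,\beta}(\iota x, \iota y) = 1$. The point is that $\tc(\iota x) \cap Q = \varnothing$, so $\rho_0(\iota x,\iota y) = d_{\HH}(\varnothing,\varnothing) = 0$, and the value only becomes $1$ after enough stages of the recursion: picking $z$ in $x\triangle y$, say $z \in x\setminus y$, we get $\iota z \sqin \iota x$, while for every $w \sqin \iota y$ we have $w = \iota z'$ with $z' \in y$ distinct from $z$, so by the induction hypothesis $\rho_{\gamma'}(\iota z, w) = 1$ for $\gamma'$ just below $\gamma$ (and $e_{\gamma'}(\iota z, \iota y) = 1$ using $\inf\varnothing = 1$ when $y$ is empty); this forces one side of the Hausdorff-type maximum defining $\rho_\gamma$ to be $1$, and limits are handled by the $\sup$ clause. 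Applying this to $x,y \in V_\alpha$ (so $\mathrm{rk} x, \mathrm{rk} y < \alpha$) yields $\rho_\gamma(a,b) = 1$ for distinct $a,b \sqin v_\alpha$ and all $\gamma \in (\alpha,\beta)$.

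Finally, uniqueness: the displayed conditions make $(\{x:x\sqin v_\alpha\},\sqin)$ a well-founded extensional structure isomorphic to $(V_\alpha,\in)$ whose members are pairwise genuinely distinct (at $\rho_\gamma$-distance $1$), so for any other candidate $v$ the uniqueness of the Mostowski collapse, together with $\mathrm{ext}(\iota(y)) = \{\iota(z):z\in y\}$ for $y\in V_\alpha$ (a special case of Lemma B with $\delta = \alpha$), forces $\mathrm{ext}(v)$ to coincide with $\mathrm{ext}(v_\alpha)$, whence $v = v_\alpha$ by extensionality of $\sqin$ (equivalently, of $\rho_\infty$) on $\Tc_\beta(Q)$; I expect this last step to be routine given the structural facts about the tree construction. \textbf{The main obstacle} is Lemma B: getting the ordinal bookkeeping right when pushing equality of levels upward through the coherence conditions, and formulating the induction uniformly in the ambient ordinal $\theta$ so that the recursion on $\mathrm{rk}(y)$ actually closes.
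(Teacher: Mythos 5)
Your proposal is correct in substance but takes a genuinely different route from the paper. The paper proves the lemma by induction on $\alpha$ (for fixed $\beta$): it writes down $v_\alpha$ directly as a $\beta$-sequence, taking $v_{\gamma+1}(\sigma+1) = \Pc(v_\gamma(\sigma))$ at successor stages and unions at limits, and reads off both the isomorphism with $(V_\alpha,\in)$ and the $\{0,1\}$-valuedness of $\rho_\gamma$ from the inductive hypothesis one level down; the whole argument is about ten lines and leaves the limit case and the uniqueness clause essentially to the reader. You instead build a single Mostowski-style embedding $\iota$ of the entire von Neumann hierarchy by $\in$-recursion and set $v_\alpha = \iota(V_\alpha)$, with the injectivity Lemma A and the propagation Lemma B doing the work that the paper's level-by-level power-set construction does implicitly. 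Your route is longer but buys more: Lemma B is exactly what is needed to show that every $\sqin$-element of $v_\alpha$ is of the form $\iota(z)$ (a point the paper's "it follows immediately" glosses over), and your uniqueness argument via the transitive collapse is more explicit than anything in the paper's proof. Both approaches are legitimate; yours is closer to a fully checked argument, the paper's is closer to a construction plus an assertion.

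One soft spot: in the discreteness induction you claim $\rho_\gamma(\iota x,\iota y)=1$ for all $\gamma \ge \max(\mathrm{rk}\,x,\mathrm{rk}\,y)$ by induction on that maximum, with "limits handled by the $\sup$ clause." When $\mu = \max(\mathrm{rk}\,x,\mathrm{rk}\,y)$ is a limit and $\gamma=\mu$, the inductive hypothesis only gives $\rho_{\gamma'}(\iota z,\iota z')=1$ for $\gamma' \ge \max(\mathrm{rk}\,z,\mathrm{rk}\,z')$, and for a fixed $\gamma'<\mu$ there may be $z'\in y$ of rank above $\gamma'$, so the infimum over $z'\sqin \iota y$ is not controlled and the $\sup$ clause does not immediately close the induction. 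The statement is still true, and the repair is easy: either prove the sharper bound that $\rho_\gamma(\iota x,\iota y)=1$ for all $\gamma \ge \min(\mathrm{rk}\,x,\mathrm{rk}\,y)+1$ (whose threshold is always a successor, so the induction never hits a limit boundary), or simply weaken your claim to $\gamma > \max(\mathrm{rk}\,x,\mathrm{rk}\,y)$, which is all the lemma needs since $\gamma \in (\alpha,\beta)$ and $x,y \in V_\alpha$ force $\gamma > \alpha > \max(\mathrm{rk}\,x,\mathrm{rk}\,y)$, and for which the successor-stage argument plus \cref{lem:rho-increasing} suffices.
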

\begin{proof}
  Fix an ordinal $\beta$. We will prove this for all $\alpha < \beta$ by induction. For $V_0 = \varnothing$, the statement is witnessed by the sequence $v_0(\gamma) = \varnothing$ in $\Tc_\beta(Q)$.

  Now assume that for some $\alpha < \beta$, the statement is known for all $\delta < \alpha$. If $\alpha$ is a successor and equal to $\gamma + 1$, let $v_\alpha$ be defined by $v_\alpha(0) = \{\varnothing\}$, $v_\alpha(\sigma+1) = \Pc(v_\gamma(\sigma))$, and $v_\alpha(\lambda) = \{x : x~\text{ is a}~\lambda~\text{sequence,}~(\forall \sigma < \lambda)x\res \sigma \in v_\alpha(\sigma)\}$ for any limit ordinal $\lambda$. Since the statement holds for $\gamma$, we have that $\rho_\gamma$ is $\{0,1\}$-valued on the $\sqin$-elements of $v_\gamma$, we get that $\rho_\alpha=\rho_{\gamma+1}$ is $\{0,1\}$-valued on the $\sqin$-elements of $v_\alpha$. Furthermore, since $(v_\gamma,\sqin)$ is isomorphic to $(V_\gamma,\in)$, it follows immediately that $(v_\alpha,\sqin)$ is isomorphic to $(V_\alpha,\in)$. 

  If $\alpha$ is a limit, then let $v_\alpha(\sigma) = \bigcup_{\gamma < \alpha}v_\gamma(\sigma)$ for every $\sigma < \beta$. The required statements are obvious.
\end{proof}

\begin{defn}\label{defn:Q-topology}
  For any $Q$ and $\alpha$, we let $\tau_{Q,\alpha}$ be the topology on $\Tc_\alpha(Q)$ generated by sets of the form $\{y \in \Tc_\alpha(Q) : \rho_\beta(x,y) < \e\}$ for $x \in \Tc_\alpha(Q)$, $\beta<\alpha$, and $\e > 0$.
\end{defn}

It is immediate from basic topological facts that for any $X \subseteq \Tc_\alpha(Q)$, there is a unique smallest closed set $\overline{X}$ containing $X$. More importantly, we have the following.

\begin{prop}\label{prop:closures-are-elements}
  For any $Q$, limit $\alpha$, and closed $F \subseteq \Tc_\alpha(Q)$, there is an $x \in \Tc_\alpha(Q)$ such that $F = \{y \in \Tc_\alpha(Q): y \sqin x\}$. 
\end{prop}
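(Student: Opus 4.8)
The plan is to write down the witness $x$ directly and verify it works. Given a closed $F\subseteq\Tc_\alpha(Q)$, I would set, for each $\beta<\alpha$,
\[
  x(\beta)\coloneqq(F\cap Q)\cup\{\,y\res\beta : y\in F\setminus Q\,\}.
\]
First I would check that $x\in\Tc_\alpha(Q)$. The containment $x(\beta)\subseteq Q\cup\Tc_\beta(Q)$ is clear, since $F\cap Q\subseteq Q$ and, for $y\in F\setminus Q\subseteq\Tc_\alpha(Q)$, the restriction $y\res\beta$ is a $\beta$-sequence that inherits the two defining clauses of $\Tc_\beta(Q)$ from $y$. For the coherence clause I would use that $Q$ contains no ordinal-indexed sequences, so every $y\res\beta$ lies outside $Q$: then $x(\beta)\cap Q=F\cap Q=x(\gamma)\cap Q$ for $\beta<\gamma<\alpha$, and $x(\beta)\setminus Q=\{y\res\beta:y\in F\setminus Q\}=\{(y\res\gamma)\res\beta:y\in F\setminus Q\}=\{z\res\beta:z\in x(\gamma)\setminus Q\}$. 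Note also that $x$ is itself an ordinal-indexed sequence, hence $x\notin Q$, so membership in $x$ is governed by the last clause of the definition of $\sqin$ in \cref{defn:Q-basic}.

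Next I would check the two inclusions. For $F\subseteq\{y:y\sqin x\}$: if $y\in F\cap Q$ then $y\in x(\beta)$ for every $\beta<\alpha$, so $y\sqin x$; and if $y\in F\setminus Q$ then $y\res\beta\in x(\beta)$ for every $\beta<\alpha$, so again $y\sqin x$. For the converse, suppose $y\sqin x$. If $y\in Q$, then $y\in x(\beta)$ for all $\beta$, and since the part of $x(\beta)$ outside $Q$ consists of sequences (never elements of $Q$) we get $y\in F\cap Q\subseteq F$. If $y\notin Q$, then $y\res\beta\in x(\beta)$ for every $\beta<\alpha$, and since $y\res\beta\notin Q$ this forces some $z_\beta\in F\setminus Q$ with $z_\beta\res\beta=y\res\beta$. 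Fixing $\beta<\alpha$ and using that $\alpha$ is a limit so $\beta+1<\alpha$, the element $z_{\beta+1}\in F$ satisfies $z_{\beta+1}(\gamma)=y(\gamma)$ for all $\gamma\leq\beta$, hence $\rho_\beta(z_{\beta+1},y)=0$ by the remark following \cref{defn:Q-basic}. So every basic $\tau_{Q,\alpha}$-neighbourhood of $y$ meets $F$, and therefore $y\in\overline F=F$ because $F$ is closed.

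I do not expect a serious obstacle: the construction is essentially forced and the verifications routine. The one point I would be careful about is the last step, where I would first record that the sets $\{w:\rho_\beta(y,w)<\e\}$ for $\beta<\alpha$ and $\e>0$ form a neighbourhood base at $y$ in the topology $\tau_{Q,\alpha}$ of \cref{defn:Q-topology} — this uses the triangle inequality for each $\rho_\beta$, the monotonicity of $\beta\mapsto\rho_\beta$ from \cref{lem:rho-increasing}, and the fact that a finite maximum of ordinals below the limit $\alpha$ is again below $\alpha$ — so that $y\in\overline F$ is equivalent to the statement that for every $\beta<\alpha$ and $\e>0$ there is $z\in F$ with $\rho_\beta(y,z)<\e$. (The degenerate case $F=\varnothing$, giving $x(\beta)=\varnothing$ for all $\beta$ and hence $\{y:y\sqin x\}=\varnothing$, is covered by the same argument.)
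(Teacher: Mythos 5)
Your proof is correct and follows essentially the same route as the paper's: define $x(\beta)$ as the set of restrictions of elements of $F$ (the paper phrases this as the $\beta$-sequences that ``extend to an element of $F$''), observe $F \subseteq \{y : y \sqin x\}$ immediately, and for the converse use that agreement up to level $\beta+1$ forces $\rho_\beta = 0$, so any $y \sqin x$ lies in $\overline{F} = F$. Your write-up is somewhat more careful than the paper's about the $F\cap Q$ part of $x(\beta)$, the verification that $x \in \Tc_\alpha(Q)$, and the neighbourhood-base remark, but these are exactly the details the paper elides with ``clearly.''
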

\begin{proof}
  For any $\beta<\alpha$, let $x(\beta) \coloneqq \{y\in \Tc_\beta(Q) : y~\text{extends to an element of}~F\}$. $x$ is clearly an element of $\Tc_\alpha(Q)$. Furthermore, we clearly have that if $y \in F$, then $y \sqin x$. So now we just need to show the converse.

  Suppose that $y \sqin x$. We would like to show that $y$ is in the closure of $F$ and therefore is in $F$. In order to do this, it is sufficient to show that $\inf\{\rho_\beta(y,z) : z \in F\} = 0$ for each $\beta < F$. For each $\beta < \alpha$, find $z \in F$ such that $y(\beta+1)$ is an initial segment of $z$. We now have that $\rho_\beta(y,z) = 0$. Since we can do this for any $\beta < \alpha$, we have that $y$ is in the closure of $F$.
\end{proof}

What will ultimately be relevant to us is that the above facts are first-order properties of the structure $(V_{\alpha+\omega},\alpha,Q,d,\sqin,\Tc_\alpha(Q))$ (assuming $\Tc_\alpha(Q)$ is an element of $V_{\alpha+\omega}$). This is part of the motivation for \cref{defn:non-standard-gauges}.

We will also need the following.  

\begin{lem}\label{lem:Q-embedding}
  Fix a metric space $(Q,d)$ and a limit ordinal $\alpha$. Let $\ol{Q}$ be the $\tau_{Q,\alpha}$-closure of $Q \subset \Tc_\alpha(Q)$. For any $z \in \ol{Q}$, there is an $x \in Q$ such that $\rho_\beta(x,z) = 0$ for all $\beta < \alpha$. 
\end{lem}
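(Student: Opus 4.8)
The plan is to read off a single point $x\in Q$ from the $\rho_0$-level behaviour of $z$ and then propagate the equality $\rho_\beta(x,z)=0$ to all $\beta<\alpha$ using only monotonicity in $\beta$ and the triangle inequality. The key structural facts I would invoke are: each $\rho_\beta$ is a pseudo-metric; $\beta\mapsto\rho_\beta(a,b)$ is non-decreasing (\cref{lem:rho-increasing}); for $q\in Q$ one has $\tc(q)=\{q\}$, so that $\rho_\beta(q,q')=d(q,q')$ for all $q,q'\in Q$ and all $\beta$, and $\rho_0(z,q)=d_{\HH}(\tc(z)\cap Q,\{q\})$; and, for the Hausdorff distance from a singleton, $d_{\HH}(\{q\},A)=\sup\{d(q,p):p\in A\}$ when $A\neq\varnothing$ while $d_{\HH}(\{q\},\varnothing)=1$.

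First I would unpack membership in $\ol Q$: for each $\beta<\alpha$ and $\e>0$ the set $\{y\in\Tc_\alpha(Q):\rho_\beta(z,y)<\e\}$ is a $\tau_{Q,\alpha}$-open neighbourhood of $z$ (it is one of the generating sets, taken with centre $z$), hence meets $Q$; so there is $q\in Q$ with $\rho_\beta(z,q)<\e$. Taking $\beta=0$ and $\e=2^{-n}$ yields $q_n\in Q$ with $d_{\HH}(\tc(z)\cap Q,\{q_n\})<2^{-n}$. Since $d_{\HH}(\varnothing,\{q_n\})=1$, this forces $\tc(z)\cap Q\neq\varnothing$; and since each $q_n$ is then within $2^{-n}$ of every point of $\tc(z)\cap Q$, the set $\tc(z)\cap Q$ has $d$-diameter $0$, so being a nonempty subset of a metric space it is a singleton $\{x\}$ with $x\in Q$, and moreover $d(q_n,x)<2^{-n}$ for all $n$.

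Next I would fix $\beta<\alpha$ and show $\rho_\beta(x,z)=0$. For each $n$ choose, as above, $q\in Q$ with $\rho_\beta(z,q)<2^{-n}$; by \cref{lem:rho-increasing} also $\rho_0(z,q)<2^{-n}$, so $d(q,x)=d_{\HH}(\{q\},\tc(z)\cap Q)=\rho_0(z,q)<2^{-n}$, whence $\rho_\beta(x,q)=d(x,q)<2^{-n}$ because $x,q\in Q$. The triangle inequality for the pseudo-metric $\rho_\beta$ then gives $\rho_\beta(x,z)\le\rho_\beta(x,q)+\rho_\beta(q,z)<2^{-n+1}$, and letting $n\to\infty$ yields $\rho_\beta(x,z)=0$; as $\beta<\alpha$ was arbitrary, this is the claim. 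I do not expect a genuine obstacle here; the only point needing slight care is the empty-set case in the Hausdorff distance, but the $\rho_0$-approximation of $z$ by singletons from $Q$ both rules that case out and simultaneously pins $\tc(z)\cap Q$ down to the single point $x$.
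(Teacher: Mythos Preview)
Your argument is correct and is genuinely different from the paper's. Both proofs begin the same way, extracting the single point $x$ by showing $\tc(z)\cap Q$ is a singleton via $\rho_0$-approximation. After that, however, the paper proceeds structurally: it first shows that $\ol{Q}$ is closed under $\sqin$, defines $\pi(y)$ for every $y\in\ol{Q}$, and then runs a transfinite induction on $\beta$ using the recursive definition of $\rho_{\beta+1}$ in terms of $e_\beta$. Your route sidesteps all of this: you never touch the $\sqin$-structure of $z$ or the recursion in $\rho_\beta$, and instead use only that $\rho_\beta$ restricted to $Q\times Q$ equals $d$, monotonicity of $\beta\mapsto\rho_\beta$, and the triangle inequality for the pseudo-metric $\rho_\beta$. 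This is shorter and more elementary; the paper's approach, by contrast, yields the auxiliary structural fact that $\ol{Q}$ is $\sqin$-downward closed, though that fact is not used elsewhere.
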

\begin{proof}
  First we need to show that if $x \in \ol{Q}$, then $|\tc(x)\cap Q| = 1$. Suppose that $\tc(x) \cap Q$ has more than one element. Let $y$ and $z$ be distinct elements of $\tc(x) \cap Q$. Suppose that $d(y,z) > r$. We now immediately have that $\rho_0(x,w) > \frac{1}{2}r$ for any $w \in Q$. Therefore $x \notin \ol{Q}$. On the other hand, suppose that $\tc(x) \cap Q = \varnothing$. Then likewise, $\rho_0(x,w) = 1$ for any $w \in Q$. Therefore $x \notin \ol{Q}$.

  Now we need to argue that if $x \in \ol{Q}$, then for any $y \sqin x$, $y \in \ol{Q}$ as well. Suppose $y \sqin x$ and $y \notin \ol{Q}$. By definition, this implies that there is a $\beta < \alpha$ such that $\inf\{\rho_\beta(y,z) : z \sqin x \} = r> 0$, but this implies that $\rho_{\beta+1}(x,w) \geq r $ for all $w \in Q$ and so $x \notin \ol{Q}$.
  
  For any $x \in \ol{Q}$, let $\pi(x)$ denote the unique element of $Q$ that is in $\tc(x)$. We need to show that $\rho_\beta(x,\pi(x)) = 0$ for all $\beta < \alpha$. Clearly $\rho_0(y,\pi(x)) = 0$ for any $y \in \ol{Q}$ with $\pi(y) = \pi(x)$. Suppose that $\rho_\gamma(y,\pi(x)) = 0$ for all $\gamma < \beta$ and $y \in \ol{Q}$ with $\pi(y) = \pi(x)$. If $\beta$ is a limit, then $\rho_\beta(x,\pi(x)) = 0$. Assume that $\beta = \delta+1$ for some $\delta$. Fix $y$ with $\pi(y) = \pi(x)$. Fix $z \sqin y$. We clearly have that $\tc(z) \cap Q \subseteq \tc(y) \cap Q$. It also must be the case that $z \in \ol{Q}$. Therefore we must have that $\pi(z) = \pi(y) = \pi(x)$ as well, so by the induction hypothesis, we have that $\rho_\delta(z,\pi(x)) = 0$. Since we can do this for any $z \sqin y$, we have that $\rho_{\delta+1}(y,\pi(x)) = 0$, as required.
\end{proof}

\begin{defn}\label{defn:non-standard-gauges}
  Fix a tuple $(Q,d,\sqin)$ as in \cref{defn:Q-basic} and an infinite ordinal $\alpha$ such that $\Tc_\alpha(Q)$ is an element of $V_{\alpha+\omega}$. We will assume that restricted $\Lc_e$-formulas are elements of $V_{\alpha+\omega}$.

  Let $(M,\alpha^M,Q^M,d^M,\sqin^M,\Tc^M)$ be a structure elementarily equivalent to $(V_{\alpha+\omega},\alpha,Q,d,\allowbreak\sqin,\allowbreak\Tc_\alpha(Q))$. We write $\rho^M_\beta(x,y)$ and $e^M_\beta(x,y)$ for the functions in $M$ given by \cref{defn:Q-basic} computed internally.

  Given any $r \in \Rb^M$ satisfying $|r| \leq n$ for some standard natural $n$, the \emph{standard part of $r$}, written $\st(r)$, is the unique standard real satisfying $r \geq t$ if and only if $\st(r) \geq t$ for all standard rationals $t$.
  
  A \emph{gauge on $M$} is a non-increasing function $s: \alpha^M \to [0,1]$ (where $[0,1]$ is the standard unit interval) with $s(0) = 1$. An \emph{internal gauge on $M$} is a non-increasing function $s \in M$ from $\alpha^M$ to $[0,1]^M$ with $s(0) = 1$. An internal gauge on $M$ is \emph{$\e$-smooth} if
  \begin{itemize}
  \item $s(0) = s(1)$,
  \item $s(\beta) = 0$ for all sufficiently large $\beta \in \alpha^M$,
  \item for every $\beta \in \alpha^M$, $s(\beta) < s(\beta+1) + \e$, and
  \item for any limit $\lambda \in \alpha^M$, there is a $\beta < \lambda$ such that $s(\beta) = s(\lambda)$.
  \end{itemize}
  Given an internal gauge $s$ on $M$, the \emph{standard part of $s$}, written $s^{\st}$, is $\st \circ s$.

  Given a gauge $s$ on $M$, we define the functions
  \[
    \rho_s(x,y) \coloneqq \sup_{\beta \in \alpha^M}\min(\rho^M_\beta(x,y),s(\beta))
  \]
  and $e_s(x,y) = \inf_{w \sqin^M y}\rho_s(x,w)$. For any internal gauge $s$ we write $\rho_s$ and $e_s$ for the corresponding quantities computed internally in $M$ and we write $\rho_s^{\st}$ and $e_s^{\st}$ for their corresponding standard parts.

  Given two gauges $s_0$ and $s_1$ on $M$, we write $\lVert s_0-s_1 \rVert$ for the quantity $\sup_{\beta \in \alpha^M}|s_0(\beta) - s_1(\beta)|$.
\end{defn}

Note that $\rho^{\st}_s = \rho_{s^{\st}}$ and $e^{\st}_s = e_{s^{\st}}$ for any $M$ as in \cref{defn:non-standard-gauges}. Since $\rho_s(x,y)$ is the supremum of a family of pseudo-metrics, it is itself a pseudo-metric. Finally, it is trivial that for any gauge $s$ on $M$, $(\Tc^M,e_s)$ is an $\Lc_e$-structure.

In the following, we will write `($M$ as in \ref{defn:non-standard-gauges}.)' to mean that the structure $M = (\Tc^M,\alpha^M,\Rb^M)$ satisfies the conditions in \cref{defn:non-standard-gauges}.

\begin{lem}\label{lem:gauge-continuity}
  ($M$ as in \ref{defn:non-standard-gauges}.) Fix a restricted $\Lc_e$-formula $\varphi(\xbar)$ and a tuple $\abar \in \Tc^M$.
  \begin{enumerate}
  \item For any gauges $s$ and $t$ on $M$,
    \[
      |\varphi^{(\Tc^M,e_{s})}(\abar) - \varphi^{(\Tc^M,e_t)}(\abar)| \leq v(\varphi)\lVert s-t \rVert.
    \]
  \item For any internal gauge $s$ on $M$,
    \[
      \st((\varphi^{(\Tc^M,e_s)}(\abar))^M) = \varphi^{(\Tc^M,e_s^{\st})}(\abar),
    \]
    where $(\varphi^{(\Tc^M,e_s)}(\abar))^M$ is the value of $\varphi^{(\Tc^M,e_s)}(\abar)$ computed internally in $M$.
  \end{enumerate}
\end{lem}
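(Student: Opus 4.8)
I would treat the two parts separately, since (1) is a purely external Lipschitz estimate while (2) is a standard-part/transfer argument. For (1), I would first show that $\sup_{x,y \in \Tc^M}|e_s(x,y) - e_t(x,y)| \leq \lVert s - t\rVert$. Indeed, $\rho_s(x,y)$ and $\rho_t(x,y)$ are assembled from the same quantities $\rho^M_\beta(x,y)$ and differ only in the gauge, so from $|\min(a,c) - \min(a,c')| \leq |c-c'|$ we get $|\min(\rho^M_\beta(x,y),s(\beta)) - \min(\rho^M_\beta(x,y),t(\beta))| \leq \lVert s-t\rVert$ for every $\beta \in \alpha^M$; taking the supremum over $\beta$, which is $1$-Lipschitz for the sup norm, gives $|\rho_s(x,y) - \rho_t(x,y)| \leq \lVert s-t\rVert$, and then $e_s(x,y) = \inf_{w \sqin^M y}\rho_s(x,w)$ together with the same $1$-Lipschitzness of $\inf$ (and the fact that both infima equal $1$ when $y$ has no $\sqin^M$-elements) gives the estimate for $e$. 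Then I would induct on the construction of the restricted $\Lc_e$-formula $\varphi$ to prove the general statement that replacing one $[0,1]$-valued interpretation of $e$ by another that differs pointwise by at most $\delta$ changes $\varphi^M$ by at most $v(\varphi)\delta$: the cases $1$ and $e(x,y)$ are immediate; for $\varphi + \psi$ the errors add, matching $v(\varphi + \psi) = v(\varphi) + v(\psi)$; $\max$, $\min$, and $r\cdot$ are handled by the corresponding elementary Lipschitz estimates, matching the recursion defining $v$; and $\sup_x$, $\inf_x$ do not increase the error because $\sup$ and $\inf$ over a fixed index set are $1$-Lipschitz for the sup norm. Applying this with $\delta = \lVert s - t\rVert$ yields (1).

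For (2) the crux is that the standard-part map commutes with the evaluation of a fixed, standard restricted $\Lc_e$-formula whenever the interpretation of $e$ is an internal $[0,1]^M$-valued function. I would record two auxiliary facts. \emph{(a)} For each subformula $\psi$ of $\varphi$, the internal evaluation $\xbar \mapsto (\psi^{(\Tc^M,e_s)}(\xbar,\abar))^M$ is an internal function (obtained by unwinding the finite recursive definition of formula evaluation, with $\Tc^M$, $\sqin^M$, $s$, $\abar$ as parameters), and it takes values in $[-v(\psi),v(\psi)]^M$: boundedness by the standard rational $v(\psi)$ follows by transferring to $M$, and applying internally, the routine induction (parallel to \cref{lem:formula-v}) that a restricted $\Lc_e$-formula is $v$-bounded in any $[0,1]$-valued $\Lc_e$-structure, noting that $e_s$ is $[0,1]^M$-valued inside $M$ because each $\rho^M_\beta$ is. In particular $\st$ is defined on all these internal values. \emph{(b)} For any internal function $F : \Tc^M \to [-N,N]^M$ with $N$ a standard natural, $\st(\sup^M_{x \in \Tc^M}F(x)) = \sup_{x \in \Tc^M}\st(F(x))$, and dually for $\inf$. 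The inequality $\geq$ is immediate from monotonicity of $\st$; for $\leq$, if $\st(\sup^M_x F(x)) > \sup_x \st(F(x)) + \eta$ for some standard $\eta > 0$, then $F(x) < \st(\sup^M_x F(x)) - \tfrac{\eta}{2}$ for every $x \in \Tc^M$, so the internal statement ``$\forall x\ F(x) \leq \st(\sup^M_x F(x)) - \tfrac{\eta}{2}$'' holds (its free parameters are $F$ and a fixed standard real), contradicting the definition of the internal supremum.

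With (a) and (b) available, I would prove (2) by induction on $\varphi$, establishing $\st((\varphi^{(\Tc^M,e_s)}(\xbar,\abar))^M) = \varphi^{(\Tc^M,e_s^{\st})}(\xbar,\abar)$ as external functions of $\xbar$. The base case $\varphi = e(x,y)$ is precisely the definition of $e_s^{\st}$, and $\varphi = 1$ is trivial; for $\varphi + \psi$, $\max(\varphi,\psi)$, $\min(\varphi,\psi)$, and $r \cdot \varphi$ with $r$ a standard rational (the only coefficients allowed in restricted formulas), one uses that $\st$ commutes with sums, maxima, minima, and multiplication by standard rationals on standardly bounded internal reals, together with the inductive hypothesis; for $\varphi = \sup_x \psi$ and $\varphi = \inf_x \psi$, one applies (b) to the internal function $x \mapsto (\psi^{(\Tc^M,e_s)}(x,\xbar,\abar))^M$ and then the inductive hypothesis. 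This gives (2). I expect the only genuinely nonroutine point to be fact (b) — the sole place a bona fide overspill-style argument is needed — together with the bookkeeping in (a) confirming that the internal formula-evaluation map really is internal (which rests on $\varphi$ being a fixed finite object and on $s$, $\abar$, $\Tc^M$, $\sqin^M$ all being internal); the remaining steps are mechanical manipulations of $v$ and of elementary properties of $\st$.
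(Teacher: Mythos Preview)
Your proposal is correct and follows essentially the same approach as the paper: first establish $|e_s-e_t|\leq\lVert s-t\rVert$ from the corresponding bound on $\rho_s$, then induct on the construction of $\varphi$ for both parts. Your treatment is considerably more detailed than the paper's two-line sketch, and the only minor wrinkle is in fact~(b), where the parameter $\st(\sup^M_x F(x))-\tfrac{\eta}{2}$ need not literally be an element of $M$; replacing it by a suitable standard rational (which is in $M$) fixes this immediately.
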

\begin{proof}
  It is straightforward to show that for any $a,b \in \Tc^M$, $|\rho_s(a,b) - \rho_t(a,b)| \leq \lVert s-t \rVert$. This implies likewise that for any $a,b \in \Tc^M$, $|e_s(a,b) - e_t(a,b)| \leq \lVert s-t \rVert$. From this, 1 follows by an induction argument.  2 also follows from an easy induction argument.
\end{proof}

\begin{lem}\label{lem:rho-Haus-alt-def}
  ($M$ as in \ref{defn:non-standard-gauges}.) For any $\beta \in \alpha^M$,
  \[
    \rho_{\beta+1}^M(x,y) = \sup_{z \in \Tc^M}|e_\beta^M(z,x)-e_\beta^M(z,y)|.
  \]
\end{lem}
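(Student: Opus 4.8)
The plan is to read the claimed equality as an instance of the alternative description of the Hausdorff metric recalled just before \cref{defn:H-ext}, namely $d_{\HH}(A,B) = \sup_z|\dist(z,A)-\dist(z,B)|$, applied to the pseudo-metric $\rho^M_\beta$ on $\Tc^M$. The cleanest way to organize this is to observe that the sentence
\[
(\forall \beta \in \alpha)(\forall x,y \in \Tc_\alpha(Q))\ \rho_{\beta+1}(x,y) = \sup_{z \in \Tc_\alpha(Q)}|e_\beta(z,x)-e_\beta(z,y)|
\]
is first-order over $(V_{\alpha+\omega},\alpha,Q,d,\sqin,\Tc_\alpha(Q))$ --- the transfinite recursion defining $\langle \rho_\beta,e_\beta\rangle_{\beta<\alpha}$ in \cref{defn:Q-basic} lives inside $V_{\alpha+\omega}$ --- so by the elementary equivalence of $M$ with this structure it suffices to establish the displayed identity in $V_{\alpha+\omega}$ itself. (One could equally run the argument below directly inside $M$, using the internal recursive clauses together with the fact, noted after \cref{defn:Q-basic}, that each $\rho_\beta$ is internally a $[0,1]$-valued pseudo-metric.)

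Fix $\beta<\alpha$ and $x,y \in \Tc_\alpha(Q)$, and set $A = \{w : w \sqin x\}$ and $B = \{w : w \sqin y\}$. By definition $e_\beta(z,x)$ and $e_\beta(z,y)$ are the point-set distances (with $\inf\varnothing = 1$) from $z$ to $A$ and to $B$ in $(\Tc_\alpha(Q),\rho_\beta)$, while $\rho_{\beta+1}(x,y) = \max(\sup_{w\in A}e_\beta(w,y),\sup_{w\in B}e_\beta(w,x))$ is precisely the $[0,1]$-valued Hausdorff distance between $A$ and $B$ (with $\sup\varnothing = 0$). The degenerate cases are immediate: if $A = B = \varnothing$ both sides vanish, and if exactly one is empty, say $A = \varnothing \ne B$, then $\rho_{\beta+1}(x,y) = 1$ while any $z \in B$ witnesses $|e_\beta(z,x)-e_\beta(z,y)| = |1-0| = 1$ on the right, using that $\rho_\beta$ is $[0,1]$-valued. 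So assume $A,B \ne \varnothing$. For $\rho_{\beta+1}(x,y) \le \sup_z|e_\beta(z,x)-e_\beta(z,y)|$: every $a \in A$ has $e_\beta(a,x) = 0$ (since $a \sqin x$ and $\rho_\beta(a,a)=0$), so $|e_\beta(a,x)-e_\beta(a,y)| = e_\beta(a,y)$, whence $\sup_{w\in A}e_\beta(w,y) \le \sup_z|e_\beta(z,x)-e_\beta(z,y)|$; symmetrically $\sup_{w\in B}e_\beta(w,x) \le \sup_z|e_\beta(z,x)-e_\beta(z,y)|$, and taking the maximum gives the inequality. For the reverse inequality, fix $z$ and assume without loss of generality $e_\beta(z,x) \ge e_\beta(z,y)$; given $\e > 0$, choose $b \in B$ with $\rho_\beta(z,b) < e_\beta(z,y)+\e$ and then $a \in A$ with $\rho_\beta(b,a) < e_\beta(b,x)+\e \le \rho_{\beta+1}(x,y)+\e$. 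Then $e_\beta(z,x) \le \rho_\beta(z,a) \le \rho_\beta(z,b)+\rho_\beta(b,a) < e_\beta(z,y)+\rho_{\beta+1}(x,y)+2\e$, so $|e_\beta(z,x)-e_\beta(z,y)| \le \rho_{\beta+1}(x,y)$ on letting $\e\to 0$; taking the supremum over $z \in \Tc_\alpha(Q)$ completes the proof.

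There is no real obstacle here: the entire content is the routine $\sup_z|\dist(z,A)-\dist(z,B)|$ characterization of the Hausdorff distance, the only care needed being with the empty-extension conventions ($\inf\varnothing=1$, $\sup\varnothing=0$) in the degenerate cases and with the purely formal remark that the identity is first-order over the structure and therefore transfers from $V_{\alpha+\omega}$ to $M$. The mildly substantive step is the triangle-inequality estimate in the reverse inequality.
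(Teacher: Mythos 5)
Your proof is correct and takes essentially the same approach as the paper: the paper's proof simply notes that $\rho_{\beta+1}^M(x,y)$ is by definition the Hausdorff distance between $\{z : z \sqin^M x\}$ and $\{z : z \sqin^M y\}$ with respect to the pseudo-metric $\rho_\beta^M$ and invokes the characterization $d_{\HH}(A,B)=\sup_z|\dist(z,A)-\dist(z,B)|$ recalled earlier in the text. You have merely written out in full the verification of that characterization (including the $\inf\varnothing=1$, $\sup\varnothing=0$ conventions and the routine transfer from $V_{\alpha+\omega}$ to $M$), which the paper leaves implicit.
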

\begin{proof}
  This follows immediately from the fact that $\rho_\beta^M$ is a pseudo-metric on $\Tc^M$ and $\rho_{\beta+1}^M(x,y)$ is precisely the Hausdorff distance between $\{z : z \sqin^M x\}$ and $\{z : z \sqin^M y\}$ with respect to $\rho_{\beta+1}^M$.
\end{proof}

\begin{lem}\label{lem:approx-H-ext}
  ($M$ as in \ref{defn:non-standard-gauges}.) Fix $\e \in (0,1]^M$ and an $\e$-smooth internal gauge $s$ on $M$. Let $d_{e,s}(x,y) \coloneqq \sup_{z}|e^M_s(z,x)-e^M_s(z,y)|$. The following statements hold internally in $M$:
  \begin{enumerate}
  \item $\rho_s(a,b) \leq d_{e,s}(a,b) \leq \rho_s(a,b) + \e$ for all $a,b \in \Tc^M$.
  \item $|e_s(a,b) - \inf_z \min(d_{e,s}(a,z) + 2e_s(z,b),1)| \leq \e$ for all $a,b \in \Tc^M$.
  \end{enumerate}
\end{lem}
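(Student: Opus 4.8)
The plan is to establish part~1 as a two-sided estimate and then deduce part~2 from it, all reasoning carried out internally in $M$ (which, being elementarily equivalent to $V_{\alpha+\omega}$, supports the ordinal recursions and the standard minimax and triangle-inequality manipulations applied to the internal ordinal $\alpha^M$ and the internal gauge $s$). Throughout, if $a$ or $b$ has no $\sqin$-elements one checks the statements by a short direct computation (for instance, if exactly one of them is empty, then both $\rho_s(a,b)$ and $d_{e,s}(a,b)$ equal $1$, using $\rho^M_1(a,b)=1$ together with $e_s(w,a)=1$ for $w\sqin b$), so I may assume $a,b$ are both non-empty. The one genuine difficulty is the ``off-by-one in levels'' built into \cref{defn:Q-basic}: by \cref{lem:rho-Haus-alt-def}, $\rho^M_{\gamma+1}$ is the Hausdorff distance of $\sqin$-extensions taken with respect to $\rho^M_\gamma$, not with respect to $\rho^M_{\gamma+1}$, and reconciling this shift with the gauge is exactly what forces the $+\e$ error and uses $\e$-smoothness.

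For the lower bound $\rho_s(a,b)\le d_{e,s}(a,b)$: plugging $z=w$ for $w\sqin a$ gives $e_s(w,a)=0$, so $d_{e,s}(a,b)\ge\sup_{w\sqin a}e_s(w,b)$, and symmetrically, whence $d_{e,s}(a,b)$ dominates the Hausdorff distance of the $\sqin$-extensions of $a$ and $b$ with respect to $\rho_s$. The minimax inequality $\inf\sup\ge\sup\inf$ gives $e_s(w,b)=\inf_{w'\sqin b}\sup_\gamma\min(\rho^M_\gamma(w,w'),s(\gamma))\ge\sup_\gamma\min(e^M_\gamma(w,b),s(\gamma))$; substituting this into both halves of that Hausdorff distance and using that $\min$ commutes with $\max$ and with the relevant suprema yields $d_{e,s}(a,b)\ge\sup_\gamma\min(\rho^M_{\gamma+1}(a,b),s(\gamma))$. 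Finally, for each ordinal $\beta$ one shows $\min(\rho^M_\beta(a,b),s(\beta))$ is bounded by a term of that right-hand supremum: for $\beta$ a successor this is immediate since $s$ is non-increasing; for $\beta=0$ it follows from $s(0)=1$ and $\rho^M_0\le\rho^M_1$ (\cref{lem:rho-increasing}); and for $\beta$ a limit one uses the $\e$-smoothness clause furnishing $\gamma_0<\beta$ with $s(\gamma_0)=s(\beta)$, together with monotonicity of $\rho^M_\gamma$, to rewrite $\min(\rho^M_\beta(a,b),s(\beta))$ as a supremum of such terms. Hence $\rho_s(a,b)\le d_{e,s}(a,b)$.

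For the upper bound $d_{e,s}(a,b)\le\rho_s(a,b)+\e$ it suffices to prove $|e_s(z,a)-e_s(z,b)|\le\rho_s(a,b)+\e$ for every $z$; assume without loss of generality $e_s(z,a)\ge e_s(z,b)$ and write $r=\rho_s(a,b)$, the case $r+\e\ge1$ being trivial. Fix a near-optimal $w_0\sqin b$ with $\rho_s(z,w_0)$ close to $e_s(z,b)$, and let $\gamma^\ast$ be the least ordinal with $s(\gamma^\ast)\le r+\e$, which exists since $s$ vanishes eventually. Then $\gamma^\ast\neq 0$ since $s(0)=1>r+\e$, and the limit clause of $\e$-smoothness forbids $\gamma^\ast$ from being a limit, so $\gamma^\ast=\delta+1$; the successor clause $s(\delta)<s(\delta+1)+\e$, applied to $\delta<\gamma^\ast$, then forces $s(\gamma^\ast)>r$. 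From $\min(\rho^M_{\delta+1}(a,b),s(\delta+1))\le r$ and $s(\gamma^\ast)>r$ we get $\rho^M_{\delta+1}(a,b)\le r$, hence $e^M_\delta(w_0,a)=\inf_{w\sqin a}\rho^M_\delta(w_0,w)\le r$, so some $w_1\sqin a$ has $\rho^M_\delta(w_0,w_1)\le r$ up to an arbitrarily small error. Now $\rho_s(w_0,w_1)\le r+\e$: for levels $\gamma\le\delta$ the $\rho^M_\gamma(w_0,w_1)$-part is $\le r$ by monotonicity, and from level $\gamma^\ast$ onward the $s(\gamma)$-part is $\le r+\e$. The triangle inequality $e_s(z,a)\le\rho_s(z,w_1)\le\rho_s(z,w_0)+\rho_s(w_0,w_1)$ then gives $e_s(z,a)\le e_s(z,b)+r+\e$ after letting the approximation parameter tend to $0$. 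This level-shift bookkeeping, and verifying that $\e$-smoothness is exactly strong enough to run it, is the main obstacle.

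Part~2 follows formally from part~1. For any $z$, taking a near-optimal $w_0\sqin b$ for $e_s(z,b)$, the triangle inequality for $\rho_s$ together with $\rho_s(a,z)\le d_{e,s}(a,z)$ (the lower bound) gives $e_s(a,b)\le\rho_s(a,w_0)\le d_{e,s}(a,z)+e_s(z,b)\le d_{e,s}(a,z)+2e_s(z,b)$; combined with $e_s(a,b)\le1$ this shows $e_s(a,b)\le\inf_z\min(d_{e,s}(a,z)+2e_s(z,b),1)$. Conversely, choosing $w_0\sqin b$ near-optimal for $e_s(a,b)$ and taking $z=w_0$, we have $e_s(w_0,b)=0$ and, by the upper bound, $d_{e,s}(a,w_0)\le\rho_s(a,w_0)+\e$, so $\inf_z\min(d_{e,s}(a,z)+2e_s(z,b),1)\le d_{e,s}(a,w_0)\le e_s(a,b)+\e$. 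The two inequalities together give $|e_s(a,b)-\inf_z\min(d_{e,s}(a,z)+2e_s(z,b),1)|\le\e$.
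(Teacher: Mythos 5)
Your proof is correct, and for part~1 it is essentially the paper's argument: the lower bound comes from the minimax/monotonicity manipulation of $\sup_\gamma\min(\cdot,s(\gamma))$ (the paper runs the same computation by chasing an $r<\rho_s(a,b)$ rather than algebraically), and the upper bound isolates the same successor level --- your ``least $\gamma^\ast$ with $s(\gamma^\ast)\le r+\e$'' is the mirror image of the paper's ``largest $\gamma$ with $s(\gamma)>r$,'' with the limit clause ruling out a limit threshold and the successor clause supplying the $\e$ loss in exactly the same way. Where you genuinely diverge is part~2: the paper deduces the identity $e_s(a,b)=\inf_z\min(\rho_s(a,z)+2e_s(z,b),1)$ by citing the $\Leftarrow$ direction of the proof of \cref{lem:ax-H-ext-char} (which in that generality requires an iterated Cauchy-sequence construction) and then perturbs $\rho_s$ to $d_{e,s}$ using part~1, whereas you prove both inequalities directly from the triangle inequality for $\rho_s$ together with the fact that $e_s(z,b)=\inf_{w\sqin b}\rho_s(z,w)$ is approximately realized by an actual witness $w_0\sqin b$ (giving $e_s(w_0,b)=0$ on the nose). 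Your route is more self-contained and avoids having to check that the cited proof transfers to the internal pseudo-metric setting; the paper's route buys uniformity with the axiomatic treatment of $\HH$-extensionality in \cref{sec:formalizing-in-CL}. One small presentational point: your blanket reduction to non-empty $a,b$ is phrased for part~1, but in part~2 the relevant degenerate case is $b$ being $\sqin$-empty (where both sides equal $1$); it is trivial, but worth a clause since the witness $w_0\sqin b$ does not exist there.
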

\begin{proof}
  First note that by definition, $d_{e,s}(x,y)$ is the Hausdorff pseudo-metric induced by the pseudo-metric $\rho_s(x,y)$. So in particular we also have that
  \[
    d_{e,s}(x,y) = \max\left(\sup_{z \sqin x}e_s(z,y),\sup_{w \sqin y}e_s(w,x)\right).
  \]

  For 1, fix $a$ and $b$ in $\Tc$. If $a$ and $b$ are both $\sqin$-empty, then $d_{e,s}(a,b) = \rho_s(a,b) = 0$. If one of them is $\sqin$-empty and the other isn't, then $d_{e,s}(a,b) = \rho_s(a,b) = 1$. So assume that they are both non-$\sqin$-empty.
  
  Suppose that $\rho_s(a,b) > r$. This implies that there is a $\beta \in \alpha^M$ such that $\min(\rho_\beta(a,b),s(\beta)) > r$, which implies that $\rho_\beta(a,b) > r$. If $\beta$ is a limit ordinal, then $\rho_\beta(a,b) = \sup_{\gamma < \beta}\rho_\gamma(a,b)$, so, since $s$ is non-increasing, we may assume that $\beta$ is not a limit ordinal. Since $s(0) = s(1)$, we may assume that $\beta > 0$ by \cref{lem:rho-increasing}. So let $\gamma + 1 = \beta$. We may now assume without loss of generality that there is a $c \sqin a$ such that $e_\gamma(c,b) > r$, implying that $\rho_\gamma(c,f) > r$ for all $f \sqin b$. Therefore we have that $\rho_s(c,f) \geq \min(\rho_s(c,f),s(\gamma)) > r$ for all $f \sqin b$, whence $e_s(c,b) \geq r$ and $d_{e,s}(a,b) \geq r$. Since we can do this for any $r < \rho_s(a,b)$, we have that $d_{e,s}(a,b) \geq \rho_s(a,b)$.

  Now suppose that $d_{e,s}(a,b) > r$ for some $r > 0$. We may assume without loss of generality that there is a $c \sqin a$ such that $e_s(c,b) > r$. So in particular, $\rho_s(c,f) > r$ for all $f \sqin b$. Therefore, for any such $f$, there is a $\beta_f \in \alpha^M$ such that $\min(\rho_{\beta_f}(c,f),s(\beta_f)) > r$. Since $s$ is $\e$-smooth, there is a largest $\gamma \in \alpha^M$ such that $s(\gamma) > r$. Note that we must have $\gamma \geq \beta_f$ for all $f \sqin b$. So now we actually know that $\rho_\gamma(c,f) > r$ for all $f \sqin b$. Therefore $e_\gamma(c,b) \geq r$ and so $\rho_{\gamma+1}(a,b) \geq r$, whence $\rho_s(a,b) \geq \min(\rho_{\gamma+1}(a,b),s(\gamma+1)) > r - \e$. Since we can this for any $r < d_{e,s}(a,b)$, we have that $\rho_s(a,b) \geq d_{e,s}(a,b)-\e$, as required.

  For 2, it follows from the $\Leftarrow$ direction of the proof of \cref{lem:ax-H-ext-char} that $e_s(a,b) = \inf_z \min(\rho_s(a,z) + 2e_2(z,b), 1)$ for all $a,b \in \Tc^M$. It is immediate from part 1 that
  \[
    |\inf_z \min(d_{e,s}(a,z) + 2e_2(z,b), 1) - \inf_z \min(\rho_s(a,z) + 2e_2(z,b), 1)| \leq \e
  \]
  for all $a,b \in \Tc^M$, so the required result follows.
\end{proof}

\begin{lem}\label{lem:approx-Lipschitz}
  ($M$ as in \ref{defn:non-standard-gauges}.) Fix $\e \in (0,1]^M$ and let $s$ be an $\e$-smooth internal gauge on $M$. For any\footnote{Possibly non-standard, although we do not need this.} $\Lc_e$-formula $\varphi(\abar,\bbar)$ and any $\abar,\bbar \in \Tc^M$,
  \[
    |\varphi^{(\Tc^M,e_s)}(\abar) - \varphi^{(\Tc^M,e_s)}(\bbar)| \leq 2v(\varphi)d_{e,s}(\abar,\bbar),
  \]
  where $d_{e,s}(\abar,\bbar) = \max_{i<|\abar|}d_{e,s}(a_i,b_i)$.
\end{lem}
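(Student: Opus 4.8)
The plan is to prove this by induction on the structure of $\varphi$, in direct parallel with \cref{lem:formula-v}(2); the only genuinely new ingredient is a Lipschitz estimate for the atomic predicate $e_s$ itself, and everything is carried out internally in $M$.

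\emph{Base case.} First I would show that $(x,y)\mapsto e_s(x,y)$ is $2$-Lipschitz with regards to $d_{e,s}$ (with the max metric on pairs). Lipschitzness in the second coordinate is immediate from the definition $d_{e,s}(y,y')=\sup_z|e_s(z,y)-e_s(z,y')|$, which dominates $|e_s(a,y)-e_s(a,y')|$ for every $a$. For the first coordinate, when $\{w:w\sqin y\}$ is nonempty I would write $e_s(x,y)=\inf_{w\sqin y}\rho_s(x,w)$ and use the pseudo-metric triangle inequality for $\rho_s$ to get $|e_s(x,y)-e_s(x',y)|\leq\sup_{w\sqin y}|\rho_s(x,w)-\rho_s(x',w)|\leq\rho_s(x,x')$, then bound $\rho_s(x,x')\leq d_{e,s}(x,x')$ by \cref{lem:approx-H-ext}(1) (the case of an $\sqin$-empty $y$ being trivial, since then $e_s(\cdot,y)\equiv 1$). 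Combining the two single-variable estimates via the triangle inequality yields $|e_s(x,y)-e_s(x',y')|\leq d_{e,s}(x,x')+d_{e,s}(y,y')\leq 2\max(d_{e,s}(x,x'),d_{e,s}(y,y'))$. Since $v(e(x,y))=1$, this is exactly the asserted inequality for $\varphi=e(x_i,x_j)$ (at most two tuple coordinates being affected), and the other atomic formula, the constant $1$, is trivially $0$-Lipschitz, which is $\leq 2v(1)d_{e,s}(\abar,\bbar)$.

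\emph{Inductive step.} The remaining steps mirror the recursion on $v$ in \cref{defn:Lsq-formulas} and are routine: for $\varphi+\psi$ the Lipschitz constants add, giving $2v(\varphi)+2v(\psi)=2v(\varphi+\psi)$; for $\max(\varphi,\psi)$ and $\min(\varphi,\psi)$ one uses $1$-Lipschitzness of $(t,u)\mapsto\max(t,u)$ and $(t,u)\mapsto\min(t,u)$ to get a bound $2\max(v(\varphi),v(\psi))=2v(\max(\varphi,\psi))$; for $r\cdot\varphi$ the constant scales by $|r|$, matching $v(r\cdot\varphi)=|r|v(\varphi)$; and for the quantifiers one uses $|\sup_x f-\sup_x g|\leq\sup_x|f-g|$ (and likewise for $\inf_x$), which leaves $v$ unchanged. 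For a standard formula $\varphi$ this induction is a finite external argument referring only to the internal functions $e_s$, $\rho_s$, $d_{e,s}$; for a nonstandard $\varphi$ one would run the identical induction internally, though that case is not needed.

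I do not anticipate a real obstacle. The one point to be careful about is that it is \cref{lem:approx-H-ext}(1), not (2), that supplies the needed inequality $\rho_s\leq d_{e,s}$: the $\e$-error in the approximate $\HH$-extensionality of $(\Tc^M,e_s)$ lives only in the identity of part (2), whereas part (1) gives the \emph{exact} comparison of the two pseudo-metrics, which is all the $2$-Lipschitz bound requires. Everything else is bookkeeping identical to the discrete and continuous-logic cases.
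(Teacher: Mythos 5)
Your proposal is correct and follows essentially the same route as the paper: the base case splits $|e_s(a_0,a_1)-e_s(b_0,b_1)|$ by the triangle inequality into a first-coordinate variation bounded by $\rho_s(a_0,b_0)\leq d_{e,s}(a_0,b_0)$ (via part 1 of \cref{lem:approx-H-ext}) and a second-coordinate variation bounded directly by the definition of $d_{e,s}$, and the inductive step is the same bookkeeping as in \cref{lem:formula-v}. Your explicit remark that it is the exact inequality $\rho_s\leq d_{e,s}$ from part 1, not the $\e$-approximate identity of part 2, that is needed is precisely the point on which the paper's proof also rests.
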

\begin{proof}
  We prove this by induction on formulas. If $\varphi$ is $e(x,y)$, then we have
  \begin{align*}
    |e(a_0,a_1)-e(b_0,b_1)| &\leq |e(a_0,a_1)-e(b_0,a_1)| + |e(b_0,a_1)-e(b_0,b_1)| \\
                            &\leq  \rho_s(a_0,b_0)+ d_{e,s}(a_1,b_1) \\
                            &\leq d_{e,s}(a_0,b_0)+ d_{e,s}(a_1,b_1) \\
                            &\leq 2d_{e,s}(a_0a_1,b_0b_1)
  \end{align*}
  by \cref{lem:approx-H-ext}. The argument for connectives and quantifiers the same as in \cref{lem:formula-v}.
\end{proof}

\begin{lem} \label{lem:approx-excision}
  ($M$ as in \ref{defn:non-standard-gauges}.) Fix $\e \in (0,1]^M$ and let $s$ be an $\e$-smooth internal gauge on $M$. For any $\Lc_e$-formula $\varphi(x,\ybar)$, $M$ satisfies that
  \[
    (\Tc^M,e_s) \models   \sup_{\ybar}\inf_z \sup_x \max(\min(e(x,z),-\varphi(x,\ybar)),\min(\e_\varphi-e(x,z),\varphi(x,\ybar) - 1)) \leq 2v(\varphi)\e.
  \]
\end{lem}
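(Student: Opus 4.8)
The plan is to prove the condition by working entirely inside $M$: for each $\bbar\in\Tc^M$ I will exhibit an element $z\in\Tc^M$ witnessing the inner $\inf_z$, and all suprema, infima, $\tau_{Q,\alpha}$-closures and formula evaluations below are computed internally in $M$ (so that the estimates of \cref{lem:approx-H-ext} and \cref{lem:approx-Lipschitz}, which hold internally, are available, and so that \cref{prop:closures-are-elements} is available via the fact, noted before \cref{defn:non-standard-gauges}, that it is a first-order property of $(V_{\alpha+\omega},\alpha,Q,d,\sqin,\Tc_\alpha(Q))$, hence holds in $M$ by elementary equivalence; recall $\alpha$ is a limit ordinal). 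Fix $\bbar$, write $g(x)\coloneqq\varphi^{(\Tc^M,e_s)}(x,\bbar)$, set $D\coloneqq\{x\in\Tc^M:g(x)\leq 0\}$, let $F$ be the $\tau_{Q,\alpha}$-closure of $D$, and pick $z\in\Tc^M$ with $\{w:w\sqin z\}=F$. With this $z$ the first entry of the $\max$ is immediate: if $g(x)\leq 0$ then $x\in D\subseteq F$, so $x\sqin z$ and $e_s(x,z)=0$; if $g(x)>0$ then $-g(x)<0$; either way $\min(e_s(x,z),-g(x))\leq 0\leq 2v(\varphi)\e$.

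The heart of the argument is the claim that every $w\in F$ satisfies $g(w)\leq 2v(\varphi)\e$. To prove it, let $\delta$ be the largest element of $\alpha^M$ with $s(\delta)>0$: this is well defined because $s$ is $\e$-smooth --- the least $\gamma$ with $s(\gamma)=0$ exists (as $s$ eventually vanishes), is nonzero (as $s(0)=1$), and cannot be a limit (by the last clause of $\e$-smoothness), so it equals $\delta+1$. Since $s$ is non-increasing and $s(\beta)=0$ for $\beta>\delta$, \cref{lem:rho-increasing} gives $\rho_s(u,v)\leq\rho_\delta(u,v)$ for all $u,v\in\Tc^M$. Now fix $w\in F$ and $\eta>0$: the set $\{y:\rho_\delta(w,y)<\eta\}$ is a basic $\tau_{Q,\alpha}$-open neighbourhood of $w$ and $w$ lies in the closure of $D$, so there is $d'\in D$ with $\rho_s(w,d')\leq\rho_\delta(w,d')<\eta$; by \cref{lem:approx-H-ext}(1), $d_{e,s}(w,d')\leq\rho_s(w,d')+\e<\eta+\e$, and by \cref{lem:approx-Lipschitz}, $|g(w)-g(d')|\leq 2v(\varphi)d_{e,s}(w,d')<2v(\varphi)(\eta+\e)$. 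As $g(d')\leq 0$ and $\eta>0$ was arbitrary, $g(w)\leq 2v(\varphi)\e$.

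It remains to bound the second entry of the $\max$, $\min(\e_\varphi-e_s(x,z),g(x)-1)$, by $2v(\varphi)\e$ for each $x$. If $2v(\varphi)\e\geq\e_\varphi$ this is automatic since $\e_\varphi-e_s(x,z)\leq\e_\varphi\leq 2v(\varphi)\e$; so assume $2v(\varphi)\e<\e_\varphi$ and suppose, toward a contradiction, that for some $x$ both $e_s(x,z)<\e_\varphi-2v(\varphi)\e$ and $g(x)-1>2v(\varphi)\e$. The first inequality produces $w\sqin z$ (hence $w\in F$) with $\rho_s(x,w)<\e_\varphi-2v(\varphi)\e$, and then \cref{lem:approx-H-ext}(1) and \cref{lem:approx-Lipschitz} give
\[
  |g(x)-g(w)|\leq 2v(\varphi)\bigl(\rho_s(x,w)+\e\bigr)<2v(\varphi)\e_\varphi+2v(\varphi)\e<\tfrac13+\tfrac13<1,
\]
where I use $2v(\varphi)\e_\varphi\leq\tfrac13$ and $2v(\varphi)\e<\e_\varphi\leq\tfrac13$, both immediate from $\e_\varphi=1/\max(6v(\varphi),3)$. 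But the claim gives $g(w)\leq 2v(\varphi)\e$ while $g(x)>1+2v(\varphi)\e$, so $g(x)-g(w)>1$, a contradiction. Hence $z$ witnesses the required inequality for this $\bbar$, and since $\bbar$ was arbitrary the lemma follows.

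The main obstacle is the claim, and specifically the step upgrading membership of $w$ in the \emph{$\tau_{Q,\alpha}$-closure} of $D$ --- which a priori only supplies, for each ordinal $\beta$ separately, approximants of $w$ from $D$ in $\rho_\beta$ --- to approximability of $w$ from $D$ in the single pseudo-metric $\rho_s$. This is exactly where $\e$-smoothness of $s$ enters: because $s$ vanishes above a \emph{successor} ordinal $\delta$, one has $\rho_s\leq\rho_\delta$ and a single $\rho_\delta$-ball does the job. Beyond this, the only points needing attention are the bookkeeping with $\e_\varphi$ (the regimes $v(\varphi)<\tfrac12$ and $v(\varphi)\geq\tfrac12$ are both absorbed by the crude bound $2v(\varphi)\e_\varphi\leq\tfrac13$); the degenerate cases $D=\varnothing$ and $v(\varphi)=0$, in which both entries of the $\max$ are trivially nonpositive; and the remark that, $\varphi$ being a fixed finite formula and $s,\bbar$ internal, the internal evaluation $\varphi^{(\Tc^M,e_s)}$ used throughout is precisely the one appearing in the statement.
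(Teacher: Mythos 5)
Your proof is correct and takes essentially the same route as the paper's: both build the witness $z$ internally as the element of $\Tc^M$ coextensive with a $\tau_{Q,\alpha}$-closed set around $\{x:\varphi^{(\Tc^M,e_s)}(x,\ybar)\leq 0\}$ via \cref{prop:closures-are-elements}, use $\e$-smoothness to dominate $\rho_s$ by a single $\rho_\delta$, and then verify the two entries of the $\max$ with \cref{lem:approx-H-ext} and \cref{lem:approx-Lipschitz}. The only real difference is that the paper argues the set $\{x:\varphi^{(\Tc^M,e_s)}(x,\ybar)\leq 0\}$ is itself already closed, whereas you pass to its closure and show $\varphi\leq 2v(\varphi)\e$ there; the slack in the target bound absorbs this, so both variants go through.
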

\begin{proof}
  For any $\Lc_e$-formula $\varphi(x,\ybar)$ and any $\abar \in M$, let $B_0 = \{ x \in \Tc^M : \varphi^{(\Tc^M,e_s)}(x,\abar) \leq 0\}$. We need to argue that $B_0$ is closed in the topology on $\Tc^M$ given in \cref{defn:Q-topology}. Suppose that $c \notin B_0$. This means that $\varphi^{(\Tc^M,e_s)}(c,\abar) > 0$. Since $e_s$ is $2$-Lipschitz with regards to $\rho_s$, this implies that there is a $\delta > 0$ such that for any $c' \in \Tc^M$ with $\rho_s(c,c') < \delta$, $\varphi^{(\Tc^M,e_s)}(c',\abar) > 0$ as well. Since $s$ is $\e$-smooth, there is a $\gamma \in \alpha^M$ such that $s(\gamma) = 0$. By \cref{lem:rho-increasing}, we know that if $\rho_\gamma(c,c') < \frac{1}{2} \delta$, then $\rho_s(c,c') < \delta$. Therefore we have that the set $\{x : \rho_{\gamma}(x,c) < \frac{1}{2}\delta\}$ is disjoint from $B_0$. Since we can do this for any $c \notin B_0$, we have that $B_0$ is closed.

  Let $b$ be the unique element of $\Tc^M$ coextensive with $B_0$ (which exists by \cref{prop:closures-are-elements}). For any $c \in \Tc^M$, if $\varphi^{(\Tc^M,e_s)}(c,\abar) \leq 0$, then $c \sqin b$ by our choice of $c$ and so $e_s(c,b) = 0$. Therefore $\min(e_s(c,b),-\varphi(c,\abar)) \leq 0$ for all $c \in \Tc^M$. On the other hand, if $e_s(c,b) \leq \e_\varphi$, then there is an $f \sqin b$ such that $\rho_s(c,f) < \e_\varphi + \sigma$ for any $\sigma > 0$. Therefore, $\varphi(c,\abar) < 2v(\varphi)d_{e,s}(c,f) < 2v(\varphi)(\e_\varphi+\sigma+\e)$ by Lemmas~\ref{lem:approx-H-ext} and \ref{lem:approx-Lipschitz}. Since $2v(\varphi)\e_\varphi < 1$ and since we can do this for any $\sigma > 0$, we have that $\varphi(c,\abar) \leq 1 + 2v(\varphi)\e$. Therefore $\min(\e_\varphi-e(c,b),\varphi(c,\abar) - 1) \leq 2v(\varphi)\e$ for any $c \in \Tc^M$. 
\end{proof}

\begin{lem}\label{lem:e-smooth-gauges-exist}
  ($M$ as in \ref{defn:non-standard-gauges}.) For any (external) gauge $s$ on $M$ with dense image in $[0,1]$ and (standard) rational $\e \in (0,1]$, there is an $\e$-smooth internal gauge $t$ such that $\lVert s-t^{\st} \rVert \leq \e$.
\end{lem}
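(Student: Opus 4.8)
The plan is to build $t$ explicitly as an internal step function with standardly many steps, using the density of the image of $s$ to locate the break points. Fix an integer $K$ with $\e/3 < 1/K \le 5\e/6$; this is possible because the interval $[6/(5\e),\,3/\e)$ has length $9/(5\e) > 1$, and necessarily $K \ge 2$. Put $\eta = 1/K$ and $\delta = \e/6$, so that $2\delta < \eta$ and $\eta + \delta \le \e$, and consider the standard rational levels $\ell_j = 1 - j\eta$ for $0 \le j \le K$, so $\ell_0 = 1$, $\ell_K = 0$, and consecutive levels differ by $\eta < \e$.

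Next I would locate the break points. Since the image of $s$ is dense in $[0,1]$, for each $j$ with $1 \le j \le K-1$ choose some $\beta_j \in \alpha^M$ with $|s(\beta_j) - \ell_j| < \delta$, and choose $\beta_K \in \alpha^M$ with $s(\beta_K) < \delta$ (possible since $0$ is a limit of values of $s$). Because $s$ is non-increasing and $\eta > 2\delta$, these ordinals are automatically strictly increasing with $\beta_1 \ge 1$: if $i < j$ but $\beta_i \ge \beta_j$, then $\ell_i - \delta < s(\beta_i) \le s(\beta_j) < \ell_j + \delta$ forces $(j-i)\eta < 2\delta$, a contradiction; the inequality $\beta_{K-1} < \beta_K$ follows from $s(\beta_{K-1}) > \ell_{K-1} - \delta = \eta - \delta > \delta > s(\beta_K)$; and $\beta_1 = 0$ is impossible since $s(0) = 1 > \ell_1 + \delta$. (As elsewhere in this construction, we take $\alpha$, hence $\alpha^M$, to be a limit ordinal, so $\beta_K + 1 < \alpha^M$.) Now define $t \colon \alpha^M \to [0,1]^M$ inside $M$ by
\[
  t(\beta) = \ell_{m}, \qquad m = \bigl|\{\, i : 1 \le i \le K,\ \beta_i < \beta \,\}\bigr|.
\]
Being defined from the standard data $K,\ell_0,\dots,\ell_K$ and the parameters $\beta_1,\dots,\beta_K \in \alpha^M$, this $t$ is internal; it is non-increasing, has $t(0) = t(1) = \ell_0 = 1$ (as every $\beta_i \ge 1$) and $t(\beta) = \ell_K = 0$ for $\beta > \beta_K$, so it is an internal gauge.

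The verification has two parts. For $\e$-smoothness: $t(0) = t(1)$ and $t$ vanishes on a tail by construction; consecutive values of $t$ differ by $0$ or $\eta < \e$, since the ordinals $\beta_i + 1$ are distinct so at most one contributes a jump at any successor; and for a limit $\lambda$ with $t(\lambda) = \ell_m$, either $m = 0$, in which case $t(0) = \ell_0 = t(\lambda)$ with $0 < \lambda$, or $m \ge 1$, in which case $\beta_m < \lambda$, so the successor ordinal $\beta_m + 1$ lies strictly below $\lambda$ and $t(\beta_m + 1) = \ell_m = t(\lambda)$. For the bound $\lVert s - t^{\st}\rVert \le \e$: fix $\beta$ and write $t(\beta) = \ell_m$; if $1 \le m \le K-1$ then $\beta_m < \beta \le \beta_{m+1}$, so $s(\beta) \le s(\beta_m) < \ell_m + \delta \le \ell_m + \e$ and $s(\beta) \ge s(\beta_{m+1}) > \ell_{m+1} - \delta = \ell_m - \eta - \delta \ge \ell_m - \e$ (the lower bound automatic when $m = K-1$, as then $\ell_m - \e = \eta - \e < 0$); the cases $m = 0$ (use $s(\beta) \le s(0) = 1$ and $s(\beta) \ge s(\beta_1) > 1 - \eta - \delta \ge 1 - \e$) and $m = K$ (use $0 \le s(\beta) \le s(\beta_K) < \delta < \e$) are handled the same way. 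Since each value $t(\beta)$ is a standard rational, $t^{\st}(\beta) = t(\beta)$, so $\lVert s - t^{\st}\rVert = \sup_{\beta \in \alpha^M}|s(\beta) - t(\beta)| \le \e$.

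The only real difficulty is bookkeeping the constants so that the competing requirements are met simultaneously: $\e$-smoothness forces the break points to be successor ordinals, pairwise distinct, and each contributing only a single level drop, while the approximation demands that both the step size $\eta$ and the locating error $\delta$ sit below $\e$; the constraints $2\delta < \eta$ and $\eta + \delta \le \e$, realized by $\delta = \e/6$ and $\e/3 < \eta \le 5\e/6$, are exactly what makes this work.
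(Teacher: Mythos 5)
Your proof is correct and takes essentially the same approach as the paper: an internal step function with standardly many standard-rational levels, whose break points are located using the density of the image of $s$, followed by a direct check of $\e$-smoothness and the sup-norm bound. The only differences are cosmetic (reversed indexing of the break points and a more explicit verification of their distinctness and of the condition $t(0)=t(1)$).
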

\begin{proof}
  Find standard $n$ large enough that $\frac{1}{n} < \frac{1}{2}\e$. For each $i<n$, find $\beta_i \in \alpha^M$ such that $\frac{i}{n} \leq s(\beta_i) < \frac{i+1}{n}$. Note that since the range of $s$ is dense, none of the $\beta_i$'s are $0$ or $1$. Also note that $(\beta_i)_{i<n}$ is a decreasing sequence of ordinals. For any $\gamma \in \alpha^M$, let
  \[
    t(\gamma) =
    \begin{cases}
      0 & \gamma \geq \beta_0 \\
      \frac{i}{n} & 0<i< n,~\beta_{i-1} > \gamma \geq \beta_i \\
      1 & \beta_{n-1} > \gamma
    \end{cases}.
  \]
Clearly $t(\gamma) = 0$ for all sufficiently large $\gamma$. We also have that $s(\gamma) < s(\gamma+1) + \frac{2}{n} < s(\gamma+1) + \e$ for all $\gamma \in \alpha^M$. Finally, the limit ordinal condition in the definition of $\e$-smooth is clearly met, so $t$ is $\e$-smooth.

Now for any $\gamma$, we have that if $t(\gamma) = 0$, then $\gamma \geq \beta_0$ and so $s(\gamma) \leq \frac{1}{n} < \e$. If $t(\gamma) \in (0,1)$, then there is a positive $i<n$ such that $\beta_{i-1} > \gamma \geq \beta_i$, implying that $\frac{i-1}{n} \leq s(\gamma) \leq \frac{i+1}{n}$, so $|s(\gamma) - t(\gamma)| = |s(\gamma) - \frac{i}{n}| < \e$. And if $t(\gamma) = 1$, then $s(\gamma) \geq s(\beta_{n-1}) \geq \frac{n-1}{n}$, so $|s(\gamma)-t(\gamma)| \leq \frac{1}{n} < \e$. Therefore $\lVert  s-t \rVert \leq \e$, as required.
\end{proof}

In order to proceed we will need a fact from model theory. This is similar to the approach typically used to build partially standard models of $\NFU$.

\begin{lem}\label{lem:partially-standard}
  For any ordinal $\sigma$, there is an $\alpha > \sigma$ and a structure $(M,\alpha^M) \equiv (V_{\alpha+\omega},\alpha)$ such that $V_\sigma^M$ is isomorphic to $V_\sigma$ and there is a set of $M$-ordinals less than $\alpha^M$ that is order-isomorphic to $\Qb$.
\end{lem}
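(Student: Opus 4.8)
The plan is to obtain $M$ by a Henkin-style omitting-types construction over the elementary diagram of $(V_{\alpha+\omega},\in,\alpha)$, expanded by names for everything of rank below $\sigma$ — essentially the same device used to build partially standard models of $\NFU$. Take $\alpha = \sigma+\omega$ (any infinite ordinal $>\sigma$ works), noting $\sigma,\alpha\in V_{\alpha+\omega}$. Work in the language $\Lc^+$ consisting of $\in$, constants $\underline\alpha$ and $\underline\sigma$, a constant $\underline a$ for each $a\in V_\sigma$, and constants $q_n$ for $n<\omega$. Fix an enumeration $(r_n)_{n<\omega}$ of $\Qb$ and let $T$ be the elementary diagram of $(V_{\alpha+\omega},\in,\alpha,\sigma,(a)_{a\in V_\sigma})$ together with the sentences ``$q_n$ is an ordinal'', ``$q_n<\underline\alpha$'', and ``$q_n<q_m$'' (resp. ``$q_n>q_m$'') whenever $r_n<r_m$ (resp. $r_n>r_m$). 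First I would check by compactness that $T$ is consistent: a finite fragment mentions only finitely many $q_n$ and constrains them to a finite linear order isomorphic to a finite suborder of $\Qb$, which is realized by finitely many distinct ordinals below the infinite ordinal $\alpha$ inside $V_{\alpha+\omega}$ itself.

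The heart of the argument is to omit the type $p(x):=\{``\operatorname{rk}(x)<\underline\sigma"\}\cup\{``x\neq\underline a":a\in V_\sigma\}$, which says that $x$ has rank below $\sigma$ but is not named. I claim $p$ is non-isolated over $T$. Over the $q$-free reduct $D$ (the elementary diagram of $(V_{\alpha+\omega},\in,\alpha,\sigma,(a)_a)$), this is the standard density observation: any formula $\vartheta(x)$ consistent with $D$ and implying every member of $p$ would be realized in the standard structure by some element of rank $<\sigma$, which is literally some $a\in V_\sigma$, contradicting $\vartheta(x)\vdash x\neq\underline a$. To promote this to $T$, given a putative isolator $\theta(x,\bar q,\bar{\underline a})$ of $p$ over $T$, I would project out the $q$-parameters — replace them by bound variables constrained only by the finite order-pattern they must satisfy, using that $D$ proves any model has infinitely many ordinals below $\alpha$ and hence can realize and finitely extend any such pattern — thereby manufacturing a $q$-free isolator of $p$ over $D$, which is impossible. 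Then, by the omitting-types theorem for a single non-isolated type over a language of arbitrary cardinality (proved by a transfinite Henkin construction that discharges each omission requirement immediately after introducing the relevant Henkin constant), there is an $M\models T$ omitting $p$.

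Finally I would verify the three conclusions. Restricting $M$ to $\{\in,\underline\alpha\}$ gives $(M,\alpha^M)\equiv(V_{\alpha+\omega},\alpha)$ since $M$ models the elementary diagram. The map $a\mapsto\underline a^M$ is an injective $\in$-embedding of $(V_\sigma,\in)$ into $M$; its image lies inside $\{x\in M:M\models\operatorname{rk}(x)<\underline\sigma\}$ because this holds of each $\underline a$ in the standard structure, and it is \emph{all} of that set precisely because $M$ omits $p$ — hence $V_\sigma^M\cong V_\sigma$. And $\{q_n^M:n<\omega\}$ is a set of $M$-ordinals, each below $\alpha^M$, on which $q_n^M\mapsto r_n$ is an order isomorphism onto $(\Qb,<)$ by the $q$-axioms. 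The main obstacle is the non-isolation argument for $p$ over the full theory $T$ — that is, confirming that adjoining the $\Qb$-pattern constants does not accidentally isolate $p$ — together with the (routine but slightly delicate) invocation of the omitting-types theorem over the possibly uncountable language $\Lc^+$; both are dealt with by the projection argument and the ``immediate discharge'' Henkin bookkeeping indicated above.
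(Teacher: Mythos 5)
Your approach is genuinely different from the paper's (which uses Erd\H{o}s--Rado indiscernibles and a Skolem hull rather than omitting types), and it has a fatal gap; in fact your setup is itself a counterexample to the omitting-types principle you invoke. With your choice $\alpha=\sigma+\omega$, the conclusion of the lemma is \emph{false} whenever $\sigma>\omega$: if $V_\sigma^M\cong V_\sigma$, then the $M$-ordinals below $\sigma^M$ are exactly the standard ordinals below $\sigma$, and since $\omega\in V_\sigma$ the internal $\omega$ is standard too, so every $M$-ordinal below $\alpha^M=\sigma^M+\omega^M$ is standard. These form a true well-order of type $\sigma+\omega$, which has no suborder isomorphic to $\Qb$. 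Consequently your theory $T$ is consistent (the compactness argument is fine) and your type $p$ is non-isolated (the density argument is fine), and yet \emph{every} model of $T$ realizes $p$. This is possible because the omitting-types theorem fails for uncountable languages, and $\Lc^+$ has cardinality $|V_\sigma|\geq 2^{\aleph_0}$ as soon as $\sigma>\omega$. The ``immediate discharge'' bookkeeping does not repair this: by the time you introduce the $\xi$-th Henkin constant for infinite $\xi$, your partial theory already extends $T$ by infinitely many sentences, and non-isolation of $p$ over $T$ (or over $T$ plus any single formula) gives you no way to pick a formula of $p$ whose negation is consistent with all of them. (Incidentally, the standard device for partially standard models of $\NFU$ that you allude to is the indiscernibles argument, not omitting types.)

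The paper's proof avoids all of this. It takes $\alpha=\beth_{(2^{|V_\sigma|})^+}$, uses the Erd\H{o}s--Rado theorem (via the standard lemma on extracting indiscernibles over a set of parameters) to find, in an elementary extension of $(V_{\alpha+\omega},\in,\alpha)$ expanded by Skolem functions, a $V_\sigma$-indiscernible sequence $I$ of order type $\Qb$ whose finite increasing subtuples realize the same type over $V_\sigma$ as actual tuples of ordinals below $\alpha$, and then takes $M$ to be the Skolem hull of $V_\sigma\cup I$. Standardness of $V_\sigma^M$ then comes for free: if $f(\bbar,\cbar)\in V_\sigma^M$ with $\bbar\in V_\sigma$ and $\cbar\in I$, then choosing ordinals $\bar\delta<\alpha$ with $\tp(\cbar/V_\sigma)=\tp(\bar\delta/V_\sigma)$ forces $f(\bbar,\cbar)=f(\bbar,\bar\delta)\in V_\sigma$. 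The largeness of $\alpha$ does real work in two places --- it makes Erd\H{o}s--Rado applicable, and it leaves room below $\alpha$ for a non-well-ordered set of internal ordinals --- so it cannot be dispensed with as your parenthetical ``any infinite ordinal $>\sigma$ works'' suggests.
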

\begin{proof}
  Let $\kappa = |V_{\sigma}|$. Let $\alpha = \beth_{(2^\kappa)^+}$. Expand $(V_{\alpha +\omega},\in)$ by Skolem functions. By \cite[Lem.\ 7.2.12]{tent_ziegler_2012}, we can find a $V_{\sigma}$-indiscernible sequence $I$ with order type $\Qb$ in some elementary extension $N$ of $(V_{\alpha+\omega},\in,\alpha,\text{Skolem functions})$ such that for any increasing sequence $a_0<\dots < a_{n-1}$ in $I$, there are ordinals $\delta_0,\dots,\delta_{n-1}  < \alpha$ with $\tp(\abar / V_{\sigma}) = \tp(\bar{\delta}/V_{\sigma})$. Let $M$ be the Skolem hull of $V_{\sigma} \cup I$. For any element $a$ of $V_\sigma^M$, there is a Skolem function $f$ and tuples $\bbar \in V_\sigma$ and $\cbar \in I$ such that $a = f(\bbar,\cbar)$. By construction, there is a tuple $\bar\delta \in V_\alpha$ such that $\tp(\cbar/V_{\sigma}) = \tp(\bar\delta/V_{\sigma})$. Since $f(\bbar,\bar\delta) \in V_{\sigma}$, we have that $f(\bbar,\cbar) = f(\bbar,\bar\delta)$. Therefore $V_\sigma^M = V_\sigma$, as required.
\end{proof}

\begin{thm}\label{thm:MSE-consistent}
  For any complete metric space $(Q,d)$ with $[0,1]$-valued metric and any ordinal $\sigma$, there is a model $N$ of $\MSE$ such that $N$ contains a set of Quine atoms isometric to $(Q,d)$ and a $1$-discrete set $v$ such that $(v,\sqin_e)$ is isomorphic to $(V_\sigma,\in)$ (and therefore $\Ord^N$ has standard part of length at least $\sigma$ and if $\sigma$ is infinite, $N \models \Inf(v)$). In particular, $\MSE$ is consistent.
\end{thm}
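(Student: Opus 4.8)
The plan is to realize $N$ as the $d_e$-completion of an $\Lc_e$-structure of the form $(\Tc^M,e_s)$, where $M$ is a partially standard model as in \cref{defn:non-standard-gauges} and $s$ is an \emph{external} gauge obtained as a limit of $\e$-smooth internal gauges, and then to locate $(Q,d)$ and $(V_\sigma,\in)$ inside $N$. The first step is a reduction: it suffices to treat the case in which $\sigma$ is an infinite ordinal whose rank exceeds that of $Q$ (so that $Q,d\in V_\sigma$), since for arbitrary $\sigma$ one may instead run the argument with a sufficiently large $\sigma'$ and then take the copy of $V_\sigma$ sitting inside the resulting $1$-discrete copy of $V_{\sigma'}$ — it is again $1$-discrete and its induced $\sqin_e$-structure is still $(V_\sigma,\in)$. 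Next, adapting the proof of \cref{lem:partially-standard} to the expansion of $(V_{\alpha+\omega},\in)$ by constants for $\alpha,\sigma,Q,d$ and Skolem functions, and choosing the $\Qb$-indexed indiscernibles so that increasing tuples from them realize, over $V_\sigma$, the types of tuples of ordinals lying in $(\sigma+1,\alpha)$, one obtains for suitable $\alpha>\sigma$ with $\Tc_\alpha(Q)\in V_{\alpha+\omega}$ a model $M$ with $V_\sigma^M=V_\sigma$, $\sigma^M=\sigma$, $Q^M=Q$, $d^M=d$, and containing a set $D$ of $M$-ordinals order-isomorphic to $\Qb$ and contained in $(\sigma+1,\alpha^M)$. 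In particular each $q\in Q$ is a Quine atom in $\Tc^M$ with $\rho^M_\beta(q,q')=d(q,q')$ for all $\beta$. Using $D$, fix an external gauge $s\colon\alpha^M\to[0,1]$ that is non-increasing, has $s(0)=1$ and dense image in $[0,1]$, and has $s(\beta)=1$ for all $\beta\le\sigma+1$.

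The second step is to show that $(\Tc^M,e_s)$ satisfies the $\HH$-extensionality axiom and the scheme of excision, so that by \cref{prop:MSE-CL-char} its $d_e$-completion $N$ is a model of $\MSE$. The key is to upgrade the \emph{approximate} principles \cref{lem:approx-H-ext} and \cref{lem:approx-excision}, which hold only for $\e$-smooth internal gauges, to exact statements for $s$: given rational $\e>0$, choose (by \cref{lem:e-smooth-gauges-exist}) an $\e$-smooth internal $t$ with $\lVert s-t^{\st}\rVert\le\e$, apply those lemmas to $t$, pass to standard parts by \cref{lem:gauge-continuity}(2), then move from $t^{\st}$ to $s$ by \cref{lem:gauge-continuity}(1) at the cost of an $O(\e)$ error, and let $\e\to0$. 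Running the same approximation on the formula $d_e(x,y)$ together with part (1) of \cref{lem:approx-H-ext} shows $d_{e,s}=\rho_s$ on $\Tc^M$; this identity is what lets me predict exactly which points are identified in the completion. Write $[\,\cdot\,]$ for the composite quotient-and-completion map into $N$.

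For the Quine atoms: $\rho_s(q,q')=\sup_\beta\min(d(q,q'),s(\beta))=d(q,q')$, so $q\mapsto[q]$ is isometric, and since $q$ is a Quine atom one has $e_s(x,q)=\rho_s(x,q)=d_{e,s}(x,q)$ for all $x$, so $[q]$ is a Quine atom in $N$. By \cref{prop:closures-are-elements} there is $a\in\Tc^M$ whose $\sqin^M$-extension is the internal $\tau_{Q,\alpha^M}$-closure of $Q$; by \cref{lem:Q-embedding} each of its elements is $\rho_s$-equivalent to some $q\in Q$, so $[a]$ has $\sqin_e$-extension exactly $\{[q]:q\in Q\}$, which is closed as $(Q,d)$ is complete — a set of Quine atoms isometric to $(Q,d)$. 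For the copy of $V_\sigma$: by \cref{lem:V-alpha-disc-embedding} there is $v\in\Tc^M$ with $(v,\sqin^M)\cong(V_\sigma,\in)$ in which distinct $\sqin^M$-elements have $\rho^M_\beta$-distance $1$ for all $\beta\in(\sigma,\alpha^M)$; since $s(\sigma+1)=1$, such elements have $d_{e,s}$-distance $1$, so (using transitivity of $v$ and \cref{lem:rho-increasing}) $[v]$ is a $1$-discrete element of $N$ with $([v],\sqin_e)\cong(V_\sigma,\in)$. The remaining claims are then routine: the order types of the von Neumann ordinals below $\sigma$ form an initial segment of $\Ord^N$ order-isomorphic to $\sigma$, each with $s$-value $1$; $\Inf([v])$ holds for $\sigma$ infinite since the extension of $[v]$ is then genuinely infinite and $1$-discrete; and consistency follows by taking $Q$ a one-point space.

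The main obstacle is exactly this transfer from the approximate extensionality and excision principles for $\e$-smooth internal gauges to the exact axioms for the external limit gauge $s$ — together with the identity $d_{e,s}=\rho_s$ — since these are what guarantee that being a Quine atom and being $1$-discrete are preserved under the quotient and completion. A secondary point needing care is arranging simultaneously that $V_\sigma$ is genuinely standard in $M$ and that the $\Qb$-indexed family of ordinals feeding $s$ lies entirely above $\sigma$, so that $s$ may be taken identically $1$ below $\sigma+1$; without this one would only recover a copy of $V_\sigma$ at a smaller scale, in keeping with \cref{thm:ord-ill}.
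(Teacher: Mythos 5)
Your proposal is correct and follows essentially the same route as the paper: build a partially standard model $M$ via \cref{lem:partially-standard}, define the external gauge $s$ from a $\Qb$-ordered set of $M$-ordinals, verify the axioms of $\MSE$ up to $\e$ by approximating $s$ with $\e$-smooth internal gauges (\cref{lem:e-smooth-gauges-exist,lem:gauge-continuity,lem:approx-H-ext,lem:approx-excision}), and locate $(Q,d)$ and $V_\sigma$ via \cref{lem:Q-embedding,lem:V-alpha-disc-embedding}. The extra points you flag --- the identity $d_{e,s}=\rho_s$, the explicit passage to the completion, and arranging that the $\Qb$-indexed ordinals lie above $\sigma$ so that $s$ is identically $1$ there --- are details the paper's proof glosses over but needs, and you have supplied them correctly.
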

\begin{proof}
  Fix $(Q,d)$ as in \cref{defn:Q-basic}. Fix some ordinal $\sigma$. We may assume that $(Q,d) \in V_\sigma$. Apply \cref{lem:partially-standard} to get a structure $M$ elementarily equivalent to $V_{\alpha+\omega}$ for some $\alpha > \sigma$ such that the standard part of $M$ contains $V_\sigma$.  Let $J$ be a set of $M$-ordinals less than $\alpha^M$ order-isomorphic to $\Qb$. Since $\Tc_\alpha(Q)$ is definable from $Q$ and $\alpha$, there is an element $\Tc^M$ in $M$ realizing the same type over $Q$ and $\alpha^M$. In this way we can regard $M$ as a structure satisfying the conditions of \cref{defn:non-standard-gauges}. 

  $J$ is also order-isomorphic to $(0,1)\cap \mathbb{Q}$. Let $f$ be an order isomorphism witnessing this. Define $s : \Ord^M \to [0,1]$ by $s(\beta) = \inf\{f(\gamma) : \gamma \in J,~\gamma \leq \beta\}$ with $\inf \varnothing = 1$. This is clearly a gauge on $M$. Let $N = (\Tc^M,e_s)$.

  We need to show that for any axiom $\varphi$ of $\MSE$ (i.e., those listed in Definitions~\ref{defn:H-ext-ax} and \ref{defn:ax-sch-exc}) and any $\e > 0$,  $N \models \varphi \leq \e$. If $\varphi$ is the $\HH$-extensionality axiom, we can find with \cref{lem:e-smooth-gauges-exist} a $\frac{1}{2}\e$-smooth internal gauge $t$ such that $\lVert s-t^{st} \rVert \leq \frac{1}{2}\e$. By Lemmas~\ref{lem:gauge-continuity} and \ref{lem:approx-H-ext}, we have that $\varphi^{N} \leq \varphi^{(\Tc^M,e_t)}+\frac{1}{2}\e \leq \e$. If $\varphi$ is the excision axiom for the formula $\psi$, then we can do the same with a $\frac{1}{2v(\psi)}\e$-smooth internal gauge $t$ by \cref{lem:approx-excision}. Since we can do this for any $\varphi \in \MSE$ and $\e > 0$, we have that $N \models \MSE$.

  Finally we just need to verify that the set of Quine atoms isomorphic to $(Q,d)$ and the set isomorphic to $V_\sigma$ exist in $N$. Let $q$ be the element of $\Tc^M$ coextensive with the set $Q^\ast$ defined in \cref{lem:Q-embedding}. We have that $Q$ is a dense subset of $q$ and furthermore $\rho_\beta$ agrees with $d$ on $Q$ for all $\beta < \alpha$, therefore $(q,\rho_s)$ is isometric to $(Q,d)$, since $Q$ is metrically complete.

  Finally, let $v = v_\sigma$ as defined in the proof of \cref{lem:V-alpha-disc-embedding}. Since $\sigma(\sigma) = 1$, we have that $v$ is $1$-discrete. The relation $x\sqin_e y$ (i.e., $e(x,y) = 0$) agrees with $x\sqin y$ for $\sqin$-elements of $v$, so we have that $(v,\sqin_e)$ is isomorphic to $(V_\sigma,\in)$.
\end{proof}

One thing to note with regards to \cref{thm:MSE-consistent} is that if $M$ satisfies the axiom of choice, then the resulting structure $N$ will satisfy the axiom of choice in all of its uniformly discrete sets. Conversely, if there is a set $x \in V_\alpha^M$ witnessing the failure of the axiom of choice, then the axiom of choice will fail for the corresponding $1$-discrete set in $N$. Since we did not use the axiom of choice at any point in our construction, this establishes that choice for uniformly discrete sets is independent of $\MSE$.

Recall that an $\Lc_e$-structure $M$ is \emph{pseudo-finite} if for every restricted $\Lc_e$-sentence $\varphi$ and $r$, if $M \models \varphi < r$, then there is a finite $\Lc_e$-structure $N$ such that $N \models \varphi < r$. %

\begin{thm}\label{thm:MSE-pseudo-finite}
  There is a pseudo-finite model of $\MSE$.
\end{thm}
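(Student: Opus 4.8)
The plan is to realize a model of $\MSE$ as an ultraproduct of finite $\Lc_e$-structures. Recall that any ultraproduct $N = \prod_{\mathcal{U}} N_n$ of finite structures is automatically pseudo-finite: if $N \models \varphi < r$ for a restricted $\Lc_e$-sentence $\varphi$ and a rational $r$, then by {\L}o\'s's theorem $\varphi^{N} = \lim_{\mathcal{U}}\varphi^{N_n} < r$, so $\varphi^{N_n} < r$ for $\mathcal{U}$-almost every $n$, and in particular for some $n$, which supplies a finite witness. Hence it is enough to produce finite $\Lc_e$-structures $N_n$ such that $\psi^{N_n} \to 0$ for every axiom $\psi$ of $\MSE$ — i.e.\ the $\HH$-extensionality axiom of \cref{defn:H-ext-ax} and each instance of the excision scheme of \cref{defn:ax-sch-exc} — and then take an ultraproduct over a nonprincipal ultrafilter $\mathcal{U}$ on $\Nb$, which will satisfy every axiom exactly.

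To build the $N_n$ I would reuse the tree construction of \cref{sec:constructing-models} with empty $Q$ and \emph{finite} height. For $n \geq 2$ let $\Gamma_n \coloneqq \Tc_n(\varnothing)$, a finite set, and fix a finite gauge $s_n \colon \{0,\dots,n-1\} \to [0,1]$ that is non-increasing, satisfies $s_n(0) = s_n(1) = 1$ and $s_n(n-1) \leq \e_n$, and has $|s_n(\beta) - s_n(\beta+1)| \leq \e_n$ for all $\beta$, where $\e_n \coloneqq 1/(n-1) \to 0$ (e.g.\ $s_n(0) = 1$ and $s_n(\beta) = \max(0, 1 - (\beta-1)/(n-1))$ for $1 \leq \beta \leq n-1$). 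Put $\rho_{s_n}(x,y) \coloneqq \sup_{\beta < n}\min(\rho_\beta^{\varnothing,n}(x,y), s_n(\beta))$, $e_{s_n}(x,y) \coloneqq \inf_{w \sqin y}\rho_{s_n}(x,w)$, and $N_n \coloneqq (\Gamma_n, e_{s_n})$ (passing to the $d_{e_{s_n}} = 0$ quotient if one prefers an honest metric structure).

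Next I would establish finite-height analogues of \cref{lem:approx-H-ext}, \cref{lem:approx-Lipschitz}, and \cref{lem:approx-excision}. The arguments for the first two go through essentially verbatim with $\alpha^M$ replaced by $\{0,\dots,n-1\}$ and $\e$ by $\e_n$: the only properties of the gauge used are that it is non-increasing, that $s(0) = s(1)$, that consecutive values differ by at most $\e_n$, and that $s$ drops below any fixed positive level before the top (here guaranteed by $s_n(n-1) \leq \e_n$), while the ``limit ordinal'' clause in the definition of $\e$-smoothness is vacuous over a finite ordinal. This yields $(\HH\text{-ext})^{N_n} \leq \e_n$ and that $x \mapsto \varphi^{N_n}(x)$ is $2v(\varphi)$-Lipschitz with respect to $\rho_{s_n}$ up to an additive $2v(\varphi)\e_n$. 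The one genuinely new ingredient, replacing \cref{prop:closures-are-elements} (which needs a limit stage), is the observation that for \emph{any} $F \subseteq \Gamma_n$ the element $x$ with $x(\beta) \coloneqq \{z\res\beta : z \in F\}$ lies in $\Gamma_n$, satisfies $F \subseteq \{y : y \sqin x\}$, and has the property that every $y \sqin x$ agrees with some $z \in F$ on levels $0,\dots,n-2$, so that $\rho_\beta^{\varnothing,n}(y,z) = 0$ for all $\beta \leq n-2$ and hence $\rho_{s_n}(y,z) \leq s_n(n-1) \leq \e_n$. Feeding this into the proof of \cref{lem:approx-excision} with $F = \{x \in \Gamma_n : \varphi^{N_n}(x,\abar) \leq 0\}$, together with the finite-height Lipschitz bound and the fact that $2v(\varphi)\e_\varphi \leq \tfrac13 < 1$, gives $(\text{excision for }\varphi)^{N_n} \leq O(v(\varphi)\e_n) \to 0$ for each restricted $\Lc_e$-formula $\varphi$ (in fact $\leq 0$ once $n$ is large).

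Finally, let $N \coloneqq \prod_{\mathcal{U}} N_n$; by {\L}o\'s's theorem $\psi^{N} = \lim_{\mathcal{U}}\psi^{N_n} = 0$ for the $\HH$-extensionality axiom and for every excision instance, so $N \models \MSE$, and $N$ is pseudo-finite by the remark above. The main obstacle is the third step: the finite, successor-height truncations do \emph{not} have the exact ``every closed class is coextensive with an element'' property of \cref{prop:closures-are-elements}, so excision can only be recovered approximately, and the point is precisely the estimate $\rho_{s_n}(y,z) \leq s_n(n-1) \leq \e_n$, which forces the gauge to (nearly) vanish at the top level. A secondary point requiring care is confirming that the hypotheses actually invoked in the proofs of \cref{lem:approx-H-ext}--\cref{lem:approx-excision} survive the passage from infinite $\e$-smooth gauges on $\alpha^M$ to these finite gauges on $\{0,\dots,n-1\}$.
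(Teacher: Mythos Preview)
Your approach is correct but takes a different route from the paper. The paper stays at the fixed \emph{limit} height $\omega$: it considers $M = (V_{\omega+\omega},\omega,\varnothing,\dots)$, defines for each $n$ the internal gauge $s_n(i) = \min(\max(1-\frac{i-1}{n},0),1)$ on $\omega$, observes that $s_n$ is genuinely $\frac{1}{n}$-smooth in the sense of \cref{defn:non-standard-gauges} (there are no limit ordinals below $\omega$, $s_n(0)=s_n(1)=1$, and $s_n$ literally vanishes from level $n+1$ on), and notes that the quotient of $\Tc_\omega(\varnothing)$ by $\rho_{s_n}$ is finite. Because $\omega$ is a limit ordinal, \cref{prop:closures-are-elements} applies as stated, so \cref{lem:approx-H-ext} and \cref{lem:approx-excision} hold verbatim; an ultraproduct of the finite quotients is then a pseudo-finite model of $\MSE$.

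The difference is where the finiteness is imposed. You truncate the \emph{height} to a successor ordinal $n$, which forces you to re-establish the lemmas at finite height and to supply a substitute for \cref{prop:closures-are-elements}. Your substitute is valid (in fact slightly stronger than you claim: with $Q=\varnothing$, $y \sqin x$ gives $y\res(n-1)=z\res(n-1)$ for some $z \in F$, hence $\rho_{n-1}(y,z)=0$ and so $\rho_{s_n}(y,z)=0$, not merely $\leq s_n(n-1)$). The edge cases you flag (the $\gamma+1=n$ case in the second inequality of \cref{lem:approx-H-ext}, and the fact that your gauge meets the step bound with equality rather than strict inequality) are real but harmless after small adjustments. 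The paper's approach buys a much shorter proof by pushing finiteness into the gauge rather than the tree, so that all of the machinery of \cref{sec:constructing-models} applies without modification; your approach is more self-contained but costs you the re-verification.
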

\begin{proof}
  Consider the structure $M = (V_{\omega+\omega},\omega,\varnothing,\dots)$. For each $n \in \Nb$, let $s_n$ be the scale on $M$ defined by $s_n(i) = \min(\max(1-\frac{i-1}{n},0),1)$. This is easily seen to be $\frac{1}{n}$-smooth. The quotient of $\Tc_\omega(\varnothing)$ by the pseudo-metric $\rho_{s_n}$ is finite, so any ultraproduct of the sequence $(\Tc_\omega(\varnothing)/\rho_{s_n},e_{s_n})_{n \in \Nb}$ is a pseudo-finite model of $\MSE$ by Lemmas~\ref{lem:approx-H-ext} and \ref{lem:approx-excision}.
\end{proof}

It is straightforward to show that no pseudo-finite model of $\MSE$ can satisfy $\Inf$, as no pseudo-finite structure can interpret Robinson arithmetic.

\section{Translation to \L ukasiewicz logic}
\label{sec:formalizing-in-LL}

It was observed in \cite{Caicedo2014} that there is a strong connection between continuous logic and \L ukasiewicz-Pavelka predicate logic. This logic extends \L ukasiewicz logic with $0$-ary connectives for each rational $r \in [0,1]$. Since every unary connective $f(x)$ in rational Pavelka logic is piecewise linear and has that $\frac{\mathrm{d}}{\mathrm{d}x}f(x) \in \Zb$ for all but finitely many $x \in [0,1]$, it is immediate that $\frac{x}{2}$ is not a connective that can be formed in it. One might think that this would prevent rational Pavelka logic from being logically complete in the sense of continuous logic, but as pointed out in \cite[Prop.~1.17]{Caicedo2014}, the connective $x \mapsto \frac{1}{2}x$ is a uniform limit of connectives in rational Pavelka logic:
\[
\lim_{n\to \infty}\max_{1\leq i \leq n}\min\left( \frac{i}{n},x \dotdiv \frac{i}{n} \right) = \frac{1}{2}x
\]
uniformly for all $x \in [0,1]$. This implies that all $[0,1]$-valued formulas in continuous logic are uniform limits of \L ukasiewicz-Pavelka formulas.

We would like to take the opportunity to observe that more than this is true. On the level of \emph{conditions} rather than \emph{formulas}, ordinary \L ukasiewicz logic is already logically complete relative to continuous logic in the following sense: For every restricted formula $\varphi(\xbar)$ (in either the sense of \cite[Sec.~3]{MTFMS} or the more permissive sense of \cite[Sec.~1.3]{HansonThesis}), there is a formula $\psi(\xbar)$ using only the connectives $1$ and $x\dotdiv y$ such that for any metric structure $M$ and tuple $\abar \in M$, $\varphi^M(\abar) \leq 0$ if and only if $\psi^M(\abar) \leq 0$.\footnote{Note however that not all continuous functions form $[0,1]^n \to [0,1]$ are uniform limits of expressions in propositional \L ukasiewicz logic, as any such expression maps $\{0,1\}^n$ to $\{0,1\}$, so \L ukasiewicz-Pavelka logic is strictly stronger in a sense that matters to continuous logic.} As a consequence of this, any continuous first-order theory or type (in a language with $[0,1]$-valued predicates) can be axiomatized entirely in \L ukasiewicz logic. This is likely obvious to those who are well-versed in \L ukasiewicz logic and its extensions, but we think it is worthwhile to write out explicitly. This fact is a consequence of results in \cite{Hjek2000}, but given the amount of translation needed to apply these results, we will sketch an argument here.

Say that a restricted formula $\varphi$ is a \emph{rational affine literal} if it is a rational affine combination of atomic formulas. Say that a quantifier-free formula is in \emph{maximal affine normal form} or \emph{max ANF} if it is $\max_{n<N}\min_{m<M_n}\varphi_{nm}$ where each $\varphi_{nm}$ is a rational affine literal. Say that a formula $\varphi$ is in \emph{prenex max ANF} if is a string of quantifiers followed by a max ANF formula. It is not too hard to show (and is written out explicitly in \cite[Prop.~1.4.12]{HansonThesis}) that every restricted formula is equivalent to a formula in prenex max ANF. %

By McNaughton's theorem \cite[Thm.~1]{McNaughton1951}, for any integers $a$ and $b_0,\dots,b_{n-1}$, the function $M(\xbar;a,\bbar) = \min(\max(a+b_0x_0+b_1x_1+\dots+b_{n-1}x_{n-1},0),1)$ can be expressed using the connectives of \L ukasiewicz logic.

Let $\varphi \leq r$ be a restricted closed condition. We may assume without loss of generality that $r = 0$. By the above discussion we can rewrite $\varphi$ as an equivalent prenex max ANF formula
\[
\psi(\xbar) = \qqq_{x_0}\qqq_{x_1}\qqq_{x_2}\dots \max_{n<N}\min_{m<M_n}\left( a_{nm} + \sum_{k < K_{nm}}b_{nmk}\chi_{nmk} \right)
\]
where each $\qqq$ is either $\inf$ or $\sup$ and each $\chi_{nmk}$ is an atomic formula. Let $\ell$ some number larger than the denominators of the coefficients in $\psi$. Consider now the formula $\psi^\dagger(\xbar)$ defined as
\[
  \qqq_{x_0}\qqq_{x_1}\qqq_{x_2}\dots \max_{n<N}\min_{m<M_n}M(\chi_{n,m,0},\dots,\chi_{n,m,K_{nm}-1};\ell!\cdot a_{nm},\ell!\cdot b_{n,m,0},\dots,\ell!\cdot b_{n,m,K_{nm}-1}).
\]
Note that $\psi^\dagger$ is equivalent to $\min(\max(\ell!\cdot \psi,0),1)$. We clearly have that in any structure $M$, $\psi(\abar) \leq 0$ if and only if $\psi^\dagger(\abar) \leq 0$, but this latter condition can be expressed in \L ukasiewicz logic by McNaughton's theorem. In particular, if we interpret $M(-;\ell!\cdot a_{nm},\ell!\cdot b_{n,m,0},\dots,\ell!\cdot b_{n,m,K_{nm}-1})$ as an expression in \L ukasiewicz logic, then we have that $M \models \psi(\abar) \geq 0$ if and only if $M$ satisfies
\[
  \neg Qx_0Qx_1Qx_3\dots \bigvee_{n<N}\bigwedge_{n<M_n}M(\chi_{n,m,0},\dots,\chi_{n,m,K_{nm}-1};\ell!\cdot a_{nm},\ell!\cdot b_{n,m,0},\dots,\ell!\cdot b_{n,m,K_{nm}-1}).
\]
where $Qx_i$ is $\exists x_i$ if $\qqq_{x_i}$ is $\sup_{x_i}$ and $\forall x_i$ if $\qqq_{x_i}$ is $\inf_{x_i}$.

There are some minor differences in the treatment of equality (i.e., the metric) in continuous logic and \L ukasiewicz logic and, relatedly, the intended semantics of continuous logic is more specific than that of \L ukasiewicz logic, but for structures without equality (i.e., general structures such as our $\Lc_e$-structures) there is no difference in expressive power.

While the above discussion is sufficient to prove that it exists, we will now give an explicit axiomatization of $\MSE$ is \L ukasiewicz logic. For the sake of compatibility with the existing \L ukasiewicz logic literature, we will switch to the convention of regarding $1$ as true. As such, we will write $x \rin y$ for $1-e(x,y)$. %
We will write $A \to B$ for connective $1-(A\dotdiv B) = \min(1-A+B,1)$ and $\bot$ for $0$. Formulas are formed from $x \rin y$ using the connectives $\to$ and $\bot$ and the quantifiers $\exists$ and $\forall$. We'll write $\LL_{\rin}$ for this set of formulas, which we will regard as a subset of the set of restricted $\Lc_e$-formulas (where we interpret $\exists x$ as $\sup_x$ and $\forall x$ as $\inf_x$). For an $\Lc_e$-structure $M$, a tuple $\abar \in M$, and a formula $\varphi(\xbar) \in \LL_{\rin}$, we say that $M$ \emph{satisfies} $\varphi(\abar)$ if $M \models \varphi(\abar) = 1$. %

It is a basic fact that the connectives $\to$ and $\bot$ can be used to define the following:
 $A \toot B\coloneqq 1-|x-y|$,
 $\neg A\coloneqq 1-A$,
 $A \wedge B\coloneqq \min(A,B)$, %
 $A \vee B\coloneqq \max(A,B)$, and %
 $A \opand B\coloneqq \max(A+B-1,0)$. %

First we need to define extensional equality: We will write $x =_e y$ as shorthand for the formula $\forall z(z \rin x \toot z \rin y)$. (This is the same thing as $1-d_e(x,y)$.) With this we can now write the $\HH$-extensionality axiom as
\[
\forall x \forall y (x \rin y \toot \exists z(x=_ez \opand z \rin y \opand z \rin y)). \tag{1}
\]
It is easy to verify that this is a literal transcription of \cref{defn:H-ext-ax}.

Given any formula $\varphi \in \LL_{\rin}$, let $\#\varphi$ be the number of instances of $\rin$ in $\varphi$.\footnote{That is to say, the number of instances of $e(x,y)$ in the corresponding restricted $\Lc_e$-formula.} %
For the axiom scheme of excision, we have
\[
  \forall \ybar \exists z \forall x (x \rin z \vee (\neg\varphi\opand\neg \varphi \opand \neg \varphi))\wedge ((\underbrace{\neg x\rin z \opand \cdots \opand \neg x \rin z}_{6\cdot\#\varphi~\text{times}}) \vee(\varphi \opand \varphi \opand \varphi)) \tag{2$_\varphi$}
\]
for every formula $\varphi(x,\ybar) \in \LL_{\rin}$ that does not contain $z$ as a free variable. As this is not a literal transcription of the axiom scheme of excision given in \cref{defn:ax-sch-exc}, we need to prove that it is equivalent. 

\begin{prop}
  An $\Lc_e$-structure $M$ is $\HH$-extensional and satisfies $\Lc_e$-excision if and only if it satisfies (1) and (2$_\varphi$) for all $\varphi \in \LL_{\rin}$.
\end{prop}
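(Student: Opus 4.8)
The plan is to treat the two axiom groups separately, using that for a \L ukasiewicz condition $\sigma$ the structure $M$ \emph{satisfies} $\sigma$ exactly when $\sigma^M=1$, that the connectives unwind to $A\toot B=1-|A-B|$, $\neg A=1-A$, $A\wedge B=\min(A,B)$, $A\vee B=\max(A,B)$, $A\opand B=\max(A+B-1,0)$, and that, by an immediate induction, $\underbrace{A\opand\cdots\opand A}_{k}=\max(kA-(k-1),0)$. First I would verify that (1) is a literal transcription of the $\HH$-extensionality axiom of \cref{defn:H-ext-ax}: unwinding, $x=_ez\opand z\rin y\opand z\rin y$ evaluates to $\max(1-d_e(x,z)-2e(z,y),0)=1-\min(d_e(x,z)+2e(z,y),1)$, so $\exists z(\cdots)$ evaluates to $1-\inf_z\min(d_e(x,z)+2e(z,y),1)$, the biconditional with $x\rin y=1-e(x,y)$ evaluates to $1-|e(x,y)-\inf_z\min(d_e(x,z)+2e(z,y),1)|$, and prefixing $\forall x\forall y$ yields $1-\sup_{xy}|\cdots|$. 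Hence $M$ satisfies (1) iff $M$ satisfies the $\Lc_e$-condition of \cref{defn:H-ext-ax}, i.e.\ iff $M$ is $\HH$-extensional. This settles the $\HH$-extensionality half of both implications at once, and in particular guarantees that whenever we need it we may pass to the complete, $\HH$-extensional structure $\ol{M/e}$, which is elementarily equivalent to $M$ as an $\Lc_e$-structure (so all the relevant $\Lc_e$-conditions and condition schemes transfer between the two).

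For the excision half I would first unwind $(2_\varphi)$, for a fixed $\varphi(x,\ybar)\in\LL_\rin$, using the $\opand$-power identity above, to an explicit ``conventional form'': for every $\ybar$ and every $\delta>0$ there is a $z$ such that for all $x$, if $\varphi^M(x,\ybar)$ is close to its maximum then $e(x,z)\le\delta$, and if $\varphi^M(x,\ybar)$ is bounded away from its maximum then $e(x,z)$ exceeds a fixed positive threshold of order $1/(6\#\varphi)$ --- the thresholds coming from the triple $\opand$'s and the $6\#\varphi$-fold $\opand$. For the direction ``(1) and all $(2_\varphi)$ $\Rightarrow$ $\Lc_e$-excision'', fix a restricted $\Lc_e$-formula $\chi(x,\ybar)$; using the results recalled at the start of this section (McNaughton's theorem and the $\dotdiv$-approximation of $\tfrac12 x$) produce, for each $\e>0$, an $\LL_\rin$-formula whose value is within $\e$ of a suitable affine rescaling of $\chi$ into $[0,1]$, apply the corresponding instance of $(2_\varphi)$, and pass to the limit $\e\to0$ using the $2v(\chi)$-Lipschitz continuity of $\xbar\mapsto\chi^M(\xbar)$ (\cref{lem:formula-v}, \cref{fact:translation}) to recover the conventional form of the axiom scheme of excision (\cref{defn:ax-sch-exc}) for $\chi$.

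For the converse, ``$\HH$-extensional and $\Lc_e$-excision $\Rightarrow$ all $(2_\varphi)$'', I would work in $\ol{M/e}$, where by \cref{prop:MSE-CL-char} we have $(\ol{M/e},d_e,{\sqin_e})\models\MSE$ and \cref{lem:fat-comp} is available. Given $\varphi\in\LL_\rin$ and $\ybar$, regard $\varphi$ as a restricted $\Lc_e$-formula, hence (by \cref{fact:translation}) as an $\Lsq$-formula on $(\ol{M/e},d_e,{\sqin_e})$; apply $\MSE$-excision (and \cref{lem:fat-comp}, to upgrade a rough approximation of the relevant class to a set with \emph{exact} membership controlled by $e(\,\cdot\,,a)$ for a fixed $a$) to produce a $z$ for which ``$\varphi$ near-maximal $\Rightarrow x\sqin z$'' holds outright and for which the triangle inequality $e(x,z)\ge e(x,a)-s$ forces the required lower bound on $e(x,z)$ when $\varphi$ is bounded away from its maximum. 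The conventional form of $(2_\varphi)$ then holds in $\ol{M/e}$, hence in $M$ by elementary equivalence.

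The main obstacle is the constant bookkeeping that makes the two conventional forms literally coincide: one must compute $v$ of the $\Lc_e$-translation of an $\LL_\rin$-formula in terms of $\#\varphi$, track how $v$ (and hence $\e_\varphi=1/\max(6v(\varphi),3)$ from \cref{defn:ax-sch-exc}) behaves under the affine rescalings and the McNaughton/$\dotdiv$ rewriting, and choose the rescaling constants so that the $\tfrac13$--$\tfrac23$ window produced by the triple $\opand$'s and the $1/(6\#\varphi)$-tolerance produced by the $6\#\varphi$-fold $\opand$ line up with the ``in'' and ``out'' sides ($\varphi\le 0$, $\varphi\ge1$, $e(x,z)<\delta$, $e(x,z)\le\e_\varphi-\delta$) of the axiom scheme of excision. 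Everything else is a routine, if somewhat lengthy, unwinding of \L ukasiewicz connectives together with the already-established facts about $\HH$-extensional $\Lc_e$-structures.
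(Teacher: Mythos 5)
Your plan reproduces the paper's proof essentially step for step: you check that (1) literally transcribes the $\HH$-extensionality axiom, you derive the scheme $(2_\varphi)$ from $\Lc_e$-excision by passing through \cref{prop:MSE-CL-char} and applying $\MSE$-excision with a fixed window (the paper uses $\exc{x}{1-\varphi(x,\abar)}{\frac{1}{3}}{\frac{2}{3}}$ together with the bound $v(\varphi)=\#\varphi$), and you recover $\Lc_e$-excision from the $(2_\varphi)$'s via the prenex max ANF/McNaughton translation plus the $2v(\varphi)$-Lipschitz estimate on the original restricted formula, which is exactly how the paper handles the fact that the translated formula has many more instances of $\rin$. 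The only real divergence is that the paper needs no approximate translation or limiting argument: since restricted $\Lc_e$-formulas have rational coefficients, multiplying through by $\ell!$ yields an \emph{exact} $\LL_{\rin}$ realization $\varphi^\dagger$ of $\min(\max(\ell!\cdot\varphi,0),1)$, so the constant bookkeeping you defer to the end is precisely the content of the paper's argument rather than an additional obstacle.
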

\begin{proof}
  We clearly have that $M$ is $\HH$-extensional if and only if it satisfies (1). Therefore we may assume without loss of generality that $(M,d_e)$ is a complete metric space.

  An easy inductive argument shows that for any $\varphi \in \LL_{\rin}$, $v(\varphi) = \#\varphi$. In particular, any such $\varphi$ is $(2\cdot \#\varphi)$-Lipschitz relative to $d_e$.

  For the $\To$ direction, by \cref{prop:MSE-CL-char}, $(M,d_e,\sqin_e)\models \MSE$. Fix a formula $\varphi(x,\ybar) \in \LL_{\rin}$ and a tuple $\abar \in M$. If $\#\varphi = 0$, then $\varphi(x,\abar)$ is a constant that does not depend on $\abar$, so (2$_\varphi$) is witnessed either by $\varnothing^M$ or by $V^M$. If $\#\varphi > 0$, consider the set $b = \exc{x}{1-\varphi(x,\abar)}{\frac{1}{3}}{\frac{2}{3}}$. For any $c \in M$, we have that if $\varphi^M(c,\abar) \geq \frac{2}{3}$, then $(c \rin b)^M = 1$ and if $(c \rin b)^M = 1$, then $\varphi^M(c,\abar) > \frac{1}{3}$. In particular, this implies that if $\varphi^M(f,\abar) \leq \frac{1}{3}$, then $(f\rin b)^M \leq 1-\frac{1}{6\cdot \#\varphi}$ (which implies that $M$ satisfies $\neg f \rin b \opand \cdots \opand \neg f \rin b$ with $6\cdot\#\varphi$ instances of $\neg f \rin b$). Furthermore $M$ satisfies
  \[
\forall x (x \rin b \vee (\neg \varphi(x,\abar)\opand \neg \varphi(x,\abar) \opand \neg \varphi(x,\abar)))
\]
and
\[
  \forall x (\underbrace{\neg x\rin b \opand \cdots \opand \neg x \rin b}_{6\cdot\#\varphi~\text{times}}) \vee(\varphi(x,\abar) \opand \varphi(x,\abar) \opand \varphi(x,\abar)).
\]
Since we can do this for any $\varphi \in \LL_{\rin}$ and any $\abar \in M$, we have that $M$ satisfies (2$_\varphi$) for all $\varphi \in \LL_{\rin}$.

For the $\Leftarrow$ direction, assume that $M$ satisfies (1) and (2$_\varphi$) for all $\varphi \in \LL_{\rin}$. Fix a restricted $\Lc_e$-formula $\varphi(x,\ybar)$. Assume without loss of generality that $\varphi(x,\ybar)$ contains an instance of the predicate $e$ and that $\varphi(x,\ybar)$ is in prenex max ANF. Pick a sufficiently large $\ell > 1$ and let $\varphi^\dagger(x,\ybar)$ be defined as above. In particular, we may think of $\varphi^\dagger(x,\ybar)$ as a formula in $\LL_{\rin}$ which has the property that for any $M$ and $a,\bbar \in M$, $(\varphi^\dagger)^M(a,\bbar) = \min(\max(\varphi^M(a,\bbar),0),1)$. 

Fix some $\abar \in M$ and $\delta > 0$ with $\delta < 1$ and apply (2$_{\neg \varphi^\dagger}$) to $\abar$ to get a $b \in M$ such that for every $x \in M$, 
\begin{itemize}
\item either $(x \rin b)^M > 1-\delta$ or $(\neg\neg \varphi^\dagger)^M(x,\abar) > \frac{1-\delta}{3}$ and
\item either $(\neg x \rin b)^M > \frac{1-\delta}{6\cdot\#\varphi^\dagger}$ or $(\neg \varphi^\dagger)^M(x,\abar) > \frac{1-\delta}{3}$.
\end{itemize}
This means that for every $x \in M$,
\begin{itemize}
\item if $\varphi(x,\abar) \leq \frac{1-\delta}{3\cdot \ell!}$, then $e(x,b) < \delta$ and
\item if $e(x,b) = 0$, then $\varphi(x,\abar) < \frac{2+\delta}{3\cdot \ell!}$.
\end{itemize}
The first of these clearly implies that if $\varphi(x,\abar) \leq -\delta$, then $e(x,z) < \delta$. Now suppose that for some $c \in M$, $e(c,b) \leq \e_\varphi- \delta$. We then have that there is a $f \in M$ with $(f \rin b)^M = 1$ such that $d(c,f)< \e_\varphi$. Since $(f\rin b)^M = 1$, we have that $e(f,b) = 0$, so $\varphi(f,\abar) < \frac{2+\delta}{3\cdot \ell!}$ and therefore
$  \varphi(c,\abar) < \varphi(f,\abar) + 2v(\varphi)d(c,f) 
                   < \frac{2+\delta}{3\cdot \ell!} + 2v(\varphi)\e_\varphi 
                    < \frac{1}{\ell!} + \frac{2v(\varphi)}{6v(\varphi)} 
                    < \frac{1}{2} + \frac{1}{3} < 1 < 1 + \delta.$ Since we can do this for any sufficiently small $\delta > 0$, we have that $M$ satisfies the excision axiom for $\varphi(x,\ybar)$. So since we can do this for any $\varphi(x,\ybar) \in \Lc_e$, we have that $M$ satisfies $\Lc_e$-excision.
\end{proof}

It is of course also possible to translate our axioms of infinity and other sentences described in this paper to \L ukasiewicz logic, but doing so is much more involved.

\bibliographystyle{plain}
\bibliography{../ref}

\end{document}